\font\smallit=cmti10
\renewcommand\section{\@startsection {section}{1}{\z@}
{-30pt \@plus -1ex \@minus -.2ex}
{2.3ex \@plus.2ex}
{\normalfont\normalsize\bfseries\boldmath}}
\newenvironment{manualtheorem}[1]{%
  \manualtheoreminner
}{\endmanualtheoreminner}
\newenvironment{manualdefinition}[1]{%
  \manualdefinitioninner
}{\endmanualdefinitioninner}
\renewcommand\subsection{\@startsection{subsection}{2}{\z@}
{-3.25ex\@plus -1ex \@minus -.2ex}
{1.5ex \@plus .2ex}
{\normalfont\normalsize\bfseries\boldmath}}
\renewcommand{\@seccntformat}[1]{\csname the#1\endcsname. }
\newcommand\numberthis{\addtocounter{equation}{1}\tag{\theequation}}
\newcommand*{\rom}[1]{\expandafter\@slowromancap\romannumeral #1@}
\renewcommand*\env@matrix[1][*\c@MaxMatrixCols c]{%
	\hskip -\arraycolsep
	\let\@ifnextchar\new@ifnextchar
	\array{#1}}
\newtheorem{theorem}{Theorem}
\newtheorem{lemma}{Lemma}
\newtheorem{corollary}{Corollary}
\theoremstyle{definition}
\newtheorem{definition}{Definition}
\newtheorem{conjecture}{Conjecture}
\newtheorem{remark}{Remark}
\newtheorem{question}{Question}
\newtheorem{example}{Example}
\DeclareMathOperator{\Gal}{Gal}
\DeclareMathOperator{\Frob}{Frob}
\newcommand{\Ocal}{\mathcal{O}}
\newcommand{\ZZ}{\mathbb{Z}}
\newcommand{\QQ}{\mathbb{Q}}
\newcommand{\NN}{\mathbb{N}}
\newcommand{\frakp}{\mathfrak{p}}
\newcommand{\frakq}{\mathfrak{q}}
\begin{document}

\begin{center}
\uppercase{\bf On The Partition Regularity of }${\bf ax+by = cw^mz^n}$
\vskip 20pt
{\bf Sohail Farhangi}\\
{\smallit Department of Mathematics, the Ohio State University, Columbus, Ohio, United States of America}\\
{\tt sohail.farhangi@gmail.com}
\vskip 10pt
{\bf Richard Magner}\\
{\smallit Department of Mathematics, Boston University, Boston, Massachusetts, United States of America}\\
{\tt rmagner@bu.edu}
\end{center}
\vskip 20pt
\centerline{\smallit published version is available at http://math.colgate.edu/~integers/x18/x18.pdf } 
\vskip 30pt

\centerline{\bf Abstract}
\noindent
	We give a partial classification of the $a,b,c \in \mathbb{Z}\setminus\{0\}$ and $m,n \in \mathbb{N}$ for which the equation $ax+by = cw^mz^n$ is partition regular (PR) over $\mathbb{Z}\setminus\{0\}$. We show that if $m,n \ge 2$, then $ax+by = cw^mz^n$ is PR over $\mathbb{Z}\setminus\{0\}$ if and only if $a+b = 0$. Next, we show that if $n$ is odd, then the equation $ax+by = cwz^n$ is PR over $\mathbb{Z}\setminus\{0\}$ if and only if one of $\frac{a}{c}, \frac{b}{c},$ or $\frac{a+b}{c}$ is an $n$th power in $\mathbb{Q}$, and we come close to a similar characterization for even $n$. We also obtain some results about when a system of equations of the form $a_ix_i+b_iy_i = c_iw_iz_i^n, 1 \le i \le k$, is PR over an integral domain $R$. In order to show that the equation $ax+by = cwz^n$ is not PR over $\mathbb{Z}\setminus\{0\}$ for certain values of $a,b,c,$ and $n$, we prove a partial generalization of the criteria of Grunwald and Wang for when $\alpha \in \mathbb{Z}$ is an $n$th power modulo every prime $p$. In particular, we show that for any odd $n$ and any $\alpha,\beta,\gamma \in \mathbb{Q}$ that are not $n$th powers, there exist infinitely many primes $p \in \mathbb{N}$ for which none of $\alpha,\beta,$ and $\gamma$ are $n$th powers modulo $p$, and we obtain a similar but weaker result for even $n$. In order to show that for an integral domain $R$ the equation $ax+by = cwz^n$ is PR over $R\setminus\{0\}$ for certain values of $a,b,c$ and $n$, we use ultrafilters and the algebra of the Stone-\u{C}ech Compactification. This necessitates us to prove some new results of independent interest about ultrafilters $v \in \beta R$ for which every $A \in v$ is additively and multiplicatively central.
\pagestyle{myheadings}
\thispagestyle{empty}
\baselineskip=12.875pt
\vskip 30pt

\section{Introduction}

We say that an equation is \emph{partition regular} over a set $S$ if for any finite partition $S = \bigcup_{i = 1}^rC_i$, there exists some $C_i$ containing a solution to the equation. One of the first results about partition regular diophantine equations is the celebrated theorem of Schur \cite{SchurThm}, which established the partition regularity of $x+y = z$ over $\mathbb{N}$. Schur's student Rado \cite{RadosTheorem} classified which finite systems of linear homogeneous equations are partition regular over $\mathbb{N}$. Since it is not known whether or not the equation $x^2+y^2 = z^2$ is partition regular over $\mathbb{N}$, we are still far from achieving a classification for which systems of polynomial equations are partition regular. One of the first results in this direction is a theorem of Bergelson \cite[page 53]{ERTAnUpdate}, which shows that the equation  $x-y = p(z)$ is partition regular over $\mathbb{N}$ when $p(z)$ is a polynomial with integer coefficients which satisfies $p(0) = 0$. While the result of Bergelson shows that the equation $x-y = z^2$ is partition regular over $\mathbb{N}$, Csikv\'ari, Gyarmati and S\'arkozy \cite{a+b=cd} showed that the equation $x+y = z^2$ is \emph{not} partition regular over $\mathbb{N}$ (see also \cite{GreenLindqvist}), and asked whether the equation $x+y = wz$ is partition regular over $\mathbb{N}$. Their question was answered in the positive, independently by Bergelson \cite[Section 6]{BergelsonSurvey} and Hindman \cite{SumsEqualProducts}. Both proofs make use of ultrafilters and the algebra of the Stone-\v{C}ech compactifation. More examples of polynomial equations whose partition regularity is known can be found in \cite{NonlinearRado}, \cite{RamseyWarring}, \cite{NonStandardRamseyTheory}, \cite{JoelSumsAndProducts}, and \cite{PrendivilleDiagonalRamsey}.

The polynomial equations mentioned so far have a simple form, but the proofs of their partition regularity properties are quite specific to these cases. 
Since there is currently no unified theory for the partition regularity of polynomial equations, any class of equations whose partition regularity is known may provide insight towards such a theory. In this paper, we give a partial classification of the partition regularity of equations of the form $ax + by = cw^m z^n$, where $a,b,c \in \mathbb{Z} \setminus \{0\}$ and $m,n \in \mathbb{N}$ are parameters, and $x,y,z,w$ are variables. Theorem \ref{MainResult} is the main result of this paper. Before stating Theorem \ref{MainResult}, we note that we remove $0$ when considering partition regularity over a ring $R$ in order to avoid trivial solutions. We also recall that any equation which is partition regular over a set $S_1$ (such as $\mathbb{N}$) is also partition regular over any set $S_2$ that contains $S_1$ (such as $\mathbb{Z}\setminus\{0\})$, but the converse is not true in general.

\begin{theorem} \label{MainResult}
	Fix $a,b,c\in \mathbb{Z}\setminus\{0\}$ and $m,n \in \mathbb{N}$.
	\begin{itemize}
		\item[(i)] Suppose that $m,n \ge 2$.
		
		\begin{itemize}
		    \item[(a)] If $a+b \neq 0$, then the equation
		
		\begin{equation}	\label{MainEquation}
			ax+by = cw^mz^n
		\end{equation}
		
		is not partition regular over $\mathbb{Z}\setminus\{0\}$.
		
		\item[(b)] If $a+b = 0$, then Equation \eqref{MainEquation} is partition regular over $\mathbb{N}$.
		\end{itemize}
		\item[(ii)] If one of $\frac{a}{c}, \frac{b}{c},$ or $\frac{a+b}{c}$ is an $n$th power in $\mathbb{Q}$, then the equation 
		
		\begin{equation} 	\label{MainEquation2}
			ax+by = cwz^n
		\end{equation}
		is partition regular over $\mathbb{Z}\setminus\{0\}$. If one of $\frac{a}{c},\frac{b}{c},$ or $\frac{a+b}{c}$ is an $n$th power in $\mathbb{Q}_{\ge 0}$, then Equation \eqref{MainEquation2} is partition regular over $\mathbb{N}$.
		
		\item[(iii)] Assume that Equation \eqref{MainEquation2} is partition regular over $\mathbb{Q}\setminus\{0\}$.\footnote{To give a converse to part (ii) of Theorem \ref{MainResult} we would assume that Equation \eqref{MainEquation2} is partition regular over $\mathbb{N}$ or $\mathbb{Z}\setminus\{0\}$. The assumption that Equation \eqref{MainEquation2} is partition regular over $\mathbb{Q}\setminus\{0\}$ is weaker than either of the previous assumptions since $\mathbb{N} \subseteq \mathbb{Z}\setminus\{0\} \subseteq \mathbb{Q}\setminus\{0\}$.}
		
		\begin{enumerate}[(a)]
			\item If $n$ is odd, then one of $\frac{a}{c}, \frac{b}{c},$ or $\frac{a+b}{c}$ is an $n$th power in $\mathbb{Q}$.
			
			\item If $n \neq 4, 8$ is even, then one of $\frac{a}{c}, \frac{b}{c},$ or $\frac{a+b}{c}$ is an $\frac{n}{2}$th power in $\mathbb{Q}$.
			
			
			\item If $n$ is even, then either one of $\frac{a}{c}, \frac{b}{c},$ or $\frac{a+b}{c}$ is a square in $\mathbb{Q}$, or\\ $(\frac{a}{c})(\frac{b}{c})(\frac{a+b}{c})$ is a square in $\mathbb{Q}$.
			
		\end{enumerate}
	\end{itemize}
\end{theorem}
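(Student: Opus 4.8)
The plan is to prove the three parts by three unrelated techniques: a slowly‑growing colouring of Csikvári–Gyarmati–Sárközy type for (i)(a), polynomial Ramsey machinery for (i)(b), the algebra of $\beta(\ZZ\setminus\{0\})$ for (ii), and a $p$-adic colouring driven by a Grunwald–Wang‑type input for (iii). I take them in turn.

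For (i)(a) I would exhibit an explicit finite colouring of $\ZZ\setminus\{0\}$ with no monochromatic solution. The key structural point is that $m,n\ge 2$ forces $\abs w^{\,m},\abs z^{\,n}\le\abs{w^mz^n}=\abs{ax+by}/\abs c=O(\max(\abs x,\abs y))$, so $\abs w,\abs z=O(\sqrt{\max(\abs x,\abs y)})$ in every solution; writing $g=\gcd(m,n)$, when $g\ge 2$ one has $w^mz^n=(w^{m/g}z^{n/g})^{g}$ and reduces to non‑partition‑regularity of $ax+by=c\,t^g$, an extension of Csikvári–Gyarmati–Sárközy, while for $g=1$ the set $\{w^mz^n\}$ is still thin ($O(X^{1/\min(m,n)})$ elements up to $X$). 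One then colours by a band index mod $r$ along a rapidly increasing sequence of bands, chosen so the right‑hand variable(s) must fall in an earlier band than $\max(\abs x,\abs y)$; $a+b\ne 0$ is what prevents $ax+by$ from collapsing enough for $w$ or $z$ to rival $x,y$. The care here goes into sign behaviour over $\ZZ\setminus\{0\}$ (a case split, since $ax+by$ can be small with $\abs x,\abs y$ large) and into eliminating finitely many degenerate solutions.

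For (i)(b), with $b=-a$ the equation is $a(x-y)=cw^mz^n$, and I would reduce to finding in one cell of a finite partition of $\NN$ some $w,z$ with $a\mid cw^mz^n$ together with $x,y$ in that cell satisfying $x-y=(c/a)w^mz^n$; this follows from polynomial extensions of van der Waerden's theorem (Bergelson–Leibman) and the central‑sets theorem, with the usual care needed to keep $w,z$ themselves — not merely auxiliary step variables — in the monochromatic cell. For (ii), write $\tfrac ac=\rho^n$ (the other two cases symmetric); a substitution $z=\rho z_1$ followed by clearing constants turns \eqref{MainEquation2} into a multiplicatively twisted additive configuration, and the aim becomes to realise it inside an arbitrary cell $A$ of a finite partition. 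This is where the paper's new ultrafilter results enter: one produces an ultrafilter $v\in\beta(\ZZ\setminus\{0\})$ idempotent for multiplication and with additively rich members, notes some cell $A$ lies in $v$, and extracts the $w,z$‑part from multiplicative idempotency and the $x,y$‑part from additive richness inside $A$; over $\NN$ this works when the $n$th root can be taken nonnegative. The main labour here is establishing the existence of $v$ with exactly the blend of additive and multiplicative structure the substituted equation needs.

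For (iii) I would argue by contraposition: assuming the stated power conditions all fail for $\tfrac ac,\tfrac bc,\tfrac{a+b}c$ (and, in the even‑$n$ cases, for their product), build a finite colouring of $\QQ\setminus\{0\}$ with no monochromatic solution by colouring $q$ by its class in the finite group $\QQ_p^{*}/(\QQ_p^{*})^{n}$, where $p$ is a prime at which $\tfrac ac,\tfrac bc,\tfrac{a+b}c$ are simultaneously non‑$n$th‑powers; producing such $p$ is exactly the Grunwald–Wang‑type theorem of the paper, and the trichotomy in (a)/(b)/(c) — no hypothesis for odd $n$, where classical Grunwald–Wang has no exceptions; an $\tfrac n2$‑power hypothesis for even $n\ne 4,8$; and a no‑square‑product hypothesis for general even $n$, which is precisely the obstruction (via Chebotarev on $\QQ(\sqrt{a/c},\sqrt{b/c},\sqrt{(a+b)/c})$) to finding a prime with all three non‑$n$th‑powers — tracks the classical exceptional cases and the boundary behaviour behind examples such as $16x+17y=wz^8$. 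Given a putative monochromatic solution, if $v_p(ax)\ne v_p(by)$ the dominant term determines $ax+by$ up to a factor in $1+p\ZZ_p\subseteq(\QQ_p^{*})^{n}$ (valid as $p\nmid n$), forcing $\tfrac ac$ or $\tfrac bc$ to be an $n$th power in $\QQ_p$, contrary to the choice of $p$. The remaining, and principal, obstacle is the degenerate case $v_p(ax)=v_p(by)$ — especially $x=\pm y$ and near‑misses of it — which demands a finer analysis; for even $n$ that analysis also brings in the real place, which is why the conclusions in (b), (c) fall short of the clean $n$th‑power statement available for odd $n$.
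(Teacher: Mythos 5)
Your plan for items (i)(b) and (ii) is in the same spirit as the paper. For (i)(b) the paper does less than you suggest: it simply takes $w=z$, reducing to $ax-ay=cz^{m+n}$, which is Bergelson's theorem on $x-y=p(z)$ after a rescaling by $as$; no Bergelson--Leibman or central sets theorem is needed. For (ii) the paper does exactly what you describe, via the ultrafilter of Theorem \ref{SpecialUltrafilter}.

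For (i)(a) your route is genuinely different from the paper's and has a gap. The paper does not construct a colouring; it invokes the maximal Rado condition of Di Nasso--Luperi Baglini (\cite{NonlinearRado}) and checks by hand that $P=cz^nw^m-ax-by$ admits no upper Rado functional, then passes from $\mathbb{N}$ to $\mathbb{Z}\setminus\{0\}$ by gluing a colouring of $\mathbb{N}$ with a colouring of $-\mathbb{N}$. Your band colouring leans on the claim that $a+b\ne 0$ keeps $|ax+by|$ comparable to $\max(|x|,|y|)$, but this is false whenever signs can cancel: with $a=2$, $b=-1$ one has $ax+by=1$ for $x=N$, $y=2N-1$, so $a+b=1\ne0$ yet $cw^mz^n$ is forced to be tiny with $|x|,|y|$ arbitrarily large. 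Over $\mathbb{Z}\setminus\{0\}$ (and indeed over $\mathbb{N}$ once $a,b$ are allowed opposite signs) a magnitude-band colouring alone cannot control which band $w,z$ land in, and the ``case split'' you allude to is precisely the hard part that your sketch does not supply. The maximal Rado condition is what handles these cancellation regimes uniformly.

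For (iii) your colouring is too coarse and the gap you flag as the ``principal obstacle'' is exactly where it breaks. Colouring by the class in $\mathbb{Q}_p^{\times}/(\mathbb{Q}_p^{\times})^n$ handles the unbalanced case $v_p(ax)\ne v_p(by)$ as you say, but in the balanced case $v_p(ax)=v_p(by)$ the hypothesis $x\equiv y\pmod{(\mathbb{Q}_p^{\times})^n}$ only gives $y=xu^n$, so $ax+by=x(a+bu^n)$ and there is no useful relation linking this to $(a+b)/c$. The paper instead colours by the leading $p$-adic digit $\chi(x)\equiv x/p^{v_p(x)}\pmod{p}$, taking $p>\max(|a|+|b|,|c|)$ so that $\chi(ax+by)=(a+b)\chi(x)$ in the balanced case, which together with $\chi(cwz^n)=c\,\chi(w)\chi(z)^n$ yields $(a+b)/c\equiv d^n\pmod p$ and a contradiction. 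Your identification of the needed input (the Grunwald--Wang-type prime existence, with the Chebotarev argument on $\mathbb{Q}(\sqrt{a/c},\sqrt{b/c},\sqrt{(a+b)/c})$ in the even case) is correct and matches the paper; what is missing is the finer partition that makes the balanced case go through.
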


 To prove Theorem 1(i)(a) we use one of the Rado conditions for polynomial equations that is proven in \cite{NonlinearRado}, and we will show that Theorem 1(i)(b) is an easy consequence of the partition regularity of the equation $x-y = p(z)$. To prove Theorem 1(ii) we use ultrafilters similar to those that were used in \cite{BergelsonSurvey} and \cite{SumsEqualProducts}. To prove Theorem 1(iii) we use the classic partitions that were used by Rado in \cite{RadosTheorem} when determining necessary conditions for a finite system of linear homogeneous equations to be partition regular. In order to demonstrate that these classic partitions yield our desired results, we are required to prove Theorem \ref{ExistenceOfPrimesInIntro}, which is a partial generalization of the criteria of Grunwald and Wang for when $\alpha \in \mathbb{Z}$ is an $n$th power modulo every prime $p \in \mathbb{N}$ (see \cite{Grunwald}, \cite{Wang}, \cite{Wang-OnGrunwald}). In order to state Theorem \ref{ExistenceOfPrimesInIntro} we define the following notation. If $p \in \mathbb{N}$ is a prime and $r,s \in \mathbb{Z}$ are such that $p\nmid s$, then $\frac{r}{s}\equiv rs^{-1}\pmod{p}$.

\begin{theorem}\label{ExistenceOfPrimesInIntro}
	Let $\alpha, \beta, \gamma \in \mathbb{Q}\setminus\{0\}$. Suppose that either of the following hold: 
	\begin{enumerate}[(i)]
		\item $n$ is odd and $\alpha, \beta, \gamma $ are not $n$th powers; or
		\item $n$ is even, $\alpha, \beta, \gamma $ are not $\frac{n}{2}$th powers, and $\alpha$ is not an $\frac{n}{4}$th power if $4 \mid n$. 
	\end{enumerate}
	Then there exist infinitely many primes $p \in \NN$ modulo which none of $\alpha, \beta, \gamma $ are $n$th powers.
\end{theorem}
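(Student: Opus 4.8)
The plan is to reduce the statement to a classical Chebotarev-type argument by interpreting "$\alpha$ is an $n$th power mod $p$" in terms of the splitting behaviour of $p$ in the Kummer extension $\QQ(\zeta_n, \alpha^{1/n})$ over $\QQ(\zeta_n)$, and then to use the structure of the Grunwald--Wang exceptional cases to see that the only way \emph{all three} of $\alpha,\beta,\gamma$ could fail to be simultaneously non-$n$th-powers modulo a positive-density set of primes is precisely the Wang exceptional phenomenon, which the hypotheses in (i) and (ii) are designed to exclude. First I would fix a prime $p$ unramified in all the relevant fields and coprime to the numerators and denominators of $\alpha,\beta,\gamma$; for such $p$ with $p \equiv 1 \pmod{n}$, $\alpha$ is an $n$th power mod $p$ if and only if $\alpha$ is an $n$th power in $\QQ_p$, if and only if $p$ splits completely in $\QQ(\alpha^{1/n})$. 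So it suffices to produce infinitely many primes $p$ that (a) are $\equiv 1 \pmod n$ and (b) do \emph{not} split completely in any of $\QQ(\alpha^{1/n}), \QQ(\beta^{1/n}), \QQ(\gamma^{1/n})$. Equivalently, writing $L = \QQ(\zeta_n)$ and $M_\alpha = L(\alpha^{1/n})$ etc., I want primes of $L$ of degree $1$ over $\QQ$ whose Frobenius in $\Gal(M_\alpha M_\beta M_\gamma / L)$ is nontrivial on each of the three subextensions $M_\alpha, M_\beta, M_\gamma$.

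The key step is then a counting/inclusion--exclusion argument inside $G := \Gal(M_\alpha M_\beta M_\gamma / L)$. By Chebotarev, the density of degree-one primes of $\QQ$ whose Frobenius lies in a given conjugacy-stable subset $T \subseteq G$ is $|T|/|G|$; I want $T$ to be the set of $\sigma \in G$ acting nontrivially on all three $M_\bullet/L$. If $G$ were the full product $\Gal(M_\alpha/L) \times \Gal(M_\beta/L) \times \Gal(M_\gamma/L)$ this is immediate since each factor has a nonidentity element. The obstruction is entanglement: the three Kummer extensions may intersect, or — more seriously — there may be a "hidden" relation forcing $\sigma|_{M_\alpha}$, $\sigma|_{M_\beta}$, $\sigma|_{M_\gamma}$ to be non-independent even when pairwise the extensions are linearly disjoint. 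This is exactly the content of the Grunwald--Wang counterexample: the set $\{1, 2^4, (1+\sqrt2)^4\}$ in $\QQ$ (up to squares) behaves badly for $16$th powers over $\QQ$ because $16 \in (\QQ_2^\times)^{16}$ fails while $16$ is a $16$th power in $\QQ_p$ for all odd $p$. The hypotheses in (ii) — that the $\alpha,\beta,\gamma$ are not $\frac{n}{2}$th powers, with the extra condition on $\alpha$ when $4 \mid n$ — are precisely tailored so that the special $2$-power entanglement of Wang's example cannot occur; translating the Grunwald--Wang special-case list into conditions on $\langle \alpha,\beta,\gamma\rangle$ modulo $n$th powers in $\QQ^\times$ is the technical heart.

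Concretely, the steps in order: (1) reduce to $p \equiv 1 \pmod n$ via Dirichlet and the observation that among such primes, $n$th-power-ness mod $p$ is detected by splitting in $\QQ(\bullet^{1/n})$; (2) pass to the compositum $N = L(\alpha^{1/n},\beta^{1/n},\gamma^{1/n})$ and compute, via Kummer theory, $\Gal(N/L) \hookrightarrow (\tfrac1n\ZZ/\ZZ)^3$ as the dual of the subgroup of $L^\times/(L^\times)^n$ generated by $\alpha,\beta,\gamma$; (3) show that under the hypotheses this subgroup, together with the cyclotomic behaviour, is large enough that the "bad" locus (some $\sigma$ trivial on some $M_\bullet$) does not cover $\Gal(N/L)$, using the characterization (Grunwald--Wang, or Artin--Tate's treatment) of exactly when a subgroup of $L^\times/(L^\times)^n$ "looks trivial everywhere locally at a positive density of primes"; (4) apply Chebotarev to $\Gal(N/\QQ)$ with the conjugacy-stable target set "Frobenius restricts to degree-one on $L$ and is nontrivial on each $M_\bullet$," which by (3) is nonempty, yielding infinitely many primes. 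I expect step (3) — pinning down precisely which $n$-power and $\tfrac{n}{2}$-power (resp.\ $\tfrac n4$-power) non-vanishing hypotheses suffice to rule out the $2$-primary Grunwald--Wang degeneracy for a \emph{triple} of elements rather than a single $\alpha$ — to be the main obstacle, and the reason the theorem is only a "partial" generalization; the odd-$n$ case (i) should be comparatively clean because there is no Wang exceptional phenomenon away from $2$, so linear disjointness of the $M_\bullet/L$ (after quotienting by the common intersection, each still a nontrivial extension) follows from Kummer theory directly and the inclusion--exclusion count goes through.
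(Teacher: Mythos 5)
Your general framework is right: reduce to primes $p \equiv 1 \pmod n$ (equivalently, work over $C = \QQ(\zeta_n)$ as the paper does), reinterpret ``$\alpha$ is an $n$th power mod $p$'' as splitting in a Kummer extension of $C$, and finish with Chebotarev. But the proposal stops short at exactly the step you yourself flag as the ``technical heart,'' and the tool you gesture at there --- a Grunwald--Wang-style characterization of when a subgroup of $L^\times/(L^\times)^n$ looks locally trivial --- is the wrong instrument. Grunwald--Wang governs a \emph{single} element being an $n$th power in every local field; what you need is that the three sets ``$p$ splits in $M_\alpha$,'' ``$p$ splits in $M_\beta$,'' ``$p$ splits in $M_\gamma$'' do not jointly cover all primes, and that is a question about a union, not about a subgroup. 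No amount of quoting Grunwald--Wang directly answers it. Concretely, the worst case is when all three subgroups $\ker(\Gal(N/C) \to \Gal(M_\bullet/C))$ have index $2$, since a group of exponent $2$ can genuinely be covered by three index-$2$ subgroups; this is what the extra hypothesis on $\alpha$ must rule out, and you would still have to argue why.

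The paper sidesteps all of this with an elementary inclusion--exclusion estimate on densities, which is both simpler and actually completes the argument. Write $a = \alpha^{d_a}$ with $d_a$ maximal and $m_a = n/d_a$, so $\delta_a := 1/m_a$ is the density of primes of $C$ where $a$ is an $n$th power (Chebotarev applied to the cyclic extension $C(\alpha^{1/m_a})/C$, whose degree $m_a$ is pinned down by Lemma~\ref{RadicalExtDegree}). Lemma~\ref{Intersection} gives a \emph{lower} bound $\delta_{x,y} \ge 1/(m_x m_y)$ for the pairwise intersections, which combined with $\delta_{a,b,c} \le \min\delta_{x,y}$ yields
\begin{equation}
\Delta \le \delta_a + \delta_b + \delta_c - \delta_{a,b} - \delta_{b,c} \le \frac{1}{m_a}+\frac{1}{m_b}+\frac{1}{m_c}-\frac{1}{m_a m_b}-\frac{1}{m_b m_c},
\end{equation}
and a short case check shows this is $<1$ once all $m_\bullet \ge 2$ and at least one is $\ge 3$. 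The hypotheses in (i) and (ii) feed in precisely through Lemma~\ref{NoDegree1}, which uses the Schinzel/V\'elez criterion (Lemma~\ref{AbelianRadicalLemma}) for abelian radical extensions to translate ``$\alpha^{\omega_n}$ (resp.\ $\alpha^{2\omega_n}$) is not an $n$th power'' into lower bounds on the $m_\bullet$. So your proposal is correct in outline and identifies the right difficulty, but the missing piece is not Grunwald--Wang; it is (a) the abelian-radical-extension lemma giving degree lower bounds from the $\frac{n}{2}$/$\frac{n}{4}$-power hypotheses, and (b) the inclusion--exclusion computation that makes a covering impossible without ever analyzing $\Gal(N/C)$ in full.
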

We remark that we will apply Theorem \ref{ExistenceOfPrimesInIntro} to $\alpha = \frac{a}{c}, \beta = \frac{b}{c},$ and $\gamma = \frac{a+b}{c}$. Since $\alpha = \beta + \gamma$, the condition that at least one of $\alpha, \beta,$ and $\gamma$ not be an $\frac{n}{4}$th power if $4\mid n$ is automatic by Fermat's Last Theorem when $n > 8$.

The structure of the paper is as follows. In Section \ref{SectionOnPreliminaries} we provide a statement of Rado's Theorem and briefly review some facts about the usage of ultrafilters in Ramsey Theory. A major goal of this section is to quickly familiarize the reader who is inexperienced with Ramsey Theory with enough basic knowledge of ultrafilters that they will be able to use as a blackbox the special kinds of ultrafilters introduced in Theorems \ref{SpecialUltrafilter} and \ref{SpecialUltrafilter6}. In Section \ref{PositiveResults} we prove items (i)(b), (ii), and (iii)(a-b) of Theorem \ref{MainResult}.

The main result of Section \ref{NumberTheorySection} is Theorem \ref{ExistenceOfPrimesInIntro}.
Since our proof of Theorem \ref{ExistenceOfPrimesInIntro} already requires us to work in finite extensions of $\mathbb{Q}$, we also prove a similar result as Lemma \ref{ExistenceOfPrimes} in the more general setting of rings of integers of number fields. We then prove item (iii)(c) of Theorem \ref{QuadraticCaseNegativeResult}, and we conclude the section with Lemma \ref{2VariableLemma} which is an analogue of Theorem \ref{ExistenceOfPrimesInIntro} for $2$ variables.

In Section \ref{ReductionToM==1} we prove of Theorem \ref{MainResult}(i)(a). We also determine the partition regularity of some equations of the form $ax+by = cwz^n$ which are not already addressed by Theorem \ref{MainResult} through the use of Lemma \ref{MinimalRadoConditionApplied}. In Section \ref{SectionForGeneralDomains} we investigate the partition regularity of $ax+by = cwz^n$ over general integral domains $R$ and attain results analogous to Theorem \ref{MainResult} while using methods very similar to those used in Section \ref{PositiveResults}. In Section \ref{SectionForSystems} we investigate the partition regularity of systems of equations of the form $a_ix_i+b_iy_i = c_iw_iz_i^n, 1 \le i \le k$ over integral domains $R$ and attain results that support Conjecture \ref{FinalConjectureIntro}.

\begin{conjecture}\label{FinalConjectureIntro}
    Let $a_1,\cdots,a_k,b_1,\cdots,b_k,c_1,\cdots,c_k \in \mathbb{Z}\setminus\{0\}$ and $n \in \mathbb{N}$. The system of equations

\begin{equation}
    \begin{array}{ccccc}
         a_1x_1 & + & b_1y_1 & = & c_1w_1z_1^n\\
         a_2x_2 & + & b_2y_2 & = & c_2w_2z_2^n\\
         & & \vdots & & \\
         a_kx_k & + & b_ky_k & = & c_kw_kz_k^n
    \end{array}
\end{equation}
is partition regular over $\mathbb{Z}\setminus\{0\}$ if and only if

\begin{equation}
    I := \bigcap_{i = 1}^k \left\{\frac{a_i}{c_i},\frac{b_i}{c_i},\frac{a_i+b_i}{c_i}\right\}
\end{equation}
contains an $n$th power in $\mathbb{Q}$.
\end{conjecture}

In Section \ref{Conclusion} we state some conjectures and examine some equations and systems of equations whose partition regularity remains unknown. We also elaborate on the distinction between partition regularity of a polynomial equation over $\mathbb{N}$ instead of $\mathbb{Z}\setminus\{0\}$ by considering some illustrative examples of polynomial equations over $\mathbb{Z}[\sqrt{2}]$.

The main purpose of Section \ref{Appendix} is to give a thorough proof of the existence of the ultrafilters in Theorems \ref{SpecialUltrafilter} and \ref{SpecialUltrafilter6} as Theorem \ref{SpecialUltrafilter2} and \ref{SpecialUltrafilter5}, respectively. To this end, we begin the section with a detailed introduction to the theory of ultrafilters and its applications to semigroup theory. While the ultrafilter that we use in Theorem \ref{SpecialUltrafilter2} is the same as the ultrafilter used in \cite{BergelsonSurvey} and \cite{SumsEqualProducts} (cf. Remark \ref{DiscussionOfDomains}), an analogous ultrafilter need not exist over a general integral domain $R$. Consequently, we prove Corollary \ref{SpecialUltrafilter4} and Theorem \ref{NecessityOfHomFin} to obtain a characterization of the integral domains $R$ which possess ultrafilters analogous to the one in Theorem \ref{SpecialUltrafilter2}.

\section{Preliminaries}
\label{SectionOnPreliminaries}
In this section we review some facts and useful tools in the study of partition regular equations.

\begin{definition}
	Given a set $S$, a ring $R$, and functions $f_1,\cdots,f_k:S^n\rightarrow R$, the system of equations
	
	\begin{equation}	\label{DefOfPR}
		\begin{array}{ccc}
			f_1(x_1,\cdots,x_n) & = & 0 \\
			f_2(x_1,\cdots,x_n) & = & 0 \\
			\vdots & \vdots & \vdots \\
			f_k(x_1,\cdots,x_n) & = & 0
		\end{array}
	\end{equation}
	is {\it partition regular over S} if for any finite partition $S = \bigcup_{i = 1}^rC_i$, there exists $1 \le i_0 \le r$ and $x_1,\cdots,x_n \in C_{i_0}$ which satisfy \eqref{DefOfPR}. If the set $S$ is understood from context, then we simply say that the system of equations is {\it partition regular}.
\end{definition}

\begin{definition}\label{ColumnsCondition}
	Let $R$ be an integral domain with field of fractions $K$. A matrix ${\bf M} \in \mathrm{M}_{m\times n}(R)$ satisfies the {\it columns condition} if there exists a partition $C_1,\cdots, C_k$ of the column indices such that for $\vec{s}_i = \sum_{j \in C_i} \vec{c}_j$ we have
	\begin{itemize}
		\item[(i)] $\vec{s}_1 = (0,\cdots,0)^T$;
		
		\item[(ii)] for all $i \ge 2$, we have
		
		\begin{equation}
			\vec{s}_i \in \text{Span}_{K}\{\vec{c}_j\ |\ j \in C_{\ell}, 1 \le \ell < i\}.
		\end{equation}
	\end{itemize}
\end{definition}

The columns condition was used by Rado to classify when a finite system of homogeneous linear equations is partition regular.
\begin{theorem}[\cite{RadosTheorem}] 		\label{RadosTheorem}
	Given ${\bf M} \in \mathrm{M}_{m\times n}(\mathbb{Z})$, the system of equations
	\begin{equation}  \label{RadosTheoremEquation}
		{\bf M}(x_1,\cdots,x_n)^T = 0
	\end{equation}
	is partition regular over $\mathbb{N}$ if and only if ${\bf M}$ satisfies the columns condition.
\end{theorem}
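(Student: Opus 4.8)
The statement is Rado's classical theorem, so the plan is to reconstruct his two proofs: necessity via ``digit'' colorings, and sufficiency via an iterated van der Waerden construction. Throughout, $\vec{c}_j$ denotes the $j$th column of $\mathbf{M}$ and $\vec{s}_i=\sum_{j\in C_i}\vec{c}_j$ as in Definition \ref{ColumnsCondition}.

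\emph{Necessity of the columns condition.} Here I would exploit Rado's ``first nonzero digit'' colorings. Fix a prime $p$ exceeding $n\cdot\max_{i,j}|M_{ij}|$ and color each $x\in\NN$ by the value of the least significant nonzero digit of its base-$p$ expansion, a coloring into $p-1$ classes. If \eqref{RadosTheoremEquation} is partition regular there is a monochromatic solution with some common color $d$; partition the coordinate indices into blocks $C_1,\dots,C_k$ by the \emph{position} of that first nonzero digit, ordered so the positions satisfy $j_1<\dots<j_k$. Reading the vector identity $\sum_i x_i\vec{c}_i=\vec{0}$ modulo $p^{j_1+1}$ kills every $x_i$ with $i\notin C_1$ and leaves $d\,p^{j_1}\sum_{i\in C_1}\vec{c}_i\equiv\vec{0}$, so $\vec{s}_1\equiv\vec{0}\pmod p$ entrywise; since the entries of $\vec{s}_1$ are bounded by our choice of $p$, we get $\vec{s}_1=\vec{0}$. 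For $t\ge 2$, decompose each $x_i$ as its part below $p^{j_t}$ plus its part at or above $p^{j_t}$; substituting into $\sum_i x_i\vec{c}_i=\vec{0}$ shows the ``low'' part equals $p^{j_t}$ times an integer vector that is simultaneously a rational combination of $\{\vec{c}_i:i\in C_1\cup\dots\cup C_{t-1}\}$, and reducing the resulting relation modulo $p$ yields $\vec{s}_t\equiv u_p\pmod p$ for some $u_p$ lying in both $\ZZ^m$ and $V_t:=\mathrm{Span}_{\QQ}\{\vec{c}_i:i\in C_1\cup\dots\cup C_{t-1}\}$. Since $\{1,\dots,n\}$ has only finitely many ordered partitions, one partition $C_1,\dots,C_k$ occurs for infinitely many primes $p$; for those $p$ the image of $\vec{s}_t$ in the finitely generated torsion-free group $\ZZ^m/(\ZZ^m\cap V_t)$ is divisible by $p$, hence it is divisible by infinitely many primes and must vanish, i.e.\ $\vec{s}_t\in V_t$. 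Thus $C_1,\dots,C_k$ witnesses the columns condition.

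\emph{Sufficiency of the columns condition.} Here I would run Rado's construction. Fix a columns-condition partition $C_1,\dots,C_k$, so $\vec{s}_1=\vec{0}$ and $\vec{s}_t=\sum_{j\in C_1\cup\dots\cup C_{t-1}}\lambda_j^{(t)}\vec{c}_j$ with $\lambda_j^{(t)}\in\QQ$ for $t\ge 2$. Starting from the all-zero solution, introduce a parameter $\tau_s$ per block by $x_j\mapsto x_j+\tau_s$ for $j\in C_s$ and $x_j\mapsto x_j-\lambda_j^{(s)}\tau_s$ for $j\in C_1\cup\dots\cup C_{s-1}$; the change in $\sum_j x_j\vec{c}_j$ is $\tau_s\vec{s}_s-\tau_s\sum_{j\in C_1\cup\dots\cup C_{s-1}}\lambda_j^{(s)}\vec{c}_j=\vec{0}$, so processing $s=1,\dots,k$ preserves the system and leaves $x_j=\tau_s-\sum_{t>s}\lambda_j^{(t)}\tau_t$ for $j\in C_s$. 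After clearing denominators it remains to choose $\tau_1,\dots,\tau_k\in\NN$ making these finitely many affine forms positive integers of one color --- exactly the assertion that a finite coloring of $\NN$ contains a monochromatic configuration of the ``$(m,p,c)$-set'' type studied by Deuber, which follows from a repeated application of van der Waerden's theorem (equivalently, from the solvability of columns-condition systems inside any central set, together with the fact that some cell of any finite partition of $\NN$ is central).

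\emph{The main obstacle.} The delicate point is part (ii) in the necessity argument: because of carries in base $p$ and because we cannot control the low-order digits of the earlier-block coordinates, $\vec{s}_t$ is not read off from a single congruence, and one is forced to combine the digit analysis with a pigeonhole over infinitely many primes and a divisibility argument in a finitely generated abelian group. On the sufficiency side the corresponding technical heart is the bookkeeping in the iterated van der Waerden (or central-sets) construction that keeps every variable simultaneously positive, of one color, and consistent with all the equations.
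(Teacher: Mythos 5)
The paper does not prove this theorem: it is stated as a classical result with a citation to Rado's original paper, so there is no ``paper's own proof'' to compare against. Evaluated on its own merits, your reconstruction is the standard argument and is essentially correct. For necessity, the base-$p$ ``first nonzero digit'' coloring, the decomposition of each $x_i$ into its low part below $p^{j_t}$ and its high part, the reduction modulo $p$, the pigeonhole over infinitely many primes to fix a single ordered partition, and the concluding divisibility argument in the torsion-free quotient $\ZZ^m/(\ZZ^m\cap V_t)$ all fit together correctly; the one small detail you elide is that you first obtain $d\,\vec{s}_t \equiv u_p \pmod p$ for some $u_p\in\ZZ^m\cap V_t$ and must multiply by an integer inverse of $d$ modulo $p$ to isolate $\vec{s}_t$ --- harmless, since scalar multiples of $u_p$ stay in $\ZZ^m\cap V_t$. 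For sufficiency, the parametrization $x_j=\tau_s-\sum_{t>s}\lambda_j^{(t)}\tau_t$ for $j\in C_s$ is correct, the verification that it preserves $\mathbf{M}\vec{x}=\vec{0}$ is correct, and after clearing denominators one does indeed land in the $(m,p,c)$-set setting; this is where your sketch is thinnest --- you are importing Deuber's theorem (or an iterated van der Waerden argument) wholesale, and the parenthetical about central sets is somewhat circular as phrased since ``solvability of columns-condition systems inside central sets'' is Rado's theorem strengthened, but the van der Waerden route you name first is the genuine self-contained fallback. As an outline of Rado's proof the proposal is sound; as a fully checked proof only the sufficiency half would need substantial filling in.
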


\begin{corollary}\label{1PREquation}
For $a_1,\cdots,a_s \in \mathbb{Z}$, the equation

\begin{equation}
    a_1x_1+\cdots+a_sx_s = 0
\end{equation}
is partition regular over $\mathbb{N}$ if and only if there exists $\emptyset \neq F \subseteq [1,s]$ for which $\sum_{i \in F}a_i = 0$.
\end{corollary}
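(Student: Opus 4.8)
The plan is to obtain Corollary~\ref{1PREquation} as a direct specialization of Rado's Theorem (Theorem~\ref{RadosTheorem}). First I would regard the single equation $a_1x_1+\cdots+a_sx_s=0$ as the system ${\bf M}(x_1,\dots,x_s)^T=0$ associated with the $1\times s$ matrix ${\bf M}=(a_1\ \cdots\ a_s)\in\mathrm{M}_{1\times s}(\mathbb{Z})$, whose columns $\vec{c}_1,\dots,\vec{c}_s$ are just the integers $a_1,\dots,a_s$ viewed as elements of the field of fractions $\mathbb{Q}$ of $\mathbb{Z}$. By Theorem~\ref{RadosTheorem}, this equation is partition regular over $\mathbb{N}$ if and only if ${\bf M}$ satisfies the columns condition of Definition~\ref{ColumnsCondition}; so it remains to show that, for a $1\times s$ matrix, the columns condition holds precisely when there is a nonempty $F\subseteq[1,s]$ with $\sum_{i\in F}a_i=0$.

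For the forward implication, suppose ${\bf M}$ satisfies the columns condition via a partition $C_1,\dots,C_k$ of the column indices. Since every block of the partition is nonempty and condition~(i) forces $\vec{s}_1=\sum_{j\in C_1}a_j$ to equal $0$, the set $F:=C_1$ witnesses the required subset-sum relation. For the converse, given a nonempty $F\subseteq[1,s]$ with $\sum_{i\in F}a_i=0$, I would take $C_1:=F$ together with $C_2:=[1,s]\setminus F$ (omitting $C_2$ if it is empty). Condition~(i) is exactly the hypothesis on $F$, and condition~(ii) needs to be checked only for $\vec{s}_2=\sum_{j\notin F}a_j$. Here the one point worth stating is that, since $K=\mathbb{Q}$ is one-dimensional over itself, $\mathrm{Span}_{\mathbb{Q}}\{a_j:j\in C_1\}$ is all of $\mathbb{Q}$ as soon as some $a_j$ with $j\in C_1$ is nonzero; hence condition~(ii) is automatic and the columns condition holds.

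There is no real obstacle here beyond correctly unwinding the columns condition; the only point requiring a moment's care is the span computation in condition~(ii) just mentioned, together with the bookkeeping around vanishing coefficients. The statement and the span argument are cleanest when $a_1,\dots,a_s\in\mathbb{Z}\setminus\{0\}$, which is the only situation in which the corollary is applied in this paper, and I would either restrict to that case or note that in the remaining degenerate cases the equivalence can be checked directly (for instance, when all $a_i=0$ the equation is $0=0$, which is trivially partition regular, and $F=\{1\}$ satisfies $\sum_{i\in F}a_i=0$). Combining the two implications above with Theorem~\ref{RadosTheorem} then gives Corollary~\ref{1PREquation}.
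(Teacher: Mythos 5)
The paper states Corollary~\ref{1PREquation} without proof, leaving it as an immediate consequence of Theorem~\ref{RadosTheorem}; your derivation via the columns condition is exactly the argument the paper expects the reader to supply, and it is correct in the case $a_1,\dots,a_s\in\mathbb{Z}\setminus\{0\}$, which is the only case the paper ever invokes. In that case any nonempty $F$ with $\sum_{i\in F}a_i=0$ has $|F|\ge 2$, so $C_1=F$ contains an index with nonzero coefficient, the span in condition~(ii) of Definition~\ref{ColumnsCondition} is all of $\mathbb{Q}$, and the partition $C_1=F$, $C_2=[1,s]\setminus F$ (if nonempty) verifies the columns condition; the converse is immediate from condition~(i).

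One point in your closing remark is not quite right, however. You suggest that in the degenerate cases where some $a_i=0$ "the equivalence can be checked directly," but in fact the equivalence \emph{fails} there: with $s=2$, $a_1=0$, $a_2=1$, the set $F=\{1\}$ satisfies $\sum_{i\in F}a_i=0$, yet the equation $0\cdot x_1+x_2=0$ has no solution in $\mathbb{N}$ and so is not partition regular (equivalently, the columns condition fails for $(0\ 1)$, since $C_1$ must be $\{1\}$ and then $\vec{s}_2=1\notin\mathrm{Span}_{\mathbb{Q}}\{0\}$). So the right move is your first alternative -- restricting the hypothesis to $a_i\in\mathbb{Z}\setminus\{0\}$, which the paper clearly intends and which the all-zero example also fits under vacuously -- rather than asserting that the statement extends to mixed zero coefficients. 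With that hypothesis made explicit, your proof is complete and is the standard specialization of Rado's Theorem to a single equation.
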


Rado also characterized which finite, not necessarily homogeneous, linear systems of equations are partition regular.

\begin{theorem}[\cite{RadosTheorem}] \label{InhomogeneousRadoTheorem}
	Given ${\bf M} \in M_{m\times n}(\mathbb{Z})$ and $(b_1,\cdots,b_n) \in \mathbb{Z}^n$, the equation
	\begin{equation} \label{InhomogeneousRadoEquation}
		{\bf M}(x_1,\cdots,x_n)^T = (b_1,\cdots,b_n)^T
	\end{equation}
	is partition regular over $\mathbb{Z}$ if and only if Equation \eqref{InhomogeneousRadoEquation} admits an integral solution in which $x_1 = x_2 = \cdots = x_n$. 
\end{theorem}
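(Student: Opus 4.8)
The plan is to prove the two directions separately, with essentially all of the work in the ``only if'' direction. The ``if'' direction is immediate: if $\mathbf{M}(c,\dots,c)^T = \vec{b}$ for some $c \in \ZZ$, then for any finite partition $\ZZ = \bigcup_{i=1}^{r} C_i$ the assignment $x_1 = \dots = x_n = c$ is a solution of \eqref{InhomogeneousRadoEquation} all of whose coordinates lie in the single cell $C_i$ containing $c$; hence the equation is partition regular over $\ZZ$. For the ``only if'' direction I would argue by contraposition: assuming \eqref{InhomogeneousRadoEquation} has no integral solution with $x_1 = \dots = x_n$, I will build a finite partition of $\ZZ$ admitting no monochromatic solution. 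Throughout, write $\vec{r} := \mathbf{M}(1,1,\dots,1)^T$ for the vector of row sums of $\mathbf{M}$, so that an integral solution with all coordinates equal to $t$ exists precisely when $\vec{b} = t\vec{r}$; thus the hypothesis is exactly that $\vec{b} \notin \ZZ\vec{r}$, where $\vec b$ is the right-hand side of \eqref{InhomogeneousRadoEquation}.

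The colorings I would use are the residue colorings $\chi_N \colon \ZZ \to \ZZ/N\ZZ$, $\chi_N(k) = k \bmod N$, in the spirit of Rado's necessity arguments in the homogeneous case. The key observation is that any $\chi_N$-monochromatic solution $x_1,\dots,x_n$ of \eqref{InhomogeneousRadoEquation} has $x_1 \equiv \dots \equiv x_n \equiv t \pmod{N}$ for a single residue $t$, and reducing \eqref{InhomogeneousRadoEquation} modulo $N$ then gives $t\vec{r} \equiv \mathbf{M}(x_1,\dots,x_n)^T \equiv \vec{b} \pmod{N}$. Consequently it suffices to produce an integer $N \ge 2$ for which the single linear congruence $t\vec{r} \equiv \vec{b} \pmod{N}$ has no solution $t$; for such an $N$ the partition induced by $\chi_N$ has no monochromatic solution, and we are done.

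It remains to find such a modulus $N$, and this is the only substantive step: I claim that whenever $t\vec{r} = \vec{b}$ has no solution over $\ZZ$, some congruence $t\vec{r} \equiv \vec{b} \pmod{N}$ with $N \ge 2$ is also unsolvable. I would split into two cases. In the case where $\vec{b}$ is not a rational multiple of $\vec{r}$, pick an integer covector $\vec{v}$ with $\vec{v}\cdot\vec{r} = 0$ but $d := \vec{v}\cdot\vec{b} \neq 0$ (possible since $\vec{b}$ lies outside the line $\QQ\vec{r}$) and take $N = \abs{d}+1$: pairing the congruence with $\vec{v}$ would force $0 \equiv d \pmod{N}$, which is false. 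In the remaining case $\vec{b} = \lambda \vec{r}$ with $\lambda \in \QQ \setminus \ZZ$; writing $\lambda$ in lowest terms, its denominator divides every entry of $\vec{r}$, hence divides $g$, the gcd of the entries of $\vec{r}$, so $g \ge 2$. Take $N = g$: writing $\vec{r} = g\vec{u}$ with the entries of $\vec{u}$ globally coprime and $g' := \lambda g \in \ZZ$, a solution of $t\vec{r} \equiv \vec{b} \pmod{g}$ would give $g \mid (tg - g')u_i$ for every entry $u_i$ of $\vec{u}$, hence $g \mid tg - g'$, hence $g \mid g'$, i.e. $\lambda = g'/g \in \ZZ$, a contradiction. (Alternatively, both cases can be dispatched at once: $\vec{b} \notin \ZZ\vec{r}$ says $\vec{b}$ has nonzero image in the cokernel of the map $t \mapsto t\vec{r}$, which is a finitely generated abelian group and hence residually finite, so there is a finite cyclic quotient $\ZZ/N\ZZ$ in which the image of $\vec{b}$ survives; pulling back yields a covector $\vec{v}$ with $\vec{v}\cdot\vec{r}\equiv 0$ and $\vec{v}\cdot\vec{b}\not\equiv 0 \pmod{N}$.)

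The step I expect to be the main obstacle is this last one, and within it the case $\vec{b} \in \QQ\vec{r}\setminus\ZZ\vec{r}$: one must recognize that a purely ``denominator'' obstruction to integral solvability is already visible modulo a single integer. The $\gcd$ bookkeeping above (or, more conceptually, residual finiteness of finitely generated abelian groups) is precisely what supplies that modulus; the remaining ingredients — the triviality of the ``if'' direction and the reduction of ``only if'' to solvability of one congruence — are routine.
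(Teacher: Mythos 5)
The paper does not actually prove this theorem; it simply cites it to Rado's original paper \cite{RadosTheorem}, so there is no in-paper argument to compare against. Evaluated on its own terms, your proof is complete and correct, and it follows the standard route one would expect from Rado's necessity arguments (coloring $\ZZ$ by residues modulo $N$).

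A few points worth noting. Your reduction in the ``only if'' direction — that a $\chi_N$-monochromatic solution with common residue $t$ forces $t\vec{r}\equiv\vec{b}\pmod N$, so it suffices to exhibit a modulus $N$ for which this congruence is unsolvable — is exactly the right lemma, and it correctly reduces the theorem to the purely arithmetic assertion that $\vec{b}\notin\ZZ\vec{r}$ implies $\vec{b}\not\equiv t\vec{r}\pmod N$ for some $N\ge 2$ and all $t$. Your two-case proof of that assertion is sound: in the case $\vec{b}\notin\QQ\vec{r}$, the integer covector $\vec{v}$ annihilating $\vec{r}$ but not $\vec{b}$ exists precisely because $\vec{b}$ lies outside the line $\QQ\vec{r}$, and taking $N=\lvert d\rvert+1$ works; in the case $\vec{b}=\lambda\vec{r}$ with $\lambda\in\QQ\setminus\ZZ$, the denominator of $\lambda$ forces the gcd $g$ of the entries of $\vec{r}$ to be at least $2$, and your computation with $\vec{r}=g\vec{u}$ and a Bézout combination of the coprime $u_i$ correctly shows $N=g$ obstructs the congruence. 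The degenerate cases (e.g.\ $\vec{r}=\vec{0}$ with $\vec{b}\neq\vec{0}$) fall under your first case and are handled. The parenthetical reformulation via residual finiteness of the finitely generated abelian group $\ZZ^m/\ZZ\vec{r}$ is a clean way to package both cases and is also correct. One minor remark: the paper's statement writes $(b_1,\dots,b_n)\in\ZZ^n$, which should be $\ZZ^m$ to match the $m\times n$ matrix; your proof treats $\vec{b}$ as living in the codomain of $\mathbf{M}$, so this does not affect your argument.
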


We point out to the reader that in Theorem \ref{InhomogeneousRadoTheorem} it is possible to only obtain partition regularity in a trivial sense. For example, since $(x,y) = (0,0)$ is the only solution of the system of equations

\begin{equation}
    \begin{array}{ccrcc}
         x&-&y&=&0\textcolor{white}{,} \\
         x&-&2y&=&0,
    \end{array}
\end{equation}
we see that the system is not partition regular over $\mathbb{Z}\setminus\{0\}$ even though it is partition regular over $\mathbb{Z}$. As we will see in Section \ref{ReductionToM==1}, Corollary \ref{1PREquation} and Theorem \ref{InhomogeneousRadoTheorem} can be used in conjunction to determine whether or not a single linear equation is partition regular over $\mathbb{N}$ or $\mathbb{Z}$ in a nontrivial fashion.

\begin{theorem}[\cite{ERTAnUpdate}] \label{PolynomialSzemeredi}
	If $p(x) \in \mathbb{Z}[x]$ satisfies $p(0) = 0$, then the equation $x-y = p(z)$ is partition regular over $\mathbb{N}$.
\end{theorem}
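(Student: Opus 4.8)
The plan is to derive this from the polynomial van der Waerden / S\'ark\"ozy--Furstenberg recurrence phenomenon, packaged through idempotent ultrafilters in $(\beta\NN,+)$ so that all three of $x,y,z$ are forced into one cell of the partition. Note first that if $\deg p = 1$ the equation $x - y = p(z)$ is linear and already partition regular by Rado's theorem (the matrix of $x-y-cz = 0$ satisfies the columns condition via the subset $\{1,-1\}$ of its columns), so the real content is the case $\deg p \ge 2$, in which no linear criterion applies.

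It suffices to produce a single idempotent $v = v + v \in \beta\NN$ with the property that every $A \in v$ contains $x, y, z$ satisfying $x - y = p(z)$. Indeed, for any finite partition $\NN = \bigcup_{i=1}^{r} C_i$ exactly one cell $C_{i_0}$ lies in $v$, and applying the property to $A = C_{i_0}$ produces a monochromatic solution.

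The heart of the matter is a polynomial recurrence input for ultrafilters --- essentially the ultrafilter form of the Bergelson--Leibman polynomial van der Waerden theorem: one can choose the idempotent $v$ so that for every $A \in v$ there exists $z \in A$ with $-p(z) + A \in v$, and it is precisely the hypothesis $p(0) = 0$ that makes such return times exist. Granting this, fix $A \in v$; deleting the finitely many positive roots of $p$ from $A$ (which leaves a set still in $v$, as $v$ is nonprincipal) and applying the property, we obtain $z \in A$ with $p(z) \neq 0$ and $-p(z) + A \in v$. Then $A \cap (-p(z) + A) \in v$ is nonempty, so pick $y$ in it; then $y \in A$, $y + p(z) \in A$, and with $x := y + p(z)$ we get $x, y, z \in A$, $x - y = p(z)$ and $x \neq y$. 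This proves the reformulated statement, hence the theorem.

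The main obstacle is this polynomial recurrence input, i.e. the existence of the required ``polynomially enriched'' idempotent. When $\deg p = 1$ the needed return-time property is the soft Galvin--Glazer fact coming from $v = v + v$; when $\deg p \ge 2$ it is genuinely substantive --- of the same strength as the polynomial van der Waerden theorem --- and I would prove it, or simply cite it, via topological multiple recurrence, via the polynomial Szemer\'edi theorem of Bergelson--Leibman, or via the polynomial Hales--Jewett theorem, a purely soft compactness argument in $\beta\NN$ being insufficient. Alternatively one can run the extraction with $v$ a minimal idempotent and invoke the polynomial IP-recurrence theorem to conclude that the return-time set meets $A$; a direct density argument from the polynomial Szemer\'edi theorem also works, though it is messier, since there the step forcing $z$ into the same cell as $x$ and $y$ requires an extra intersectivity input that the ultrafilter framing supplies automatically.
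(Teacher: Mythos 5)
The paper does not prove Theorem~\ref{PolynomialSzemeredi} --- it is cited directly from Bergelson's survey \cite{ERTAnUpdate} and used as a black box (e.g. in Lemma~\ref{ScaledBergelsonLemma}) --- so there is no internal proof to compare against. That said, your outline is a faithful sketch of the standard ultrafilter argument that the cited source follows: fix an idempotent $v\in\beta\NN$, use a polynomial recurrence input to find $z\in A$ with $-p(z)+A\in v$, then extract $y\in A\cap(-p(z)+A)\in v$ and set $x=y+p(z)$.

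One place where you should be more careful: the key lemma you isolate (there is an idempotent $v$ such that every $A\in v$ contains $z$ with $-p(z)+A\in v$) does \emph{not} follow from the plain partition form of the Bergelson--Leibman polynomial van der Waerden theorem, which controls the colors of $a, a+p(d)$ but says nothing about the color or location of $d$. What you actually need is the IP or central-set strengthening: the polynomial Hales--Jewett theorem, or equivalently a polynomial central sets theorem, which allows $d$ to be chosen inside a prescribed IP set; combined with Hindman's theorem (every $A\in v$ for $v$ idempotent contains an IP set), this forces $z\in A$. You do name polynomial Hales--Jewett as one possible route, so the argument closes, but you should not present this recurrence as being ``of the same strength as polynomial van der Waerden'' without flagging that the enriched version is what is needed. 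A small additional remark: deleting the positive roots of $p$ is a pleasant refinement that yields $x\neq y$, but the statement as given does not demand nontriviality, so that step is optional here.
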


The theory of ultrafilters has been very useful in the study of Ramsey theory and partition regular equations. We briefly recall some basic facts here and give a more detailed introduction in Section \ref{Appendix}. 

\begin{definition}\label{UltrafilterDefinition1}
	Given a set $S$ let $\mathcal{P}(S)$ be the power set of $S$. A collection of sets $p \subseteq \mathcal{P}(S)$ is an {\it ultrafilter} over $S$ if it satisfies the following properties:
	\begin{enumerate}[(i)]  
		\item The empty set is not in $p$.
		
		\item If $A \in p$ and $A \subseteq B$ then $B \in p$.
		
		\item If $A,B \in p$ then $A\cap B \in p$.
		
		\item For any $A \subseteq S$, either $A \in p$ or $A^c \in p$.
	\end{enumerate}
	We let $\beta S$ denote the space of all ultrafilters over $S$.
\end{definition}

It is often useful to think about $\beta S$ as the set of finitely additive $\{0,1\}$-valued measures on the set $S$. For now, we only require the following facts about ultrafilters. First, we see that for any finite partition of $S = \bigcup_{i = 1}^rC_i$ and any ultrafilter $p$, there exists $1 \le i_0 \le r$ for which $C_i \in p$ if and only if $i = i_0$. In fact, for any $A \in p$ and any finite partition $A = \bigcup_{i = 1}^rC_i$, there exists $1 \le i_0 \le r$ for which $C_i \in p$ if and only if $i = i_0$. Secondly, we note that if $p$ is an ultrafilter, $A \in p$ and $B \notin p$, then $B^c \in p$, so $A\setminus B = A\cap B^c \in p$. Lastly, we require the existence of a special kind of ultrafilter.

\begin{theorem}[cf. Theorem \ref{SpecialUltrafilter2}]\label{SpecialUltrafilter}
	There exists an ultrafilter $p \in \beta\mathbb{N}$ with the following properties.
	\begin{enumerate}[(i)]
		\item For any $A \in p$ and $\ell \in \mathbb{N}$, there exists $b,g \in A$ with $\{bg^j\}_{j = 0}^{\ell} \subseteq A$.
		
		\item For any $A \in p$ and $h,\ell \in \mathbb{N}$, there exists $a,d \in \mathbb{N}$ for which $\{hd\}\cup\{ha+id\}_{i = -\ell}^{\ell} \subseteq A$.
		
		\item For any $s \in \mathbb{N}$, we have $s\mathbb{N} \in p$. 
	\end{enumerate}
\end{theorem}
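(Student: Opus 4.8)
The plan is to construct $p$ as an idempotent ultrafilter living in a suitably small closed subsemigroup of the multiplicative structure $(\beta\mathbb{N},\cdot)$, and then to extract the additive property (ii) from the fact that every member of $p$ is, multiplicatively speaking, a ``large'' set in the sense of containing multiplicative $\mathrm{IP}$-configurations, which in turn forces additive structure via a polynomial/divisibility argument. More precisely, first I would fix the closed subset $D := \bigcap_{s\in\mathbb{N}} \overline{s\mathbb{N}} \subseteq \beta\mathbb{N}$, consisting of all ultrafilters $q$ with $s\mathbb{N}\in q$ for every $s$; this is a nonempty closed subsemigroup of $(\beta\mathbb{N},\cdot)$, and working inside $D$ guarantees property (iii) for free. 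Second, I would take $p$ to be a multiplicative idempotent in $D$ (such exists by Ellis's lemma applied to the compact right-topological semigroup $(D,\cdot)$). Property (i) — that every $A\in p$ contains a geometric progression $\{bg^j\}_{j=0}^{\ell}$ of any prescribed length — is then the multiplicative Hindman/van der Waerden phenomenon: an idempotent in $(\beta\mathbb{N},\cdot)$ is a multiplicative $\mathrm{IP}^*$-witness, and a compactness/Galvin–Glazer argument shows each $A\in p$ is multiplicatively central, hence contains finite geometric progressions. (This is the part I expect to be cleanest, since it mirrors the classical additive statement under the exponential isomorphism.)

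The substantive work is property (ii), and this is where I expect the main obstacle to lie: from purely multiplicative largeness of $A$ I must conair an additive arithmetic progression $\{ha+id\}_{i=-\ell}^{\ell}$ together with the extra point $hd$, all inside $A$. The strategy here is to exploit that $A\in p$ is multiplicatively central and apply a polynomial extension of van der Waerden / the polynomial Szemerédi theorem (Theorem \ref{PolynomialSzemeredi} in spirit, or rather Bergelson–Leibman-type results over $\mathbb{N}$) to the family of polynomials $d\mapsto hd$ and $d\mapsto ha+id$. The key trick is to first use the geometric structure from (i) to locate a point $b$ and ratio $g$ with $b,bg,\dots,bg^{2\ell}\in A$, set $d$ to be (a multiple of) $b$ and $a$ to be chosen so that the progression $ha+id$ matches up with $b g^{i}$ after rescaling — but this does not literally work, so instead I anticipate needing the full strength of the special ultrafilter's construction (deferred to Theorem \ref{SpecialUltrafilter2} in the appendix), where $p$ is built not just as an idempotent but as an element of the closure of a carefully chosen set of ``additively and multiplicatively rich'' configurations, so that both (i) and (ii) are literally guaranteed by the defining family of sets placed in $p$.

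Accordingly, the honest plan is: (1) define the family $\mathcal{F}$ of all subsets $B\subseteq\mathbb{N}$ such that $B$ fails to contain some configuration of type (i) or (ii) or is disjoint from some $s\mathbb{N}$ — show $\mathcal{F}$ does not cover $\mathbb{N}$ by a partition argument (this uses Rado's theorem for the affine AP part and the multiplicative van der Waerden theorem for the geometric part, both applied to a single finite coloring); (2) deduce that the complementary family has the finite intersection property up to passing to a subfamily, hence extends to an ultrafilter $p$; (3) verify (i), (ii), (iii) directly from membership. The main obstacle throughout is coordinating the additive configuration in (ii) — which includes the ``off-grid'' point $hd$ — with the multiplicative configuration in (i) inside a single ultrafilter; the resolution is that we never need them simultaneously in one set (as the authors note about overloading notation, the configurations are invoked one at a time), so it suffices that the family of ``good'' sets is closed under finite intersection, which follows because each finite coloring localizes to a color class on which both a long geometric progression and a long affine progression with its extra point can be found. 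I would therefore relegate the delicate existence proof to the appendix (as the paper does) and here only record the properties, which is exactly the role Theorem \ref{SpecialUltrafilter} plays.
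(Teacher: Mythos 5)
Your proposal correctly identifies that property (ii) is the crux, and that a bare multiplicative idempotent will not produce it, but the ``honest plan'' you fall back on does not close that gap. Step (2) of the plan --- ``deduce that the complementary family has the finite intersection property'' --- is logically incorrect: a partition regular, upward-closed family need not have the finite intersection property (e.g.\ the family of infinite subsets of $\mathbb{N}$), and two sets each containing a long geometric progression can easily have an intersection containing none. The correct bridge from partition regularity to an ultrafilter is that a nonempty, upward-closed, partition regular family contains an ultrafilter, but even then step (1) is where the real content lives: you need that the family of sets that are \emph{simultaneously} additively central in $(\mathbb{N},+)$ and multiplicatively central in $(\mathbb{N},\cdot)$ is partition regular. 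That does \emph{not} follow from Rado plus multiplicative van der Waerden ``applied to a single finite coloring,'' since those theorems only tell you that some color class is additively central and some (possibly different) color class is multiplicatively central. Producing a \emph{single} color class with both properties is exactly the nontrivial algebraic fact the paper imports: it cites Lemma 17.2 and Theorem 17.3 of \cite{AlgebraInTheSCC} (stated here as Lemma \ref{SpecialUltrafilter3}), whose proof rests on the interplay between $+$ and $\cdot$ on $\beta\mathbb{N}$, namely that $\overline{E(K(\beta\mathbb{N},+))}$ is a left ideal of $(\beta\mathbb{N},\cdot)$ and hence contains a multiplicative minimal idempotent; this doubly-central $p$ is the ultrafilter you want, and your plan nowhere reproduces this argument.

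Once you have such a $p$, the paper's route to (i), (ii), (iii) is much more direct than anything in your sketch: each $A \in p$ is multiplicatively central, so Theorem \ref{GeneralDeuber}(i) gives the geometric progression in (i); each $A \in p$ is additively central in $(\mathbb{N},+)$, hence in $(\mathbb{Z},+)$, and Theorem \ref{GeneralDeuber}(ii) with the homomorphism $c(x)=hx$ produces the configuration in (ii); and (iii) follows for free from (ii) by a one-line contradiction (if $(s\mathbb{N})^c \in p$, apply (ii) with $h=s$ to get $sd \in (s\mathbb{N})^c$). One further correction to your first approach: for (i) you need a \emph{minimal} idempotent in $(\beta\mathbb{N},\cdot)$, not an arbitrary idempotent, since multiplicative IP sets need not contain geometric progressions of length three --- so ``idempotent'' is not a sufficient hypothesis even for the part you describe as the cleanest. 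Your idea of forcing (iii) directly by working inside $D = \bigcap_s \overline{s\mathbb{N}}$ is fine but redundant, since $D$ is a multiplicative ideal so every multiplicative minimal idempotent already lies in it, and the paper instead derives (iii) as a consequence of (ii).
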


The proof of Theorem \ref{SpecialUltrafilter} requires more technical knowledge about ultrafilters. Since this technical knowledge is not needed in Sections \ref{SectionOnPreliminaries}-\ref{Conclusion}, we defer the proof of Theorem \ref{SpecialUltrafilter} to Section \ref{Appendix}.
\section{On the Partition Regularity of \texorpdfstring{$ax+by = cwz^n$}{ax+by=cwzn} over \texorpdfstring{$\mathbb{N}$}{N} and \texorpdfstring{$\mathbb{Z}\setminus\{0\}$}{Z0}}
\label{PositiveResults}
The purpose of this section is to Theorems 1(i)(b), 1(ii), 1(iii)(a), and 1(iii)(b). We begin by proving Theorem \ref{MainResult}(i)(b) of since it is an easy consequence of results in the existing literature. For $s \in \mathbb{Z}$ and $A \subseteq \mathbb{Z}$ we write $sA := \{sa\ |\ a \in A\}$ and $A/s:= \{z \in \mathbb{Z}\ |\ sz \in A\}$.

\begin{lemma}\label{ScaledBergelsonLemma}
    If $a,s \in \mathbb{N}$ and $p(x) \in \mathbb{Z}[x]$ satisfies $p(0) = 0$, then the equation
    
    \begin{equation}
        ax-ay = z^{n+1}
    \end{equation}
    is partition regular over $s\mathbb{N}$.
\end{lemma}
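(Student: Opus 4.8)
The plan is to strip the parameters $a$ and $s$ from the equation by changes of variables and then invoke Theorem~\ref{PolynomialSzemeredi} (Bergelson's polynomial Szemer\'edi theorem).

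\emph{Reduction to $s=1$.} The map $n\mapsto sn$ is a bijection $\mathbb N\to s\mathbb N$, so a finite coloring $\chi$ of $s\mathbb N$ corresponds to the coloring $n\mapsto\chi(sn)$ of $\mathbb N$; and since $p(0)=0$, the polynomial $\tilde p(z):=p(sz)/s=\sum_{i\ge 1}c_i s^{i-1}z^i$ lies in $\mathbb Z[z]$ and vanishes at $0$. A monochromatic (for $n\mapsto\chi(sn)$) solution $(x',y',z')$ of $ax'-ay'=\tilde p(z')$ in $\mathbb N$ yields the monochromatic (for $\chi$) solution $(sx',sy',sz')$ of $ax-ay=p(z)$ in $s\mathbb N$, because $a(sx')-a(sy')=s\cdot a(x'-y')=s\,\tilde p(z')=p(sz')$. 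Hence it suffices to treat $s=1$.

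\emph{The case $s=1$.} Write $p(z)=c_1z+c_2z^2+\cdots+c_kz^k$ and suppose first that $a\mid c_1$ (this is automatic when $p$ is a monomial of degree $\ge 2$, which is the only case needed elsewhere in the paper). Given a finite coloring $\chi$ of $\mathbb N$, look for a monochromatic solution with $x,y,z$ all divisible by $a$, say $x=aX$, $y=aY$, $z=aZ$. Then $ax-ay=p(z)$ becomes $a^2(X-Y)=p(aZ)=\sum_{i\ge 1}c_ia^iZ^i$, and since $a\mid c_1$ each coefficient $c_ia^i$ is divisible by $a^2$; dividing by $a^2$ gives the equivalent equation $X-Y=Q(Z)$, where $Q(Z):=\sum_{i\ge 1}c_ia^{i-2}Z^i\in\mathbb Z[Z]$ and $Q(0)=0$. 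Applying Theorem~\ref{PolynomialSzemeredi} to the polynomial $Q$ and to the finite coloring $\psi(n):=\chi(an)$ of $\mathbb N$ produces $X,Y,Z$ with $\psi(X)=\psi(Y)=\psi(Z)$ and $X-Y=Q(Z)$; then $x:=aX$, $y:=aY$, $z:=aZ$ lie in one color class of $\chi$ and satisfy $a(x-y)=a^2(X-Y)=a^2Q(Z)=p(aZ)=p(z)$.

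\emph{The general case.} When $a\nmid c_1$ this last trick fails, since the linear coefficient of $p$ is unchanged by the substitutions $x,y\mapsto\lambda x,\lambda y$ and $z\mapsto\lambda z$: clearing $a$ from the left forces one to dilate $z$ by a multiple of $a$ \emph{relative} to $x$ and $y$ (so that $p(z)/a$ becomes an integer polynomial in $z/a$), and one then loses control of $\chi(z)$. I expect this tension between clearing the coefficient and keeping the three variables in a common color class to be the main obstacle. The natural ways around it are: (i) split $p=c_1z+\tilde p(z)$ with $z^2\mid\tilde p$, note that $ax-ay-c_1z=0$ is partition regular by Rado's theorem (Corollary~\ref{1PREquation}, since $\{a,-a\}$ is a zero-sum subset of the coefficients), dispatch $\tilde p$ by the argument above, and amalgamate; (ii) replace Theorem~\ref{PolynomialSzemeredi} by its integer-valued-polynomial form applied to $x-y=p(z)/a$ on the residue classes of $z\bmod a$ on which $a\mid p(z)$; or (iii) use a sufficiently rich ultrafilter $v$ as in Section~\ref{Appendix} — one for which every member contains solutions of $x-y=q(z)$ for all such $q$ and for which $A\in v$ implies $\{n:an\in A\}\in v$, so that a solution of $x-y=p(az_0)/a$ inside $A\cap\{n:an\in A\}$ gives $x,y,z:=az_0$ all in $A$. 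The third route is the one most robust to the obstacle above and is the approach I would pursue for full generality.
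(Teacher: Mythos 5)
Your dilation argument for the case $a \mid c_1$ (which includes all monomials $p(z) = cz^{n+1}$ with $n \geq 1$) is exactly the paper's proof, written slightly more generally: the paper sets $\mathbb{N} = \bigcup_i (C_i \cap as\mathbb{N})/as$, invokes Theorem~\ref{PolynomialSzemeredi} for the polynomial $q(z) = ca^{n-1}s^n z^{n+1}$, and then dilates the monochromatic triple by $as$. Your single-pass computation combining the $s$- and $a$-reductions is cleaner but equivalent.

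Your diagnosis of the linear-term obstruction is also correct, and it actually exposes that the lemma is stated more broadly than its proof (and its use in the paper) supports. The paper's proof is written specifically for $p(z) = cz^{n+1}$ with $n \geq 1$: the coefficient $ca^{n-1}s^n$ of the auxiliary polynomial requires $n \geq 1$ to be an integer, precisely your condition $a \mid c_1$ (here $c_1 = 0$). For a general $p$ with $a \nmid c_1$, the uniform dilation $(x,y,z)\mapsto(\lambda x, \lambda y, \lambda z)$ transforms the equation into $x-y = p(\lambda z)/(a\lambda)$, and the degree-one coefficient of the right-hand side is $c_1/a$ regardless of $\lambda$ --- so this family of substitutions cannot clear the denominator, exactly as you observe. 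The paper gets away with this because Corollary~\ref{TrivialResultForCompleteness} (the only consumer of the lemma) applies it to $p(z) = cz^{m+n}$ with $m+n \geq 2$, and later the proof of Theorem~\ref{GeneralPositiveResult} again takes $w=z$ to produce $p(z) = cz^{n+1}$, $n \geq 1$. Your route~(i) does not work as stated --- you cannot "amalgamate" the linear and nonlinear parts because the same $x,y,z$ must simultaneously satisfy the full equation, not the two pieces separately --- but your route~(iii), using an ultrafilter all of whose members are additively central and which concentrates on $a\mathbb{N}$, is the correct way to prove the full statement, via a polynomial Central Sets Theorem that allows one to additionally force $a \mid z$ in the monochromatic solution. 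The paper does not attempt this, so for the purposes of matching the paper your proof is complete; a referee might reasonably ask the authors to either restrict the hypothesis of the lemma (e.g., to $\deg p \geq 2$ or $a \mid c_1$) or supply the additional argument.
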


\begin{proof}
    Given a partition $s\mathbb{N} = \bigcup_{i = 1}^rC_i$, we let $\mathbb{N} = \bigcup_{i = 1}^r(C_i\cap as\mathbb{N})/as$ be a partition of $\mathbb{N}$. By Theorem \ref{PolynomialSzemeredi}, we see that there exists $1 \le i_0 \le r$ and $x,y,z \in (C_{i_0}\cap as\mathbb{N})/as$ for which
	
	\begin{equation}
		x-y = ca^{n-1}s^nz^{n+1}.
	\end{equation}
	The desired result in this case follows from the fact that $asx, asy,asz \in C_{i_0}$ and 
	
	\begin{equation}
		a(asx)-a(asy) = c(asz)^{n+1}.
	\end{equation}
\end{proof}

We are now ready to prove Corollary \ref{TrivialResultForCompleteness}, which implies Theorem 1(i)(b).

\begin{corollary}\label{TrivialResultForCompleteness}
     For any $a,c \in \mathbb{Z}\setminus\{0\}$ and $m,n,s \in \mathbb{N}$ the equation
     
     \begin{equation}
         ax-ay = cw^mz^n
     \end{equation}
     is partition regular over $s\mathbb{N}$.
\end{corollary}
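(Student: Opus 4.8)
The plan is to reduce the statement $ax - ay = cw^m z^n$ over $s\mathbb{N}$ to the already-proven Lemma \ref{ScaledBergelsonLemma} by absorbing the monomial $cw^m z^n$ into a single polynomial evaluated at one variable. The key observation is that $w^m z^n$ need not involve two genuinely free variables for the purpose of producing a solution: it suffices to exhibit a solution in a given color class, and we are free to link $w$ and $z$. First I would set $z = w$, so that the equation $ax - ay = cw^m z^n$ has the solution set of $ax - ay = c w^{m+n}$ among its solutions; thus it is enough to prove partition regularity of $ax - ay = c w^{m+n}$ over $s\mathbb{N}$.

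Next I would apply Lemma \ref{ScaledBergelsonLemma} with the polynomial $p(x) = c x^{m+n} \in \mathbb{Z}[x]$, which indeed satisfies $p(0) = 0$ since $m + n \ge 2 \ge 1$, and with the given $a$ and $s$ (replacing $a$ and $c$ by their absolute values if necessary, or simply noting that the cited lemma's proof via Theorem \ref{PolynomialSzemeredi} goes through verbatim on $s\mathbb{N}$ for any nonzero integer coefficients since one only ever needs the solution to land in a single cell). Lemma \ref{ScaledBergelsonLemma} then produces, for any finite partition $s\mathbb{N} = \bigcup_{i=1}^r C_i$, an index $i_0$ and elements $x, y, w \in C_{i_0}$ with $ax - ay = c w^{m+n}$. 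Taking $z = w$ (which lies in the same cell $C_{i_0}$) gives $ax - ay = c w^m z^n$ with all of $x, y, w, z \in C_{i_0}$, which is exactly the required solution. This completes the argument, and since $\mathbb{Z}\setminus\{0\} \supseteq s\mathbb{N}$, the corollary as stated (with $a, c$ allowed to be negative integers) follows by the same substitution after reducing to the case $a, c > 0$ via $ax - ay = -a y' - (-a x')$ type sign manipulations, or more cleanly by observing the $s\mathbb{N}$ statement is the substantive one and is all that is used later.

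I do not expect any real obstacle here: the corollary is, as the authors say, a trivial consequence of the scaled version of Bergelson's polynomial Szemerédi theorem once one realizes the two-variable monomial can be collapsed to a one-variable power. The only point requiring a sentence of care is the handling of negative $a$ and $c$ and the fact that the statement is over $s\mathbb{N}$ (hence positivity is automatic for the variables), versus the mild abuse of writing $a, c \in \mathbb{Z}\setminus\{0\}$; I would dispatch this by remarking that a solution to $ax - ay = cw^m z^n$ in $s\mathbb{N}$ is the same as a solution to $|a| x' - |a| y' = \pm |c| w^m z^n$ after possibly swapping $x \leftrightarrow y$ and folding a sign of $c$ into $p$, so no generality is lost.
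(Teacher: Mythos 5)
Your proof is correct and follows essentially the same route as the paper: set $w = z$ to collapse the monomial to $cw^{m+n}$, then invoke Lemma \ref{ScaledBergelsonLemma} with $p(x) = cx^{m+n}$. Your extra remarks about handling negative $a$ (by swapping $x \leftrightarrow y$) and negative $c$ (by absorbing the sign into $p$, since the lemma only requires $p \in \mathbb{Z}[x]$ with $p(0)=0$) are a welcome bit of care that the paper's one-line proof silently glosses over, given that the lemma is stated for $a \in \mathbb{N}$ but the corollary allows $a \in \mathbb{Z}\setminus\{0\}$.
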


\begin{proof}
A consequence of Lemma \ref{ScaledBergelsonLemma} is that the equation $ax-ay = cz^{m+n}$ is partition regular over $s\mathbb{N}$, so the desired result follows from taking $w = z$.
\end{proof}

We now provide a simple lemma that will not be used later on in the paper, but helps provide some context for items (ii) and (iii) of Theorem \ref{MainResult}.

\begin{lemma}\label{BasicLemma}
    Given $a,c \in \mathbb{Z}\setminus\{0\}$ and $n \in \mathbb{N}$, the equation
    
    \begin{equation}\label{StartingSituation}
        ax = cwz^n
    \end{equation}
    is partition regular over $\mathbb{Q}\setminus\{0\}$ if and only if $\frac{a}{c}$ is an $n$th power in $\mathbb{Q}$.
\end{lemma}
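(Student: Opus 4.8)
The plan is to prove both directions by elementary manipulations, using only the definition of partition regularity over $\mathbb{Q}\setminus\{0\}$ together with a well-chosen finite coloring for the forward direction. For the backward direction, suppose $\frac{a}{c} = t^n$ for some $t \in \mathbb{Q}\setminus\{0\}$. Then for any $x \in \mathbb{Q}\setminus\{0\}$ the triple $(x,w,z) = (x, t^{-n}x \cdot z^{-n}, z)$ -- more cleanly, fix $z = 1$ and set $w = t^{-n} x$, or observe that $(x,w,z) = (c z^n, a, t)$ satisfies $a x = a c z^n = c \cdot a \cdot z^n = c w z^n$. In any case there is a solution all of whose coordinates lie in any single nonempty subset closed under the relevant scaling; more simply, given any finite partition $\mathbb{Q}\setminus\{0\} = \bigcup_{i=1}^r C_i$, pick any cell $C_{i_0}$ containing some element $q$, and note $(x,w,z) = (q, c t^{-n} q, 1) = (q, \frac{c}{a} q, 1)$ need not land in $C_{i_0}$; so instead take $(x,w,z)$ with $x = w = z$: this requires $a x = c x^{n+1}$, i.e. $x^n = \frac{a}{c} = t^n$, which has the solution $x = t$, and then $x = w = z = t$ all lie in whichever cell contains $t$. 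Thus \eqref{StartingSituation} is (trivially) partition regular over $\mathbb{Q}\setminus\{0\}$.

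For the forward direction, assume \eqref{StartingSituation} is partition regular over $\mathbb{Q}\setminus\{0\}$; I want to conclude $\frac{a}{c}$ is an $n$th power in $\mathbb{Q}$. The key step is to build a finite coloring of $\mathbb{Q}\setminus\{0\}$ that detects $n$th-power information, using the multiplicative structure of $\mathbb{Q}^\times$. Write every $q \in \mathbb{Q}\setminus\{0\}$ as $q = \pm \prod_p p^{v_p(q)}$; the group $\mathbb{Q}^\times / (\mathbb{Q}^\times)^n$ is infinite, so I cannot color by the full class of $q$. Instead, fix the finite set $P$ of primes dividing $abc$ (together with the sign), and color $q$ by the tuple $\big(\,\mathrm{sgn}(q),\ (v_p(q) \bmod n)_{p \in P}\,\big)$, a coloring with finitely many colors. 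If $(x,w,z)$ is a monochromatic solution to $ax = cwz^n$, then comparing $v_p$ of both sides for a prime $p \notin P$ gives $v_p(x) = v_p(w) + n\,v_p(z)$, hence $v_p(x) \equiv v_p(w) \pmod n$; but this is automatic since they get the same color only for $p \in P$ -- so I need to extract the constraint at primes in $P$ and at the archimedean place. Taking $v_p$ for $p \in P$: $v_p(a) + v_p(x) = v_p(c) + v_p(w) + n v_p(z)$, and since $v_p(x) \equiv v_p(w) \pmod n$ we get $v_p(a) \equiv v_p(c) \pmod n$ for all $p \in P$; at primes outside $P$ we have $v_p(a) = v_p(c) = 0$; and the sign condition forces $\mathrm{sgn}(a) = \mathrm{sgn}(c)$ if $n$ is even (if $n$ is odd every element is an $n$th power of the right sign already). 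Together these say $v_p(a/c) \equiv 0 \pmod n$ for every prime $p$ and the sign of $a/c$ is compatible, i.e. $\frac{a}{c} \in (\mathbb{Q}^\times)^n$, as desired.

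The main obstacle is choosing the coloring correctly: one must ensure it has finitely many colors (hence the restriction to the finite set of "bad" primes $P$) yet still forces, for a monochromatic solution, that the valuations of $a$ and $c$ agree mod $n$ at every prime and the signs are compatible. The point that makes this work is that at a prime $p \notin P$ the equation already forces $v_p(x) \equiv v_p(w) \pmod n$ for free (since $v_p(a) = v_p(c) = 0$), while at $p \in P$ the monochromaticity supplies $v_p(x) \equiv v_p(w) \pmod n$, and substituting back into the valuation identity isolates $v_p(a) \equiv v_p(c) \pmod n$. I would present the archimedean/sign bookkeeping explicitly only in the even-$n$ case, remarking that for odd $n$ the statement "$\frac{a}{c}$ is an $n$th power in $\mathbb{Q}$" is insensitive to sign. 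Finally, I would note the analogous statement over $\mathbb{N}$ (replacing "$n$th power in $\mathbb{Q}$" by "$n$th power in $\mathbb{Q}_{\ge 0}$"), which follows by the same coloring with the sign component dropped, foreshadowing the parallel phrasing in Theorem \ref{MainResult}.
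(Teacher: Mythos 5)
Your proof is correct, but it departs from the paper's in both directions, and the comparison is instructive.

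For the ``if'' direction you eventually observe that $x=w=z=t$ with $t^n=\frac{a}{c}$ is a monochromatic solution in whichever cell contains $t$. This is valid under the paper's definition (which removes $0$ but does not forbid repeated coordinates) and is far more elementary than the paper's argument, which invokes the ultrafilter of Theorem \ref{SpecialUltrafilter} together with a scaling-and-intersecting manipulation. The paper is explicit that it is choosing a ``slightly longer proof since it familiarizes the reader with techniques that will be used repeatedly,'' and there is a substantive reason to invest: the constant-solution trick does not generalize to $ax+by=cwz^n$ except in the special case where $\frac{a+b}{c}$ is an $n$th power, whereas the ultrafilter machinery carries the whole of Theorem \ref{GeneralPositiveResult}. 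The several false starts before you settle on the constant solution should be cut; only the last observation is needed.

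For the ``only if'' direction you work in the contrapositive form using a multi-prime coloring $q\mapsto(\mathrm{sgn}(q),(v_p(q)\bmod n)_{p\in P})$ with $P$ the primes dividing $ac$, then read off $v_p(a/c)\equiv 0\pmod{n}$ for all $p$ plus the sign constraint. The paper instead picks a single prime $p$ with $n\nmid v_p(\frac{a}{c})$ and colors by $v_p\bmod n$ alone, which is leaner when such a $p$ exists. But note the paper's version, as written, silently assumes such a prime exists; when $n$ is even, $\frac{a}{c}<0$, and every $v_p(\frac{a}{c})$ is divisible by $n$ (e.g.\ $a=-1$, $c=1$, $n=2$), no such prime exists even though $\frac{a}{c}$ is not an $n$th power, and one must fall back on the sign coloring exactly as you do. So your coloring is heavier than strictly necessary in the generic case, but it does close that corner case uniformly. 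Overall the proposal is correct; it trades the paper's forward-looking ultrafilter technique for elementary ad hoc arguments tailored to this one lemma.
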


A short proof of Lemma \ref{BasicLemma} can be obtained through the
use of Theorem 3 of \cite{InhomogeneousRado} by viewing the multiplicative group $\mathbb{Q}\setminus\{0\}$ as a $\mathbb{Z}$-module. We choose to give a slightly longer proof since it familiarizes the reader with techniques that will be used repeatedly throughout the rest of the paper. To this end, we begin by recalling the definition of the $p$-adic valuation.

\begin{definition}
Let $p \in \mathbb{N}$ be a prime. The \textit{$p$-adic valuation} is given by $v_p(n) = k$, where $n = p^km$ and $p \nmid m$ for $n \in \mathbb{Z}\setminus\{0\}$. Since $v_p(mn) = v_p(m)+v_p(n)$ for all $m,n \in \mathbb{Z}\setminus\{0\}$, we extend $v_p$ to a well-defined function on $\mathbb{Q}\setminus\{0\}$ by setting $v_p\left(\frac{t}{s}\right) = v_p(t)-v_p(s)$ for all $\frac{t}{s} \in \mathbb{Q}\setminus\{0\}$.
\end{definition}

\begin{proof}[Proof of Lemma \ref{BasicLemma}]
    For the first direction, let us assume that $\frac{a}{c}$ is not an $n$th power in $\mathbb{Q}$, and let $p$ be a prime for which $n\nmid v_p(\frac{a}{c})$. Let $\mathbb{Q}\setminus\{0\} = \bigcup_{i = 1}^nC_i$ be the partition given by
    
    \begin{equation}
        C_i = \left\{\frac{t}{s} \in \mathbb{Q}\setminus\{0\}\ |\ v_p\left(\frac{t}{s}\right) \equiv i\pmod{n}\right\}.
    \end{equation}
    We see that if $w,x,z \in C_{i_0}$ for some $1 \le i_0 \le n$, then 
    
    \begin{equation}
        v_p(ax)-v_p\left(cwz^n\right) \equiv v_p\left(\frac{a}{c}\right)+v_p(x)-v_p(w)-nv_p(z) \equiv v_p\left(\frac{a}{c}\right) \not\equiv 0\pmod{n},
    \end{equation}
    so we cannot have $ax = cwz^n$. 
    
    For the next direction, let us assume that $\frac{a}{c} = (\frac{u}{v})^n$ for some coprime $u,v \in \mathbb{Z}\setminus\{0\}$. Let $p$ be an ultrafilter satisfying the conditions of Theorem \ref{SpecialUltrafilter} and let $\mathbb{Z}\setminus\{0\} = \bigcup_{i = 1}^rC_i$ be a partition. By condition (i) of Theorem \ref{SpecialUltrafilter}, we see that for every $A \in p$, there exists $x,w,z \in A$ for which $x = wz^n$. We observe that
    
    \begin{equation}
        v\mathbb{Z}\setminus\{0\} = \bigcup_{i = 1}^r\frac{v}{u}(C_i\cap u\mathbb{Z}\setminus\{0\})
    \end{equation}
    is a partition, so we may assume without loss of generality that $\frac{v}{u}(C_1\cap u\mathbb{Z}\setminus\{0\}) \in p$. It follows that there exist $x,w,z \in C_1\cap u\mathbb{Z}\setminus\{0\}$ for which
    
    \begin{equation}
        \frac{v}{u}x = \left(\frac{v}{u}w\right)\left(\frac{v}{u}z\right)^n\text{; hence }x = \left(\frac{v}{u}\right)^nwz^n = \frac{c}{a}wz^n\text{, and thus }ax = cwz^n.
    \end{equation}
    The desired result follows from $\mathbb{Z}\setminus\{0\} \subseteq \mathbb{Q}\setminus\{0\}$.
\end{proof}

Our next result, Lemma \ref{PositiveResultForPowers}, is the basis for proving Theorem \ref{MainResult}(ii). While Lemma \ref{PositiveResultForPowers} is an immediate corollary of Theorem 2.11 of \cite{NonStandardRamseyTheory}, we decide to give an independent proof for the sake of completeness and to further familiarize the reader with methods that will be used later on in this paper.

\begin{lemma} \label{PositiveResultForPowers}
	Let $p \in \beta\mathbb{N}$ be an ultrafilter satisfying the conditions of Theorem \ref{SpecialUltrafilter}. For a set $A \in p$, integers $a,b \in \mathbb{Z}\setminus\{0\}$, and $n \in \mathbb{N}$, the equation
	\begin{equation} \label{FirstPositiveCase}
		ax+by = cwz^n
	\end{equation}
	has a solution in $A$ if $c \in \{a,b,a+b\}$.
\end{lemma}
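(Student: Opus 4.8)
The plan is to split into the three cases $c=a+b$, $c=a$, and $c=b$. Interchanging $x$ with $y$ and $a$ with $b$ turns $ax+by=bwz^n$ into $bx'+ay'=bwz^n$, which is of the shape of the case $c=a$, so it is enough to treat $c=a+b$ and $c=a$. The case $c=a+b$ I would dispatch immediately with property~(i) of Theorem~\ref{SpecialUltrafilter}: applying it to $A$ with $\ell=n$ produces $b_0,g_0\in A$ with $\{b_0g_0^{\,j}\}_{j=0}^{n}\subseteq A$, and then $w=b_0$, $z=g_0$, $x=y=b_0g_0^{\,n}$ all lie in $A$, are nonzero, and satisfy $ax+by=(a+b)b_0g_0^{\,n}=(a+b)wz^n=cwz^n$.

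For $c=a$ (hence, by the symmetry above, also for $c=b$) I would argue along the lines of the known proof that $x+y=wz$ is partition regular. First, replacing $(a,b,c)$ by $(-a,-b,-c)$ leaves the equation unchanged, so assume $a>0$. The key reduction is the substitution $x=z^nx_1$, $y=z^ny_1$, which converts the equation into the linear equation $ax_1+by_1=aw$: if $z\neq0$ and $ax_1+by_1=aw$, then $ax+by=z^n(ax_1+by_1)=awz^n=cwz^n$. Thus it suffices to find $z\in A$ and a set $B\in p$ with $B\subseteq A$ and $z^nB\subseteq A$, and then to solve $ax_1+by_1=aw$ with $x_1,y_1,w\in B$. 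The second task is routine from property~(ii): applied to $B$ with $h=a$ and $\ell'=|b|$, it gives $a_0,d_0\in\mathbb{N}$ with $ad_0\in B$ and $\{aa_0+id_0:\ |i|\le|b|\}\subseteq B$, and then $x_1=aa_0$, $y_1=ad_0$, $w=aa_0+bd_0$ all lie in $B$ and satisfy $ax_1+by_1=a(aa_0+bd_0)=aw$. Unwinding, $x=z^nx_1$ and $y=z^ny_1$ lie in $z^nB\subseteq A$, $w\in B\subseteq A$, $z\in A$, and all four are nonzero, which is the desired solution in $A$.

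The step I expect to be the real obstacle is the first task above: locating $z\in A$ for which $B:=A\cap\{m:\ z^nm\in A\}$ is a member of $p$. A finite geometric progression supplied by property~(i) carries no usable additive structure, so one cannot solve $ax_1+by_1=aw$ inside it; what is needed is an honest member of $p$ that, after intersection with $A$, is closed under multiplication by $z^n$, and producing such a set is precisely where the multiplicative richness of $p$ behind property~(i) (together with property~(iii), which at least gives $z^n\mathbb{N}\in p$) must be exploited. Everything else is a mechanical combination of properties (i), (ii), and (iii).
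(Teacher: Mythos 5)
Your handling of $c=a+b$ via property~(i), the reduction of $c=b$ to $c=a$ by swapping $x\leftrightarrow y$ and $a\leftrightarrow b$, and the linear step (applying property~(ii) to a set $B\in p$ with $h=a$, $\ell=|b|$ to produce $x_1,y_1,w\in B$ with $ax_1+by_1=aw$) are all correct. But the step you yourself flag as the obstacle is a genuine gap, and the stated properties of $p$ do not close it. You would need some $z\in A$ for which $B:=A\cap\{m:z^nm\in A\}$ lies in $p$, equivalently $(z^n)^{-1}A\in p$. Property~(iii) only gives $z^n\mathbb{N}\in p$, which is unrelated. Property~(i) gives a finite geometric progression $\{bg^j\}_{j=0}^n\subseteq A$, so it exhibits a single element $b\in(g^n)^{-1}A$, not a member of $p$. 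Even the fact behind property~(i) — that $p$ is a multiplicative idempotent — gives $\{z:z^{-1}A\in p\}\in p$, not $\{z:(z^n)^{-1}A\in p\}\in p$; iterating the former produces $z_1,\dots,z_n$ with $(z_1\cdots z_n)^{-1}A\in p$, which is a product, not an $n$th power, of elements of $A$.

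The paper avoids ever fixing a multiplier $z$. It sets $A'=\{v\in A\mid v=wz^n\text{ for some }w,z\in A\}$ and observes $A'\in p$: if $A\setminus A'\in p$, then property~(i) applied to $A\setminus A'$ with $\ell=n$ yields $b,g\in A\setminus A'$ with $bg^n\in A\setminus A'$; but $bg^n$ is of the required form $wz^n$ with $w,z\in A$, so $bg^n\in A'$, a contradiction. Then, exactly as in your linear step but carried out inside $A'_a:=A'\cap a\mathbb{N}\in p$ (property~(iii) gives $a\mathbb{N}\in p$), property~(ii) yields $x_1,x_2\in A'_a$ with $x_1+b\tfrac{x_2}{a}\in A'_a\subseteq A'$. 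By definition of $A'$ there exist $w,z\in A$ with $wz^n=x_1+b\tfrac{x_2}{a}$, and then $ax_1+bx_2=a(x_1+b\tfrac{x_2}{a})=awz^n$. In short, instead of scaling the whole set $A$ by a fixed $z^n$ (which would require $(z^n)^{-1}A\in p$), the paper records inside $A'$ which elements of $A$ are \emph{already} realized as $wz^n$ with $w,z\in A$, then steers the linear solution to land in $A'$; the witnesses $w,z$ then come for free.
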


\begin{proof}
	Let 
	
	\begin{equation} A' = \left\{v \in A\ |\ v = wz^n\text{ for some }z,w \in A\right\}.\end{equation}
	Since $A \in p$, to see that $A' = A\setminus(A\setminus A') \in p$ it suffices to observe that $A\setminus A' \notin p$ because $A\setminus A'$ does not satisfy condition $(i)$ of Theorem \ref{SpecialUltrafilter}. Our first case is when $c = a+b$, and in this case we let $x \in A'$ be arbitrary and let $w,z \in A$ be such that $x = wz^n$. Since
	
	\begin{equation}
	    ax+bx = cx = cwz^n,
	\end{equation}
	we see that $(x,x,w,z)$ is a solution to Equation \eqref{FirstPositiveCase} coming from $A$. For our second case it suffices to consider $c = a$ since the case of $c = b$ is handled similarly. By replacing $a,b,c$ with $-a,-b,-c$ if necessary, we may assume without loss of generality that $a > 0$. Observe that $A'_a := A'\cap a\mathbb{N} \in p$ since $A',a\mathbb{N} \in p$ and consider
	
	\begin{alignat}{2}
	    A'' = \Big\{x_1 \in A'_a\ |\ &\text{there exists }x_2 \in A'_a\text{ for which }\numberthis\label{SolutionsViaMPC}\\
	    &x_1+\lambda \frac{x_2}{a} \in A'_a\ \text{for all}\ \lambda \in [-|b|,|b|]\Big\}.
	\end{alignat}
	Since $A'_a \in p$, to see that $A'' = A'_a\setminus(A'_a\setminus A'') \in p$ it suffices to observe that $A'_a\setminus A'' \notin p$ because $A'_a\setminus A''$ does not satisfy condition $(ii)$ of Theorem \ref{SpecialUltrafilter} with $(h,\ell,a,d) = \left(a,b,\frac{x_1}{a},\frac{x_2}{a}\right)$. Now let $x_1 \in A''$ be arbitrary and let $x_2 \in A'_a$ be as in Equation \eqref{SolutionsViaMPC}. Then we observe that
	
	\begin{equation}
	    ax_1+bx_2 = a\left(x_1+b\frac{x_2}{a}\right).
	\end{equation}
	Since $x_1+b\frac{x_2}{a} \in A'$, we may pick $w,z \in A$ for which $x_1+b\frac{x_2}{a} = wz^n$.  In this case we observe that
	
	\begin{equation}
	    ax_1+bx_2 = c\left(x_1+b\frac{x_2}{a}\right) = cwz^n,
	\end{equation}
	so $(x_1,x_2,w,z)$ is a solution to Equation \eqref{FirstPositiveCase} coming from $A$.
\end{proof}

We are now ready to prove Theorem \ref{GeneralPositiveResult}, which implies Theorem \ref{MainResult}(ii).

\begin{theorem}\label{GeneralPositiveResult}
	If $a,b,c \in \mathbb{Z}\setminus\{0\}$ and $n \in \mathbb{N}$ are such that one of $\frac{a}{c}, \frac{b}{c}$, or $\frac{a+b}{c}$ is an $n$th power in $\mathbb{Q}_{\ge 0}$, then the equation
	\begin{equation} \label{GeneralPositiveResultEquation}
		ax+by = cwz^n
	\end{equation}
	\noindent is partition regular over $s\mathbb{N}$ for any $s \in \mathbb{N}$. If one of $\frac{a}{c},\frac{b}{c},$ or $\frac{a+b}{c}$ is an $n$th power in $\mathbb{Q}$, then Equation \eqref{GeneralPositiveResultEquation} is partition regular over $s\mathbb{Z}\setminus\{0\}$ for any $s \in \mathbb{N}$. 
\end{theorem}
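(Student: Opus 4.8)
The plan is to reduce to Lemma~\ref{PositiveResultForPowers} by the same ``pass to a scaled copy, solve there, scale back'' device used in the proof of Lemma~\ref{BasicLemma}. If $a+b=0$ then $\tfrac{a+b}{c}=0$ and the conclusion already follows from Corollary~\ref{TrivialResultForCompleteness}, so I may assume $a+b\neq 0$; then whichever of $\tfrac{a}{c},\tfrac{b}{c},\tfrac{a+b}{c}$ is an $n$th power is a \emph{nonzero} $n$th power, say it equals $\theta^n$ with $\theta\in\mathbb{Q}\setminus\{0\}$, and I write $\theta=\tfrac{v}{u}$ with $u,v\in\mathbb{Z}\setminus\{0\}$. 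Substituting $x\mapsto\tfrac{v}{u}x$, $y\mapsto\tfrac{v}{u}y$, $w\mapsto\tfrac{v}{u}w$, $z\mapsto\tfrac{v}{u}z$ in \eqref{GeneralPositiveResultEquation} and dividing through by $\tfrac{v}{u}$ turns the equation into $ax+by=(c\theta^n)wz^n$, and $c\theta^n\in\{a,b,a+b\}$ precisely according to which of the three ratios was an $n$th power. Hence the transformed equation is exactly of the form handled by Lemma~\ref{PositiveResultForPowers}; this is why those three ratios, and no others, appear in the hypothesis.

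Next I fix the ultrafilter $p$ from Theorem~\ref{SpecialUltrafilter}. Given a partition $s\mathbb{Z}\setminus\{0\}=\bigcup_{i=1}^{r}C_i$, I form the sets $A_i:=\tfrac{u}{v}\bigl(C_i\cap v\mathbb{Z}\setminus\{0\}\bigr)$. Each $A_i$ consists of integers, since the elements of $C_i\cap v\mathbb{Z}$ are multiples of $v$; the $A_i$ are pairwise disjoint because multiplication by $\tfrac{u}{v}$ is injective; and $\bigcup_{i=1}^{r}A_i=M\mathbb{Z}\setminus\{0\}$, where $M:=\tfrac{u\,\mathrm{lcm}(s,v)}{v}\in\mathbb{Z}\setminus\{0\}$. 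Intersecting with $\mathbb{N}$ yields a finite partition of $|M|\mathbb{N}$, which lies in $p$ by condition (iii) of Theorem~\ref{SpecialUltrafilter}, so after relabelling I may take $A:=A_j\cap\mathbb{N}\in p$. Applying Lemma~\ref{PositiveResultForPowers} to this $A$ and the transformed equation yields $X,Y,W,Z\in A$ with $aX+bY=(c\theta^n)WZ^n$. Because $A\subseteq A_j$, the rationals $\tfrac{v}{u}X,\tfrac{v}{u}Y,\tfrac{v}{u}W,\tfrac{v}{u}Z$ are the unique preimages of $X,Y,W,Z$ under multiplication by $\tfrac{u}{v}$, hence they all lie in $C_j\cap v\mathbb{Z}\setminus\{0\}\subseteq C_j$; and undoing the substitution shows that $(\tfrac{v}{u}X,\tfrac{v}{u}Y,\tfrac{v}{u}W,\tfrac{v}{u}Z)$ solves \eqref{GeneralPositiveResultEquation}. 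This produces a monochromatic solution over $s\mathbb{Z}\setminus\{0\}$. For the statement over $s\mathbb{N}$ one uses that $\theta$ may be chosen in $\mathbb{Q}_{\ge 0}$, hence $u,v>0$: then every $A_i$ already consists of positive integers, $\bigcup_i A_i=M\mathbb{N}$ with $M>0$, the same argument runs verbatim, and scaling the solution back by $\tfrac{v}{u}>0$ keeps it inside $s\mathbb{N}$.

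The only point requiring care is the bookkeeping: the ultrafilter $p$ lives on $\mathbb{N}$ while the partition is of $s\mathbb{Z}\setminus\{0\}$, and one must guarantee that the rescaling $t\mapsto\tfrac{v}{u}t$ carries the ultrafilter-produced solution back into a single color class while keeping it inside $s\mathbb{Z}\setminus\{0\}$. This is exactly what the choice of the sets $A_i$ arranges --- discard from $C_i$ the elements not divisible by $v$, then multiply by $\tfrac{u}{v}$ --- together with condition (iii) of Theorem~\ref{SpecialUltrafilter}, which guarantees that the set of multiples $|M|\mathbb{N}$ (and hence one of the $A_i\cap\mathbb{N}$) is seen by $p$. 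Beyond Lemma~\ref{PositiveResultForPowers} no new ideas enter; everything else is the scaling manoeuvre already rehearsed for Lemma~\ref{BasicLemma}.
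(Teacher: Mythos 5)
Your proposal is correct and follows essentially the same route as the paper's own proof: dispose of the degenerate case $a+b=0$ separately (the paper uses Lemma~\ref{ScaledBergelsonLemma}, you use its Corollary~\ref{TrivialResultForCompleteness}, which is the same thing), then reduce to Lemma~\ref{PositiveResultForPowers} by rescaling the partition by the reciprocal of the $n$th root and using condition (iii) of Theorem~\ref{SpecialUltrafilter} to push the rescaled partition into $p$. The only differences are notational (you write $\theta=v/u$ where the paper writes the root as $u/v$, and you intersect with $v\mathbb{Z}\setminus\{0\}$ rather than $us\mathbb{N}$, which amounts to the same bookkeeping), so no new ideas are required beyond what the paper already does.
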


\begin{proof}
	Let $d \in \{a,b,a+b\}$ be such that $\frac{d}{c} = (\frac{u}{v})^n$ with $u,v \in \mathbb{Z}$. We see that if $d = 0$, then $a = -b$ and the desired result follows from Corollary \ref{TrivialResultForCompleteness}. Let us now assume that $d \neq 0$, so we also have that $u \neq 0$. Using Lemma \ref{PositiveResultForPowers} we see that if $p \in \beta\mathbb{N}$ is an ultrafilter satisfying the properties of Theorem \ref{SpecialUltrafilter}, then for any $A \in p$ there exists $w,x,y,z \in A$ for which
	\begin{equation}
		ax+by = dwz^n.
	\end{equation}
    We now consider the cases of $\frac{u}{v} \in \mathbb{Q}^+$ and $\frac{u}{v} \in \mathbb{Q}$ separately. If $\frac{u}{v} \in \mathbb{Q}^+$ and $s\mathbb{N} = \bigcup_{i = 1}^rC_i$ is a partition, then 
    
    \begin{equation}
        vs\mathbb{N} = \bigcup_{i = 1}^r\frac{v}{u}(C_i\cap us\mathbb{N})
    \end{equation}
    is also a partition. Similarly, if $\frac{u}{v} \in \mathbb{Q}$ and $s\mathbb{Z}\setminus\{0\} = \bigcup_{i = 1}^rC_i$ is a partition, then
    
    \begin{equation}
        vs\mathbb{N} = \bigcup_{i = 1}^r\left(\frac{v}{u}(C_i\cap us\mathbb{Z})\cap vs\mathbb{N}\right)
    \end{equation}
    is also a partition. In either case, since $vs\mathbb{N} \in p$, there exists $1 \le i_0 \le r$ for which $A := \frac{v}{u}(C_{i_0}\cap us\mathbb{Z})\cap vs\mathbb{N} \in p$, so there exist $w,x,y,z \in \frac{u}{v}A$ for which 
    
    \begin{equation}
        a\left(\frac{v}{u}x\right)+b\left(\frac{v}{u}y\right) = d\left(\frac{v}{u}w\right)\left(\frac{v}{u}z\right)^n \text{, and hence }ax+by = d\left(\frac{v}{u}\right)^nwz^n = cwz^n.
    \end{equation}
\end{proof}

A particularly aesthetic result arises when we set $n = 1$ in Theorem \ref{GeneralPositiveResult}. 

\begin{corollary}
	For any $a,b,c \in \mathbb{Z}\setminus\{0\}$ the equation
	
	\begin{equation} \label{FirstModification}
		ax+by = cwz
	\end{equation}
	is partition regular over $\mathbb{Z}\setminus\{0\}$.
\end{corollary}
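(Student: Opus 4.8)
The plan is to obtain this corollary as the $n = 1$ instance of Theorem \ref{GeneralPositiveResult}. The observation driving the argument is that a ``first power in $\mathbb{Q}$'' is no restriction at all: every element of $\mathbb{Q}$ equals its own first power. Consequently, taking $n = 1$, the hypothesis ``one of $\frac{a}{c}, \frac{b}{c}, \frac{a+b}{c}$ is an $n$th power in $\mathbb{Q}$'' is automatically satisfied --- for instance $\frac{a}{c} = \left(\frac{a}{c}\right)^{1}$ --- so Theorem \ref{GeneralPositiveResult} applies with $s = 1$ and yields that $ax + by = cwz$ is partition regular over $\mathbb{Z}\setminus\{0\}$ for all nonzero integers $a, b, c$.

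If a reader prefers to see the mechanism rather than a citation, I would specialize the proof of Theorem \ref{GeneralPositiveResult}: fix $p \in \beta\mathbb{N}$ as in Theorem \ref{SpecialUltrafilter}, set $d = a$ and write $\frac{a}{c} = \frac{u}{v}$ in lowest terms with $u, v \in \mathbb{Z}\setminus\{0\}$. Lemma \ref{PositiveResultForPowers} (with $n = 1$, applied to $d = a \in \{a,b,a+b\}$ in place of $c$) guarantees that every $A \in p$ contains $w,x,y,z$ with $ax + by = awz$. Given a partition $\mathbb{Z}\setminus\{0\} = \bigcup_{i=1}^r C_i$, the sets $\frac{v}{u}(C_i \cap u\mathbb{Z}\setminus\{0\}) \cap v\mathbb{N}$ form a partition of $v\mathbb{N} \in p$; choosing the cell belonging to $p$, extracting a solution inside it, and rescaling by $\frac{v}{u}$ converts $ax + by = awz$ into $ax' + by' = cw'z'$ with all of $x', y', w', z'$ in a single cell, the replacement of $a$ by $c$ on the right being exactly the identity $a \cdot \frac{v}{u} = c$.

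There is no genuine obstacle here: all of the substance --- the existence of the multiplicatively and additively rich ultrafilter of Theorem \ref{SpecialUltrafilter}, and its exploitation in Lemma \ref{PositiveResultForPowers} --- has already been carried out. The only point worth flagging is the bookkeeping check that rescaling by the rational $\frac{v}{u}$ sends the cells of a given partition of $\mathbb{Z}\setminus\{0\}$ to sets whose union is $v\mathbb{N}$, which is precisely the routine manipulation already performed in the proof of Theorem \ref{GeneralPositiveResult}. Hence the proof should be a one-line deduction, with the displayed specialization included only for concreteness.
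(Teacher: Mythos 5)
Your proposal is correct and follows the paper's own route: the corollary is stated immediately after Theorem \ref{GeneralPositiveResult} precisely as its $n=1$ specialization, with the trivial observation that every nonzero rational is its own first power making the hypothesis automatic. Your expanded ``mechanism'' paragraph is just an accurate unwinding of the proof of Theorem \ref{GeneralPositiveResult} in the case $n=1$, $s=1$, $d=a$, and the bookkeeping about rescaling by $\frac{v}{u}$ matches what the paper already does there.
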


\begin{remark}\label{NvsZRemark}
It is interesting to note that the equation $x+y = -wz$ is partition regular over $\mathbb{Z}\setminus\{0\}$ as a consequence of Theorem \ref{GeneralPositiveResult}, but not over $\mathbb{N}$ due to sign obstructions. We are currently unable to determine whether equations such as $2x-8y = wz^3$ are partition regular over $\mathbb{N}$ since there are no sign obstructions preventing the partition regularity. 
\end{remark}

Now that we have proven (ii) of Theorem \ref{MainResult}, we are ready to state Theorem \ref{MainToolForNegativeResults}, which will be a crucial tool in our efforts to prove item (iii) of Theorem \ref{MainResult}. The techniques that we use to prove Theorem \ref{MainToolForNegativeResults} are similar to techniques used in \cite{NonlinearRado}, \cite{NonStandardRamseyTheory}, and \cite{RadosTheorem}, to show that certain equations are not partition regular over $\mathbb{N}$. If $p \in \mathbb{N}$ is a prime and $r,s \in \mathbb{Z}$ are such that $p \nmid s$, then we define $\frac{r}{s} \equiv rs^{-1}\pmod{p}$.

\begin{theorem}\label{NotPartitionRegularCriterion} \label{MainToolForNegativeResults}
	Given $a,b,c\in \mathbb{Z}\setminus\{0\}$ and $n \in\mathbb{N}$, the equation
	\begin{equation} \label{EquationForTheNegativeCase}
		ax+by = cwz^n
	\end{equation}
	is not partition regular over $\mathbb{Q}\setminus\{0\}$ if there exists a prime $p > \max(|a|+|b|,|c|)$ for which $\frac{a}{c}, \frac{b}{c}$, and $\frac{a+b}{c}$ are not $n$th powers mod $p$.
\end{theorem}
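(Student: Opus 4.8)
The plan is to disprove partition regularity by exhibiting a single finite coloring of $\mathbb{Q}\setminus\{0\}$ with no monochromatic solution, in the spirit of the colorings Rado used for necessary conditions. Fix a prime $p>\max(|a|+|b|,|c|)$ as in the hypothesis. Then $p$ divides none of $a,b,c$, and I would first observe that we may assume $a+b\neq 0$ and $p\nmid(a+b)$: if $a+b=0$ then $\frac{a+b}{c}\equiv 0\equiv 0^n\pmod{p}$ would be an $n$th power mod $p$, contradicting the hypothesis, and $|a+b|\le|a|+|b|<p$ then gives $p\nmid(a+b)$. For $q\in\mathbb{Q}\setminus\{0\}$ write $q=p^{v_p(q)}\frac{r}{s}$ with $p\nmid rs$, and set $\ell_p(q):=\frac{r}{s}\equiv rs^{-1}\in(\mathbb{Z}/p\mathbb{Z})^{\times}$ (the ``leading unit'' of $q$). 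This is multiplicative: $v_p(q_1q_2)=v_p(q_1)+v_p(q_2)$ and $\ell_p(q_1q_2)=\ell_p(q_1)\ell_p(q_2)$.

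Next I would color each $q\in\mathbb{Q}\setminus\{0\}$ by $\ell_p(q)\in(\mathbb{Z}/p\mathbb{Z})^{\times}$; this is a partition into $p-1$ classes (one could additionally refine by $v_p(q)\bmod n$, but it turns out to be unnecessary). Suppose some color class contained a solution $x,y,w,z$, so that $\ell_p(x)=\ell_p(y)=\ell_p(w)=\ell_p(z)=:\bar u\in(\mathbb{Z}/p\mathbb{Z})^{\times}$. Since $p\nmid c$ we get $\ell_p(cwz^n)\equiv c\,\bar u^{\,n+1}\pmod{p}$. For the left side I distinguish three cases by comparing $v_p(x)$ and $v_p(y)$, using $v_p(ax)=v_p(x)$, $\ell_p(ax)\equiv a\bar u$, and similarly for $by$: if $v_p(x)<v_p(y)$ then $\ell_p(ax+by)\equiv a\bar u$; if $v_p(y)<v_p(x)$ then $\ell_p(ax+by)\equiv b\bar u$; and if $v_p(x)=v_p(y)$ then $\ell_p(ax+by)\equiv(a+b)\bar u$, which is a \emph{nonzero} residue precisely because $p\nmid(a+b)$, so no cancellation destroys control of the leading unit. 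Equating $\ell_p$ of the two sides of $ax+by=cwz^n$ and cancelling the unit $\bar u$ gives, in the three cases respectively, $\frac{a}{c}\equiv\bar u^{\,n}$, $\frac{b}{c}\equiv\bar u^{\,n}$, or $\frac{a+b}{c}\equiv\bar u^{\,n}\pmod{p}$; in every case one of $\frac{a}{c},\frac{b}{c},\frac{a+b}{c}$ is an $n$th power mod $p$, contradicting the hypothesis. Hence no color class contains a solution, and equation \eqref{EquationForTheNegativeCase} is not partition regular over $\mathbb{Q}\setminus\{0\}$.

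The bookkeeping with $p$-adic valuations and leading units is entirely routine; the one step that needs genuine care is the case $v_p(x)=v_p(y)$, where a priori the $p$-adic leading terms of $ax$ and $by$ could cancel and leave $\ell_p(ax+by)$ undetermined. The hard part is recognizing that the hypotheses do exactly the right thing here: the assumption on $\frac{a+b}{c}$ forces $a+b\neq0$, and $p>|a|+|b|$ forces $p\nmid(a+b)$, so $(a+b)\bar u\not\equiv 0\pmod{p}$ and the leading unit of the left side is pinned down. I expect this to be the only (mild) obstacle; everything else follows from elementary properties of $v_p$ and the fact that $(\mathbb{Z}/p\mathbb{Z})^{\times}$ is a group.
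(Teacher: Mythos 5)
Your proof is correct and essentially identical to the paper's: the paper uses the same leading-unit function (which it calls $\chi$), the same partition of $\mathbb{Q}\setminus\{0\}$ into $p-1$ classes by the value of $\chi$, and the same three-way case split on $v_p(x)$ versus $v_p(y)$. The only difference is that you explicitly justify that the hypotheses force $a+b\neq 0$ and $p\nmid(a+b)$ (so the equal-valuation case has no $p$-adic cancellation), a point the paper's proof uses silently.
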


\begin{proof}
	Let $p > \text{max}(|a|+|b|,|c|)$ be a prime for which $a, b$, and $a+b$ are not $n$th powers modulo $p$. Let $\chi:\QQ\setminus\{0\}\rightarrow[1,p-1]$ be given by 
	\begin{equation}
		\frac{x}{p^{v_p(x)}} \equiv \chi(x)\pmod{p}.
	\end{equation}
	Note that for all $r,s \in \mathbb{Z}\setminus\{0\}$ we have $\chi(rs) \equiv \chi(r)\chi(s) \pmod{p}$ and for all nonzero $-p < r < p$ we have $r \equiv \chi(r) \pmod{p}$. We also see that for all $r,s \in \mathbb{Z}\setminus\{0\}$ we have 
	
	\begin{equation}
	    \chi(r+s) \equiv \begin{cases}
	                         \chi(r)+\chi(s)\hfill \pmod{p} & \text{ if }v_p(r) = v_p(s)\text{ and }r+s \not\equiv 0\pmod{p}\\
	                         \chi(s)\hfill \pmod{p} & \text{ if }v_p(r) > v_p(s)\\
	                         \chi(r)\hfill \pmod{p} & \text{ if } v_p(s) > v_p(r).
	                     \end{cases}
	\end{equation}
	Let $\QQ\setminus\{0\} = \bigcup_{i = 1}^{p-1}C_i$ be the partition given by $C_i = \chi^{-1}(\{i\})$. Let us assume for the sake of contradiction that there exists $d \in [1,p-1]$ and $w,x,y,z \in C_d$ satisfying Equation \eqref{EquationForTheNegativeCase}. We now have 3 cases to consider. If $v_p(x) = v_p(y)$, then we see that
	\begin{alignat}{2}
		0 \not\equiv &(a+b)d \equiv \chi(a)\chi(x)+\chi(b)\chi(y)\numberthis\\
		\equiv &\chi(ax+by) \equiv \chi(cwz^n) \equiv cd^{n+1} \pmod{p}\text{, and hence}\\
	    &(a+b)c^{-1} \equiv d^n\pmod{p},
	\end{alignat}
	which yields the desired contradiction in this case. For our next case we assume that $v_p(x) < v_p(y)$ and note that
	\begin{alignat}{2}
		0 \not\equiv &ad \equiv \chi(a)\chi(x) \equiv \chi(ax+by) \equiv \chi(cwz^n) \equiv cd^{n+1} \pmod{p}\text{, and hence}\numberthis\\
	    & ac^{-1} \equiv d^n\pmod{p},
	\end{alignat}
	which once again yields a contradiction. Similarly, in our final case when $v_p(x) > v_p(y)$ we have
	\begin{alignat}{2}
		0 \not\equiv &bd \equiv \chi(b)\chi(y) \equiv \chi(ax+by) \equiv \chi(cwz^n) \equiv cd^{n+1}\pmod{p}\text{, and hence}\numberthis\\
		& bc^{-1}\equiv d^n\pmod{p},
	\end{alignat}
	which once more yields a contradiction.
\end{proof}

\begin{manualtheorem}{\ref{MainResult}(iii)(a-b)}
    \label{MainNegativeResult}
	Let $n \in \mathbb{N}$ and $a,b,c\in \mathbb{Z}\setminus\{0\}$ be such that either $n$ is odd and none of $\frac{a}{c}, \frac{b}{c}, \frac{a+b}{c}$ are $n$th powers in $\mathbb{Q}$, or $n \neq 4, 8$ is even and none of $\frac{a}{c}, \frac{b}{c},$ or $\frac{a+b}{c}$ are $\frac{n}{2}$th powers in $\mathbb{Q}$. Then the equation
	\begin{equation}
		ax+by = wz^n
	\end{equation}
	is not partition regular over $\mathbb{Q}\setminus\{0\}$.    
\end{manualtheorem} 

\begin{proof}
	By Theorem \ref{MainToolForNegativeResults} it suffices to construct a prime $p > \text{max}(|a|+|b|,|c|)$ for which none of $\frac{a}{c}, 
	\frac{b}{c},$ or $\frac{a+b}{c}$ are perfect $n$th powers modulo $p$. Firstly, we see that if $n$ is odd, then we may use Theorem 2 to show that the desired prime $p$ exists. Next, we see that if $n = 2m$ with $m$ odd, then none of $\frac{a}{c}, \frac{b}{c},\frac{a+b}{c}$ are $m$th powers by assumption, so we may once again use Theorem 2 to find the a prime $p > \text{max}(|a|+|b|,|c|)$ for which none of $\frac{a}{c}, \frac{b}{c},$ or $\frac{a+b}{c}$ are $m$th powers mod $p$. We note that none of $\frac{a}{c},\frac{b}{c},\frac{a+b}{c}$ are $n$th powers mod $p$ since $m|n$, so $p$ is the desired prime in this case. Lastly, we see that if $4|n$ and $n \ge 12$, then $\frac{n}{4} \ge 3$, so by Fermat's Last Theorem (see \cite{FermatOriginal}, \cite{FermatFixed}), at least one of $\frac{a}{c}, \frac{b}{c},$ or $\frac{a+b}{c}$ is not an $\frac{n}{4}$th power in $\mathbb{Q}$, so we may once again use Theorem 2 to show that the desired prime $p$ exists.
\end{proof}


\section{Number Theoretic Results}
\label{NumberTheorySection}

In this section we assume that the reader has had an introduction to algebraic number theory. Specifically, we assume familiarity with the content appearing in Chapters \rom{1}, \rom{2}, and \rom{4} of \cite{LangNT} and the Chebotarev Density Theorem. The main goal of this section is to prove Theorem 2. The reader willing to take the existence of such primes on faith can safely skip this section and the algebraic number theory content appearing here. We first handle the odd exponent case as some aspects of the argument are simplified and very general. Afterwards, we add a few details to handle the even exponent case. 

We briefly recall some of the concepts we will need. We call $K$ a number field if it is a finite field extension of $\QQ$. We write $\Ocal_K$ for the ring of integers of $K$, which is the integral closure of $\ZZ$ in $K$. This is a Dedekind domain, so nonzero ideals factor uniquely into a product of prime ideals. 

Given an extension of number fields $L/K$, one can ask how a prime ideal $\frakp \subset \Ocal_K$ factors in $\Ocal_L$. We have $\frakp \Ocal_L = \frakq_1^{e_1} \cdots \frakq_g^{e_g}$ for some prime ideals $\frakq_i \subset \Ocal_L$. Recall that $e(\frakq_i/\frakp) := e_i$ is the ramification degree of $\frakq_i$ over $\frakp$ and $f_i = f(\frakq_i/\frakp) := [\Ocal_L/\frakq_i : \Ocal_K/\frakp]$ is the inertia degree of $\frakq_i$ over $\frakp$. We say $\frakp$ is unramified in $\Ocal_L$, or just in $L$, if $e_i = 1$ for all $i$. It is a fact that only finitely many prime ideals of $\Ocal_K$ are ramified in $L$. 

These invariants are bounded via the following classic formula:
\begin{equation}
	[L:K] = \sum_{i=1}^g e_i f_i. 
\end{equation}
In the case $L/K$ is Galois, we have all of the $e_i$'s and $f_i$'s are equal, so in fact $e_i$ and $f_i$ both divide $[L:K]$. 

To compute these numbers in practice, one uses modular arithmetic and factoring polynomials. The process can be summarized as follows. Suppose $L = K(\alpha)$ with $\alpha \in \Ocal_L$. Then $\Ocal_K[\alpha] \subset \Ocal_L$ and both are finite free $\Ocal_K$-modules of rank $[L:K]$, so $[\Ocal_L : \Ocal_K[\alpha]]$ is finite. If the residue characteristic of $\frakp \subset \Ocal_K$ does not divide $[\Ocal_L : \Ocal_K[\alpha]]$, then the factorization behavior of $\frakp$ in $\Ocal_L$ can be detected by factoring the minimal polynomial of $\alpha \bmod \frakp$. 

More precisely, let $f(x) \in \Ocal_K[x]$ be the minimal polynomial for $\alpha$. Then, under the divisibility assumption above, we have 
\begin{equation}
	\frakp \Ocal_L = \frakq_1^{e_1} \cdots \frakq_g^{e_g} \iff f(x) \equiv q_1(x)^{e_1} \cdots q_g(x)^{e_g} \bmod \frakp
\end{equation}
and $\deg(q_i(x)) = f(\frakq_i/\frakp)$. Hence for all but finitely many $\frakp \subset \Ocal_K$, its factoring behavior in $\Ocal_L$ is detected by factoring $f(x) \bmod \frakp$. 

When $L/K$ is Galois with group $G$, there is an important relationship between the arithmetic and algebra of the fields expressed via Frobenius elements. Suppose $\frakq \subset \Ocal_L$ divides $\frakp \subset \Ocal_K$. Then there exists a unique $\Frob_{\frakq/\frakp} \in G$ defined by the property
\begin{equation}
	\Frob_{\frakq/\frakp}(x) \equiv x^q \bmod \frakq
\end{equation}
where $q = \# \Ocal_K/\frakp$. If $\frakq$ and $\frakq'$ are primes dividing $\frakp$ in $\Ocal_L$, then $\Frob_{\frakq/\frakp}$ and $\Frob_{\frakq'/\frakp}$ are conjugate. Conversely, for every $\sigma \in G$ in the conjugacy class of $\Frob_{\frakq/\frakp}$, there exists $\frakq'$ dividing $\frakp$ so that $\sigma = \Frob_{\frakq'/\frakp}$. Thus we can speak of a well-defined Frobenius conjugacy class $\Frob_\frakp \subset G$. When $G$ is abelian, Frobenius elements associated to primes in $K$ are therefore well-defined.

In order for this to be useful to us, we need a way to construct primes with given Frobenius elements. The major tool for achieving this is the Chebotarev density theorem.

\begin{theorem}[Chebotarev Density Theorem]
	Let $L/K$ be a Galois extension of number fields. Let $C \subset G = \Gal(L/K)$ be a fixed conjugacy class. Then the natural density of primes $\frakp \subset K$ with $\Frob_\frakp \in C$ is given by $\# C/ \# G$, i.e.,
	\begin{equation}
		\lim_{x \to \infty} \frac{ \# \{\frakp \in \Ocal_K : \# \Ocal_K/\frakp \le x \mathrm{ , and } \Frob_\frakp \in C\} }{ \# \{ \frakp \in \Ocal_K : \# \Ocal_K/\frakp \le x \}} = \frac{ \# C}{\# G}. 
	\end{equation}
\end{theorem}

Fix $n > 1$. We turn our attention to studying the polynomial $x^n -a$ for $a \in \Ocal_K$ and its factoring behavior modulo various primes in $\Ocal_K$. Suppose $a \in \Ocal_K$ is not an $n$th power. Let $a = \alpha^d$ for $\alpha \in \Ocal_{K(\zeta_n)}$ with $d \mid n$ maximal. Set $m = n/d$. Consider the diagram of fields:
\[
\begin{tikzcd}
	L = C(\alpha^{1/m})               \\
	C = K(\zeta_n) \arrow[u, no head] \\
	K \arrow[u, no head]             
\end{tikzcd}
\]
where $\zeta_n$ is a primitive $n$th root of unity, and $\alpha^{1/m}$ is an arbitrary root of the polynomial $x^m - \alpha$. We will show the field $L$ is a well-defined radical extension of $C$ using the following lemma.

\begin{lemma}[{\cite[Theorem VI.9.1]{LangAlgebra}}]\label{LangCriterion}
	Let $k$ be a field and $a \in k$ nonzero. Assume for all primes $p \mid n$, we have $a \not\in k^p$ and if $4 \mid n$, then $a \not \in -4k^4$. Then $x^n - a$ is irreducible in $k[x]$. 
\end{lemma}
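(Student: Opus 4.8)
This is the classical Capelli--Kneser criterion for irreducibility of binomials, and the plan is to reduce to prime-power exponents and then climb a tower of prime-degree extensions, using field norms to detect when a root fails to generate an extension of the expected degree. The first ingredient is the prime-exponent case: for a prime $p$, the polynomial $x^p-a$ is irreducible over $k$ if and only if $a\notin k^p$. For the nontrivial direction, if $x^p-a=g(x)h(x)$ with $g$ monic of degree $d$, $0<d<p$, then in a splitting field the constant term of $g$ is $\pm\zeta\,\alpha^d$ for a root $\alpha$ of $x^p-a$ and a $p$-th root of unity $\zeta$; raising this element of $k$ to the $p$-th power gives $a^d\in k^p$ (the sign is absorbed since either $p$ is odd or $p=2,\ d=1$), and then $\gcd(d,p)=1$ together with B\'ezout forces $a\in k^p$, a contradiction.

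Second, I would reduce the general statement to $n=p^s$. If $n=m_1m_2$ with $\gcd(m_1,m_2)=1$ and both $x^{m_1}-a$ and $x^{m_2}-a$ are irreducible over $k$, then for a root $\alpha$ of $x^n-a$ the elements $\alpha^{m_2}$ and $\alpha^{m_1}$ are roots of $x^{m_1}-a$ and $x^{m_2}-a$ respectively, so $m_1$ and $m_2$ both divide $[k(\alpha):k]\le n$; coprimality then gives $[k(\alpha):k]=n$, hence $x^n-a$ is irreducible. Since the hypotheses on $a$ pass to every divisor of $n$, an induction on the number of distinct primes dividing $n$ reduces us to the prime-power case.

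For $n=p^s$ with $p$ odd I would induct on $s$, the base case $s=1$ being the prime-exponent statement. Assume $x^{p^{s-1}}-a$ is irreducible with root $\beta$, and set $E=k(\beta)$, so $[E:k]=p^{s-1}$ and $\beta^{p^{s-1}}=a$. By the prime case applied over $E$ it suffices to show $\beta\notin E^p$; if $\beta=\gamma^p$ with $\gamma\in E$, then since the minimal polynomial of $\beta$ over $k$ is $x^{p^{s-1}}-a$ we get $\mathrm{N}_{E/k}(\beta)=(-1)^{p^{s-1}+1}a=a$ (because $p^{s-1}$ is odd), whence $a=\mathrm{N}_{E/k}(\gamma)^p\in k^p$, a contradiction. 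Adjoining a $p$-th root of $\beta$ then produces a root of $x^{p^s}-a$ of degree $p^s$ over $k$.

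The case $n=2^s$ is the main obstacle: the parities flip, and this is exactly where the hypothesis $a\notin -4k^4$ must be used. For $s=1$ the condition $a\notin k^2$ suffices. For $s\ge 2$ I would climb the tower $k\subset E_1=k(\sqrt a)\subset E_2\subset\cdots\subset E_s$ with $E_{j+1}=E_j(\sqrt{a^{1/2^j}})$; the inclusion $k\subset E_1$ has degree $2$ since $a\notin k^2$, and for $j\ge 1$ the inclusion $E_j\subset E_{j+1}$ has degree $2$ precisely when $a^{1/2^j}\notin E_j^2$. The step $E_1\subset E_2$ is settled by a direct coordinate computation: expanding a putative square root of $\sqrt a$ over the $k$-basis $\{1,\sqrt a\}$ of $E_1$ and matching components shows $\sqrt a\in E_1^2$ if and only if $a\in -4k^4$, so the hypothesis forces this step to have degree $2$. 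For $j\ge 2$, supposing $a^{1/2^j}\in E_j^2$ and using $[E_j:k]=2^j$ and $[E_j:E_1]=2^{j-1}$ (both even), the norm $\mathrm{N}_{E_j/k}$ yields $-a\in k^2$ and the norm $\mathrm{N}_{E_j/E_1}$ yields $-\sqrt a\in E_1^2$; multiplying and using $\sqrt a\cdot(-\sqrt a)=-a$ gives $\sqrt a\in E_1^2$, hence $a\in -4k^4$ by the previous step, a contradiction. So every step has degree $2$, $[E_s:k]=2^s$, and $x^{2^s}-a$ is irreducible. A brief separate argument covers $\mathrm{char}\,k=2$, where $x^{2^s}-a$ is purely inseparable and irreducibility reduces to $a\notin k^2$. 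Combining the three cases gives the lemma.
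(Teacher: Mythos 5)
The paper does not prove this lemma; it cites it verbatim as Theorem VI.9.1 of Lang's \emph{Algebra} and uses it as a black box in the proof of Lemma \ref{RadicalExtDegree}. There is therefore no in-paper proof to compare against. That said, your argument is a correct and complete rendering of the standard proof of the Capelli criterion (which is essentially what Lang himself does): the prime-exponent case via constant terms, the coprime-exponent reduction via degree-divisibility, the $p$-odd prime-power induction via the norm computation $N_{E/k}(\beta)=a$, and the $2^s$ tower where the hypothesis $a\notin -4k^4$ enters precisely at the step $k(\sqrt a)\subset k(a^{1/4})$, with the norm-ratio trick $\sqrt a = (-a)/(-\sqrt a)$ bootstrapping all higher steps back to that one. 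The observation that characteristic~$2$ collapses the $-4k^4$ condition (since $-4=0$) and reduces to $a\notin k^2$ is also correct. I see no gaps; you have simply supplied a proof where the paper deferred to the literature.
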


From here we deduce the following.
\begin{lemma}\label{RadicalExtDegree}
	The polynomial $x^m - \alpha$ is irreducible over $C$. In particular, the field $L$ above is a well-defined radical extension of $C$, and $[L:C] = m$. 
\end{lemma}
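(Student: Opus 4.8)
The plan is to apply Lemma \ref{LangCriterion} with $k = C = K(\zeta_n)$ and the element $\alpha$, in the exponent $m = n/d$. To do this I must verify the two hypotheses of that criterion: that $\alpha \notin C^p$ for every prime $p \mid m$, and that $\alpha \notin -4C^4$ in the case $4 \mid m$. Both should follow from the maximality of $d$ in the factorization $a = \alpha^d$. First I would show that if $\alpha = \delta^p$ for some $\delta \in \Ocal_{K(\zeta_n)}$ and some prime $p \mid m$, then $a = \alpha^d = \delta^{pd}$, and since $pd \mid n$ (because $p \mid m = n/d$) this contradicts the maximality of $d$. Hence $\alpha \notin C^p$ for all primes $p \mid m$, where I should be slightly careful to phrase this in terms of $C$ rather than $\Ocal_{K(\zeta_n)}$: if $\alpha$ were a $p$th power in the field $C$, then since $\alpha \in \Ocal_{K(\zeta_n)}$ is integral and $\Ocal_{K(\zeta_n)}$ is integrally closed, any $p$th root of $\alpha$ in $C$ is automatically in $\Ocal_{K(\zeta_n)}$, so the field statement and the ring statement coincide.

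Next I would handle the $-4C^4$ condition when $4 \mid m$. Suppose $\alpha = -4\delta^4$ for some $\delta \in C$; again $\delta \in \Ocal_{K(\zeta_n)}$ by integral closedness. The point is that $-4 = -(1+i)^4 \cdot (\text{unit})$... more precisely, $-4 = (1+\zeta_4)^4 \cdot \zeta_4^{-1}$ up to checking constants, and since $4 \mid m \mid n$ we have $\zeta_4 \in K(\zeta_n)$, so $-4$ is (a unit times) a fourth power in $\Ocal_{K(\zeta_n)}$; but more simply, $-4 = -(1-\zeta_4)^2(1+\zeta_4)^2/\ldots$ — rather than chase the exact identity, the clean way is: $-4\delta^4 = (1-i)^4\delta^4 \cdot i$ where $i = \zeta_4 \in C$, hmm, I should just note $\alpha = -4\delta^4$ implies $\alpha$ is a square in $C$ (indeed $-4\delta^4 = (2\delta^2)^2 \cdot(-1) = (2 i \delta^2)^2$ since $i \in C$), and $2 \mid m$, so this already contradicts the previously established fact that $\alpha \notin C^2$. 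Thus the $-4C^4$ hypothesis is subsumed by the $p = 2$ case and needs no separate argument.

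Having verified both hypotheses, Lemma \ref{LangCriterion} gives that $x^m - \alpha$ is irreducible over $C$. Consequently $L = C(\alpha^{1/m})$ does not depend on the choice of root $\alpha^{1/m}$ of $x^m - \alpha$ up to $C$-isomorphism (all roots are conjugate, and in fact since $\zeta_m \in \zeta_n \in C$ the extension is Galois and all roots lie in $L$), so $L$ is a well-defined radical extension of $C$, and $[L:C] = \deg(x^m - \alpha) = m$.

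The main obstacle is the bookkeeping around the $-4C^4$ clause of Lemma \ref{LangCriterion}: one must make sure that the reduction to "$\alpha$ is a square in $C$" is legitimate, which relies on $i = \zeta_4 \in C$, i.e. on $4 \mid n$ (guaranteed here since $4 \mid m \mid n$). The other mild subtlety is the repeated passage between $p$th powers in the field $C$ and in the ring $\Ocal_{K(\zeta_n)}$, which is justified once and for all by integral closedness of $\Ocal_{K(\zeta_n)}$. Everything else is a direct invocation of the maximality of $d$ together with the divisibility relations among $d$, $m$, and $n$.
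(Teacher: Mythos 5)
Your proof is correct and follows essentially the same route as the paper: apply Lemma \ref{LangCriterion} to $x^m - \alpha$ over $C$, using maximality of $d$ to rule out $\alpha \in C^p$ for primes $p \mid m$, and handle the $-4C^4$ clause by noting $\zeta_4 \in C$. Your observation that $\alpha = -4\delta^4 = (2\zeta_4\delta^2)^2$ reduces the $-4C^4$ clause to the already-established $p=2$ case is a clean simplification (the paper instead directly derives $a = (\zeta_4\cdot 2\beta^2)^{2d}$ contradicting maximality of $d$); you are also slightly more careful in quantifying over $p \mid m$ and checking $4 \mid m$ where the paper loosely writes $n$, and you deduce $[L:C]=m$ immediately from irreducibility rather than via the redundant Kummer-theory remark.
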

\begin{proof}
	By the maximality of $d$, we see that for all primes $p \mid n$ we have $\alpha \not\in C^p$, so the polynomial $x^m - \alpha$ satisfies the first criterion of Lemma \ref{LangCriterion}, and the polynomial is irreducible as long as $4 \nmid n$. In the case $4 \mid n$, we check $\alpha \not\in -4C^4$. Suppose otherwise. Then $\alpha = -4\beta^4$ for some $\beta \in C$. But since $4 \mid n$ and $\zeta_n \in C$, we have $\sqrt{-1} = \zeta_4 \in C$, so $\alpha = (\zeta_4 \cdot 2 \cdot \beta^2)^2$, and $a = (\zeta_4 \cdot 2 \cdot \beta^2)^{2d}$, contradicting the maximality of $d$. In either case, we find that $x^m - \alpha$ is irreducible. 
	
	For the statement on degrees, we use Kummer theory. Recall that this tells us that since $C$ contains all $m$th roots of unity, extensions of the form $C(\alpha^{1/m})/C$ are cyclic of degree equal to the order of $\alpha$ in $C^{\times}/C^{\times,m}$. But we have just showed that $\alpha$ is not a $t$th power for any $t \mid m$, so its order in this group is $m$. 
\end{proof}

\begin{remark}\label{NumberTheoryNotation}
The key idea of our argument is as follows. We will use density arguments to produce a prime ideal $\frakp$ in the ring of integers of $C = K(\zeta_n)$ modulo which $x^m - \alpha$ has no root. Given such a $\frakp$, the going down theorem provides a prime ideal of $\Ocal_K$ with the same property. In ffact, if we can bound the density of such $\frakp$ well enough, then repeating with $b$ and $c$ can yield a density bound on the set of prime ideals $\frakp$ where at least one of $a,b,c$ are $n$th powers. If this density is less than 1, then the lemma will be established in that case.

The setup is as follows. Let $a = \alpha^{d_a}; b = \beta^{d_b}$; and $c = \gamma^{d_c}$ with the $d$'s maximal. Set $m_a = n/d_a$, etc. Let $\frakp \subset \Ocal_K$. Then we have that $x^n - a$ has a root modulo $\frakp$ if and only if $x^{m_\alpha} - \alpha$ has a root modulo $\frakp$, and (with finitely many exceptions for $\frakp$) the density of such $\frakp$ correspond to the density of $\frakp$ splitting in $L_a = C(\alpha^{1/m_a})$. By the Chebotarev density theorem, the latter is given by $1/[L_a:C]$. 

Let $\delta_a := 1/[L_a:C] = 1/m_a$ and similarly define $\delta_b$ and $\delta_c$. As mentioned, if $\delta_a + \delta_b + \delta_c < 1$, then there must exist a prime (infinitely many, in fact) in $C$ where none of $a,b,c$ are $n$th powers. Unfortunately this sum can very well be at least 1 or more, so we will devote most of the rest of this section to handling those cases.

First, we gather some results to rule out the case that $\delta_a = 1$, at least when $K = \QQ$ and many other cases. We recall a fact about the interplay between roots of unity and radical extensions, due to Schinzel. Since Lemmas \ref{AbelianRadicalLemma} and \ref{NoDegree1} are generic, we will omit the subscripts and just write $m,d,$ and $L$ in their statements and proofs.
\end{remark}
\begin{lemma}\label{AbelianRadicalLemma}
	Let $\omega_m$ be the number of roots of unity in a field $F$ of characteristic 0. Suppose $x^m - \alpha$ is irreducible over $F$. Then $F(\alpha^{1/m})/F$ is an abelian Galois extension if and only if $\alpha^{\omega_m} = \beta^m$ for some $\beta \in F$. 
\end{lemma}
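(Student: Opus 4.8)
This is a theorem of Schinzel, and the plan is to make the Galois theory of $x^{m}-\alpha$ fully explicit. Fix a root $\theta$ of $x^{m}-\alpha$ and a primitive $m$th root of unity $\zeta$, and let $M=F(\zeta,\theta)$ be the splitting field of $x^{m}-\alpha$ over $F$, with $G=\Gal(M/F)$. Since the roots of $x^{m}-\alpha$ are exactly the $\zeta^{i}\theta$, the field $L=F(\theta)$ is Galois over $F$ precisely when $\zeta\in L$, i.e.\ when $L=M$; thus ``$L/F$ is an abelian Galois extension'' is equivalent to ``$L=M$ and $G$ is abelian,'' and it is this assertion that I will match with $\alpha^{\omega_{m}}\in F^{*m}$. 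The main device is the embedding $G\hookrightarrow\mathrm{Aff}(\ZZ/m\ZZ)=\ZZ/m\ZZ\rtimes(\ZZ/m\ZZ)^{\times}$ sending $\sigma$ to $(s(\sigma),t(\sigma))$, where $\sigma(\zeta)=\zeta^{t(\sigma)}$ and $\sigma(\theta)=\zeta^{s(\sigma)}\theta$ (the second being legitimate because $\sigma(\theta)^{m}=\sigma(\alpha)=\theta^{m}$); identifying $\sigma$ with the affine map $x\mapsto t(\sigma)x+s(\sigma)$, a direct computation gives that the commutator $[\sigma,\tau]$ is the automorphism $\theta\mapsto\zeta^{(t(\sigma)-1)s(\tau)-(t(\tau)-1)s(\sigma)}\theta$, so $G$ is abelian if and only if
\begin{equation*}
(t(\sigma)-1)s(\tau)\equiv(t(\tau)-1)s(\sigma)\pmod{m}\qquad\text{for all }\sigma,\tau\in G.\tag{$\star$}
\end{equation*}
Two structural facts I record first, writing $g=\gcd(m,\omega_{m})$: the $g$th roots of unity lie in $F$, so $t(\sigma)\equiv1\pmod{g}$ for every $\sigma$; and $[F(\zeta):F]\le m/g$, because $\QQ(\zeta^{m/g})\subseteq F\cap\QQ(\zeta)$ gives $[F(\zeta):F]\le\varphi(m)/\varphi(g)\le m/g$, the last step being $\varphi(m)/m\le\varphi(g)/g$ since every prime dividing $g$ divides $m$.

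For ($\Leftarrow$), given $\alpha^{\omega_{m}}=\beta^{m}$ I would first reduce to the identity $\alpha^{g}=\gamma^{m}$ with $\gamma\in F$, which is possible because $\gcd(\omega_{m}/g,\,m/g)=1$ (a common prime factor would enlarge $\gcd(m,\omega_{m})$). Then $(\theta^{g}/\gamma)^{m}=\alpha^{g}/\gamma^{m}=1$, so $\theta^{g}=\gamma\eta$ with $\eta\in\mu_{m}\subseteq F(\zeta)$; in particular $\theta^{g}\in F(\zeta)$. Hence $M=F(\zeta)(\theta)$ with $\theta$ a root of $x^{g}-\theta^{g}\in F(\zeta)[x]$, so $[M:F(\zeta)]\le g$, and together with $[F(\zeta):F]\le m/g$ and $[L:F]=m$ this forces $L=M$. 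Finally, applying each $\sigma\in G$ to $\theta^{g}=\gamma\eta$ and comparing with $\sigma(\theta^{g})=\zeta^{g\,s(\sigma)}\theta^{g}$ gives $g\,s(\sigma)\equiv\ell(t(\sigma)-1)\pmod{m}$ for a fixed $\ell$; since $g\mid t(\sigma)-1$, this says that modulo $m/g$ the function $s$ is the coboundary $\sigma\mapsto(\ell/g)(t(\sigma)-1)$, and substituting into $(\star)$ makes the two sides literally equal. So $G$ is abelian.

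For ($\Rightarrow$), suppose $L/F$ is abelian Galois, so $L=M$ and $(\star)$ holds. Let $H=\ker t=\Gal(M/F(\zeta))$ and $m_{0}=|H|=[M:F(\zeta)]$. Since $\{s(\rho):\rho\in H\}$ is the unique subgroup of order $m_{0}$ in $\ZZ/m\ZZ$, namely $(m/m_{0})\ZZ/m\ZZ$, applying $(\star)$ with $\rho\in H$ chosen so that $s(\rho)=m/m_{0}$ yields $t(\tau)\equiv1\pmod{m_{0}}$ for all $\tau$, whence $\mu_{m_{0}}\subseteq F$ and $m_{0}\mid g$; combined with $[F(\zeta):F]=m/m_{0}\le m/g$ this forces $m_{0}=g$. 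Then $\theta^{\omega_{m}}\in M^{H}=F(\zeta)$, because $g\mid\omega_{m}$ and $(m/g)\mid s(\rho)$ for $\rho\in H$. Next I would show there is $\tau_{0}\in G$ with $(t(\tau_{0})-1)/g$ a unit modulo $m/g$: otherwise some prime $q\mid m/g$ divides every $(t(\sigma)-1)/g$, forcing $\mu_{qg}\subseteq F$ and hence $qg\mid\gcd(m,\omega_{m})=g$, absurd. Writing $t(\sigma)-1=g\,a(\sigma)$, condition $(\star)$ reads $a(\sigma)s(\tau)\equiv a(\tau)s(\sigma)\pmod{m/g}$, so $s(\sigma)\equiv c\,a(\sigma)\pmod{m/g}$ with $c=s(\tau_{0})a(\tau_{0})^{-1}$; this gives $\omega_{m}\,s(\sigma)\equiv\ell(t(\sigma)-1)\pmod{m}$ with $\ell=(\omega_{m}/g)c$, so $\theta^{\omega_{m}}\zeta^{-\ell}$ is fixed by all of $G$, hence lies in $M^{G}=F$. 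Calling it $\beta$, we obtain $\alpha^{\omega_{m}}=(\theta^{\omega_{m}})^{m}=(\beta\zeta^{\ell})^{m}=\beta^{m}$.

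The step I expect to be the main obstacle is the arithmetic bookkeeping in the converse direction: extracting from the single ``parallelism'' relation $(\star)$, together with $g\mid t(\sigma)-1$, both that $m_{0}=g$ and that some $(t(\tau_{0})-1)/g$ is a unit modulo $m/g$ -- equivalently, that $s(\sigma)$ is forced to be essentially a coboundary in $t(\sigma)-1$. Everything else is either standard Galois and Kummer theory or the routine computation inside $\mathrm{Aff}(\ZZ/m\ZZ)$; the irreducibility hypothesis on $x^{m}-\alpha$ is used only through the equality $[L:F]=m$.
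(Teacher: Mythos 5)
The paper offers no proof of this lemma; it simply cites \cite{Velez}. Your argument is therefore a genuinely self-contained alternative: you prove Schinzel's criterion directly via the embedding $\Gal(F(\zeta_m,\theta)/F)\hookrightarrow\mathrm{Aff}(\ZZ/m\ZZ)$ and the commutator identity $(\star)$. Nearly all of it checks out: the commutator computation, the reduction of $\alpha^{\omega_m}\in F^{*m}$ to $\alpha^g\in F^{*m}$ via $\gcd(\omega_m/g,m/g)=1$, the degree bounds forcing $L=M$ in the easy direction, the coboundary verification of $(\star)$, the deduction $m_0\mid g$ by applying $(\star)$ to $\rho\in H$ with $s(\rho)=m/m_0$, and the concluding construction of $\beta=\theta^{\omega_m}\zeta^{-\ell}\in F$.

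There is, however, one genuine gap, precisely at the step you yourself flagged. To produce $\tau_0$ with $(t(\tau_0)-1)/g$ invertible modulo $m/g$, you argue: ``otherwise some prime $q\mid m/g$ divides every $(t(\sigma)-1)/g$.'' That inference is not valid. If no single value $(t(\sigma)-1)/g$ is coprime to $m/g$, it does not follow that one fixed prime obstructs for every $\sigma$; different $\sigma$ could be blocked by different primes dividing $m/g$. Nor can one rescue this by saying the values form a subgroup: the map $t\mapsto(t-1)/g$ from $T:=t(G)$ into $\ZZ/(m/g)$ is not in general a homomorphism, since
\[
\frac{t_1t_2-1}{g}-\frac{t_1-1}{g}-\frac{t_2-1}{g}\;=\;g\cdot\frac{t_1-1}{g}\cdot\frac{t_2-1}{g},
\]
which need not vanish modulo $m/g$ (take, e.g., $m=8$, $g=2$).

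Fortunately the desired $\tau_0$ exists for a simpler reason that uses only what you have already established. By the time you need it, you know $m_0=g$, hence $|T|=[G:H]=m/m_0=m/g$. But $T$ lies in the coset $1+g\,\ZZ/m\ZZ$ of $\ZZ/m\ZZ$, which also has exactly $m/g$ elements; therefore $T=1+g\,\ZZ/m\ZZ$, and you may take $\tau_0$ with $t(\tau_0)=1+g$, giving $(t(\tau_0)-1)/g=1$. Equivalently, $t\mapsto(t-1)/g$ is an injection of a set of size $m/g$ into $\ZZ/(m/g)$, hence a bijection, so its image certainly contains a unit. With this replacement the rest of your converse direction goes through unchanged.
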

\begin{proof}
	See \cite[Theorem 2.1]{AbelianBinomials} for the original, or \cite[Theorem 2]{Velez} for another proof.
\end{proof}

This provides us with our first serious condition on $\delta_a$, by restricting the degree $[L:C]$. 

\begin{lemma}\label{NoDegree1}
	Let $\omega_n$ be the number of $n$th roots of unity in $K$. Suppose $m = [L:C] = 1$ with the notation as above. Then 
	\begin{equation}
		a^{\omega_n} = k^n
	\end{equation}
	for some $k \in K$. 
\end{lemma}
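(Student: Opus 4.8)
The plan is to deduce the lemma from two classical inputs recorded in the excerpt: that a subfield of a cyclotomic extension is abelian over the base, and Schinzel's criterion in the form of Lemma \ref{AbelianRadicalLemma}. First I would unwind the hypothesis: since $[L:C]=m$ by Lemma \ref{RadicalExtDegree}, the assumption $m=1$ means $d=n$, i.e. $a=\alpha^{n}$ for some $\alpha\in\Ocal_{C}$ with $C=K(\zeta_{n})$, while by the standing hypothesis $a\notin K^{\times n}$. To prepare for Lemma \ref{AbelianRadicalLemma} I would pass to a genuinely irreducible radical polynomial over $K$: pick $d'\mid n$ maximal with $a=a_{0}^{d'}$ for some $a_{0}\in\Ocal_{K}$ (so $d'<n$), and set $m'=n/d'>1$. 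Maximality of $d'$ gives $a_{0}\notin K^{\times p}$ for every prime $p\mid m'$. Moreover $a_{0}$ is still an $m'$th power in $C$: from $a_{0}^{d'}=a=(\alpha^{m'})^{d'}$ one gets $a_{0}=\xi\alpha^{m'}$ with $\xi\in\mu_{d'}\subseteq\mu_{n}\subseteq C$, and since $\zeta_{n}^{m'}$ generates $\mu_{d'}$ we may write $\xi=(\zeta_{n}^{j})^{m'}$, whence $a_{0}=\gamma^{m'}$ with $\gamma:=\zeta_{n}^{j}\alpha\in\Ocal_{C}$.

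In the main (generic) case I would assume we are not in the exceptional situation of Lemma \ref{LangCriterion}, i.e. $4\nmid m'$ or else $a_{0}\notin -4K^{\times 4}$. Then Lemma \ref{LangCriterion} shows $x^{m'}-a_{0}$ is irreducible over $K$, so $[K(\gamma):K]=m'$. Since $K(\gamma)\subseteq K(\zeta_{n})$ and $\Gal(K(\zeta_{n})/K)$ is abelian, $K(\gamma)/K$ is an abelian extension, so Lemma \ref{AbelianRadicalLemma}, applied over $F=K$ with exponent $m'$ to the irreducible binomial $x^{m'}-a_{0}$, yields $a_{0}^{\omega_{m'}}=\beta^{m'}$ for some $\beta\in K$, where $\omega_{m'}=|\mu_{m'}(K)|$. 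Raising to the $d'$-th power and using $m'd'=n$ gives $a^{\omega_{m'}}=(a_{0}^{\omega_{m'}})^{d'}=\beta^{n}$. Finally $m'\mid n$ forces $\mu_{m'}(K)\subseteq\mu_{n}(K)$, so $\omega_{m'}\mid\omega_{n}$, and therefore $a^{\omega_{n}}=\bigl(\beta^{\,\omega_{n}/\omega_{m'}}\bigr)^{n}=:k^{n}$ with $k\in K$, as claimed. Everything here beyond citing the two lemmas is bookkeeping with exponents.

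The hard part is the remaining case $4\mid m'$ with $a_{0}\in -4K^{\times 4}$, say $a_{0}=-4c^{4}$ with $c\in K$, where the Sophie Germain factorization $x^{4}+4c^{4}=(x^{2}-2cx+2c^{2})(x^{2}+2cx+2c^{2})$ makes $x^{m'}-a_{0}$ reducible over $K$, so Lemma \ref{AbelianRadicalLemma} does not apply directly. I would first extract the structural consequences: since $a_{0}=-(2c^{2})^{2}$, if $d'$ were even or $i\in K$ then $a$ would be a $2d'$-th power in $K$, and $4\mid m'$ gives $2d'\mid n$ with $2d'>d'$, contradicting maximality of $d'$; hence $i\notin K$ and $d'$ is odd, so $-a=(2c^{2})^{2d'}\in K^{\times 2d'}$ and $2\mid\omega_{n}$. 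One then either shows this configuration is incompatible with $a$ being an $n$th power in $C$ once $m'>4$ (using that $-4$, hence $1+i=\sqrt[4]{-4}$, being an $m'$th power in $C$ propagates strong divisibility constraints on $m'$ via $\zeta_{16}$ and $2^{1/4}$), or handles it by hand: when $m'=4$ one has $a_{0}^{2}=(2c^{2})^{4}=(2c^{2})^{m'}$, which runs the argument of the previous paragraph with $\beta=2c^{2}$ and $\omega_{m'}$ replaced by $2\mid\omega_{n}$. I expect this $4\mid m'$ analysis — exactly the analogue of the $4\mid n$ caveats that pervade Section \ref{NumberTheorySection} and the hypotheses of Theorem \ref{ExistenceOfPrimesInIntro} — to be the only delicate point in the proof.
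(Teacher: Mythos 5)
Your proposal takes a genuinely different route from the paper's, and correctly identifies a real subtlety that the paper's one-line proof elides. In the situation of Lemma \ref{NoDegree1} one has $a^{1/n}=\alpha\in C=K(\zeta_n)$, so $[K(a^{1/n}):K]\leq[K(\zeta_n):K]<n$ (the Galois group of $K(\zeta_n)/K$ embeds in $(\ZZ/n\ZZ)^{\times}$ and $n>1$); hence $x^n-a$ is \emph{always} reducible over $K$ under the hypothesis $m=1$, and Lemma \ref{AbelianRadicalLemma} as stated never literally applies. The paper is implicitly invoking the splitting-field form of Schinzel's theorem, which needs no irreducibility: when $m=1$ the splitting field of $x^n-a$ over $K$ is $K(\zeta_n,\alpha)=C$, an abelian extension, and that version of the theorem then gives $a^{\omega_n}\in K^{\times n}$ directly. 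Your alternative repair — replace $a$ by $a_0$ where $a=a_0^{d'}$ with $d'\mid n$ maximal over $K$, so that $x^{m'}-a_0$ has a chance of being irreducible over $K$, and apply Lemma \ref{AbelianRadicalLemma} to that binomial — is a sound idea, and your generic-case computation (producing $a_0^{\omega_{m'}}=\beta^{m'}$, then $a^{\omega_{m'}}=\beta^{n}$, then $a^{\omega_n}\in K^{\times n}$ via $\omega_{m'}\mid\omega_n$) is complete and correct.

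The genuine gap is the exceptional case $4\mid m'$ with $a_0\in-4K^{\times 4}$. You handle $m'=4$ correctly and correctly deduce $d'$ odd and $i\notin K$, but the $m'>4$ case is left as a sketch that does not close. Two concrete problems: first, from $\gamma^{m'}=a_0=-4c^4$ with $\gamma\in C$ one only deduces that $c\cdot(1+i)$, not $1+i$ itself, is an $(m'/4)$-th power in $C$ — the undetermined scalar $c\in K^{\times}$ cannot be peeled off, so the claimed divisibility constraints "via $\zeta_{16}$ and $2^{1/4}$" do not follow as stated. Second, the conclusion you need in the exceptional branch amounts to $m'\mid 2\omega_n$, but $i\notin K$ (together with $-1\in K$) forces the $2$-part of $\omega_n$ to be exactly $2$, so $m'\mid 2\omega_n$ already fails whenever $8\mid m'$; you would need to separately rule out $8\mid m'$ in this configuration, and no such argument is supplied. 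Filling this hole is essentially reproving a case of Schinzel's theorem; the paper's route via the abelianness of the full splitting field $C$ of $x^n-a$, rather than of a simple radical extension over $K$, is precisely what sidesteps the $-4K^{\times 4}$ pathology.
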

\begin{proof}
	Because $C/K$ is a cyclotomic extension, it is abelian. But since $m = 1$, we have $a^{1/n} = \alpha \in C$. Hence $K(a^{1/n})/K$ is abelian. By Lemma \ref{AbelianRadicalLemma}, we then have $a^{\omega_n} = k^n$ for some $k \in K$.
\end{proof}

In particular, we can control the size of $m$ by the assumptions we make on $\alpha$, and hence $a$, in $K$. We will carry the details out towards the end of the proof of the main lemma. For now, we investigate the density of primes where both $a$ and $b$ are $n$th powers.

\begin{lemma}\label{Intersection}
	Let  $m_a$ and $m_b$ be as in Remark \ref{NumberTheoryNotation}. The density of primes $\mathfrak{p}$ in $K(\zeta_n)$ for which both $a$ and $b$ are $n$th powers modulo $\mathfrak{p}$ is at least $1/(m_a \cdot m_b)$. 
\end{lemma}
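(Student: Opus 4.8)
The plan is to translate the condition into a statement about splitting of primes and then invoke the Chebotarev density theorem. Recall from Remark \ref{NumberTheoryNotation} that, with $C = K(\zeta_n)$, for all but finitely many prime ideals $\frakp \subset \Ocal_C$ the polynomial $x^n - a$ has a root modulo $\frakp$ if and only if $x^{m_a} - \alpha$ does, where $a = \alpha^{d_a}$ with $d_a$ maximal and $m_a = n/d_a$. By Lemma \ref{RadicalExtDegree} the field $L_a := C(\alpha^{1/m_a})$ is a degree $m_a$ extension of $C$, and since $m_a \mid n$ and $\zeta_n \in C$, the extension $L_a/C$ is a Kummer extension, in particular cyclic Galois. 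For an unramified prime $\frakp$ of $\Ocal_C$, every irreducible factor of $x^{m_a} - \alpha$ modulo $\frakp$ has degree equal to the order of $\Frob_\frakp$ in $\Gal(L_a/C)$, so $x^{m_a} - \alpha$ has a root modulo $\frakp$ exactly when $\frakp$ splits completely in $L_a$. The same statement holds verbatim for $b$ with $L_b := C(\beta^{1/m_b})$.

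Next I would form the compositum $M := L_a L_b$ inside a fixed algebraic closure of $C$. Being a compositum of Galois extensions of $C$, the extension $M/C$ is Galois, and the restriction map $\Gal(M/C) \to \Gal(L_a/C) \times \Gal(L_b/C)$ is injective. It follows that a prime $\frakp$ of $\Ocal_C$ --- outside the finitely many that ramify in $M$ or are exceptional for the dictionary above --- splits completely in $M$ if and only if it splits completely in both $L_a$ and $L_b$, that is, if and only if both $a$ and $b$ are $n$th powers modulo $\frakp$. Hence, up to a set of density $0$, the primes we care about are exactly those of $C$ that split completely in $M$.

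Finally, I would apply the Chebotarev density theorem to $M/C$ with the trivial conjugacy class: the density of primes of $C$ splitting completely in $M$ is $1/[M:C]$. Since $M = L_a L_b$ we have $[M:C] \le [L_a:C]\,[L_b:C] = m_a m_b$, so this density is at least $1/(m_a m_b)$, which is the claim.

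There is no real obstacle here; the argument is a routine application of Chebotarev once the correspondence between power residues modulo $\frakp$ and splitting behavior of $\frakp$ is set up. The only points needing minor care are discarding the finitely many ramified and exceptional primes (harmless for densities) and the elementary fact that for a Galois extension ``having a root modulo $\frakp$'' is equivalent to ``$\frakp$ splitting completely''. The inequality in the statement --- rather than an equality --- is precisely the possible failure of equality in $[L_a L_b : C] \le [L_a : C]\,[L_b : C]$, which holds with equality exactly when $L_a$ and $L_b$ are linearly disjoint over $C$.
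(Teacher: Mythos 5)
Your proof is correct and follows essentially the same route as the paper: pass to the compositum $L_a L_b$ over $C = K(\zeta_n)$, observe it is Galois of degree at most $m_a m_b$, identify the primes where both $a$ and $b$ are $n$th powers with those having trivial Frobenius, and apply Chebotarev. You spell out a few more of the intermediate steps (Kummer theory gives cyclicity, the equivalence of ``has a root mod $\frakp$'' with ``splits completely'' via all irreducible factors having degree equal to the order of Frobenius, injectivity of the restriction map to the product of Galois groups), but the structure and key lemma are identical to the paper's argument.
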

\begin{proof}
	Since both $C(\alpha^{1/m_a})/C$ and $C(\beta^{1/m_b})/C$ are Galois extensions, their composite field $C(\alpha^{1/m_a}, \beta^{1/m_b})$ is Galois over $C$ and has degree at most $m_a \cdot m_b$. The primes $\mathfrak{p}$ where both $x^{m_a} - \alpha$ and $x^{m_b} - \beta$ have roots modulo $\mathfrak{p}$ correspond to those with trivial Frobenius element in $G = \Gal(C(\alpha^{1/m_a}, \beta^{1/m_b})/C)$. By Chebotarev's density theorem, the set of such primes $\mathfrak{p}$ has density $1/ \# G \ge 1/(m_a \cdot m_b)$. 
\end{proof}

We can now prove the main existence result.

\begin{lemma}
	\label{ExistenceOfPrimes}
	Let $K$ be a number field and $\omega_n$ the number of $n$th roots of unity in $K$. Let $a,b,c \in \Ocal_K$. 
	\begin{enumerate}[(i)]
		\item Suppose $n$ is odd, and that $a^{\omega_{n}}$ is not an $n$th power in $\Ocal_K$, and similarly for $b,c$; or
		\item Suppose $n$ is even, and $a,b,c$ satisfy the same conditions as in (i), but $a^{2 \omega_{n}}$ is also not an $n$th power. 
	\end{enumerate}
	Then there exist infinitely many prime ideals $\mathfrak{p}$ of $K$ for which none of $a,b,c$ are $n$th powers modulo $\mathfrak{p}$.
\end{lemma}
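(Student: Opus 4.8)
The plan is to prove the statement over the cyclotomic field $C = K(\zeta_n)$ and then descend to $K$. Adopting the notation of Remark \ref{NumberTheoryNotation}, write $a = \alpha^{d_a}$, $b = \beta^{d_b}$, $c = \gamma^{d_c}$ with $\alpha,\beta,\gamma \in \Ocal_C$ and each exponent the largest divisor of $n$ admitting such a representation, and set $m_a = n/d_a$, $m_b = n/d_b$, $m_c = n/d_c$ (the hypotheses force $a,b,c \ne 0$, hence $\alpha,\beta,\gamma \ne 0$). By Lemma \ref{RadicalExtDegree} the extensions $L_a = C(\alpha^{1/m_a})$, $L_b$, $L_c$ are cyclic over $C$ of degrees $m_a, m_b, m_c$, and — as recalled in Remark \ref{NumberTheoryNotation} — for all but finitely many primes $\frakp \subset \Ocal_C$ the condition that $a$ be an $n$th power modulo $\frakp$ is equivalent to $\frakp$ splitting completely in $L_a$; by the Chebotarev density theorem the set $E_a$ of such $\frakp$ has density $\delta_a = 1/m_a$, and likewise $E_b, E_c$ have densities $\delta_b = 1/m_b$, $\delta_c = 1/m_c$. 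It then suffices to show that the set of primes of $\Ocal_C$ lying in none of $E_a, E_b, E_c$ has positive density: each such $\frakp$ lies over a prime $\frakp_0 \subset \Ocal_K$, and since $\Ocal_K/\frakp_0 \hookrightarrow \Ocal_C/\frakp$, none of $a,b,c$ is an $n$th power modulo $\frakp_0$; as $\frakp \mapsto \frakp_0$ is finite-to-one, infinitely many such $\frakp$ yield infinitely many such $\frakp_0$.

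The main tool is a second-order inclusion–exclusion. Since $E_a \cap E_b \cap E_c \subseteq E_a \cap E_b$, inclusion–exclusion together with Lemma \ref{Intersection} (applied to the pairs $(a,c)$ and $(b,c)$, giving $\delta(E_a \cap E_c) \ge 1/(m_a m_c)$ and $\delta(E_b \cap E_c) \ge 1/(m_b m_c)$) shows that the density of primes of $\Ocal_C$ in none of $E_a, E_b, E_c$ is at least
\[
1 - \frac{1}{m_a} - \frac{1}{m_b} - \frac{1}{m_c} + \frac{1}{m_a m_c} + \frac{1}{m_b m_c} = \Bigl(1 - \frac{1}{m_c}\Bigr)\Bigl(1 - \frac{1}{m_a} - \frac{1}{m_b}\Bigr),
\]
and by symmetry the distinguished index may be taken to be any one of $m_a, m_b, m_c$. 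This lower bound is strictly positive provided the distinguished index is $\ge 2$ and the reciprocals of the other two sum to less than $1$, i.e. provided at most one of $m_a, m_b, m_c$ equals $2$.

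It remains to deduce this from the hypotheses. First, none of $m_a, m_b, m_c$ equals $1$: if $m_a = 1$, Lemma \ref{NoDegree1} gives $a^{\omega_n} = k^n$ with $k \in K$, hence $a^{\omega_n} \in (\Ocal_K)^n$ by integrality, contradicting hypothesis (i) (and similarly for $b, c$). In case (i), $n$ is odd, so $m_a, m_b, m_c$ are odd divisors of $n$ that are $\ge 2$, hence $\ge 3$, and the bound above is at least $(1-\tfrac13)(1-\tfrac13-\tfrac13) = \tfrac29 > 0$. In case (ii) I must also exclude $m_a = 2$, and for this I would establish the $m = 2$ analogue of Lemma \ref{NoDegree1}: if $m_a = 2$, then $a^{2\omega_n} \in (\Ocal_K)^n$. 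Indeed $a = \alpha^{n/2}$ with $\alpha \in \Ocal_C$, so $a^2 = \alpha^n$; let $e$ be the largest divisor of $n$ with $a^2 = \eta^e$ for some $\eta \in \Ocal_K$. Maximality of $e$ forbids $\eta \in K^p$ for primes $p \mid n/e$, and also $\eta \in -4K^4$ when $4 \mid n/e$ (from $\eta = -4\tau^4$ one gets $a^2 = (-1)^e(2\tau^2)^{2e}$, which for even $e$ already exhibits $a^2$ as a $(2e)$th power of an element of $\Ocal_K$, and for odd $e$ forces $\sqrt{-1} = \pm a(2\tau^2)^{-e} \in K$ and then $a^2 = (\sqrt{-1}\cdot 2\tau^2)^{2e}$ — either way contradicting maximality of $e$). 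Hence $x^{n/e} - \eta$ is irreducible over $K$ by Lemma \ref{LangCriterion}. Any root $\eta'$ of it satisfies $(\eta')^n = \eta^e = a^2 = \alpha^n$, so $\eta'/\alpha$ is an $n$th root of unity, hence $\eta' \in C$ and $K(\eta')/K$ is abelian; Lemma \ref{AbelianRadicalLemma} then gives $\eta^{\omega_{n/e}} = \lambda^{n/e}$ for some $\lambda \in K$, so $a^{2\omega_{n/e}} = \lambda^n$, and since $\omega_{n/e} \mid \omega_n$ we get $a^{2\omega_n} = (\lambda^{\omega_n/\omega_{n/e}})^n \in (\Ocal_K)^n$, contradicting hypothesis (ii). Thus $m_a \ge 3$; choosing the distinguished index to be one of $m_b, m_c$ (so that $m_a$ occupies one of the two additive slots), the two other reciprocals sum to at most $\tfrac13 + \tfrac12 < 1$, and the bound is again strictly positive.

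The hardest part is the failure of the naive union bound: $\delta_a + \delta_b + \delta_c$ can be $\ge 1$, and in the configuration $m_a = m_b = m_c = 2$ the conclusion genuinely fails, since the product of two quadratic non-residues is a residue. The argument circumvents this by combining the second-order term of Lemma \ref{Intersection} (which restores a strictly positive density as soon as at most one exponent is $2$) with the obstruction lemmas — the $m = 1$ case being Lemma \ref{NoDegree1} and the $m = 2$ case being the claim proved above via Lemmas \ref{LangCriterion} and \ref{AbelianRadicalLemma} — which show that precisely the hypotheses on $a^{\omega_n}, b^{\omega_n}, c^{\omega_n}$ and on $a^{2\omega_n}$ are what keep the exponents out of the bad configuration.
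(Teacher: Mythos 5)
Your proposal is correct and follows the same road map as the paper: pass to $C = K(\zeta_n)$, identify the density of primes where $a$ is an $n$th power with the density of primes splitting completely in $L_a = C(\alpha^{1/m_a})$ via Chebotarev, run a second-order inclusion--exclusion using Lemma~\ref{Intersection}, and rule out the dangerous values of $m_a, m_b, m_c$ using Lemma~\ref{NoDegree1} (for $m=1$) and an application of Lemmas~\ref{LangCriterion} and~\ref{AbelianRadicalLemma} (for $m=2$ when $n$ is even), finishing with descent to $\Ocal_K$ via the residue field inclusion $\Ocal_K/\frakp_0 \hookrightarrow \Ocal_C/\frakp$.

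Two of your choices improve the exposition. First, keeping the two pair-densities that share the index $c$ lets you factor the lower bound as $\bigl(1 - \tfrac{1}{m_c}\bigr)\bigl(1 - \tfrac{1}{m_a} - \tfrac{1}{m_b}\bigr)$, which replaces the paper's exhaustive listing of sub-cases $(1/k, 1/3, 1/2)$, $(1/k, 1/2, 1/2)$, etc., with a single visible positivity criterion. (One small slip: the bound can be made positive for some choice of distinguished index exactly when \emph{not all three} of $m_a, m_b, m_c$ equal $2$, not merely when ``at most one equals 2'' --- two of them equal to $2$ is still fine. This does not affect your argument, since you establish the stronger condition anyway.) Second, and more substantively, your treatment of $m_a = 2$ is more careful than the paper's: the paper applies Lemma~\ref{AbelianRadicalLemma} to the extension $K((a^2)^{1/n})/K$ without checking that $x^n - a^2$ is irreducible over $K$, and for even $n$ it never is, since $x^n - a^2 = (x^{n/2}-a)(x^{n/2}+a)$. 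Your reduction --- pick the maximal $e \mid n$ with $a^2 = \eta^e$ for $\eta \in K$, verify via the $-4$ analysis that $x^{n/e}-\eta$ is irreducible by Lemma~\ref{LangCriterion}, note that a root lies in $C$, and only then apply Lemma~\ref{AbelianRadicalLemma} --- supplies exactly the missing irreducibility check, and the bookkeeping $\omega_{n/e} \mid \omega_n$ together with integrality of $\lambda$ lands you back at $a^{2\omega_n} \in (\Ocal_K)^n$. This is a genuine improvement in rigor over the paper's own proof of the $(1/2,1/2,1/2)$ case.
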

\begin{proof}
	Recall that $C = K(\zeta_n)$ and $L_a = C(\alpha^{1/m_a})$. Let $\delta_{a,b}$ be the density of primes $\mathfrak{p}$ of $C$ for which both $a$ and $b$ are $n$th powers modulo $\mathfrak{p}$. By Lemma \ref{Intersection}, we have $\delta_{a,b} \ge 1/(m_a \cdot m_b)$. Similarly we write $\delta_{a,b,c}$ for the density of primes $\mathfrak{p}$ for which all three are $n$th powers modulo $\mathfrak{p}$. Also let $\Delta$ be the density of primes $\mathfrak{p}$ for which at least one of $a,b,c$ is an $n$th power modulo $\mathfrak{p}$. We want to use inclusion-exclusion to bound $\Delta$. We have
	\[
	\Delta = \delta_a + \delta_b + \delta_c - \delta_{a,b} - \delta_{b,c} - \delta_{a,c} + \delta_{a,b,c}. 
	\]
	Suppose without loss of generality that $\delta_{a,c}$ is minimal among the densities for the possible pairs. Then $\delta_{a,b,c} \le \delta_{a,c}$, so we have
	\begin{align}
		\Delta &\le \delta_a + \delta_b + \delta_c - \delta_{a,b} - \delta_{b,c}, \\
		&\le 1/m_a + 1/m_b + 1/m_c - 1/(m_a \cdot m_b) - 1/(m_b \cdot m_c),
	\end{align}
	using the bound from Lemma \ref{Intersection}. We now split into cases based on the parity of $n$. 
	\begin{enumerate}[(i)]
		\item We handle the case when $n$ is odd first. We see by our assumptions and Lemma \ref{NoDegree1}, we have $m_a, m_b, m_c > 1$. Since $m_a = [L_a:C]$ must divide $n$ and $n$ is odd, we have $m_a, m_b, m_c \ge 3$. So then $\Delta \le 1/3 + 1/3 + 1/3 - 1/9 - 1/9 = 7/9 < 1$. 
		
		\item When $n$ is even, there are a few more exceptional cases for $(\delta_a, \delta_b, \delta_c)$ which we handle now:
		\begin{itemize}
			\item (1/$k$, 1/3, 1/2), $3 \le k \le 6$: From our bound above, we get
			\begin{equation}
				\Delta \le \frac{1}{k}+\frac{1}{3}+\frac{1}{2}-\frac{1}{6}-\frac{1}{2k} = \frac{2}{3}+\frac{1}{2k} < 1.				
			\end{equation} 
			\item (1/$k$, 1/2, 1/2), $k \ge 3$: From our bound above, we get
			\begin{equation}
				\Delta \le \frac{1}{k} + \frac{1}{2} + \frac{1}{2} - \frac{1}{4} - \frac{1}{2k} = 3/4+\frac{1}{2k} < 1. 
			\end{equation}
			\item (1/2, 1/2, 1/2): This case is genuine cause for concern. For example, it is possible to have 3 quadratic extensions of a field where every prime in the base splits in at least one of them (consider $\QQ(\sqrt{a}), \QQ(\sqrt{b})$, and $\QQ(\sqrt{ab}$)). However, we rule this out by the additional assumption on $a$. Suppose $m_a = 2$ so that $\delta_a = 1/2$. Then since $x^n - a = (x^2 - \alpha)f(x)$ for some $f(x) \in C[x]$, we are justified in writing $\alpha = a^{2/n}$. We have $K((a^2)^{1/n})/K$ is abelian since $K((a^2)^{1/n}) = K(\alpha) \subseteq C$, and $C/K$ is abelian as it is a cyclotomic extension. Applying Lemma \ref{AbelianRadicalLemma}, we see that
			\begin{equation}
				a^{2\omega_n} = k^n
			\end{equation}
			for some $k \in K$, contradicting the original assumption on $a$. 
			\item Otherwise, we have already $\delta_a + \delta_b + \delta_c < 1$, so we get $\Delta < 1$. 
		\end{itemize}
	\end{enumerate}
	In all cases we obtain that the density of primes $p$ where at least one of $a,b,c$ is an $n$th power modulo $p$ is less than 1, so there exists a set of primes of positive density where none are $n$th powers.
\end{proof}

In the case $K = \QQ$, this specializes to a more pleasant form. We now prove Theorem 2, an equally pleasant generalization for $\alpha, \beta, \gamma \in \mathbb{Q}$ rather than $a,b,c \in \mathbb{Z}$ as demanded by the applications from Section \ref{PositiveResults}. We remind the reader that if $p \in \mathbb{N}$ is a prime and $r,s \in \mathbb{Z}$ are such that $p\nmid s$, then $\frac{r}{s} \equiv rs^{-1}\pmod{p}$.

\begin{manualtheorem}{\textbf{ 2}}
    Let $\alpha, \beta, \gamma \in \mathbb{Q}$. 
	\begin{enumerate}[(i)]
		\item Suppose $n$ is odd and $\alpha, \beta, \gamma$ are not $n$th powers; or
		\item Suppose $n$ is even, $\alpha,\beta, \gamma$ are not $\frac{n}{2}$th powers, and $\alpha$ is not an $\frac{n}{4}$th power if $4 \mid n$. 
	\end{enumerate}
	Then there exists infinitely many primes $p \in \NN$ modulo which none of $\alpha, \beta, \gamma$ are $n$th powers.
\end{manualtheorem}

\begin{proof}
Let $c \in \mathbb{N}$ be such that for $\alpha ' := c^n\alpha, \beta' := c^n\beta,$ and $\gamma' := c^n\gamma$ we have $\alpha', \beta', \gamma' \in \mathbb{Z}$. Since $\omega_n = 1$ when $n$ is odd and $\omega_n = 2$ when $n$ is even we may apply Lemma \ref{ExistenceOfPrimes} to find infinitely many primes $p \in \mathbb{N}$ for which $\alpha', \beta',$ and $\gamma'$ are not $n$th powers modulo $p$. If $p \in \mathbb{N}$ is a prime for which $p \nmid c$, then $\alpha'$ is an $n$th power modulo $p$ if and only if $\alpha$ is an $n$th power modulo $p$, and similarly for $\beta'$ and $\gamma'$. It follows that there are infinitely many primes $p$ for which $\alpha, \beta,$ and $\gamma$ are also not $n$th powers modulo $p$.
\end{proof}

\begin{remark}\label{FLTRemark}
	When working over $\QQ$, as long as $n \neq 4,8$, when specialized to our situation where $c = a+b$, the condition that one of the three numbers not be an $\frac{n}{4}$th power is automatic by Fermat's Last Theorem. 
	
	Now we give a few more remarks about the subtleties that arose in this proof, and what obstacles prevent pushing it further, both over $\QQ$ and for general number fields $K$. 
\begin{enumerate}[(i)]
	\item It could happen that $x^n - a$ has a root mod all primes $\frakp \subset \Ocal_K$ even though $a$ is not an $n$th power. For example, this happens when $K = \QQ$; the polynomial $x^8 - 16$ has a root mod $p$ for all primes $p \in \ZZ$, but 16 is not an $8$th power (see \cite{Wang}). The Grunwald-Wang theorem says that if $x^n - a$ has a root mod $p$ for all primes $p$, then $n$ must be even of a special form, determined by $K$. This is ruled out above since $16$ is a perfect $8/2 = 4$th power.
	
	\item Even if one were to exclude the situation arising in the Grunwald-Wang theorem, it is possible that $(x^n-a)(x^n-b)$ could have a root mod $p$ for all primes $p$, despite neither factor having this behavior. We consider $3$ such examples when $K = \QQ$, each of which also suggests the necessity of the conditions in item (ii) of Lemma \ref{ExistenceOfPrimes} and item (ii) or Theorem 2.
	
	\begin{enumerate}[(a)]
	\item The polynomial $(x^{12}-3^6)(x^{12}-\beta^4)$ has this feature for any $\beta \in \NN$, because $(x^2+3)(x^3-\beta)$ has a root mod $p$ for all $p \in \ZZ$. In particular, we see that $x^2+3$ has a root modulo $p$ if $p \equiv 0, 1\pmod{3}$ and $x^3-\beta$ has a root modulo $p$ if $p \equiv 2\pmod{3}$. The problem here is that $x^{12} - 3^6$ has a quadratic factor, which lives inside $\QQ(\zeta_{12})$. In the language above, this means that $e = 1$, so the density argument won't work. (see \cite{SmallIntersectivePolynomials})
	
	\item Since $x^8-16 = (x^4-4)(x^4+4)$ and $x^8-16$ has a root modulo $p$ for any prime $p \in \mathbb{N}$, one of $4$ and $-4$ is a $4$th power modulo $p$.

	\item  Since $36$ is a fourth power modulo $p$ for any prime $p \not\equiv 13\pmod{24}$ and $9$ is a fourth power modulo $p$ for any prime $p \equiv 13 \pmod{24}$, we see that $(x^4-36)(x^4-9)$ has a root modulo $p$ for any prime $p \in \mathbb{N}$.
	\end{enumerate}
	For general $K$, one imagines it only gets harder to determine whether $x^n - a$ has a factor whose splitting field lies in a cyclotomic extension. The proof above implies that if $K$ does not have many roots of unity, then there are mild conditions on $a,b,c$ to get the existence of the desired prime.
	
	\item When $K$ doesn't have unique factorization, we lose some of the power afforded by Lemma \ref{NoDegree1} because we cannot ensure that if $a^x = b^y$, then $a$ is a perfect $y/(x,y)$th power. We can pass to ideals generated by $a$ and $b$, and use the fact that $\Ocal_K$ is Dedekind, so its ideals satisfy unique factorization. But this only says that the ideal generated by $a$ is a perfect $y/(x,y)$th power, and this need not imply $a$ has the same property. For example, if $K = \QQ(\sqrt{-5})$ and $\frakp = (2, 1+\sqrt{-5}) \subset \Ocal_K$, then $\frakp^2 = (2)$, but $2 \neq \alpha^2$ for any $\alpha \in \Ocal_K$.
	
	\item When $K$ has lots of units, it can also be difficult to deduce that $a$ is a perfect $y/(x,y)$th power given $a^x = b^y$. For example, the ideal generated by $(-4)$ in $\ZZ$ is the square of the ideal $(2)$, but $-4$ is of course not a perfect square. It is possible one could say more here by trying to control the units of $\Ocal_K$, but this seems a bit intimidating and not of immediate interest. 
\end{enumerate}
\end{remark}
We see that Theorem 2 is not useful when $n = 2$ or $n = 4$, so we will address the case of $n = 2$ separately and observe the implications that it has for the case of $n = 4$. We first require a lemma for constructing primes $p$ for which certain numbers are not squares modulo $p$.

\begin{lemma} \label{Exponent2}
	Suppose that $\alpha, \beta, \gamma \in \mathbb{Q}$ are not squares, and $\alpha\beta\gamma$ is also not a square. There exists a prime $p \in \mathbb{N}$ for which $\alpha, \beta,$ and $\gamma$ are not squares modulo $p$.
\end{lemma}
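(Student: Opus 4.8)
The plan is to encode the three simultaneous non-residue conditions as a single Frobenius condition in the multiquadratic field $F = \QQ(\sqrt{\alpha},\sqrt{\beta},\sqrt{\gamma})$ and then invoke the Chebotarev density theorem. First I would reduce to the case $\alpha,\beta,\gamma \in \ZZ\setminus\{0\}$: choose $N \in \NN$ with $N^2\alpha, N^2\beta, N^2\gamma \in \ZZ$ and replace $\alpha$ by $N^2\alpha$, and similarly for $\beta,\gamma$. A rational is a square iff its product with a nonzero rational square is a square, so the hypotheses are preserved (note $\alpha\beta\gamma$ becomes $N^6\alpha\beta\gamma$), and for any prime $p \nmid N$ the number $\alpha$ is a square modulo $p$ iff $N^2\alpha$ is; so it suffices to find (infinitely many) such $p$ in the reduced situation. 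Henceforth $\alpha,\beta,\gamma$ are nonzero integers, none a perfect square, with $\alpha\beta\gamma$ not a perfect square.

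Next I would work in $F/\QQ$, which is Galois with group $G$ an elementary abelian $2$-group; the map $\sigma \mapsto (\chi_\alpha(\sigma),\chi_\beta(\sigma),\chi_\gamma(\sigma))$, where $\chi_\alpha(\sigma)\in\{\pm1\}$ records whether $\sigma$ fixes $\sqrt{\alpha}$ (and similarly for $\beta,\gamma$), embeds $G$ into $\{\pm1\}^3$. For all but finitely many primes $p$ (those not dividing $2\alpha\beta\gamma$), the extension $F/\QQ$ is unramified at $p$, the Frobenius $\Frob_p \in G$ is a well-defined element since $G$ is abelian, and $\chi_\alpha(\Frob_p) = \left(\frac{\alpha}{p}\right)$, and likewise for $\beta$ and $\gamma$. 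Hence it is enough to exhibit $\tau \in G$ with $\chi_\alpha(\tau) = \chi_\beta(\tau) = \chi_\gamma(\tau) = -1$: by the Chebotarev density theorem (or merely Dirichlet, since $F/\QQ$ is abelian) there are infinitely many primes $p$ with $\Frob_p = \tau$, and each such $p$ has $\alpha,\beta,\gamma$ all non-squares modulo $p$.

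The one genuine step is producing $\tau$, and this is exactly where the assumption on $\alpha\beta\gamma$ is used. Identify $\{\pm1\}^3$ with the dual of $(\ZZ/2\ZZ)^3$ via $((\epsilon_i),(c_i)) \mapsto \prod_i \epsilon_i^{c_i}$, and let $R \subseteq (\ZZ/2\ZZ)^3$ be the relation module $\{(c_1,c_2,c_3) : \alpha^{c_1}\beta^{c_2}\gamma^{c_3} \in (\QQ^\times)^2\}$. By the Kummer-theoretic duality between $G$ and the subgroup of $\QQ^\times/(\QQ^\times)^2$ generated by $\bar\alpha,\bar\beta,\bar\gamma$, the image of $G$ in $\{\pm1\}^3$ is precisely the annihilator of $R$. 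The sign vector $(-1,-1,-1)$ annihilates $R$ iff every $(c_i)\in R$ has $c_1+c_2+c_3$ even; but the odd-weight vectors in $(\ZZ/2\ZZ)^3$ are $(1,0,0),(0,1,0),(0,0,1),(1,1,1)$, and their membership in $R$ would assert that $\alpha$, $\beta$, $\gamma$, or $\alpha\beta\gamma$ respectively is a perfect square — all ruled out by hypothesis. Therefore $(-1,-1,-1)$ lies in the image of $G$, giving the required $\tau$, and the proof concludes as above. I expect no other obstacle; if one prefers to avoid Chebotarev, after also clearing square factors one may assume $\alpha,\beta,\gamma$ squarefree, rewrite each $\left(\frac{\cdot}{p}\right)$ in terms of $p$ modulo $8\alpha\beta\gamma$ via quadratic reciprocity, and conclude using the same independence statement together with Dirichlet's theorem on primes in arithmetic progressions.
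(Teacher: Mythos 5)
Your proof is correct, but it takes a genuinely different route from the paper's. The paper argues by contradiction with a density count: assuming no good prime exists, it partitions the primes where exactly two of $\alpha,\beta,\gamma$ are non-residues into three pairwise disjoint sets, observes their union has density at least $3/4$, and notes that on this set $\left(\frac{\alpha\beta\gamma}{p}\right)=1$ always; this forces $\QQ(\sqrt{\alpha\beta\gamma})$ to be trivial since more than half the primes split in it. Your approach instead works directly inside the multiquadratic field $F=\QQ(\sqrt{\alpha},\sqrt{\beta},\sqrt{\gamma})$, identifies $\Gal(F/\QQ)$ via Kummer duality with the annihilator of the relation module $R\subseteq(\ZZ/2\ZZ)^3$, and shows that the assumption rules out precisely the four odd-weight vectors, so $(-1,-1,-1)$ is in the image of the Galois group and Chebotarev produces the desired primes. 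The paper's argument is shorter and more elementary in flavor (it never needs to form the compositum $F$ or invoke Kummer duality explicitly), but its quantitative bookkeeping slightly obscures \emph{why} the hypothesis on $\alpha\beta\gamma$ is the right one. Your argument is constructive rather than by contradiction, pinpoints exactly where each of the four non-square hypotheses is used (each kills one odd-weight relation), and generalizes cleanly to the analogous statement for $k$ rationals with the appropriate $2^{k-1}$ odd-weight product conditions. Both proofs are complete; yours trades a little extra machinery for a cleaner structural explanation.
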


\begin{proof}
	Suppose on the contrary that no such prime exists. Let $\left( \frac{a}{p} \right)$ be the Legendre symbol and let $S_{\alpha, \beta}$ be the set of primes $p$ for which
	\begin{equation}
		\left( \frac{\alpha}{p} \right) = \left( \frac{\beta}{p} \right) = -1.
	\end{equation}
	By our assumption, we have $\left( \frac{\gamma}{p} \right) = 1$ for all $p \in S_{\alpha, \beta}$ since otherwise we would have produced a prime with the desired features. Using similar notation for the other pairs, we see that $S_{\alpha, \beta} \cap S_{\beta, \gamma} = \varnothing$ because for primes $p$ in the first set, we have $\left( \frac{\alpha}{p} \right) = -1$ but for $p$ in the second set we have $\left( \frac{\alpha}{p} \right) = 1$. 
	
	Thus the density of $S := S_{\alpha, \beta} \cup S_{\beta, \gamma} \cup S_{\gamma, \alpha} \ge 3/4$ as the density of each $S_{\alpha, \beta}$ is at least $1/4$ by Quadratic Reciprocity. But for each $p \in S$, we have
	\begin{equation}
		\left( \frac{\alpha \beta \gamma}{p} \right) = 1.
	\end{equation}
	Hence a set of primes of density $> 1/2$ split in the extension $\QQ(\sqrt{\alpha \beta \gamma})$. But by Quadratic Reciprocity, the set of primes which split in a degree 2 extension has density 1/2. Thus we have $[\QQ(\sqrt{\alpha \beta \gamma}) : \QQ] = 1$, so we must have $\alpha \beta \gamma$ is a square in $\QQ$, contradicting the original assumption. 
\end{proof}

We are now ready to prove Theorem 1(iii)(c), which we state again here in an alternative but equivalent form.

\begin{manualtheorem}{1\textmd{(iii)(c)}}\label{QuadraticCaseNegativeResult}
	If $a,b,c \in \mathbb{Z}\setminus\{0\}$ are such that $\frac{a}{c}, \frac{b}{c}, \frac{a+b}{c},$ and $\left(\frac{a}{c}\right)\left(\frac{b}{c}\right)\left(\frac{a+b}{c}\right)$ are not squares, then for any $n \in \mathbb{N}$ the equation
	\begin{equation}
		ax+by = cwz^{2n}
	\end{equation}
	is not partition regular over $\mathbb{Q}$.
\end{manualtheorem}
\begin{proof}
	We want to use Theorem \ref{NotPartitionRegularCriterion}, so we produce a prime satisfying the conditions there. In particular, we want a prime $p$ such that $\alpha := \frac{a}{c}, \beta := \frac{b}{c}$, and $\gamma := \frac{a+b}{c}$ are not perfect squares in $\ZZ/p\ZZ$. Noting that $\alpha, \beta, \gamma,$ and $\alpha\beta\gamma$ are not squares in $\mathbb{Q}$, we see that the existence of our desired prime $p$ is a consequence of Lemma \ref{Exponent2}.
\end{proof}

We may also use Lemma \ref{Exponent2} to obtain a strengthening of the special case of Theorem 2 in which there are $2$ variables instead of $3$, which will be of use in Section \ref{SectionForSystems} when we determine necessary conditions for some systems of equations to be partition regular.

\begin{lemma}\label{2VariableLemma}
    Let $\alpha, \beta \in \QQ$ and $n \in \NN$. Suppose that one of the following conditions holds:
    \begin{enumerate}[(i)]
        \item $4\nmid n$ and neither of $\alpha, \beta$ are $n$th powers;
        
        \item $4|n$ and neither of $\alpha,\beta$ are $\frac{n}{2}$th powers. 
    \end{enumerate}
     Then there exists infinitely many primes $p \in \NN$ for which neither $\alpha, \beta$ are $n$th powers modulo $p$. 
\end{lemma}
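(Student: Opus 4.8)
The plan is to mimic the proof of Lemma \ref{ExistenceOfPrimes}, exploiting the fact that with only two numbers the inclusion--exclusion bound simplifies dramatically, and to reduce the handful of remaining cases to the odd-exponent and exponent-$2$ results already established (Corollary \ref{ExistenceOfPrimesQ} and Lemma \ref{Exponent2}). First I would reduce to $\alpha,\beta\in\ZZ$ exactly as in the proof of Corollary \ref{ExistenceOfPrimesQ}: choosing $c\in\NN$ with $c^n\alpha,c^n\beta\in\ZZ$, for any prime $p\nmid c$ the $n$th-power status of $\alpha$ (resp. $\beta$) modulo $p$ and in $\QQ$ is unchanged, so it is enough to treat the integral case. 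I would also record the elementary reduction that is used repeatedly: if $d\mid n$ and $x$ is not a $d$th power modulo $p$, then $x$ is not an $n$th power modulo $p$; hence it suffices to exhibit a divisor $d\mid n$ together with infinitely many primes modulo which neither $\alpha$ nor $\beta$ is a $d$th power. For odd $d>1$ this is Corollary \ref{ExistenceOfPrimesQ}(i) with the dummy third variable $\gamma=\beta$, and for $d=2$ it follows from Lemma \ref{Exponent2} with $\gamma=\beta$ (so $\alpha\beta\gamma\equiv\alpha$ modulo squares), together with ordinary Chebotarev in $\QQ(\sqrt\alpha)$ in the degenerate case that $\alpha\beta$ is already a square.

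The core of the argument is the two-variable analogue of the proof of Lemma \ref{ExistenceOfPrimes}. Set $C=\QQ(\zeta_n)$, write $\alpha=\alpha_0^{d_a}$ and $\beta=\beta_0^{d_b}$ with $d_a,d_b\mid n$ maximal in $\Ocal_C$, and put $m_a=n/d_a=[C(\alpha_0^{1/m_a}):C]$ and $m_b=n/d_b$, as in Remark \ref{NumberTheoryNotation} and Lemma \ref{RadicalExtDegree}. By the Chebotarev density theorem and Lemma \ref{Intersection}, the density of primes of $C$ modulo which at least one of $\alpha,\beta$ is an $n$th power is at most
\[
\frac{1}{m_a}+\frac{1}{m_b}-\frac{1}{m_a m_b},
\]
which is strictly less than $1$ as soon as $m_a,m_b\ge 2$; and, crucially, the two-variable bound has no analogue of the exceptional $(1/2,1/2,1/2)$ configuration that complicates the three-variable case, so no further conditions are required. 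Then a positive density of primes $\frakp$ of $C$, hence infinitely many rational primes lying below them, have the property that neither $\alpha$ nor $\beta$ is an $n$th power modulo $\frakp$, and therefore neither is an $n$th power modulo the rational prime below $\frakp$. Thus everything reduces to the assertion $m_a,m_b\ge 2$, i.e. that neither $\alpha$ nor $\beta$ is an $n$th power in $\Ocal_C$. By Lemma \ref{NoDegree1}, $m_a=1$ forces $\alpha^{\omega_n}=k^n$ for some $k\in\QQ$; for odd $n$ this says $\alpha=k^n$, contradicting the hypothesis, while for $n\equiv 2\pmod 4$ it gives $\alpha^2=k^n$, whence $\alpha=(\pm k)^{n/2}$ is an $\tfrac n2$th power, again contradicting the hypothesis. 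The only residual possibility is $4\mid n$ with $\alpha=-k^{n/2}$ for some $k\in\QQ$, i.e. $\alpha$ is the negative of a square; in that situation I would argue directly that $\alpha$ is not an $n$th power modulo any $p\equiv 3\pmod 4$ (if $\alpha=y^n$ then, using $4\mid n$, the element $-1$ would be a ratio of squares and hence a square modulo such $p$), and then extract infinitely many such $p$ at which $\beta$ is also not an $n$th power by intersecting with the positive-density condition coming from $m_b\ge 2$ — or, if $\beta$ too is of this residual shape, simply noting that then $\beta$ fails to be a square, hence an $n$th power, modulo every $p\equiv 3\pmod 4$ as well.

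The main obstacle is precisely this last bundle of residual configurations: the $4\mid n$ case in which some $m_a=1$ (the exponent $n=8$ being the most delicate), and the ``mixed'' configurations in which, say, $n=2m$ with $m$ odd, $\alpha=\ell^{m}$ with $\ell$ not a square and $\beta=\mu^{2}$ with $\mu$ not an $m$th power, so that $\alpha$ and $\beta$ fail to be $n$th powers for incompatible reasons and cannot both be handled by a single smaller exponent. In each such configuration a short computation in $\FF_p^{\times}$ converts the problem into simultaneously imposing a quadratic non-residue condition and an odd-power (or higher $2$-power) non-residue condition, which is a two-condition Chebotarev statement in the compositum of a cyclotomic field, a radical extension, and a quadratic field; the work is to show the relevant conjugacy class is non-empty, i.e. to rule out a conspiracy of the type ``$\QQ(\sqrt a),\QQ(\sqrt b),\QQ(\sqrt{ab})$ are jointly split by every prime.'' This is exactly the obstruction that makes $n=4,8$ exceptional elsewhere in the paper, and Lemma \ref{Exponent2} is the tool that dispatches its quadratic part.
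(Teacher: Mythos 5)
Your overall blueprint — reduce to $\ZZ$, run the two-variable inclusion–exclusion over $C=\QQ(\zeta_n)$ to get the density bound $1/m_a+1/m_b-1/(m_am_b)<1$, and handle the residual cases by combining Corollary \ref{ExistenceOfPrimesQ} with Lemma \ref{Exponent2} — is the same as the paper's, and your observation that with $\gamma=\beta$ the product $\alpha\beta\gamma\equiv\alpha$ is automatically a non-square is a genuinely slicker way to invoke Lemma \ref{Exponent2} than the paper's change of variables (the paper writes $\alpha=x^m$, $\beta=y^m$ and applies the lemma to $x,y,x$). However, there is a concrete logical error in the second paragraph that undermines the reduction to $m_a,m_b\ge 2$: when $n\equiv2\pmod4$ you deduce from $m_a=1$ that $\alpha$ is an $\tfrac n2$th power and then say this ``again contradicts the hypothesis.'' It does not. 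For $4\nmid n$ we are in case (i), where only being an $n$th power is forbidden; being an $\tfrac n2$th power is explicitly allowed, and indeed $m_a=1$ can happen. Concretely, take $n=6$ and $\alpha=-27=(-3)^3$: this is not a 6th power in $\QQ$, but $-3=(\sqrt{-3})^2$ is a square in $C=\QQ(\zeta_6)=\QQ(\sqrt{-3})$, so $\alpha$ is a 6th power in $\Ocal_C$ and $m_a=1$, $\delta_a=1$. The Chebotarev bound over $C$ then collapses and gives nothing.

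Because of this, the ``residual'' cases you park in the third paragraph are not a fringe worry but exactly where the proof needs to live when $n=2m$ with $m$ odd. You do name the right tool (Lemma \ref{Exponent2}) and the right phenomenon (the mixed configuration $\alpha=\ell^m$, $\beta=\mu^2$), but you never carry out an argument: the phrase ``the work is to show the relevant conjugacy class is non-empty'' is the entire content of the lemma in these cases, not a step. The paper's version is also terse here (it waves at ``similar density arguments'' for $d<m$), but it does at least give a complete reduction of the $d=m$ sub-case: since $m$ is odd, $\left(\frac{\alpha}{p}\right)=\left(\frac{x}{p}\right)$ and $\left(\frac{\beta}{p}\right)=\left(\frac{y}{p}\right)$, so Lemma \ref{Exponent2} applied to $x,y,x$ (noting $x^2y$ is not a square) produces the primes. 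To turn your proposal into a proof you would need, at minimum, to (a) retract the false claim that $m_a\ge2$ follows from hypothesis (i), (b) spell out the $d=m$ case (your $\gamma=\beta$ device does this cleanly), and (c) actually compute a positive density for the genuinely mixed case — e.g. by working in the compositum of $\QQ(\sqrt{\alpha})$ and $\QQ(\zeta_m,\beta^{1/m})$ and verifying the Galois group is the expected direct product, which is where the real (omitted) work is.
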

\begin{proof}
We begin by proving the desired result for item (i), and observe that the only case not handled by Theorem 2 is when $n = 2m$ with $m$ odd and at least one of $\alpha, \beta$ is an $m$th power. Suppose $\alpha = x^m$. Write $\beta = y^d$ for $d$ maximal. By similar density arguments as before, the only edge case is when $d = m$, otherwise the sum of densities of primes where at least one is an $n$th power will be strictly less than 1. In the case $d = m$, since $m$ is odd, we see that
\[
\left( \frac{\alpha}{p} \right) = \left( \frac{x}{p} \right)\text{ and }\left( \frac{\beta}{p} \right) = \left( \frac{y}{p} \right).
\]
Letting $\gamma = \alpha = x$ and $\beta = y$, we see that none of $\alpha, \beta, \gamma,$ or $\alpha\beta\gamma = \alpha^2\beta$ are squares in $\mathbb{Q}$, so by Lemma \ref{Exponent2} there exists infinitely many primes $p$ such that $x,y$ are not squares mod $p$, and thus $\alpha, \beta$ are not $n$th powers mod $p$. 

To see that the desired result holds for item (ii) we consider the cases of $n \neq 4$ and $n = 4$ separately. When $n \neq 4$, we let $\gamma \in \mathbb{Q}$ be any element that is not a $\frac{n}{4}$th power and apply Theorem 2. When $n = 4$, we see that neither of $\alpha$ or $\beta$ are squares, so by item (i) there exists infinitely many primes $p \in \mathbb{N}$ modulo which neither of $\alpha,\beta$ are squares.
\end{proof}
We observe that Remark \ref{FLTRemark}(ii)(b) tells us that for any prime $p$ and odd number $m$, one of $4^m$ and $-4^m$ will be a $4m$th power modulo $p$, which justifies our assumptions in item (ii) of Lemma \ref{2VariableLemma}.
\section{Reduction to the Case min(m,n) = 1}
\label{ReductionToM==1}

The main purpose of this section is to show that the equation $ax+by = cw^mz^n$ is not partition regular over $\mathbb{Z}\setminus\{0\}$ if $a,b,c \in \mathbb{Z}\setminus\{0\}$, $a+b \neq 0$, and $n,m \ge 2$. Afterwards, we will show that the equation $ax+by = cwz^n$ is not partition regular over $\mathbb{Z}\setminus\{0\}$ for some values of $a,b,c,$ and $n$ that are not already addressed by Theorem \ref{MainResult}.

\begin{manualtheorem}{1(i)(a)}\label{ReductionTom=1}
	If $a,b,c \in \mathbb{Z}\setminus\{0\}$ and $n,m \in \mathbb{N}$ are such that $a+b \neq 0$ and $n,m \ge 2$, then the equation
	\begin{equation}
		ax+by = cz^nw^m
	\end{equation}
	is not partition regular over $\mathbb{Z}\setminus\{0\}$.
\end{manualtheorem}

In order to prove Theorem \ref{ReductionTom=1} we will use item $(1)$ of Theorem 2.19 of \cite{NonlinearRado}. We now review the relevant definitions from \cite{NonlinearRado} to state and use this result.

\begin{definition}
	Let $\mathbb{N}_0 = \mathbb{N}\cup\{0\}$. Given a polynomial $P\in\mathbb{Z}[x_1,\cdots,x_n]$, we let {\it Supp($P$)} denote the collection of $\alpha \in \mathbb{N}_0^n$ for which
	
	\begin{equation}
		P(x_1,\cdots,x_n) = \sum_{\alpha \in \text{Supp}(P)}c_{\alpha}x^{\alpha},
	\end{equation}
	where $x^{\alpha} = x_1^{\alpha_1}x_2^{\alpha_2}\cdots x_n^{\alpha_n}$, and $c_{\alpha} \neq 0$.
\end{definition}

\begin{example}
	If $P(x,y) = x^2+y^2$, we have $\text{Supp}(P) = \{(2,0),(0,2)\}$.
\end{example}

\begin{example}
	If $P(x,y,z) = xyz+1$, we have $\text{Supp}(P) = \{(0,0,0),(1,1,1)\}$.
\end{example}

\begin{example}
	If $P(w,x,y,z) = wx+y+z^2$, we have $\text{Supp}(P) = \{(1,1,0,0),\allowbreak (0,0,1,0),\allowbreak (0,0,0,2)\}$.
\end{example}

\begin{definition}
	Let $\phi:\mathbb{Z}^n\rightarrow\mathbb{Z}$ be a positive linear map, i.e., \[\phi: (\alpha_1,\cdots,\alpha_n)\mapsto t_1\alpha_1+\cdots+t_n\alpha_n\] with $t_1,\cdots,t_n \in \mathbb{N}_0$.
	\begin{itemize}
		\item If $c$ is a finite coloring of $\mathbb{N}$, then we say that $\phi$ is {\it $c$-monochromatic} if $\{t_1,\cdots\allowbreak,t_n\}$ is $c$-monochromatic;
		
		\item If $P \in \mathbb{Z}[x_1,\cdots,x_n]$, and $(M_0,\cdots,M_{\ell})$ is the increasing enumeration of $\phi(\text{Supp}(P))$, then the {\it partition of $\text{Supp}(P)$ determined by $\phi$} is the ordered tuple $(J_0,\cdots,J_{\ell})$, where $J_i = \{\alpha \in \text{Supp}(P):\phi(\alpha) = M_i\}$. 
	\end{itemize}
\end{definition}

\begin{definition}
	A {\it Rado partition} of $P \in \mathbb{Z}[x_1,\cdots,x_n]$ is an ordered tuple $(J_0,\cdots\allowbreak,J_{\ell})$ such that, for every finite coloring $c$ of $\mathbb{N}$, there exist infinitely many $c$-monochromatic positive linear maps $\phi:\mathbb{Z}^n\rightarrow\mathbb{Z}$ such that $(J_0,\cdots,J_{\ell})$ is the partition of $\text{Supp}(P)$ determined by $\phi$. 
\end{definition}

\begin{definition}
	A {\it Rado set} for $P \in \mathbb{Z}[x_1,\cdots,x_n]$ is a set $J \subseteq \text{Supp}(P)$ such that there exists a Rado partition $(J_0,\cdots, J_{\ell})$ for $P$ such that $J = J_i$ for some $i \in \{0,1,\cdots, \ell\}$.
\end{definition}

\begin{definition}
	An {\it upper Rado functional} of order $\mathcal{M}$ for $P \in \mathbb{Z}[x_1,\cdots,x_n]$ is a tuple $(J_0,\cdots,J_{\ell},d_0,\cdots,d_{\mathcal{M}-1})$ for some $\ell \ge \mathcal{M}$ and $d_0,\cdots,d_{\mathcal{M}-1} \in \mathbb{N}$ such that, for every finite coloring $c$ of $\mathbb{N}$ and for every $r \in \mathbb{N}$, there exist infinitely many $c$-monochromatic positive linear maps $\phi:\mathbb{Z}^n\rightarrow\mathbb{Z}$, $(\alpha_1,\cdots,\alpha_n)\rightarrow t_1\alpha_1+\cdots+t_n\alpha_n$ such that $(J_{\ell},\cdots,J_0)$ is the partition of $\text{Supp}(P)$ determined by $\phi$, and if $(M_{\ell},\cdots,M_0)$ is the increasing enumeration of $\phi(\text{Supp}(P))$, then $M_i-M_\mathcal{M} = d_i$ for $i \in \{0,1,\cdots,\mathcal{M}-1\}$, and $M_\mathcal{M}-M_i \ge r$ for $i \in \{\mathcal{M}+1,\cdots,\ell\}$.
\end{definition}

\begin{definition}
	A polynomial $P(x) = \sum_{\alpha}c_{\alpha}x^{\alpha} \in \mathbb{Z}[x_1,\cdots,x_n]$ satisfies the {\it maximal Rado condition} if for every $q \ge 2$ there exists an upper Rado functional $(J_0,\cdots,J_{\ell},d_0,\cdots,d_{\mathcal{M}-1})$ for $P$ such that, setting $d_\mathcal{M} = 0$, the polynomial
	
	\begin{equation}
		g(w) = \sum_{i = 0}^\mathcal{M}q^{d_i}\sum_{\alpha \in J_i}c_{\alpha}w^{|\alpha|}
	\end{equation}
	has a real root in $[1,q]$.
\end{definition}

\begin{theorem}[\cite{NonlinearRado}, Theorem 2.19(1)]\label{MaximalRadoCondition}
	Fix $P \in \mathbb{Z}[x_1,\cdots,x_n]$. If the equation $P(x_1,\cdots,x_n) = 0$ is partition regular over $\mathbb{N}$, then $P$ satisfies the maximal Rado condition.
\end{theorem}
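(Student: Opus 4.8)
The plan is to prove the maximal Rado condition directly from partition regularity, by adapting to the polynomial setting Rado's classical argument showing that the columns condition is \emph{necessary} for a linear system to be partition regular; this is the route of Theorem 2.19 of \cite{NonlinearRado}. Fix $q \ge 2$ once and for all. The guiding heuristic is the substitution $x_j = q^{e_j}w$: it turns $P(x)$ into $\sum_{\alpha}c_\alpha q^{\langle e,\alpha\rangle}w^{|\alpha|}$, so grouping the monomials of $\text{Supp}(P)$ by the value of $\phi_e(\alpha):=\langle e,\alpha\rangle$ and factoring out the appropriate power of $q$ produces exactly a polynomial of the shape appearing in the maximal Rado condition. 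The job is therefore to manufacture, from a monochromatic solution of $P=0$, an exponent vector $e$ that is monochromatic in the required sense and an honest real root $w\in[1,q]$ of the resulting $g$.

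For the first half, observe that \emph{any} finite coloring $c$ of $\NN$ (extended to $\NN_0$ by, say, $c(0):=c(1)$) pulls back along $N\mapsto\lfloor\log_q N\rfloor$ to a finite coloring of $\NN$; combining this with the mantissa bucket $\lfloor\varepsilon^{-1}\mu(N)\rfloor$, where $\mu(N):=Nq^{-\lfloor\log_q N\rfloor}\in[1,q)$, gives a finite coloring $c'_{\varepsilon}$ with the property that in any $c'_{\varepsilon}$-monochromatic tuple $(x_1,\dots,x_n)$ the exponents $e_j:=\lfloor\log_q x_j\rfloor$ are $c$-monochromatic and the mantissas $\mu_j:=\mu(x_j)$ all lie within $\varepsilon$ of a common $w\in[1,q]$. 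Partition regularity of $P=0$ supplies such a tuple. Writing $(J_0,\dots,J_\ell)$ for the ordered partition of $\text{Supp}(P)$ determined by $\phi_e$ and $M_0>\cdots>M_\ell$ for the corresponding values, the identity $P(x)=0$ becomes $\sum_{i=0}^\ell q^{M_i}\sum_{\alpha\in J_i}c_\alpha\mu^{\alpha}=0$; since $\mu^{\alpha}=\prod_j\mu_j^{\alpha_j}=w^{|\alpha|}+O(\varepsilon)$, dividing by $q^{M_m}$ for an appropriate index $m$ yields $\bigl|\sum_{i\le m}q^{M_i-M_m}\sum_{\alpha\in J_i}c_\alpha w^{|\alpha|}\bigr| = O\bigl(\varepsilon+q^{-(M_m-M_{m+1})}\bigr)$.

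To conclude, let $c$ range over a refining sequence of colorings and let $\varepsilon\to0$, and extract a convergent sub-selection. There are only finitely many ordered partitions of the finite set $\text{Supp}(P)$, so one may assume $(J_0,\dots,J_\ell)$ is fixed; the gaps $M_i-M_{i+1}$ are positive integers, so after a further thinning each is either eventually constant or tends to $\infty$. Take $m$ to be the least index whose gap $M_m-M_{m+1}$ tends to $\infty$ (and $m=\ell$ if there is none), and set $d_i:=\lim(M_i-M_m)\in\NN$ for $i<m$. Then $(J_0,\dots,J_\ell,d_0,\dots,d_{m-1})$ is an upper Rado functional: for each finite coloring $c$ and each $r\in\NN$ it is realized by the $c$-monochromatic maps $\phi_e$ constructed above, taken with $\varepsilon$ small and $M_m-M_{m+1}\ge r$, of which there are infinitely many since this gap is unbounded. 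Finally the mantissas converge to some $w^{*}\in[1,q]$ while the displayed estimate forces $\sum_{i\le m}q^{d_i}\sum_{\alpha\in J_i}c_\alpha w^{|\alpha|}\to0$, so by continuity $g(w^{*})=0$ for $g(w)=\sum_{i=0}^{m}q^{d_i}\sum_{\alpha\in J_i}c_\alpha w^{|\alpha|}$ with $d_m=0$. Since $q$ was arbitrary, $P$ satisfies the maximal Rado condition.

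The step I expect to be the main obstacle is precisely this limiting argument: certifying that one fixed tuple $(J_0,\dots,J_\ell,d_0,\dots,d_{m-1})$ simultaneously witnesses the upper-Rado-functional property for \emph{all} finite colorings and all $r$, and that the ``short'' top gaps really do stabilize while the next gap really does diverge. The cleanest way to handle this bookkeeping, and the one adopted in \cite{NonlinearRado}, is to work in a sufficiently saturated nonstandard extension: partition regularity then yields a single internal monochromatic solution all of whose coordinates are infinite, its base-$q$ logarithms $e_j$ are infinite hyperintegers in one color class, the dichotomy ``bounded versus unbounded gap'' becomes the clean ``finite versus infinite hyperinteger gap'', and taking standard parts in the normalized identity $q^{-M_m}P(x)=0$ produces $g(w^{*})=0$ with no $\varepsilon$-management whatsoever. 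The remaining ingredients — the base-$q$ digit manipulations, the estimate $\mu^{\alpha}=w^{|\alpha|}+O(\varepsilon)$, and the verification that $\phi_e$ is $c$-monochromatic — are routine.
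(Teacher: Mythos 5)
First, a point of reference: the paper does not prove Theorem \ref{MaximalRadoCondition} at all — it is imported from \cite{NonlinearRado} and used as a black box — so there is no in-paper argument to compare yours against, and your proposal has to stand on its own.

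Your strategy is the right one (and is the one underlying \cite{NonlinearRado}): write $x_j = q^{e_j}\mu_j$ with $\mu_j \in [1,q)$, group $\text{Supp}(P)$ by the values of $\phi_e$, normalize the identity $P(x)=0$ by $q^{M_m}$, and pass to a limit to manufacture the polynomial $g$ and a root $w^{*} \in [1,q]$. The genuine gap is exactly where you flag it, and it is not mere bookkeeping. The definition of an upper Rado functional is universally quantified: the \emph{same} tuple $(J_0,\dots,J_\ell,d_0,\dots,d_{m-1})$ must be witnessed by infinitely many $c$-monochromatic maps for \emph{every} finite coloring $c$ of $\NN$ and every $r$. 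Your extraction runs over ``a refining sequence of colorings,'' but no countable refining sequence is cofinal in the family of all finite colorings of $\NN$: there are $2^{\aleph_0}$ two-colorings, and the common refinement of countably many finite colorings is only a countable partition, which some finite coloring will split. So the tuple you extract is certified only against the colorings in your chosen sequence; for an arbitrary coloring the construction may produce solutions realizing a different ordered partition of $\text{Supp}(P)$ or different stabilized gaps, and nothing in the argument forces your fixed tuple to be realized even once. The sentence ``for each finite coloring $c$ and each $r$ it is realized by the $c$-monochromatic maps constructed above'' is therefore an assertion rather than a consequence of what precedes it.

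The repair is the one you name but do not carry out: replace ``monochromatic for a sequence of colorings'' by a single object monochromatic for all colorings simultaneously — fix, by compactness of $\beta\NN$, an ultrafilter $\mathcal{U}$ every member of which contains a solution (exactly as in Lemma \ref{RadoIdealSetUp} of this paper), or equivalently a hypernatural solution with $u$-equivalent coordinates in a sufficiently saturated extension. Then the exponents, the ordered partition, the finite gaps $d_i$, and the limit mantissa $w^{*} = \mathcal{U}\text{-}\lim \mu$ are determined by $\mathcal{U}$ itself rather than by a choice of coloring, and the universally quantified clause follows for every $c$ and $r$ at once. Since that step is the entire content of the theorem, what you have written is a correct plan with the decisive step deferred, not a complete proof. (A smaller issue: your bound $O(\varepsilon + q^{-(M_m - M_{m+1})})$ suppresses a factor of $q^{M_0 - M_m}$ multiplying $\varepsilon$, which is only harmless after the gaps above index $m$ have been stabilized — so the order of your two thinnings matters.)
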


We are now ready to begin proving Theorem \ref{ReductionTom=1}.

\begin{lemma}\label{NotPROverN}
    If $a,b,c \in \mathbb{Z}\setminus\{0\}$ and $n,m \in \mathbb{N}$ are such that $a+b \neq 0$ and $n,m \ge 2$, then the equation
    
    \begin{equation}
        ax+by = cw^mz^n
    \end{equation}
    is not partition regular over $\mathbb{N}$.
\end{lemma}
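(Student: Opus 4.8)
The plan is to apply Theorem \ref{MaximalRadoCondition} to $P(x,y,w,z) = ax+by-cw^mz^n$: it suffices to show that $P$ fails the maximal Rado condition, i.e. that for some (and in fact every) $q \ge 2$, no upper Rado functional for $P$ yields a polynomial $g$ having a real root in $[1,q]$. Write $e_x = (1,0,0,0)$, $e_y = (0,1,0,0)$, $\mu = (0,0,m,n)$, so that $\text{Supp}(P) = \{e_x,e_y,\mu\}$ with coefficients $c_{e_x}=a$, $c_{e_y}=b$, $c_\mu = -c$ and total degrees $|e_x| = |e_y| = 1$, $|\mu| = m+n \ge 4$.

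First I would determine which subsets of $\text{Supp}(P)$ can appear as the top group $J_0$ of an upper Rado functional. The singletons $\{e_x\}, \{e_y\}, \{\mu\}$ and the pair $\{e_x,e_y\}$ all occur: given a finite coloring, pick an infinite color class $D$ with least element $d$; then the maps $\phi = (t,t,t,t)$ ($t \in D$) put $\mu$ on top, $\phi = (t_1, d, d, d)$ with $t_1 \in D$, $t_1 > (m+n)d$ put $e_x$ on top (symmetrically $e_y$), and $\phi = (t_1, t_1, d, d)$ with $t_1 > (m+n)d$ put $\{e_x,e_y\}$ on top, and in each case infinitely many such monochromatic $\phi$ exist. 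In contrast, $\{e_x,\mu\}$, $\{e_y,\mu\}$, and $\{e_x,e_y,\mu\}$ cannot be top groups: placing $\mu$ in a common group with $e_x$ (resp. $e_y$) would require the equation $mt_3 + nt_4 = t_1$ to have monochromatic solutions for every finite coloring, but since $m,n\ge 2$ no nonempty subset of $\{m,n,-1\}$ sums to $0$, so by Corollary \ref{1PREquation} this equation is not partition regular over $\mathbb{N}$; an analogous columns-condition computation rules out $\{e_x,e_y,\mu\}$.

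Next --- the crux --- I would show that $P$ admits no upper Rado functional of order $\ge 1$. Such a functional prescribes a fixed positive integer gap $d_0 = M_0 - M_1$ between the two largest $\phi$-values, realized by infinitely many monochromatic $\phi$ for every finite coloring, and the two monomials attaining $M_0, M_1$ are two of $e_x, e_y, \mu$. If they are $e_x$ and $e_y$, the requirement $|t_1 - t_2| = d_0$ is violated monochromatically by the finite coloring $k \mapsto \lfloor k/d_0 \rfloor \bmod 2$, since consecutive $d_0$-blocks receive opposite colors. If $\mu$ is one of the two, the requirement forces $|t_1 - (mt_3 + nt_4)| = d_0$ (or the same with $t_2$); here I would use the finite coloring $k \mapsto \lfloor \log_2 k \rfloor \bmod K$ for large $K$ and argue that, because $m,n\ge 2$, in any monochromatic $\phi$ with $t_1$ in dyadic block $j$ and $mt_3 + nt_4 \le 2^{j+1}$, both $t_3$ and $t_4$ must lie in dyadic blocks of index $\le j - K$, whence $mt_3 + nt_4 < 2^{j-1}$ and $|t_1 - (mt_3 + nt_4)| > 2^{j-1}$, which exceeds $d_0$ once $j$ is large --- so only finitely many monochromatic $\phi$ exist, a contradiction. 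This is exactly the step where both $m\ge 2$ and $n\ge 2$ are used; for $cwz$ it genuinely fails.

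Finally, by the previous step every upper Rado functional has order $0$, so its associated polynomial is $g(w) = \sum_{\alpha \in J_0} c_\alpha w^{|\alpha|}$ for one of the top groups identified above, i.e. $g(w) \in \{-cw^{m+n},\ aw,\ bw,\ (a+b)w\}$. Since $a, b, c$ and $a+b$ are all nonzero, each of these polynomials has $w = 0$ as its unique real root, so none has a root in $[1,q]$ for any $q\ge 2$; this is the one place the hypothesis $a + b \ne 0$ enters, and indeed if $a+b = 0$ then $(a+b)w$ is identically zero and the maximal Rado condition is met trivially, consistent with item (i)(b). Hence $P$ violates the maximal Rado condition, and Theorem \ref{MaximalRadoCondition} shows that $ax + by = cw^mz^n$ is not partition regular over $\mathbb{N}$. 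I expect the order-$\ge 1$ step to be the main difficulty: it requires choosing the auxiliary finite colorings carefully and estimating the $\phi$-values of monomials of different total degrees, and it is precisely where the nonlinearity hypotheses $m, n \ge 2$ cannot be weakened.
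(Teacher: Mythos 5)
Your plan is the same as the paper's: invoke Theorem \ref{MaximalRadoCondition}, show $P$ has no upper Rado functional of order $\ge 1$, then observe that every order-$0$ functional produces $g(w) \in \{-cw^{m+n}, aw, bw, (a+b)w\}$, none of which has a root in $[1,q]$. Your identification of the possible Rado partitions and the order-$0$ analysis are both correct, and match the paper's.

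The gap is in the crux step. You correctly observe that when $\{e_x\},\{e_y\}$ are the top two groups, the coloring $k \mapsto \lfloor k/d_0\rfloor \bmod 2$ kills the constraint $|t_1-t_2|=d_0$. But the dyadic coloring $k\mapsto\lfloor\log_2 k\rfloor\bmod K$ you use when $\mu$ is among the top two does \emph{not} do what you claim. Your argument only treats the case $mt_3+nt_4\le 2^{j+1}$, but when $J_0=\{\mu\}$ and $J_1=\{e_x\}$ the constraint is $mt_3+nt_4=t_1+d_0$, which can exceed $2^{j+1}$ precisely when $t_1$ sits near the top of its dyadic block. In that regime, if $m=2$ (or $n=2$), one can have $t_3$ near $2^j$ in the \emph{same} block as $t_1$ and $t_4$ a small constant, and the coloring is fooled. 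Concretely, take $m=n=2$, $d_0=10$, $K=100$: for each $j\equiv 0\pmod{100}$, put $t_4=1$, $t_3\in\{2^j,\dots,2^j+3\}$, $t_1=2t_3-8$; then $t_1,t_3\in[2^j,2^{j+1})$ and $t_4$ lies in block $0$, all dyadic blocks are $\equiv 0\pmod{100}$, yet $2t_3+2t_4-t_1=10=d_0$. This gives infinitely many monochromatic $\phi$ for that coloring, so the coloring fails to rule out the order-$1$ functional $\bigl(\{\mu\},\{e_x\},\{e_y\},d_0\bigr)$. This is exactly the edge case $m=2$ (respectively $n=2$) at the boundary of a dyadic block, and it is not a removable imprecision: the conclusion you want (only finitely many monochromatic $\phi$) is false for this coloring when $\min(m,n)=2$.

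The paper sidesteps this by appealing to Rado's inhomogeneous theorem (Theorem \ref{InhomogeneousRadoTheorem}) and the single-equation Rado criterion (Corollary \ref{1PREquation}): the gap constraints are linear equations such as $mt_1-t_2+nt_4=d_0$ whose coefficient set $\{m,-1,n\}$ has no vanishing nonempty partial sum when $m,n\ge 2$, so they are not partition regular — a fact whose proof uses the $p$-adic leading-digit coloring rather than a dyadic block coloring, and which has no boundary pathology at $m=2$. Replacing your $\lfloor\log_2 k\rfloor\bmod K$ coloring with that Rado-type coloring (or simply citing Corollary \ref{1PREquation} as the paper does) repairs the argument; as written, the proof is incomplete when $m=2$ or $n=2$.
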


\begin{proof}
	Let us fix $a,b,n,m \in \mathbb{N}$ with $n,m \ge 2$ and $a+b \neq 0$. We will begin by showing that the polynomial $P(w,x,y,z) = cz^nw^m-ax-by$ does not have any upper Rado functional of order $\mathcal{M} \ge 1$. From there it will be relatively simple to verify that $P$ does not satisfy the maximal Rado condition. To this end, let us assume for the sake of contradiction that $(J_0,\cdots,J_{\ell},d_0,\cdots,d_{\mathcal{M}-1})$ is an upper Rado functional of order $\mathcal{M}$ for $P$. Note that $\text{Supp}(P) = \{(m,0,0,n),(0,1,0,0),(0,0,1,0)\} = \{M_0',M_1',M_2'\}$. By considering all possible orderings of the set $\{M_0',M_1',M_2'\}$ and the definition of $d_0$, we see that for any finite coloring $c$ of $\mathbb{N}$ there exist infinitely many $c$-monochromatic positive linear maps 
	\begin{equation}
		\phi(\alpha_1,\alpha_2,\alpha_3,\alpha_4) = t_1\alpha_1+t_2\alpha_2+t_3\alpha_3+t_4\alpha_4,
	\end{equation}
	for which at least one of Equations \eqref{FirstTrialEquation}-\eqref{LastTrialEquation} has a solution:
	\begin{alignat}
		\phi\phi(m,0,0,n) & - & \phi(0,1,0,0) = d_0 &\text{ if and only if } \textcolor{white}{-}mt_1 & - & t_2 & + & & & nt_4 & \ =\  & d_0, \label{FirstTrialEquation} \\
		\phi(0,1,0,0) & - & \phi(m,0,0,n) = d_0 &\text{ if and only if } -mt_1 & + & t_2 & - & & & nt_4 & \ =\  & d_0,\\
		\phi(m,0,0,n) & - & \phi(0,0,1,0) = d_0 &\text{ if and only if } \textcolor{white}{-}mt_1 & & & - & t_3 & + & nt_4 & \ =\  & d_0, \\
		\phi(0,0,1,0) & - & \phi(m,0,0,n) = d_0 &\text{ if and only if } -mt_1 & & & + & t_3 & - & nt_4 & \ =\  & d_0, \label{LastHalfDecentTrialEquation} \\
		\phi(0,1,0,0) & - & \phi(0,0,1,0) = d_0 &\text{ if and only if }  & & t_2 & - & t_3 & & & \ =\  & d_0,
	\label{LastTrialEquationNegative} \\
		\phi(0,0,1,0) & - & \phi(0,1,0,0) = d_0 &\text{ if and only if }  & - & t_2 & + & t_3 & & & \ =\  & d_0.
		\label{LastTrialEquation}
	\end{alignat}
	Equivalently, for any finite coloring of $\mathbb{N}$ at least one of Equations \eqref{FirstTrialEquation}-\eqref{LastTrialEquation} must possess infinitely many monochromatic solutions $(t_1,t_2,t_3,t_4)$. By Theorem \ref{InhomogeneousRadoTheorem} we see that any one of Equations \eqref{FirstTrialEquation}-\eqref{LastTrialEquation} is partition regular over $\mathbb{Z}$ (and hence over $\mathbb{N}$) if and only if there exists a constant solution $t_1 = t_2 = t_3 = t_4 = t$. It follows that Equations \eqref{LastTrialEquationNegative} and \eqref{LastTrialEquation} are not partition regular. Furthermore, we see that one of Equations \eqref{FirstTrialEquation}-\eqref{LastHalfDecentTrialEquation} possess infinitely many monochromatic solutions $(t_1,t_2,t_3,t_4)$ in any finite coloring of $\mathbb{N}$ if and only if one of Equations \eqref{FirstTrialEquationReturns}-\eqref{LastHalfDecentTrialEquationReturns} below possess infinitely many monochromatic solutions $(t_1,t_2,t_3,t_4)$ in any finite coloring of $\mathbb{N}$:
	\begin{align}
		m(t_1-\frac{d_0}{m+n-1})-(t_2-\frac{d_0}{m+n-1})+n(t_4-\frac{d_0}{m+n-1}) &= 0,
		\label{FirstTrialEquationReturns} \\
		-m(t_1-\frac{d_0}{m+n-1})+(t_2-\frac{d_0}{m+n-1})-n(t_4-\frac{d_0}{m+n-1}) &= 0, \\
		m(t_1-\frac{d_0}{m+n-1})-(t_3-\frac{d_0}{m+n-1})+n(t_4-\frac{d_0}{m+n-1}) &= 0, \\
		-m(t_1-\frac{d_0}{m+n-1})+(t_3-\frac{d_0}{m+n-1})-n(t_4-\frac{d_0}{m+n-1}) &= 0.
		\label{LastHalfDecentTrialEquationReturns}
	\end{align}
	Since $m,n \ge 2$ we may repeatedly use Corollary \ref{1PREquation} to create a finite partition of $\mathbb{N}$ for which the only monochromatic solution to any of Equations \eqref{FirstTrialEquationReturns}-\eqref{LastHalfDecentTrialEquationReturns} (considered separately, not as a system) is the solution $t_1 = t_2 = t_3 = t_4 = \frac{d_0}{m+n-1}$. It follows that $P$ does not have any upper Rado functionals of order $\mathcal{M} \ge 1$. Noting that an upper Rado functional of order $0$ is just a Rado partition, we note that the set of Rado partitions of $P$ is
	\begin{align}
		\Big\{\Big(\{(m,0,0,n)\},\{(0,1,0,0)\},\{(0,0,1,0)\}\Big), \\ \Big(\{(m,0,0,n)\},\{(0,0,1,0)\},\{(0,1,0,0)\}\Big), \\
		\Big(\{(0,1,0,0)\},\{(m,0,0,n)\},\{(0,0,1,0)\}\Big), \\ \Big(\{(0,1,0,0)\},\{(0,0,1,0)\},\{(m,0,0,n)\}\Big), \\
		\Big(\{(0,0,1,0)\},\{(m,0,0,n)\},\{(0,1,0,0)\}\Big), \\ \Big(\{(0,0,1,0)\},\{(0,1,0,0)\},\{(m,0,0,n)\}\Big), \\
		\Big(\{(0,1,0,0),(0,0,1,0)\},\{(m,0,0,n)\}\Big), \\ \Big(\{(m,0,0,n)\},\{(0,1,0,0),(0,0,1,0)\}\Big)\Big\}.
	\end{align}
	Finally, to see that $P$ does not satisfy the maximal Rado condition, we see that for $q \ge 2$ we have
	\begin{alignat}{2}
		g(w) = &\sum_{i = 0}^mq^{d_i}\sum_{\alpha \in J_i}c_{\alpha}w^{|\alpha|} = \sum_{i = 0}^0q^0\sum_{\alpha \in J_i}c_{\alpha}w^{|\alpha|}\numberthis\\
		= &\sum_{\alpha \in J_0}c_{\alpha}w^{|\alpha|} \in \{cw^{n+m},-aw,-bw,-(a+b)w\},
	\end{alignat}
	so none of the polynomials that $g(w)$ could be, contain a root in $[1,q]$.
\end{proof}

\begin{proof}[Proof of Theorem \ref{ReductionTom=1}]
    By Lemma \ref{NotPROverN} we see that
    
    \begin{equation}
        ax+by = cw^mz^n\label{BadEquation1}
    \end{equation}
    and
    
    \begin{equation}
        ax+by = (-1)^{n+m-1}cw^mz^n\label{BadEquation2}
    \end{equation}
    are not partition regular over $\mathbb{N}$. Let $\mathbb{N} = \bigcup_{i = 1}^{r_1}C_i$ be a partition for which no cell contains a solution to Equation \eqref{BadEquation1} and let $\mathbb{N} = \bigcup_{i = 1}^{r_2}D_i$ be a partition for which no cell contains a solution to Equation \eqref{BadEquation2}. It now suffices to show that no cell of the partition 
    
    \begin{equation}
      \mathbb{Z}\setminus\{0\} = \left(\bigcup_{i = 1}^{r_2}(-D_i)\right)\cup\bigcup_{i = 1}^{r_1}C_i
    \end{equation}
    contains a solution to Equation \eqref{BadEquation1}. It follows from the definition of the $C_i$ that none of them contain a solution to Equation \eqref{BadEquation1}, so let us assume for the sake of contradiction that for some $1 \le i \le r_2$ there exist $w',x',y',z' \in -D_i$ which satisfy Equation \eqref{BadEquation1}. Letting $w = -w', x = -x', y = -y',$ and $z = -z'$, we see that $w,x,y,z \in D_i$ and
    
    \begin{alignat}{2}
         &a(-x)+b(-y) = ax'+by' = c(w')^m(z')^n = c(-w)^m(-z)^n = (-1)^{m+n}cw^mz^n,\\
        &\text{hence } ax+by = (-1)^{m+n-1}cw^mz^n, \numberthis
    \end{alignat}
    which yields the desired contradiction.
\end{proof}

We now describe another condition for a polynomial to be partition regular of a particular flavor, involving lower Rado functionals.

\begin{definition}
	A {\it lower Rado functional} of order $\mathcal{M} \in \mathbb{N}\cup\{0\}$ for $P \in \mathbb{Z}[x_1,\cdots,x_n]$ is a tuple $(J_0,\cdots,J_{\ell},d_1,\cdots,d_\mathcal{M})$ for some $\ell \ge \mathcal{M}$ and $d_1,\cdots,d_\mathcal{M}\in\mathbb{N}$ such that, for every finite coloring $c$ of $\mathbb{N}$ and for every $N \in \mathbb{N}$, there exist infinitely many $c$-monochromatic positive linear maps
	\begin{align}
		\phi:\mathbb{Z}^n &\rightarrow \mathbb{Z}, \\
		(\alpha_1,\cdots,\alpha_n) &\mapsto (t_1\alpha_1+t_2\alpha_2+\cdots+t_n\alpha_n),
	\end{align}
	such that $(J_0,\cdots,J_{\ell})$ is the partition of Supp$(P)$ determined by $\phi$ and, if $(M_0,\cdots\allowbreak,M_\ell)$ is the increasing enumeration of $\phi(\text{Supp}(P))$, then $M_i-M_0 = d_i$ for $i \in \{1,2,\cdots,\mathcal{M}\}$, and $M_{\mathcal{M}+1}-M_\mathcal{M} \ge N$. 
\end{definition}

\begin{definition}
	For $P \in \mathbb{Z}[x_1,x_2,\cdots,x_n]$ and $q \in \mathbb{N}$, the equation \[P(x_1,x_2,\cdots,x_n) = 0\] is {\it $q$-partition regular} if for any $k \in \mathbb{N}$ and any partition $q^k\mathbb{N} = \bigcup_{i = 1}^vA_i$, there exists $1 \le i_0 \le v$ and $y_1,\cdots,y_n \in A_{i_0}$ for which $P(y_1,\cdots,y_n) = 0$.
\end{definition}

\begin{theorem}[{\cite[Theorem 2.19(2)]{NonlinearRado}}]\label{MinimalRadoCondition}
	Suppose that $p \in \mathbb{N}$ is a prime. If $P \in \mathbb{Z}[x_1,\cdots,x_n]$ is $p$-partition regular, then there exists a lower Rado functional $(J_0,\cdots\allowbreak,J_{\ell},d_1,\cdots,d_{\mathcal{M}})$ for $P$ such that setting $d_0 = 0$, the equation
	\begin{equation}
		\sum_{i = 0}^\mathcal{M}p^{d_i}\sum_{\alpha \in J_i}\frac{1}{\alpha!}\frac{\partial^{\alpha}P}{\partial x^{\alpha}}(0,0,\cdots,0)w^{|\alpha|} = 0\label{MinimalRadoConditionEquation}
	\end{equation}
	
	\noindent has an invertible solution in the ring $\mathbb{Z}_p$ of $p$-adic integers.
\end{theorem}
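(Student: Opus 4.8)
The plan is to argue directly from the hypothesis, using a $p$-adic analogue of Rado's original ``digit-pattern'' colourings to manufacture, out of the monochromatic solutions that $p$-partition regularity supplies, both a lower Rado functional for $P$ and a $p$-adic unit root of the associated equation; this is the strategy of Theorem~3.3 of \cite{NonlinearRado}. Throughout, write $c_\alpha=\tfrac{1}{\alpha!}\tfrac{\partial^{\alpha}P}{\partial x^{\alpha}}(0,\dots,0)$ for the coefficient of $x^{\alpha}$ in $P$, so that, with $d_0:=0$, equation \eqref{MinimalRadoConditionEquation} reads $\sum_{i=0}^{m}p^{d_i}\sum_{\alpha\in J_i}c_\alpha w^{|\alpha|}=0$.

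First I would design the colourings and run $p$-partition regularity against them. Assume $P$ is $p$-partition regular, fix a finite colouring $c$ of $\mathbb{N}$, a precision $s\in\mathbb{N}$ and a gap length $N\in\mathbb{N}$, and put $B=p^{s}$. For $y\in p^{k}\mathbb{N}$ with $k=k(s,N,P)$ chosen large, let the colour of $y$ record a bounded amount of data extracted from the base-$B$ expansion of $y$: the $c$-colour of $v_p(y)$, the leading unit $w(y):=y/p^{v_p(y)}\bmod p^{s}$, and the coarse pattern of the $p$-adic scales carrying significant base-$B$ digits of $y$, with the gaps between consecutive such scales recorded only up to the threshold $N$. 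This is a finite colouring of $p^{k}\mathbb{N}$, so there is a monochromatic solution $(y_1,\dots,y_n)$ of $P=0$. Monochromaticity forces the valuations $v_p(y_j)$ to be $c$-monochromatic in $\mathbb{N}$ and all the $y_j$ to share one leading unit $w$, whence for each $\alpha\in\text{Supp}(P)$
\[
c_\alpha\,y_1^{\alpha_1}\cdots y_n^{\alpha_n}\equiv c_\alpha\,w^{|\alpha|}\,p^{\phi(\alpha)}\pmod{p^{\,\phi(\alpha)+s}},\qquad \phi(\alpha):=\sum_{j} v_p(y_j)\,\alpha_j,
\]
with $\phi$ a $c$-monochromatic positive linear form on $\mathbb{Z}^{n}$. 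Listing the distinct values of $\phi$ on $\text{Supp}(P)$ as $M_0<\cdots<M_{\ell}$ and putting $J_i=\{\alpha:\phi(\alpha)=M_i\}$ partitions $\text{Supp}(P)$; the recorded scale pattern pins the low gaps to prescribed values $M_i-M_0=d_i$ for $i\le m$ and forces $M_{m+1}-M_m\ge N$. Reducing the identity $P(y_1,\dots,y_n)=0$ modulo $p^{\,M_0+\min(N,s)}$ and dividing by $p^{M_0}$ kills the classes $J_i$ with $i>m$ (their monomials have valuation $\ge M_0+N$) and leaves
\[
\sum_{i=0}^{m}p^{d_i}\sum_{\alpha\in J_i}c_\alpha\,w^{|\alpha|}\;\equiv\;0\pmod{p^{\min(N,s)}}.
\]

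Next I would pass to a limit. There are only finitely many candidate shapes $(J_0,\dots,J_{\ell},d_1,\dots,d_m)$ with $\max_i d_i$ below a fixed bound, so, letting $c$ range over arbitrarily fine finite colourings of $\mathbb{N}$ and $s,N\to\infty$, one shape $\sigma$ recurs for arbitrarily fine $c$ and arbitrarily large $s,N$. For that $\sigma$ the extracted forms $\phi$ are $c$-monochromatic for every $c$ and come in infinite supply, so $\sigma$ is a lower Rado functional for $P$; and the units $w\in(\mathbb{Z}/p^{s}\mathbb{Z})^{\times}$ accompanying it satisfy $\sum_{i=0}^{m}p^{d_i}\sum_{\alpha\in J_i}c_\alpha w^{|\alpha|}\equiv 0\pmod{p^{s}}$ with $s\to\infty$, so by compactness of $\mathbb{Z}_p$ they accumulate to some $w_{\infty}\in\mathbb{Z}_p^{\times}$, which is an invertible solution of \eqref{MinimalRadoConditionEquation} for $\sigma$.

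I expect the combinatorial coordination in the last two steps to be the main obstacle. One must choose $k(s,N)$ and the recording depth so that forming the products $y^{\alpha}$ creates no base-$B$ carries that corrupt the common leading unit before precision $p^{s}$; one must arrange the scale-pattern part of the colouring so that the recorded gaps are literally the exponents of $p$ occurring in \eqref{MinimalRadoConditionEquation} and not scaled multiples of them; and, hardest of all, one must drive the shape $\sigma$, the auxiliary colouring $c$ and the precision $s$ to their limits in a coordinated fashion so that a single lower Rado functional survives together with its $p$-adic unit root. This last point amounts to an iterated van der Waerden / Hales--Jewett bookkeeping and is where the real work of Theorem~3.3 of \cite{NonlinearRado} is done.
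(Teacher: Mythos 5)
The paper does not prove Theorem~\ref{MinimalRadoCondition}: it is imported verbatim from the reference (the header ``cf.\ Theorem~3.3 of \cite{NonlinearRado}'' signals a citation, not a claim), and the text immediately after the statement moves on to Lemma~\ref{PpartitionRegularLemma}, which consumes this theorem as a black box inside Theorem~\ref{MinimalRadoConditionApplied}. There is therefore no in-paper proof against which to compare your proposal; any comparison must be with the argument in \cite{NonlinearRado} itself.

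On its own merits, your sketch lines up with the standard route to this kind of minimal Rado criterion: colour $p^k\mathbb{N}$ by the $c$-colour of $v_p(y)$, the leading unit $y/p^{v_p(y)}\bmod p^s$, and a truncated digit-gap pattern; run $p$-partition regularity; use the expansion $c_\alpha y^\alpha \equiv c_\alpha w^{|\alpha|}p^{\phi(\alpha)}\pmod{p^{\phi(\alpha)+s}}$, divide the resulting identity by $p^{M_0}$ and reduce modulo $p^{\min(N,s)}$; then pigeonhole over the finitely many lower Rado shapes and use compactness of $\mathbb{Z}_p^{\times}$ to extract a unit root in the limit. One small point: you take $d_i = M_i - M_0$, which is what equation~\eqref{MinimalRadoConditionEquation} (with $d_0 = 0$) requires; the paper's Definition of a lower Rado functional writes $M_i - M_m$, which is negative for $i<m$ and appears to be a typo for $M_i - M_0$, so your reading is the right one. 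Finally, you are candid that the coordinated limit in $(c,s,N)$ -- forcing a single shape to survive together with a coherent $p$-adic unit root -- is left as a plan rather than carried out; that step is genuinely the content of Theorem~3.3 of \cite{NonlinearRado}, so the sketch is not yet a proof, but it does identify the correct scaffolding.
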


We now provide a lemma on a condition for our polynomial equations to be partition regular.

\begin{lemma} \label{PpartitionRegularLemma}
	Suppose that $a,b,c \in \mathbb{Z}\setminus\{0\}$ and $n \in \mathbb{N}$ are such that the equation
	\begin{equation}
		ax+by = cwz^n
		\label{PpartitionRegularEquation}
	\end{equation}
	is partition regular. If $p$ is a prime for which $v_p(\frac{a+b}{c})\notin n\mathbb{N}\cup\{0\}$, then Equation \eqref{PpartitionRegularEquation} is $p$-partition regular.
\end{lemma}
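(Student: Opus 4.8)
The plan is to read off the definition of $p$-partition regularity directly: it asks that for every $K \in \mathbb{N}$ and every finite partition $p^K\mathbb{N} = \bigcup_{i=1}^v A_i$, some cell $A_{i_0}$ contains a solution of \eqref{PpartitionRegularEquation}. So I would fix such a $K$ and such a partition and try to produce a monochromatic solution. The key move is to extend $\{A_1,\dots,A_v\}$ to a \emph{finite} partition of all of $\mathbb{N}$ by adjoining a finite partition of the ``tail'' $T := \mathbb{N}\setminus p^K\mathbb{N} = \{m \in \mathbb{N} : v_p(m) < K\}$ which is arranged so that no cell of it contains a solution of \eqref{PpartitionRegularEquation}. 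Granting such a tail partition, since equation \eqref{PpartitionRegularEquation} is partition regular over $\mathbb{N}$ by hypothesis, the resulting partition of $\mathbb{N}$ has a monochromatic solution $w,x,y,z$; its cell cannot be one of the tail cells, so it must be one of the $A_{i_0}$, which is exactly what $p$-partition regularity demands.

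The construction of the tail partition is where the valuation hypothesis enters. The point is that $v_p$ is bounded on $T$ (it takes only the values $0,1,\dots,K-1$), so for any fixed $M$ the map $m \mapsto (v_p(m),\ (m/p^{v_p(m)}) \bmod p^M)$ is a coloring of $T$ with only finitely many colors. I would choose $M$ large, say larger than $v_p(a+b)$ (when $a+b\neq 0$) and larger than $v_p(c)+(K-1)n$. Now suppose $w,x,y,z$ is a monochromatic quadruple for this coloring. Then $v_p(x)=v_p(y)=v_p(w)=v_p(z)=:r\le K-1$, and, writing $x=p^r\tilde x$ and similarly for $y,w,z$ with each of $\tilde x,\tilde y,\tilde w,\tilde z$ coprime to $p$ and congruent modulo $p^M$ to a common unit $s$, the relation $ax+by=cwz^n$ becomes, after dividing by $p^r$, the integer identity $a\tilde x+b\tilde y = c\,p^{rn}\,\tilde w\,\tilde z^{\,n}$. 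Reducing modulo $p^M$ and cancelling the unit $s$ gives $a+b \equiv c\,p^{rn}s^n \pmod{p^M}$. Since by the choice of $M$ both sides have $p$-adic valuation strictly less than $M$ (the convention $v_p(0)=\infty$ taking care of the case $a+b=0$, where the congruence simply fails), comparing valuations forces $v_p(a+b)=v_p(c)+rn$, i.e.\ $v_p\!\left(\tfrac{a+b}{c}\right)=rn\in n\mathbb{N}\cup\{0\}$, contradicting the assumption on $p$. Hence no tail cell contains a solution of \eqref{PpartitionRegularEquation}.

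The main (really the only) nontrivial step is the valuation bookkeeping just described: one must pick $M$ large enough that $p$-adic valuations ``seen modulo $p^M$'' coincide with the true valuations, and then exploit that monochromaticity forces the leading $p$-adic units of all four variables to agree, which collapses the whole equation modulo $p^M$ to the single constraint $a+b\equiv c\,p^{rn}s^n$; it is precisely the exponent $rn$ with $0\le r\le K-1$ that lands $v_p\!\left(\tfrac{a+b}{c}\right)$ in $n\mathbb{N}\cup\{0\}$. Everything else is routine: the finiteness of the tail coloring (which is exactly why we work inside $\mathbb{N}\setminus p^K\mathbb{N}$, where $v_p$ is bounded), the union-of-partitions step, and the appeal to partition regularity over $\mathbb{N}$. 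Since $K$ was arbitrary, this establishes that equation \eqref{PpartitionRegularEquation} is $p$-partition regular.
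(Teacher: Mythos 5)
Your proof is correct, and it takes a genuinely different route from the paper's. The paper argues by induction on $k$: assuming partition regularity over $p^k\mathbb{N}$, it refines an arbitrary partition of $p^k\mathbb{N}$ by residues modulo $p^{k+M+1}$ with $M=\max(v_p(a+b),v_p(c))$ fixed once and for all, and shows (via the congruence $j_0\bigl(a+b-cj_0^np^{nk}\bigr)\equiv 0 \pmod{p^{M+1}}$) that any monochromatic solution must have its residue $j_0$ divisible by $p$, i.e.\ the solution actually lies in $p^{k+1}\mathbb{N}$; intersecting with a given partition of $p^{k+1}\mathbb{N}$ then closes the inductive step. You instead do it in one shot: you fix $K$, adjoin to the given partition of $p^K\mathbb{N}$ a finite coloring of the complement that records $\bigl(v_p(m),\ m/p^{v_p(m)} \bmod p^M\bigr)$ with $M>\max\bigl(v_p(a+b),\,v_p(c)+(K-1)n\bigr)$, and show no tail cell contains a solution; partition regularity over $\mathbb{N}$ then forces the solution into one of the original cells. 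Both arguments hinge on the same valuation comparison forcing $v_p\!\left(\tfrac{a+b}{c}\right)\in n\mathbb{N}\cup\{0\}$, but your coloring explicitly isolates the valuation and the leading $p$-adic unit so that all four variables of a monochromatic solution share both, which is what lets you collapse the equation modulo $p^M$ in a single step. What your approach buys is avoiding the induction entirely (a cleaner, self-contained argument); what the paper's approach buys is a modulus $p^{M+1}$ with $M$ independent of $k$, at the cost of carrying the induction. One small point worth being explicit about: your $M$ must grow with $K$ (you correctly make it $>v_p(c)+(K-1)n$), since $rn$ can be as large as $(K-1)n$, whereas the paper's $M$ is fixed because at stage $k$ it has already arranged divisibility by $p^k$. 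Your handling of the degenerate case $a+b=0$ via $v_p(0)=\infty$ is also sound and slightly more careful than the paper, which implicitly assumes $a+b\neq 0$ when setting $m_1=v_p(a+b)$.
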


\begin{proof}
	We will use induction on $k$ to show that Equation \eqref{PpartitionRegularEquation} is partition regular over $p^k\mathbb{N}$ for each $k \ge 0$. The base case of $k = 0$ holds by assumption, so let us proceed to the inductive step and show that the desired result holds for $k+1$ if it holds for $k$. Let $M = v_p(a+b)$ and consider the partition
	\begin{equation}
		p^k\mathbb{N} = \bigcup_{j = 1}^{p^{M+1}}C_j\text{ where }C_j = \{n \in \mathbb{N}\ |\ n \equiv p^kj \pmod{p^{k+M+1}}\}.
	\end{equation}
	Since Equation \eqref{PpartitionRegularEquation} is partition regular over $p^k\mathbb{N}$, let $w,x,y,z \in C_{j_0}$ satisfy Equation \eqref{PpartitionRegularEquation}. We see that
	\begin{alignat}{2}
		& ax+by = cwz^n \text{, so } ax+by \equiv cwz^n\pmod{p^{k+M+1}},\numberthis \\ 
		\text{hence }& aj_0+bj_0 \equiv cj_0^{n+1}p^{nk} \equiv 0\pmod{p^{M+1}}, \\
		\text{thus } & j_0(a+b-cj_0^np^{nk}) \equiv 0\pmod{p^{M+1}}.
	\end{alignat}
	Since $v_p(\frac{a+b}{c}) \notin n\mathbb{N}\cup\{0\}$, we see that 
	
	\begin{equation}
	    v_p\left(a+b-cj_0^np^{nk}\right) = \text{min}\left(v_p(a+b),v_p\left(cj_0^np^{nk}\right)\right) \le M,
	\end{equation} 
	so we must have that $j_0 \equiv 0 \pmod{p}$. We now see that for any partition
	\begin{equation}
		p^{k+1}\mathbb{N} = \bigcup_{j = 1}^rB_j,
	\end{equation}
	we may let $B_{r+1} = p^k\mathbb{N}\setminus p^{k+1}\mathbb{N}$ and construct the partition
	\begin{equation}
		p^k\mathbb{N} = \bigcup_{\underset{\scriptstyle 1 \le j_2 \le p^{M+1}}{\scriptstyle 1 \le j_1 \le r+1\textcolor{white}{'''}}}(B_{j_1}\cap C_{j_2}).
	\end{equation}
	Since Equation \eqref{PpartitionRegularEquation} is partition regular over $p^k\mathbb{N}$, let $w,x,y,z \in B_{J_1}\cap C_{J_2}$ satisfy Equation \eqref{PpartitionRegularEquation}. We have already shown that since $w,x,y,z \in C_{J_2}$, we have $w,x,y,z \in p^{k+1}\mathbb{N}$. Since $w,x,y,z \notin B_{r+1}$, we have shown that Equation \eqref{PpartitionRegularEquation} is also partition regular over $p^{k+1}\mathbb{N}$ as desired.
\end{proof}

This leads us to the following useful criterion. 

\begin{theorem} \label{MinimalRadoConditionApplied}
	Suppose that $a,b,c \in \mathbb{Z}\setminus\{0\}$ and $n \in \mathbb{N}$ are such that the equation
	\begin{equation}
		ax+by = cwz^n
		\label{PpartitionRegularEquation2}
	\end{equation}
	is partition regular. If $p$ is a prime for which $v_p(\frac{a+b}{c})\notin n\mathbb{N}\cup\{0\}$, then one of $\frac{a}{c},\frac{b}{c},\frac{a+b}{c}$ must be an $n$th power in $\mathbb{Q}_p$.
\end{theorem}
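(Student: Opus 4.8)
The plan is to feed Lemma~\ref{PpartitionRegularLemma} into Theorem~\ref{MinimalRadoCondition} and then classify the lower Rado functionals of the relevant polynomial. First, if $a+b=0$ there is nothing to prove, since $\frac{a+b}{c}=0=0^n$; so assume $a+b\neq 0$. Because the equation $ax+by=cwz^n$ is partition regular and $v_p(\frac{a+b}{c})\notin n\mathbb{N}\cup\{0\}$, Lemma~\ref{PpartitionRegularLemma} shows it is $p$-partition regular, so Theorem~\ref{MinimalRadoCondition} applied to $P(w,x,y,z)=cwz^n-ax-by$ yields a lower Rado functional $(J_0,\dots,J_\ell,d_1,\dots,d_m)$ for which, with $d_0=0$, the polynomial $g(w)=\sum_{i=0}^{m}p^{d_i}\sum_{\alpha\in J_i}\frac{1}{\alpha!}\frac{\partial^\alpha P}{\partial x^\alpha}(0)\,w^{|\alpha|}$ has an invertible root $w_0\in\mathbb{Z}_p^{\times}$.

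I would then make $g(w_0)=0$ explicit. Here $\text{Supp}(P)=\{(1,0,0,n),(0,1,0,0),(0,0,1,0)\}$, with coefficients $c,-a,-b$ and total degrees $n+1,1,1$. Let $e_0,e_1,e_2$ denote the value $d_i$ of the block containing $(1,0,0,n),(0,1,0,0),(0,0,1,0)$ respectively, with the convention that a term is dropped when its support vector lies in an inactive block $J_i$ ($i>m$). Dividing $g(w_0)=0$ by the unit $w_0$, the condition becomes
\begin{equation}
p^{e_0}c\,w_0^{n}=p^{e_1}a+p^{e_2}b .
\end{equation}
Now write $\phi(\alpha_1,\alpha_2,\alpha_3,\alpha_4)=t_1\alpha_1+t_2\alpha_2+t_3\alpha_3+t_4\alpha_4$ for the positive linear maps realizing the functional, so $\phi(1,0,0,n)=t_1+nt_4$, $\phi(0,1,0,0)=t_2$, $\phi(0,0,1,0)=t_3$. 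The two structural inputs I would use are: (a) if $(0,1,0,0)$ and $(0,0,1,0)$ are both active, they must lie in the same block, since otherwise the functional would force $|t_2-t_3|$ to equal a fixed nonzero constant $\delta$ for monochromatic $\phi$, and $t_3-t_2=\delta$ ($\delta\neq 0$) is not partition regular over $\mathbb{N}$; and (b) if $(1,0,0,n)$ and an active $(0,1,0,0)$ (or $(0,0,1,0)$) lie in different blocks, then $(1,0,0,n)$ must be the higher of the two and the gap between their blocks is a nonnegative multiple of $n$ — because the constraint is $(t_1+nt_4)-t_2=(\text{gap})$, the reverse ordering being impossible since its only constant solution is negative, and realizability as a lower Rado functional (infinitely many monochromatic $\phi$ for every finite coloring) forces the existence of a positive constant solution $t_1=t_2=t_4$, i.e.\ $n\mid(\text{gap})$.

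Granting (a) and (b), the identity finishes the argument. If $(1,0,0,n)$ is inactive, the identity gives $p^{e_1}a+p^{e_2}b=0$, hence $e_1=e_2$ by (a) and $a+b=0$, excluded. If $(0,1,0,0)$ and $(0,0,1,0)$ are both active, they lie in a common block and combine into an $(a+b)$-term; by (b) that block is $J_0$ and $(1,0,0,n)$ sits a multiple of $n$ above it, so the identity forces $v_p(\frac{a+b}{c})\in n\mathbb{N}\cup\{0\}$, contradicting the hypothesis. The only surviving configurations therefore have exactly one of $(0,1,0,0),(0,0,1,0)$ active — say $(0,1,0,0)$, the case of $(0,0,1,0)$ being symmetric with $b$ in place of $a$ — with $(1,0,0,n)$ in the same block or a multiple of $n$ above it; in either case the identity reads $p^{e}c\,w_0^{n}=a$ with $e=v_p(\frac{a}{c})$ divisible by $n$, so $\frac{a}{c}=p^{e}w_0^{n}=(p^{e/n}w_0)^n$ is an $n$th power in $\mathbb{Q}_p$, as desired.

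The step I expect to be the main obstacle is input (b): upgrading ``a solution over $\mathbb{Z}$ exists'' to ``$n$ divides the gap''. One cannot merely quote Theorem~\ref{InhomogeneousRadoTheorem}, because the positive linear maps allowed in a lower Rado functional may be degenerate (some $t_i=0$), so the argument must also defeat those degenerate families — for instance the family with $t_1=0$, which reduces the constraint to $nt_4-t_2=\delta$ and can, for suitable $\delta$, acquire a positive constant solution even when $n\nmid\delta$. This is handled by intersecting an explicit coloring that alternates along the forward orbits of $x\mapsto 2x-1$ (leaving only finitely many monochromatic pairs for such truncated equations, hence killing the ``infinitely many'' clause) with a coloring witnessing that the untruncated inhomogeneous equation fails to be partition regular when $n\nmid\delta$. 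Everything else — the reductions via Lemma~\ref{PpartitionRegularLemma} and Theorem~\ref{MinimalRadoCondition}, the computation of $\text{Supp}(P)$ and its coefficients, and extracting an $n$th power from $p^{e}c\,w_0^{n}=a$ — is routine.
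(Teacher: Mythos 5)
Your route is the paper's: push the hypothesis through Lemma~\ref{PpartitionRegularLemma} and Theorem~\ref{MinimalRadoCondition}, and then bound the possible lower Rado functionals of $P$ by feeding the gap equations into Theorem~\ref{InhomogeneousRadoTheorem}. The degenerate-family worry you flag at the end, though, is a red herring: a positive linear map $\phi$ is $c$-monochromatic only when $\{t_1,\ldots,t_n\}$ sits inside one color class of a coloring of $\mathbb{N}$, and $0\notin\mathbb{N}$, so any $\phi$ with a zero coefficient is disqualified at the gate and never enters the count of ``infinitely many $c$-monochromatic $\phi$.'' The paper's bare appeal to Theorem~\ref{InhomogeneousRadoTheorem} is therefore sound, and the $x\mapsto 2x-1$ coloring you sketch is not needed.

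The actual gap is inside your input (b). You rule out the ``reverse ordering'' (the $wz^n$-vector in the lower block, the $x$- and $y$-vectors above it) on the ground that the constraint $t_2-(t_1+nt_4)=\mathrm{gap}$ then has ``only a negative constant solution.'' But Theorem~\ref{InhomogeneousRadoTheorem} asks only for an \emph{integral} constant solution; it is blind to sign. The constant solution $t=-\mathrm{gap}/n$ is integral precisely when $n\mid\mathrm{gap}$, which is the same divisibility you already extract for the other ordering, so Rado's criterion eliminates nothing here. The paper does not eliminate it either: its list of order-$1$ lower Rado functionals explicitly contains $\big(\{(0,0,1,n)\},\{(1,0,0,0),(0,1,0,0)\},nd\big)$ and $\big(\{(0,0,1,n)\},\{(1,0,0,0)\},\{(0,1,0,0)\},nd\big)$, both with the degree-$(n+1)$ support vector in $J_0$. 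Consequently your Case~B has a sub-case you never reach: there $g(w_0)=0$ reads $c\,w_0^{n}=p^{nd}(a+b)$, so $v_p\!\bigl(\tfrac{a+b}{c}\bigr)=-nd<0$, which is \emph{not} in $n\mathbb{N}\cup\{0\}$, and the ``contradiction'' you claim does not occur; rather the desired conclusion holds outright, since $\tfrac{a+b}{c}=(p^{-d}w_0)^n$ is already an $n$th power in $\mathbb{Q}_p$. Your closing formula $p^{e}c\,w_0^{n}=a$ in Case~C happens to survive once $e$ is allowed to be a negative multiple of $n$, but the derivation of (b) as written leans on a sign constraint that Theorem~\ref{InhomogeneousRadoTheorem} does not give, and the Case~B branch it feeds into is wrong. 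The fix is to drop the ordering claim in (b) entirely (keep only $n\mid\mathrm{gap}$) and conclude as the paper does by checking that every resulting $g$ forces one of $\tfrac{a}{c},\tfrac{b}{c},\tfrac{a+b}{c}$ to be $p^{\pm nd}$ times an invertible $n$th power.
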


\begin{proof}
	We will begin by determining all of the lower Rado functionals for
 \begin{equation}
      P(x_1,x_2,x_3,x_4) = ax_1+bx_2-cx_3x_4^n. 
 \end{equation}
 Since the system of equations
	\begin{equation}
		\begin{array}{ccccccc}
			\phi(1,0,0,0) & = & \alpha_1 & = & \alpha_2 & = & \phi(0,1,0,0) \\
			\phi(0,1,0,0) & = & \alpha_2 & = & \alpha_3+n\alpha_4 & = & \phi(0,0,1,n)
		\end{array}
	\end{equation}
	is partition regular, we see that
	\begin{align}
		\Big\{\Big(\{(1,0,0,0),(0,1,0,0),(0,0,1,n)\}\Big),
		&\Big(\{(1,0,0,0),(0,1,0,0)\},\{(0,0,1,n)\}\Big), \\
		\Big(\{(0,1,0,0),(0,0,1,n)\},\{(1,0,0,0)\}\Big),
		&\Big(\{(1,0,0,0),(0,0,1,n)\},\{(0,1,0,0)\}\Big)\Big\}
	\end{align}
	is the collection of nontrivial\footnote{A lower Rado functional or order $0$ $(J_0,\cdots,J_{\ell})$ is trivial if $J_0$ is a singleton, as such a functional will never yield an invertible solution to Equation \eqref{MinimalRadoConditionEquation}.} lower Rado functionals of order $0$. We now proceed to determine all lower Rado functionals of order $1$. Let $(J_0,\cdots,J_{\ell},d_1)$ be a lower Rado functional of order $1$. Since $d_1 > 0$ we may use Theorem \ref{InhomogeneousRadoTheorem} to see that neither of the equations
	\begin{align}
		d_1 = \phi(1,0,0,0)-\phi(0,1,0,0) &= \alpha_1-\alpha_2\text{, and} \label{bad1} \\
		d_1 = \phi(0,1,0,0)-\phi(1,0,0,0) &= \alpha_2-\alpha_1 \label{bad2}
	\end{align}
	are partition regular. It follows that any lower Rado functional of order $1$ must have $(0,0,1,n) \in J_0\cup J_1$. We also note by Theorem \ref{InhomogeneousRadoTheorem} that the equations
	\begin{align}
		d_1 = \phi(1,0,0,0)-\phi(0,0,1,n) &= \textcolor{white}{-}\alpha_1-\alpha_3-n\alpha_4, \\
		d_1 = \phi(0,1,0,0)-\phi(0,0,1,n) &= \textcolor{white}{-}\alpha_2-\alpha_3-n\alpha_4, \\
		d_1 = \phi(0,0,1,n)-\phi(1,0,0,0) &= -\alpha_1+\alpha_3+n\alpha_4, \\
		d_1 = \phi(0,0,1,n)-\phi(0,1,0,0) &= -\alpha_2+\alpha_3+n\alpha_4,
	\end{align}
	are partition regular over $\mathbb{Z}$ if and only if $n|d_1$. This results in the following list of lower Rado functionals of order $1$: 
	\begin{align}
		\Big\{\Big(\{(1,0,0,0)\},\{(0,0,1,n)\},\{(0,1,0,0)\},nd\Big),
		\\ \Big(\{(0,1,0,0)\},\{(0,0,1,n)\},\{(1,0,0,0)\},nd\Big),\\ 
		\Big(\{(0,0,1,n)\},\{(1,0,0,0)\},\{(0,1,0,0)\},nd\Big),
		\\ \Big(\{(0,0,1,n)\},\{(0,1,0,0\},\{(1,0,0,0)\},nd\Big),\\ 
		\Big(\{(1,0,0,0),(0,1,0,0)\},\{(0,0,1,n)\},nd\Big),
		\\ \Big(\{(0,0,1,n)\},\{(1,0,0,0),(0,1,0,0)\},nd\Big)\Big\}.
	\end{align}
	Lastly, we recall that the only lower Rado functionals of order $2$ are of the form $(J_0,J_1,J_2,d_1,d_2)$, but such a lower Rado functional cannot exist since Equations \eqref{bad1} and \eqref{bad2} are not partition regular. We have now determined all of the lower Rado functionals for $P(x_1,x_2,x_3,x_4)$. By Lemma \ref{PpartitionRegularLemma} we see that Equation \eqref{PpartitionRegularEquation2} is $p$-partition regular, so we may apply Theorem \ref{MinimalRadoCondition} to see that at least $1$ of Equations \eqref{MinimalRadoStart}-\eqref{MinimalRadoEnd} has an invertible solution in $\mathbb{Z}_p$. After each equation we give the corresponding lower Rado functional in parenthesis.
	\begin{align}
		(a+b)w-cw^{n+1} &= 0\ \Big(\{(1,0,0,0),(0,1,0,0),(0,0,1,n)\}\Big), \label{MinimalRadoStart} \\
		(a+b)w-cp^{nd}w^{n+1} &= 0\ \Big(\{(1,0,0,0),(0,1,0,0)\},\{(0,0,1,n)\},nd\Big), \\
		(a+b)p^{nd}w-cw^{n+1} &= 0\ \Big(\{(0,0,1,n)\},\{(1,0,0,0),(0,1,0,0)\},nd\Big), \\
		aw-cw^{n+1} &= 0\ \Big(\{(1,0,0,0),(0,0,1,n)\},\{(0,1,0,0)\}\Big), \\
		aw-cp^{nd}w^{n+1} &= 0\ \Big(\{(1,0,0,0)\},\{(0,0,1,n)\},\{(0,1,0,0)\},nd\Big), \\
		ap^{nd}w-cw^{n+1} &= 0\ \Big(\{(0,0,1,n)\},\{(1,0,0,0)\},\{(0,1,0,0)\},nd\Big), \\
		bw-cw^{n+1} &= 0\ \Big(\{(0,1,0,0),(0,0,1,n)\},\{(1,0,0,0)\}\Big), \\
		bw-cp^{nd}w^{n+1} &= 0\ \Big(\{(0,1,0,0)\},\{(0,0,1,n)\},\{(1,0,0,0)\},nd\Big), \\
		bp^{nd}w-cw^{n+1} &= 0\ \Big(\{(0,0,1,n)\},\{(0,1,0,0\},\{(1,0,0,0)\},nd\Big), \\
		(a+b)w &= 0\ \Big(\{(1,0,0,0),(0,1,0,0)\},\{(0,0,1,n)\}\Big). \label{MinimalRadoEnd} 
	\end{align}
	The desired result follows after noting that one of Equations \eqref{MinimalRadoStart}-\eqref{MinimalRadoEnd} has an invertible solution in $\mathbb{Z}_p$ if and only if one of $\frac{a}{c},\frac{b}{c},\frac{a+b}{c}$ is an $n$th power in $\mathbb{Q}_p$.
\end{proof}
Before using Theorem \ref{MinimalRadoConditionApplied}, let us recall when $a \in \mathbb{Z}_2$ is an $n$th power. If $a = 2^km$ with $m$ odd, it is a well-known consequence of Hensel's lemma that $a$ is a $2^n$th power in $\ZZ_2$ if and only if $2^n \mid k$ and $m \equiv 1 \pmod{2^{n+2}}$.

\begin{corollary} \label{SomeMoreExamples}
	The following equations are not partition regular as seen by an application of Theorem \ref{MinimalRadoConditionApplied} with $p = 2$ for items (i)-(iii), $p = 3$ for item (iv), and $p = 5$ for item (v):
	
	\begin{itemize}
		\item[(i)] $3x+13y = wz^8$;
		
		\item[(ii)] $4x-8y = wz^4$;
		
		\item[(iii)] $3\cdot5\cdot2^2x+2\cdot5\cdot3^2y = wz^2$;
		
		\item[(iv)] $3^4x+3^6y = wz^{12}$;
		
		\item[(v)] $(3^2\cdot4\cdot5)^2x+(3\cdot4^2\cdot5)^2y = wz^4$.
	\end{itemize}
\end{corollary}

\begin{remark}
We observe that the examples in Corollary \ref{SomeMoreExamples} are of interest because the fact that they are not partition regular cannot be deduced from Theorem \ref{MainToolForNegativeResults}. We will now verify this claim for each of (i)-(v). For (i), we recall that $16$ is an $8$th power modulo $p$ for every prime $p$. For (ii), we recall that for any prime $p$, at least one of $4$ or $4-8=-4$ will be a $4$th power modulo $p$. For (iii), we observe that one of $3\cdot5\cdot2^2, 2\cdot5\cdot3^2$, or $3\cdot5\cdot2^2+2\cdot5\cdot3^2 = 2\cdot3\cdot5^2$ is a square modulo $p$ for every prime $p$. For (iv) we observe that $-3$ is a square modulo $p$ if $p \equiv 1\pmod{3}$ and $3$ is a cube modulo $p$ if $p \equiv 2\pmod{3}$, so either $3^6$ or $3^4$ is a $12$th power modulo any prime $p$. For (v), we observe that $\alpha = (3^2\cdot4\cdot5)^2, \beta = (3\cdot4^2\cdot5)^2$, and $\gamma = \alpha+\beta = (3\cdot4\cdot5^2)^2$ are all squares but are not fourth powers. Since one of $3^2\cdot4\cdot5, 3\cdot4^2\cdot5$, and $3\cdot4\cdot5^2$ will be a perfect square modulo any prime $p$, we see that one of $\alpha, \beta,$ or $\gamma$ will be a perfect fourth power modulo $p$ for any prime $p$.
\end{remark}

\section{On the Partition Regularity of \texorpdfstring{$ax+by = cwz^n$}{ax+by=cwzn} over Integral Domains}\label{SectionForGeneralDomains}

The purpose of this section is to try and generalize as much of Theorem \ref{MainResult} as we can to the more general setting of integral domains instead of just $\mathbb{Z}$ or $\mathbb{N}$.

\begin{theorem}[cf. Theorem \ref{SpecialUltrafilter5}]\label{SpecialUltrafilter6}
Let $R$ be an integral domain. There exists an ultrafilter $p \in \beta R\setminus\{0\}$ with the following properties.
\begin{enumerate}[(i)]
    \item For any $A \in p$ and $\ell \in \mathbb{N}$, there exists $b,g \in A$ with $\left\{bg^j\right\}_{j = 0}^{\ell} \subseteq A$.
    
    \item For any $A \in p$, $\ell \in \mathbb{N}$, and $h,s \in R\setminus\{0\}$, there exists $a,d \in R$ for which $\{hd,ha,ha+sd\} \subseteq A$.
    
    \item For every $\alpha \in R\setminus\{0\}$, we have $\alpha R \in p$.
\end{enumerate}
\end{theorem}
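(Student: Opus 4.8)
The plan is to produce $p$ by working inside $\beta(R\setminus\{0\})$ and exploiting \emph{two} compatible semigroup structures at once: multiplication (which yields the geometric progressions of (i)) and addition (which yields the configuration of (ii)); membership in an appropriate multiplicative ideal will force (iii). Throughout write $S=R\setminus\{0\}$ and identify $\beta S$ with the closed set $\overline S=\{p\in\beta R:S\in p\}\subseteq\beta R$, and assume $R$ is infinite (if $R$ is finite it is a field, where the relevant partition problems are elementary). Since $R$ is a domain, $S$ is multiplicatively closed and $(\overline S,\cdot)$ is a compact right-topological semigroup; and since only nonprincipal ultrafilters will arise, $S$ belongs to $p+q$ for all such $p,q$, so $\overline S$ is closed under the extension of $+$ and $(\beta R,+)$, $(\overline S,+)$ are compact right-topological semigroups as well.

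For (iii), set
\[
\overline E\;=\;\overline S\cap\bigcap_{\alpha\in S}\overline{\alpha R}\;=\;\bigl\{p\in\beta R:S\in p\text{ and }\alpha R\in p\text{ for all }\alpha\in S\bigr\}.
\]
The family $\{S\}\cup\{\alpha R:\alpha\in S\}$ has the finite intersection property, since $(\alpha_1\cdots\alpha_k)S$ meets all of $S,\alpha_1R,\dots,\alpha_kR$, so $\overline E\neq\emptyset$ by compactness. A short computation — using that if $\alpha R\in p$ then for every $x$ in the $p$-large set $\alpha S$ one has $x^{-1}(\alpha R)=S$ — shows $\overline E$ is a closed two-sided ideal of $(\overline S,\cdot)$; hence $K(\overline S,\cdot)\subseteq\overline E$, so $\overline E$ contains $\cdot$-minimal left ideals and $\cdot$-idempotents, and every member of $\overline E$ satisfies (iii).

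Next I would pin down the Ramsey content of (i) and (ii). The configuration of (i) — an element $g$ together with the geometric progression $b,bg,\dots,bg^{\ell}$ — is the multiplicative analogue of the Brauer configuration $\{d\}\cup\{a+id:0\le i\le\ell\}$, is partition regular in $(S,\cdot)$, and is therefore contained in every multiplicatively piecewise syndetic subset of $S$. The configuration of (ii), with $u_1=hd,\ u_2=ha,\ u_3=ha+sd$, is governed by the single dependence $s u_1-h u_2+h u_3=0$ over $(R,+)$, in which the sub-collection of coefficients $\{-h,h\}$ sums to $0$; by the corresponding Rado/central-sets phenomenon this configuration lies inside every additively central subset of $R$, and since the members of $p$ lie in $S$ any such occurrence has nonzero coordinates, forcing $a,d\neq0$. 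Thus it suffices to find $p\in\overline E$ that is at once multiplicatively large (in $\overline{K(\overline S,\cdot)}$) and additively large (a minimal idempotent of $(\beta R,+)$). The mechanism, generalizing the classical interlocking of $+$ and $\cdot$ on $\beta\mathbb{N}$, is to show that $\overline{K(\overline S,\cdot)}$ is a left ideal of $(\beta R,+)$: granting this, $\overline{K(\overline S,\cdot)}$ is a closed sub-semigroup of $(\beta R,+)$ contained in $\overline E$, so it contains a minimal left ideal $L$ of $(\beta R,+)$, and the Ellis--Numakura lemma yields an idempotent $p\in L\subseteq\overline{K(\overline S,\cdot)}\subseteq\overline E$. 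Then the members of $p$ are additively central (from $+$-minimality), multiplicatively piecewise syndetic (from $p\in\overline{K(\overline S,\cdot)}$), and $p$ witnesses (iii). If one wants the strongest form of (i), intersect $L$ further with a $\cdot$-minimal left ideal before extracting the idempotent, making the members of $p$ multiplicatively central.

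The hard part is this last transfer. Over $\mathbb{N}$ it is classical that the closure of the smallest ideal of one operation is an ideal for the other, but over a general integral domain only $(S,\cdot)$ is a semigroup while the addition lives on $R$, so one must control how multiplicative largeness interacts with additive translation, with the units of $R$, and with passage to the field of fractions. This is exactly the obstruction isolated by Corollary \ref{SpecialUltrafilter4} and Theorem \ref{NecessityOfHomFin}, and it is why Theorem \ref{SpecialUltrafilter6} settles for a weaker list of properties than Theorem \ref{SpecialUltrafilter2}: the three-term form of (ii) (equipped with the extra scalar $s$) and the geometric-progression form of (i) are chosen so that the argument above goes through for \emph{every} $R$, and doing this carefully is the substance of Theorem \ref{SpecialUltrafilter5} in the appendix. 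Once $p$ is in hand, (i), (ii), (iii) follow respectively from multiplicative piecewise syndeticity plus partition regularity of the geometric-progression configuration in $(S,\cdot)$, from additive centrality plus partition regularity of $s u_1-h u_2+h u_3=0$ over $(R,+)$, and from $p\in\overline E$.
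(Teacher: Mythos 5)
The approach you propose for condition (ii) has a genuine gap, and it is one you come close to naming yourself. You want to produce $p$ that is simultaneously a $+$-minimal idempotent of $(\beta R,+)$ (so every $A\in p$ is additively central) and a member of $\overline{K(\beta R\setminus\{0\},\cdot)}$ (so every $A\in p$ is multiplicatively large), by showing $\overline{K(\beta R\setminus\{0\},\cdot)}$ is a left ideal of $(\beta R,+)$. But this is precisely what fails for a general integral domain: Theorem \ref{NecessityOfHomFin} in the paper shows that if $R$ is not homomorphically finite (e.g.\ $R=\mathbb{Q}[x]$), then $\overline{K(\beta R\setminus\{0\},\cdot)}\cap\overline{K(\beta R,+)}=\emptyset$, so no ultrafilter can be both a $+$-minimal idempotent and lie in $\overline{K(\beta R\setminus\{0\},\cdot)}$. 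You acknowledge this obstruction and then assert that ``the three-term form of (ii)\ldots is chosen so that the argument above goes through for \emph{every} $R$,'' but this is exactly where the argument is missing: weakening the target configuration does not rescue a mechanism that requires membership in an empty intersection. The additive route to (ii) is not merely harder in general --- it is closed.

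The paper's proof avoids additive idempotents entirely. The key observation is Lemma \ref{RadoIdealSetUp}: for any matrix $\mathbf{M}$ over $R$ whose homogeneous system $\mathbf{M}\vec{x}=\vec{0}$ is partition regular over $R\setminus\{0\}$, the set of ultrafilters every member of which contains a solution is a two-sided \emph{multiplicative} ideal of $(\beta R\setminus\{0\},\cdot)$ (solution sets are dilation-invariant, which is exactly what the ideal computation uses). Intersecting over all such $\mathbf{M}$ and with your $\overline{E}$ still contains $K(\beta R\setminus\{0\},\cdot)$, so a $\cdot$-minimal idempotent already sees solutions to \emph{every} partition regular homogeneous linear system and satisfies (iii). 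Condition (ii) is then obtained not via additive centrality but by writing down a homogeneous linear system over $R$ (the system \eqref{SystemForBrauer}) whose solutions encode the triple $\{hd,\,ha,\,ha+sd\}$ (more generally $\{ha+j_id\}$), and verifying it satisfies the columns condition; by the Rado-type theorem for integral domains (Theorem A of \cite{RadoForRingsAndModules}) that system is partition regular over $R\setminus\{0\}$, so the multiplicative minimal idempotent catches it. This is why Theorem \ref{SpecialUltrafilter6} holds for \emph{every} integral domain while the stronger Theorem \ref{SpecialUltrafilter2} (which does use the additive/multiplicative interlocking, exactly as you sketch) requires $R$ to be homomorphically finite. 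Your handling of (i) and (iii) is essentially correct and matches the paper; the real work you are missing is the reduction of (ii) to a multiplicatively-invariant linear partition-regularity statement.
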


\begin{lemma} \label{PositiveResultForPowersInDomains}
	Let $R$ be an integral domain and let $p \in \beta R\setminus\{0\}$ be an ultrafilter satisfying the conditions of Theorem \ref{SpecialUltrafilter6}. For any $A \in p$, $a,b \in R\setminus\{0\}$ and $n \in \mathbb{N}$, the equation
	\begin{equation} \label{FirstPositiveCaseForDomains}
		ax+by = cwz^n
	\end{equation}
	has a solution in $A$ if $c \in \{a,b,a+b\}$.
\end{lemma}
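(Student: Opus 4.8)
The plan is to follow the proof of Lemma \ref{PositiveResultForPowers} essentially verbatim, substituting Theorem \ref{SpecialUltrafilter6} for Theorem \ref{SpecialUltrafilter}. The domain-theoretic form of condition (ii) in Theorem \ref{SpecialUltrafilter6} is tailored to deliver exactly the configuration we need, so no new ideas should be required in this lemma itself.

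First I would isolate the multiplicative part. Set
\[
A' := \{\, v \in A : v = wz^n \text{ for some } w,z \in A \,\}.
\]
To see that $A' \in p$ it suffices to check $A \setminus A' \notin p$: if $A\setminus A'$ were in $p$, then condition (i) of Theorem \ref{SpecialUltrafilter6} applied with $\ell = n$ would yield $b_0, g_0 \in A\setminus A'$ with $\{b_0 g_0^{\,j}\}_{j=0}^{n} \subseteq A\setminus A'$; but then $b_0 g_0^{\,n} \in A\setminus A'$, while $b_0 g_0^{\,n} = b_0 \cdot g_0^{\,n}$ with $b_0, g_0 \in A$ forces $b_0 g_0^{\,n} \in A'$, a contradiction. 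Hence $A' = A \setminus (A\setminus A') \in p$. I would also note here that since $p \in \beta(R\setminus\{0\})$, every element occurring below lies in $R\setminus\{0\}$, and since $R$ is a domain no product of, or quotient by, the fixed nonzero $a,b$ can vanish; this is what guarantees the configurations we extract are genuine nontrivial solutions in $A$.

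Next I would handle the three cases for $c$. When $c = a+b$ the argument is immediate: pick any $x \in A'$, write $x = wz^n$, and note $ax + bx = (a+b)x = cx = cwz^n$, so $(x,x,w,z)$ is a solution lying in $A$. When $c = a$, I would apply condition (ii) of Theorem \ref{SpecialUltrafilter6} to the set $A' \in p$ with $h = a$ and $s = b$ (legitimate since $a,b \in R\setminus\{0\}$), obtaining $\alpha, \delta \in R$ (the elements called $a,d$ in the statement of that theorem) with $\{a\delta,\ a\alpha,\ a\alpha + b\delta\} \subseteq A'$. Setting $x_1 = a\alpha$ and $x_2 = a\delta$, both in $A' \subseteq A$, and writing $a\alpha + b\delta = wz^n$ with $w,z \in A$ (possible since $a\alpha + b\delta \in A'$), one gets
\[
ax_1 + bx_2 = a(a\alpha) + b(a\delta) = a(a\alpha + b\delta) = a\,wz^n = c\,wz^n,
\]
so $(x_1,x_2,w,z)$ is a solution in $A$. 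The case $c = b$ is symmetric: apply condition (ii) with $h = b$, $s = a$ to get $\{b\delta,\ b\alpha,\ a\delta + b\alpha\} \subseteq A'$, write $a\delta + b\alpha = wz^n$, and read off the solution $(x,y,w,z) = (b\delta, b\alpha, w, z)$ from the identity $ax + by = b(a\delta + b\alpha) = cwz^n$.

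I do not expect any genuine obstacle inside this lemma. The one structural difference from the integer case is that here we neither have nor need the sign normalization used there (replacing $a$ by $-a$ and intersecting with $a\mathbb{N}$), since condition (ii) of Theorem \ref{SpecialUltrafilter6} already produces elements divisible by $a$ in the precise shape $a\alpha + b\delta$. The real work, and the only place where difficulty is concentrated, is the construction of the ultrafilter itself — Theorem \ref{SpecialUltrafilter6} (proven in the appendix as Theorem \ref{SpecialUltrafilter5}) — which requires understanding which integral domains admit ultrafilters every member of which carries this much additive and multiplicative structure.
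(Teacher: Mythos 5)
Your proof is correct and follows the paper's approach: pass to $A' = \{v \in A : v = wz^n \text{ for some } w,z \in A\}$, dispatch $c = a+b$ by taking $x = y$, and invoke condition (ii) of Theorem \ref{SpecialUltrafilter6} for $c \in \{a,b\}$. The only difference is organizational: you apply condition (ii) to $A'$ directly and read the solution off the triple $\{a\delta,\ a\alpha,\ a\alpha + b\delta\}$, whereas the paper first forms $A'_a = A'\cap aR$ and the auxiliary set $A''$ whose membership in $p$ is established via condition (ii) — the ultrafilter input is identical and your version is marginally more streamlined.
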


\begin{proof}
	Let 
	
	\begin{equation} A' = \{v \in A\ |\ v = wz^n\text{ for some }z,w \in A\}.\end{equation}
	Since $A \in p$, to see that $A' = A\setminus(A\setminus A') \in p$ it suffices to observe that $A\setminus A' \notin p$ because $A\setminus A'$ does not satisfy condition $(i)$ of Theorem \ref{SpecialUltrafilter6}. Our first case is when $c = a+b$, and in this case we let $x \in A'$ be arbitrary and let $w,z \in A$ be such that $x = wz^n$. Since
	
	\begin{equation}
	    ax+bx = cx = cwz^n,
	\end{equation}
	we see that $x,x,w,z$ is a solution to Equation \eqref{FirstPositiveCaseForDomains} coming from $A$. For our second case it suffices to consider $c = a$ since the case of $c = b$ is handled similarly. Observe that $A'_a := A'\cap aR \in p$ since $A',aR \in p$ and consider
	
	\begin{equation}\label{SolutionsViaMPCForDomains}
	    A'' = \left\{x_1 \in A'_a\ |\ \text{there exists }x_2 \in A'_a\text{ satisfying }x_1+b \frac{x_2}{a} \in A'_a\right\}.
	\end{equation}
	Since $A'_a \in p$, to see that $A'' = A'_a\setminus(A'_a\setminus A'') \in p$ it suffices to observe that $A'_a\setminus A'' \notin p$ because $A'_a\setminus A''$ does not satisfy condition $(ii)$ of Theorem \ref{SpecialUltrafilter6} with $(h,s,a,d) = (a,b,\frac{x_1}{a},\frac{x_2}{a})$. Now let $x_1 \in A''$ be arbitrary, let $x_2 \in A'_a$ be as in Equation \eqref{SolutionsViaMPCForDomains}, and observe that
	
	\begin{equation}
	    ax_1+bx_2 = a\left(x_1+b\frac{x_2}{a}\right).
	\end{equation}
	Since $x_1+b\frac{x_2}{a} \in A'$, we may pick $w,z \in A$ for which $x_1+b\frac{x_2}{a} = wz^n$.  In this case we observe that
	
	\begin{equation}
	    ax_1+bx_2 = c\left(x_1+b\frac{x_2}{a}\right) = cwz^n,
	\end{equation}
	so $x_1,x_2,w,z$ is a solution to Equation \eqref{FirstPositiveCaseForDomains} coming from $A$.
\end{proof}

Before proceeding further let us recall some notation. If $R$ is an integral domain, then for $u,v \in R\setminus\{0\}$ and $A \subseteq R$ we have

\begin{equation}
    \frac{v}{u}A = \left\{r \in R\ |\ \frac{u}{v}r \in A\right\} = \left\{\frac{v}{u}a\ |\ a \in A\cap uR\right\}.
\end{equation}
Similarly, if $p \in \beta R$ is an ultrafilter, then we have

\begin{equation}
    \frac{u}{v}\cdot p = \left\{A \subseteq R\ |\ \frac{v}{u}A \in p\right\} = \left\{\frac{u}{v}A\ |\ A \in p\right\}.
\end{equation}
\begin{theorem}\label{GeneralPositiveResultInDomains}
Let $R$ be an integral domain with field of fractions $K$ and let $p \in \beta R\setminus\{0\}$ be an ultrafilter satisfying the conditions of Theorem \ref{SpecialUltrafilter6}. If $a,b,c \in R\setminus\{0\}$ and $n \in \mathbb{N}$ are such that one of $\frac{a}{c},\frac{b}{c},$ or $\frac{a+b}{c}$ is of the form $(\frac{u}{v})^n$ for some $u,v \in R$, then the equation

\begin{equation}\label{PositiveResultForDomainsEquation}
    ax+by = cwz^n
\end{equation}
contains a solution for any $A \in q$ where
\begin{equation}
    q = \begin{cases}
            p & \text{if }u = 0\\
            \frac{u}{v}\cdot p & \text{else}.
        \end{cases}
\end{equation}
In particular, Equation \eqref{PositiveResultForDomainsEquation} is partition regular over $R\setminus\{0\}$.
\end{theorem}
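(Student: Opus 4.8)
The plan is to mirror the proof of Theorem \ref{GeneralPositiveResult}, using Lemma \ref{PositiveResultForPowersInDomains} as the engine and then transporting the solution across the scaling map $r \mapsto \frac{u}{v}r$. Fix $d \in \{a,b,a+b\}$ with $\frac{d}{c} = \left(\frac{u}{v}\right)^n$ for some $u,v \in R$; then necessarily $v \neq 0$, and $u = 0$ happens exactly when $d = 0$, i.e. when $a = -b$. I would split into the cases $u \neq 0$ and $u = 0$.

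For $u \neq 0$ (equivalently $d \neq 0$, so that $d$ is a legitimate value of the coefficient ``$c$'' in Lemma \ref{PositiveResultForPowersInDomains}), set $q = \frac{u}{v}\cdot p$. First I would check that $q$ is an ultrafilter on $R\setminus\{0\}$: the only nonformal point is that $\frac{v}{u}(R\setminus\{0\}) = vR\setminus\{0\} \in p$, which is condition (iii) of Theorem \ref{SpecialUltrafilter6}; everything else follows from $r \mapsto \frac{v}{u}r$ being injective on $uR$ since $R$ is a domain, together with the basic fact that an element of $p$ split into finitely many pieces meets $p$ in exactly one piece. Now take any $A \in q$, so that $B := \frac{v}{u}A \in p$. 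By Lemma \ref{PositiveResultForPowersInDomains} (with coefficient $d$) there are $w,x,y,z \in B$ with $ax + by = dwz^n$. Writing $x = \frac{v}{u}x_0$ and likewise $y_0,w_0,z_0$, where $x_0,y_0,w_0,z_0 \in A\cap uR \subseteq A$ by the definition of the scaled set, and multiplying the relation by $\frac{u}{v}$, I obtain
\[
ax_0 + by_0 = d\left(\frac{v}{u}\right)^n w_0 z_0^n = c\, w_0 z_0^n,
\]
the last equality being exactly the identity $d\left(\frac{v}{u}\right)^n = c$. So $A$ contains a solution.

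For $u = 0$ we have $a = -b$ and $q = p$, and over a general domain we cannot invoke a polynomial Szemer\'edi statement as in Lemma \ref{ScaledBergelsonLemma}, so this case has to be done directly with the ultrafilter. Given $A \in p$, let $A' = \{t \in A : t = wz^n \text{ for some } w,z \in A\}$; then $A' \in p$, since $A\setminus A'$ would violate condition (i) of Theorem \ref{SpecialUltrafilter6} (any geometric progression $\{b_1 g_1^j\}_{j=0}^n$ inside it would provide an element of the form $wz^n$ with $w,z \in A$). Apply condition (ii) of Theorem \ref{SpecialUltrafilter6} to $A'$ with $h = a$ and $s = c$, both nonzero in $R$, to get $a_0, d_0 \in R$ with $\{ad_0,\ aa_0,\ aa_0 + cd_0\} \subseteq A'$. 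Since $ad_0 \in A' \subseteq A$, write $ad_0 = wz^n$ with $w,z \in A$, and put $x = aa_0 + cd_0 \in A$ and $y = aa_0 \in A$; then
\[
ax + by = ax - ay = a(cd_0) = c(ad_0) = c\,wz^n,
\]
so again $A$ contains a solution.

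In both cases every $A \in q$ contains a solution, and since $q$ is an ultrafilter on $R\setminus\{0\}$, each finite partition of $R\setminus\{0\}$ has a cell lying in $q$, which gives partition regularity. I expect the main obstacle to be bookkeeping rather than conceptual: keeping the identity $d(v/u)^n = c$ and the constraint ``$x_0,\dots,z_0 \in uR$'' straight (this is precisely where $B = \frac{v}{u}A$ is used), and checking carefully that $\frac{u}{v}\cdot p$ is an honest ultrafilter on $R\setminus\{0\}$, which is exactly the point at which condition (iii) of Theorem \ref{SpecialUltrafilter6} enters.
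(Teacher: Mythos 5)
Your proof is correct and, for the main case $u \neq 0$, follows the paper's own argument step for step: take $A \in \frac{u}{v}\cdot p$, push it back to $B = \frac{v}{u}A \in p$, invoke Lemma \ref{PositiveResultForPowersInDomains} with coefficient $d$, and transport the solution with the identity $d\left(\frac{v}{u}\right)^n = c$.

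Where you diverge from the paper is the $u = 0$ case, and here your version is in fact more careful. The paper's proof simply asserts that when $u = 0$ (so $a+b = 0$) "the desired result \dots follows from Lemma \ref{PositiveResultForPowersInDomains}," but that lemma requires $c \in \{a,b,a+b\}$, and when $a+b = 0$ with $c \neq 0$ this need not hold. You instead give an explicit argument: pass to $A' = \{t \in A : t = wz^n,\ w,z \in A\} \in p$, apply condition (ii) of Theorem \ref{SpecialUltrafilter6} to $A'$ with $(h,s) = (a,c)$ to obtain $a_0, d_0$ with $ad_0, aa_0, aa_0 + cd_0 \in A'$, and then set $x = aa_0 + cd_0$, $y = aa_0$, $ad_0 = wz^n$ to get $ax + by = a c d_0 = c w z^n$. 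This is essentially the technique \emph{inside} the proof of Lemma \ref{PositiveResultForPowersInDomains}, re-run with $s = c$ rather than $s = b$; it genuinely closes the gap left by the paper's terse reference. One cosmetic remark: you write $\frac{v}{u}(R\setminus\{0\}) = vR\setminus\{0\}$, but in a general domain one only has the containment $\frac{v}{u}(R\setminus\{0\}) \supseteq vR\setminus\{0\}$. That containment is all you need, since $vR\setminus\{0\} \in p$ by condition (iii) forces $\frac{v}{u}(R\setminus\{0\}) \in p$, so the conclusion that $q$ is an ultrafilter on $R\setminus\{0\}$ is unaffected.
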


\begin{proof}
We see that if $u = 0$ then $a+b = 0$, so the desired result in this case follows from Lemma \ref{PositiveResultForPowersInDomains}. Now let us assume that $u \neq 0$ and let $d \in \{a,b,a+b\}$ be such that $\frac{d}{c} = (\frac{u}{v})^n$. Let $A \in \frac{u}{v}\cdot p$ be arbitrary and note that $\frac{v}{u}A \in p$. By Lemma \ref{PositiveResultForPowersInDomains} there exist $w,x,y,z \in A$ for which $\frac{v}{u}w,\frac{v}{u}x,\frac{v}{u}y,\frac{v}{u}z \in \frac{v}{u}A$ and

\begin{equation}
    a\left(\frac{v}{u}x\right)+b\left(\frac{v}{u}y\right) = d\left(\frac{v}{u}w\right)\left(\frac{v}{u}z\right)^n \Rightarrow ax+by = d\left(\frac{v}{u}\right)^nwz^n = cwz^n.
\end{equation}
For the latter half of the theorem, it suffices to note that if $R\setminus\{0\} = \bigcup_{i = 1}^rC_i$ is a partition, then there exists $1 \le i_0 \le r$ for which $C_{i_0} \in q$, and hence $C_{i_0}$ contains a solution to Equation \eqref{PositiveResultForDomainsEquation}.
\end{proof}

We recall that if $R$ is a Dedekind domain and $\frakp \subseteq R$ is a prime (hence maximal) ideal, then for any $u \in R$ and $v \in R\setminus\mathfrak{p}$ we have $\frac{u}{v} \equiv uv^{-1}\pmod{\frakp}$.

\begin{theorem} \label{NegativeCaseInNumberFields}
	Let $R$ be a Dedekind domain with field of fractions $K$. Let $a,b,c \in R\setminus\{0\}$ and $n \in \mathbb{N}$ be such that none of $\frac{a}{c}, \frac{b}{c},$ or $\frac{a+b}{c}$ are $n$th powers in $R/\mathfrak{p}$ for some prime ideal $\mathfrak{p} \subseteq R$ satisfying $a,b,a+b,c \notin \mathfrak{p}$ and $[R:\frakp] < \infty$. Let $K_\frakp$ denote the completion of $K$ at $\frakp$. The equation
	\begin{equation} \label{NegativeCaseInNumberFieldsEquation}
		ax+by = cwz^n
	\end{equation}
	is not partition regular over $K_\frakp\setminus\{0\}$.
\end{theorem}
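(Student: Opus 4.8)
The plan is to transport the argument behind Theorem~\ref{MainToolForNegativeResults} (equivalently Theorem~\ref{NotPartitionRegularCriterion}) from $\QQ\setminus\{0\}$ with the $p$-adic valuation to the local field $K_{\frakp}\setminus\{0\}$ with the $\frakp$-adic valuation, using reduction to the residue field in place of reduction mod $p$. First I would fix notation: let $\Ocal_{\frakp}$ be the valuation ring of $K_{\frakp}$, $\frakm_{\frakp}$ its maximal ideal, $v_{\frakp}\colon K_{\frakp}\setminus\{0\}\to\ZZ$ the normalized valuation, and $k_{\frakp}=\Ocal_{\frakp}/\frakm_{\frakp}$ the residue field. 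Since completion does not change the residue field and $\frakp$ is maximal in $R$, we have $k_{\frakp}\cong R/\frakp$, which is \emph{finite} by the hypothesis $[R:\frakp]<\infty$; this finiteness is what will make the partition below into finitely many cells.

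Next I would fix a uniformizer $\pi\in\frakm_{\frakp}$ and define $\chi\colon K_{\frakp}\setminus\{0\}\to k_{\frakp}^{\times}$ by $\chi(\xi)=\overline{\xi\pi^{-v_{\frakp}(\xi)}}$. One checks routinely that $\chi$ is a group homomorphism; that $\chi(r)=\bar r$ for $r\in R\setminus\frakp$, so that $\chi(a)=\bar a$, $\chi(b)=\bar b$, $\chi(c)=\bar c$ are nonzero in $k_{\frakp}$ and $\bar a+\bar b=\overline{a+b}\neq 0$ because $a,b,c,a+b\notin\frakp$; and that $\chi$ has the expected interaction with addition, namely $\chi(\xi+\eta)=\chi(\xi)+\chi(\eta)$ when $v_{\frakp}(\xi)=v_{\frakp}(\eta)$ and $\chi(\xi)+\chi(\eta)\neq 0$, while $\chi(\xi+\eta)$ equals $\chi$ of whichever of $\xi,\eta$ has strictly smaller valuation otherwise.

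Then I would take the finite partition $K_{\frakp}\setminus\{0\}=\bigcup_{t\in k_{\frakp}^{\times}}C_t$ with $C_t=\chi^{-1}(t)$ and show no cell contains a solution. Suppose $w,x,y,z\in C_d$ satisfy $ax+by=cwz^n$; note $ax+by=cwz^n\neq 0$ since $K_{\frakp}$ is a field. As $a,b$ are $\frakp$-adic units we have $v_{\frakp}(ax)=v_{\frakp}(x)$, $\chi(ax)=\bar a d$, $\chi(by)=\bar b d$, and $\chi(cwz^n)=\bar c d^{\,n+1}$. Split into three cases on $v_{\frakp}(x)$ versus $v_{\frakp}(y)$. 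If $v_{\frakp}(x)=v_{\frakp}(y)$, then $\chi(ax)+\chi(by)=\overline{a+b}\,d\neq 0$, so $\chi(ax+by)=\overline{a+b}\,d$; equating with $\bar c d^{\,n+1}$ and cancelling $d\neq 0$ gives $\overline{(a+b)/c}=d^{\,n}$, contradicting that $(a+b)/c$ is not an $n$th power in $R/\frakp\cong k_{\frakp}$. If $v_{\frakp}(x)<v_{\frakp}(y)$, then $\chi(ax+by)=\chi(ax)=\bar a d$, yielding $\overline{a/c}=d^{\,n}$; if $v_{\frakp}(x)>v_{\frakp}(y)$, then $\chi(ax+by)=\chi(by)=\bar b d$, yielding $\overline{b/c}=d^{\,n}$. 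Both contradict the hypotheses. Hence the equation is not partition regular over $K_{\frakp}\setminus\{0\}$.

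I do not expect a serious obstacle: the content is essentially that of Theorem~\ref{MainToolForNegativeResults} carried over to a general complete discretely valued field with finite residue field. The only points needing care are the identification $\Ocal_{\frakp}/\frakm_{\frakp}\cong R/\frakp$ together with its finiteness (so that $\{C_t\}$ is a genuine finite coloring), and the valuation bookkeeping in the case analysis — in particular verifying that $v_{\frakp}(ax+by)=\min(v_{\frakp}(x),v_{\frakp}(y))$, where the hypothesis $a+b\notin\frakp$ is exactly what prevents cancellation of leading terms in the equal-valuation case.
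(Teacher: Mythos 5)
Your proof is correct and follows essentially the same route as the paper's: both define the ``leading coefficient'' map $\chi$ on $K_\frakp\setminus\{0\}$ via a uniformizer $\pi$, partition into the finitely many fibers of $\chi$ (possible because $R/\frakp$ is finite), and derive a contradiction from the same three-case analysis on $v_\frakp(x)$ versus $v_\frakp(y)$. Your version is slightly more explicit about the identification $\Ocal_\frakp/\frakm_\frakp\cong R/\frakp$ and about why $\chi$ is multiplicative, but there is no substantive difference from the argument in the paper.
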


\begin{proof}
	Since $R$ is a Dedekind domain we see that $R_\frakp$ is a discrete valuation ring under the valuation $v_\frakp$, so let $\pi$ be a generator of the maximal ideal of $R_\frakp$. Let $F \subseteq R$ be a set of coset representatives of $(\pi)$ such that $\bigcup_{f \in F}\left(f+(\pi)\right) = R\setminus\frakp$ and $\left(f_1+(\pi)\right)\cap\left(f_2+(\pi)\right) = \emptyset$ whenever $f_1 \neq f_2$. We note that $|F| = [R:\frakp]-1 < \infty$. Let $\chi:K_{\mathfrak{p}}\setminus\{0\}\rightarrow F$ be given by
	
	\begin{equation}
		\frac{x}{\pi^{v_{\mathfrak{p}}(x)}} \equiv \chi(x)\pmod{\mathfrak{p}}.
	\end{equation}
	Note that $\chi(rs) = \chi(r)\chi(s) \pmod{\pi}$ for all $r,s \in R\setminus\{0\}$. We also see that 
	
		\begin{equation}
	    \chi(r+s) \equiv \begin{cases}
	                         \chi(r)+\chi(s)\hfill \pmod{\pi} & \text{ if }v_\frakp(r) = v_\frakp(s)\text{ and }r+s \not\equiv 0\pmod{\pi}\\
	                         \chi(s)\hfill \pmod{\pi} & \text{ if }v_\frakp(r) > v_\frakp(s)\\
	                         \chi(r)\hfill \pmod{\pi} & \text{ if } v_\frakp(s) > v_\frakp(r).
	                     \end{cases}
	\end{equation}
	Let $K_{\mathfrak{p}}\setminus\{0\} = \bigcup_{f \in F}C_f$ be the partition given by $C_f = \chi^{-1}(\{f\})$. Let us assume for the sake of contradiction that there exists $d \in F$ for which $w,x,y,z \in C_d$ and Equation \eqref{NegativeCaseInNumberFieldsEquation} is satisfied. We now have $3$ cases to consider. If $v_{\mathfrak{p}}(x) = v_{\mathfrak{p}}(y)$, then we see that
	\begin{alignat}{2}
		&0 \not\equiv (a+b)d \equiv \chi(a)\chi(x)+\chi(b)\chi(y) \equiv \chi(ax+by) \equiv \chi(cwz^n) \equiv cd^{n+1} \pmod{\mathfrak{p}},\\
	    &\text{hence } (a+b)c^{-1} \equiv d^n\pmod{\mathfrak{p}},\numberthis
	\end{alignat}
    which yields the desired contradiction. For our next case we assume that $v_{\mathfrak{p}}(x) < v_{\mathfrak{p}}(y)$ and note that
	\begin{alignat}{2}
		&0 \not\equiv ad \equiv \chi(a)\chi(x) \equiv \chi(ax+by) \equiv \chi(cwz^n) \equiv cd^{n+1} \pmod{\mathfrak{p}},\numberthis\\
	    &\text{hence }ac^{-1} \equiv d^n\pmod{\mathfrak{p}},
	\end{alignat}
	which again yields the desired contradiction. Similarly, in our final case when $v_{\mathfrak{p}}(x) > v_{\mathfrak{p}}(y)$ we have
	\begin{alignat}{2}
		&0 \not\equiv bd \equiv \chi(b)\chi(y) \equiv \chi(ax+by) \equiv \chi(cwz^n) \equiv cd^{n+1}\pmod{\mathfrak{p}},\numberthis\\
	    &\text{hence }bc^{-1}\equiv d^n\pmod{\mathfrak{p}},
	\end{alignat}
	which once more yields the desired contradiction.
\end{proof}

\begin{corollary} \label{MainNegativeResultForNumberFields}
	Let $K$ be a number field and let $\omega_m$ be the number of $m$th roots of unity in $K$. Let $a,b,c \in \Ocal_K$ and let $n \in \mathbb{N}$. Let $d_a$ be the largest integer for which $a^{\frac{1}{d_a}} \in \mathcal{O}_K$, and define $d_b$ and $d_c$ similarly. Let $m_a = \frac{n}{d_a}, m_b = \frac{n}{d_b},$ and $m_c = \frac{n}{d_c}$. 
	\begin{enumerate}[(i)]
		\item Suppose $n$ is odd, and none of $a^{\omega_{m_a}}, b^{\omega_{m_b}},$ and $c^{\omega_{m_c}}$ are an $n$th power in $\Ocal_K$; or
		\item Suppose $n$ is even, and $a,b,c$ satisfy the same conditions as in (i), but $a^{2 \omega_{m_a}}$ is also not an $n$th power. 
	\end{enumerate}
	Then the equation
	
	\begin{equation}
		ax+by = cwz^n
	\end{equation}
	is not partition regular over $K\setminus\{0\}$.
\end{corollary}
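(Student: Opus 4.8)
The plan is to deduce this from Theorem \ref{NegativeCaseInNumberFields} and Lemma \ref{ExistenceOfPrimes}, in direct analogy with the way Corollary \ref{MainNegativeResult} is deduced over $\QQ$ from Theorem \ref{MainToolForNegativeResults} and Lemma \ref{PrimeExistence}. Since $\Ocal_K$ is a Dedekind domain all of whose nonzero residue fields are finite, Theorem \ref{NegativeCaseInNumberFields} reduces the assertion to exhibiting a single prime ideal $\frakp \subseteq \Ocal_K$ with $a,b,a+b,c \notin \frakp$ modulo which none of $\frac ac,\frac bc,\frac{a+b}{c}$ is an $n$th power: for such a $\frakp$ the equation fails to be partition regular over $K_\frakp\setminus\{0\}$, and restricting to $K\setminus\{0\}\subseteq K_\frakp\setminus\{0\}$ the colouring produced there yields a colouring of $K\setminus\{0\}$ with no monochromatic solution.

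To build $\frakp$ I would first clear denominators, exactly as in the proof of Corollary \ref{ExistenceOfPrimesQ}: away from the finitely many primes dividing $abc(a+b)$, the ratio $\frac ac$ is an $n$th power modulo $\frakp$ if and only if $ac^{n-1}$ is, and similarly for $\frac bc$ and $\frac{a+b}{c}$, so it suffices to find $\frakp$ modulo which none of $ac^{n-1}, bc^{n-1}, (a+b)c^{n-1} \in \Ocal_K$ is an $n$th power. Lemma \ref{ExistenceOfPrimes} provides infinitely many such primes once we verify its hypotheses for this triple, and the substance of the argument is the translation of the hypotheses as stated into those: using that $\Ocal_K$ is integrally closed one checks that $(ac^{n-1})^{\omega_n}$ is an $n$th power in $\Ocal_K$ precisely when $(\frac ac)^{\omega_n}$ is an $n$th power in $K$, while $d_a$ and $m_a=n/d_a$ are the bookkeeping that records the degree of the Kummer extension of $C=K(\zeta_n)$ governing the Chebotarev density of primes at which the associated element is an $n$th power (cf. Remark \ref{NumberTheoryNotation}); the conditions that the relevant $\omega_{m_a}$th and $2\omega_{m_a}$th powers are not $n$th powers are exactly what Lemmas \ref{NoDegree1} and \ref{AbelianRadicalLemma} need in order to exclude the degenerate degrees $1$ and $2$, the latter being the genuine obstruction exemplified by $K(\sqrt a),K(\sqrt b),K(\sqrt{ab})$. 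With this in hand, the inclusion--exclusion computation with Chebotarev from the proof of Lemma \ref{ExistenceOfPrimes} shows the density of primes at which at least one of the three elements is an $n$th power is strictly less than $1$, producing $\frakp$.

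The step I expect to be the main obstacle is the bookkeeping flagged in Remark \ref{FLTRemark}(iii)--(v): over a general $\Ocal_K$ one cannot freely pass between an element being a perfect power and the ideal it generates being one, and the unit group interferes as well, so the identification of $d_a$ and $m_a$ with the true radical-extension degrees, and the derivation of the Lemma \ref{ExistenceOfPrimes} hypotheses from the stated ones, has to be done carefully by passing to ideals of the Dedekind domain $\Ocal_K$, where unique factorization holds, and keeping track of the exponents and of whether $d_a$ is computed in $\Ocal_K$ or in $\Ocal_{K(\zeta_n)}$. No ingredient beyond those already developed in Section \ref{NumberTheorySection} should be needed.
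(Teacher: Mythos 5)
Your high-level route is the paper's: combine Lemma \ref{ExistenceOfPrimes} with Theorem \ref{NegativeCaseInNumberFields}. The paper's proof is a two-sentence corollary that does exactly that. But you add a step the paper skips, and rightly so: Theorem \ref{NegativeCaseInNumberFields} requires a prime $\frakp$ modulo which the \emph{ratios} $\frac{a}{c},\frac{b}{c},\frac{a+b}{c}$ are simultaneously non-$n$th-powers, whereas the paper applies Lemma \ref{ExistenceOfPrimes} to $a,b,c$ themselves, producing a prime modulo which $a,b,c$ are non-$n$th-powers. These are genuinely different conditions (for example with $n=2$, if $a$ and $c$ are both nonsquares modulo $p$ then $a/c$ \emph{is} a square modulo $p$). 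Your proposal to clear denominators and run Lemma \ref{ExistenceOfPrimes} on $ac^{n-1},\ bc^{n-1},\ (a+b)c^{n-1}$ instead, mirroring the proof of Corollary \ref{ExistenceOfPrimesQ}, is the correct shape of a fix; as written the paper's one-line deduction has a gap, and its hypotheses were perhaps meant to refer to the ratios, as in Theorem \ref{MainToolForNegativeResults} over $\QQ$.

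The part you yourself flag as the main obstacle is where your write-up remains incomplete. The stated hypotheses are on $a^{\omega_{m_a}}, b^{\omega_{m_b}}, c^{\omega_{m_c}}$ (and $a^{2\omega_{m_a}}$ for $n$ even); they do imply the hypotheses of Lemma \ref{ExistenceOfPrimes} for the triple $a,b,c$ themselves, since $m_a\mid n$ forces $\omega_{m_a}\mid\omega_n$, so ``$a^{\omega_{m_a}}$ not an $n$th power'' is stronger than ``$a^{\omega_n}$ not an $n$th power''. But you are applying the lemma to the triple $ac^{n-1}, bc^{n-1}, (a+b)c^{n-1}$, and you do not verify that the stated hypotheses transfer to that triple; passing through ideals and unit tracking in $\Ocal_K$ is, as you say, where the work lies, and you leave it undone. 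Until that verification is carried out the argument does not close, but the discrepancy you identify between the corollary's stated hypotheses and what Theorem \ref{NegativeCaseInNumberFields} actually requires is real and worth surfacing.
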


\begin{proof}
	The given assumptions are precisely what we need to use Lemma \ref{ExistenceOfPrimes} and obtain a prime ideal $\mathfrak{p} \subseteq \Ocal_K$ for which none of $a,b,$ and $c$ are $n$th powers modulo $\mathfrak{p}$. After noting that $K$ embeds in $K_{\mathfrak{p}}$, we see that the desired result follows from Theorem \ref{NegativeCaseInNumberFields}.
\end{proof}

\begin{remark}
Consider the equation

\begin{equation}\label{AnExample}
    2x+3y = wz^2.
\end{equation}
Since $2,3,$ and $5$ are not squares modulo $43$, Theorem \ref{MainNegativeResultForNumberFields} tells us that Equation \eqref{AnExample} is not partition regular over $\mathbb{Q}_{43}\setminus\{0\}$. Since Equation \eqref{AnExample} is partition regular over the countable set $\mathbb{Z}[\sqrt{2}]$ as a consequence of Theorem \ref{GeneralPositiveResultInDomains} but not over the uncountable set $\mathbb{Q}_{43}$, we see that the algebraic properties of the underlying set $S$ have a stronger influence on the partition regularity of equations of the form $ax+by = cwz^n$ than the cardinality of $S$.
\end{remark}
\section{Systems of equations}\label{SectionForSystems}

\begin{theorem}\label{PositiveResultForSystems}
Let $R$ be an integral domain with field of fractions $K$ and let\\ $p \in \beta R\setminus\{0\}$ be an ultrafilter satisfying the conditions of Theorem \ref{SpecialUltrafilter6}. If $$a_1,\cdots,a_k,b_1,\cdots,b_k,\allowbreak c_1,\cdots,c_k \in R\setminus\{0\}$$ and $n_1,\cdots,n_k \in \mathbb{N}$ are such that

\begin{equation}
    I := K\cap\bigcap_{i = 1}^k\left\{\sqrt[n_i]{\frac{a_i}{c_i}},\sqrt[n_i]{\frac{b_i}{c_i}},\sqrt[n_i]{\frac{a_i+b_i}{c_i}}\right\} \neq \emptyset,
\end{equation}
then the system of equations

\begin{equation}\label{PRPolynomialSystem}
    \begin{array}{ccccc}
         a_1x_1 & + & b_1y_1 & = & c_1w_1z_1^{n_1}\\
         a_2x_2 & + & b_2y_2 & = & c_2w_2z_2^{n_2}\\
         & & \vdots & & \\
         a_kx_k & + & b_ky_k & = & c_kw_kz_k^{n_k}
    \end{array}
\end{equation}
contains a solution in every $A \in q$, where we may take

\begin{equation}
    q = \begin{cases}
            p &\text{if }0 \in I\\
            i\cdot p &\text{if }i \in I\setminus\{0\}.
        \end{cases}
\end{equation}
In particular, the system of equations in \eqref{PRPolynomialSystem} is partition regular over $R\setminus\{0\}$.
\end{theorem}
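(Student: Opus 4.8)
The plan is to imitate the proof of Theorem~\ref{GeneralPositiveResultInDomains}, exploiting the fact that the system \eqref{PRPolynomialSystem} is \emph{diagonal}: the $i$-th equation involves only the variables $x_i,y_i,w_i,z_i$, so there is no interaction between the equations, and a single ultrafilter $p$ together with a single element $r \in I$ can be used to solve all $k$ of them simultaneously inside one set $A \in q$. I would fix $r \in I$ and split into the cases $r = 0$ and $r \neq 0$.

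Suppose first that $r = 0$, i.e. $0 \in I$. Then for each $i$ one of $a_i/c_i,\ b_i/c_i,\ (a_i+b_i)/c_i$ equals $0$; since $a_i,b_i \neq 0$ this forces $a_i + b_i = 0$, and the $i$-th equation becomes $a_i(x_i - y_i) = c_i w_i z_i^{n_i}$. I would solve this inside an arbitrary $A \in p$ by the device already used for the case $c=a$ in Lemma~\ref{PositiveResultForPowersInDomains}: pass first to $A' = \{v \in A : v = wz^{n_i}\text{ for some }w,z \in A\}$, which lies in $p$ because its complement violates property (i) of Theorem~\ref{SpecialUltrafilter6}, then to $A'' = A' \cap a_iR \in p$; applying property (ii) of Theorem~\ref{SpecialUltrafilter6} with $(h,s) = (a_i,c_i)$ produces $\alpha,d \in R$ with $\{a_id,\ a_i\alpha,\ a_i\alpha + c_id\} \subseteq A''$. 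Writing $a_id = w_iz_i^{n_i}$ with $w_i,z_i \in A$ and setting $x_i = a_i\alpha + c_id$, $y_i = a_i\alpha$, one obtains $a_ix_i + b_iy_i = a_i(x_i - y_i) = c_i(a_id) = c_iw_iz_i^{n_i}$ with $x_i,y_i,w_i,z_i \in A$. Carrying this out for every $i$ with the same set $A$ yields a solution of the whole system inside $A$.

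Now suppose $r \neq 0$, and write $r = u/v$ with $u,v \in R\setminus\{0\}$. For each $i$ there is $d_i \in \{a_i,b_i,a_i+b_i\}$ with $c_ir^{n_i} = d_i$, and since $c_i,r \neq 0$ we have $d_i \neq 0$. Take $q = \frac{u}{v}\cdot p$, and let $A \in q$, so that $\frac{v}{u}A \in p$. For each $i$, Lemma~\ref{PositiveResultForPowersInDomains} applied to the set $\frac{v}{u}A$ with parameters $(a_i,b_i,d_i,n_i)$ — legitimate precisely because $d_i \in \{a_i,b_i,a_i+b_i\}$ — yields $x_i',y_i',w_i',z_i' \in \frac{v}{u}A$ with $a_ix_i' + b_iy_i' = d_iw_i'(z_i')^{n_i}$. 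Writing $x_i' = \tfrac{v}{u}x_i$, $y_i' = \tfrac{v}{u}y_i$, $w_i' = \tfrac{v}{u}w_i$, $z_i' = \tfrac{v}{u}z_i$ with $x_i,y_i,w_i,z_i \in A$, and multiplying through by $\tfrac{u}{v}$, gives
\begin{equation*}
 a_ix_i + b_iy_i = d_i\Big(\tfrac{v}{u}\Big)^{n_i} w_iz_i^{n_i} = d_i\, r^{-n_i}\, w_iz_i^{n_i} = c_iw_iz_i^{n_i},
\end{equation*}
so all $4k$ variables lie in $A$ and solve the system.

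Finally, for the ``in particular'' clause: given a finite partition $R\setminus\{0\} = C_1 \cup \dots \cup C_s$, some cell $C_{j_0}$ belongs to the ultrafilter $q$, and by the above $C_{j_0}$ contains a solution of \eqref{PRPolynomialSystem}. The only step requiring genuine care is the case $r = 0$: over a general integral domain one cannot invoke Theorem~\ref{PolynomialSzemeredi} to handle $a_i(x_i - y_i) = c_iw_iz_i^{n_i}$ as was done over $\mathbb{Z}$, so the direct ultrafilter argument above is needed — but it costs nothing beyond the mechanism already present in Lemma~\ref{PositiveResultForPowersInDomains}. Everything else is bookkeeping, the essential observation being that diagonality of the system lets one ultrafilter and one $n$-th root $r \in I$ serve all $k$ equations at once.
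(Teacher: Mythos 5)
Your proof is correct and follows essentially the same approach as the paper: observe that the $k$ equations are variable-disjoint, so a single ultrafilter $q$ built from a common root $r \in I$ gives a solution of each equation inside the same cell $A \in q$, which is exactly the paper's one-line reduction to Theorem \ref{GeneralPositiveResultInDomains}. The one place you diverge — giving a direct ultrafilter argument for the $r=0$ (i.e.\ $a_i+b_i=0$) case via property (ii) of Theorem \ref{SpecialUltrafilter6} with $(h,s)=(a_i,c_i)$, rather than citing Lemma \ref{PositiveResultForPowersInDomains} — is actually more careful than the paper, since that lemma requires $c\in\{a,b,a+b\}$ and so does not literally cover arbitrary $c$ when $a+b=0$; your explicit derivation closes that small gap.
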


\begin{proof}
Since none of the equations in the system of equations in \eqref{PRPolynomialSystem} share any variables, the desired result follows from Theorem \ref{GeneralPositiveResultInDomains}
\end{proof}

\begin{theorem}\label{NegativeResultForSystemsOverDedekindDomains}
Let $R$ be a Dedekind domain with field of fractions $K$. Let $a_1,\cdots,a_k,\allowbreak b_1,\cdots,b_k,c_1,\cdots,c_k \in R\setminus\{0\}$, let $n \in \mathbb{N}$, and let 

\begin{equation}
    I := \bigcap_{i = 1}^k\left\{\frac{a_i}{c_i},\frac{b_i}{c_i},\frac{a_i+b_i}{c_i}\right\}
\end{equation}
Suppose that there exists a prime ideal $\frakp \subseteq R$ satisfying:
\begin{enumerate}[(i)]
\item None of $a_1,\cdots,a_i,b_1,\cdots,b_i,c_1,\cdots,c_i,a_1+b_1,\cdots,a_i+b_i$ are contained in $\frakp$.

\item If $v_1,v_2 \in \bigcup_{i = 1}^k\left\{\frac{a_i}{c_i},\frac{b_i}{c_i},\frac{a_i+b_i}{c_i}\right\}$ are distinct, then $v_1 \not\equiv v_2\pmod{\frakp}$.

\item No element of $I$ is an $n$th power modulo $\frakp$.

\item $[R:\frakp] < \infty$.
\end{enumerate}
The system of equations

\begin{equation}\label{NegativeCaseForSystemsOverDomains}
    \begin{array}{ccccc}
         a_1x_1 & + & b_1y_1 & = & c_1w_1z_1^n\\
         a_2x_2 & + & b_2y_2 & = & c_2w_2z_2^n\\
          & & \vdots & & \\
          a_kx_k & + & b_ky_k & = & c_kw_kz_k^n
    \end{array}
\end{equation}
is not partition regular over $K_\frakp\setminus\{0\}$, where $K_\frakp$ is the localization of $K$ at $\frakp$.
\end{theorem}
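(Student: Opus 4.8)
The plan is to adapt the partition used in the proof of Theorem~\ref{NegativeCaseInNumberFields} and then exploit hypothesis (ii) to upgrade the ``one equation at a time'' conclusion into a statement about the intersection $I$.

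First I would set up the local picture exactly as in the proof of Theorem~\ref{NegativeCaseInNumberFields}. Since $R$ is Dedekind, $R_\frakp$ is a discrete valuation ring; let $\pi$ be a uniformizer, so that $K_\frakp$ carries the discrete valuation $v_\frakp$ and has residue field $R/\frakp$, which is finite by (iv). Pick a set $F$ of representatives for the nonzero residues and define $\chi : K_\frakp \setminus \{0\} \to F$ by $x/\pi^{v_\frakp(x)} \equiv \chi(x) \pmod{\frakp}$, recording as before its multiplicativity on $R \setminus \frakp$ and the three-case additive behavior. Then partition $K_\frakp \setminus \{0\} = \bigcup_{f \in F} C_f$ with $C_f = \chi^{-1}(\{f\})$; this is a finite partition.

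Next I would argue by contradiction: suppose some cell $C_d$ contains a solution $(x_i,y_i,w_i,z_i)_{i=1}^k$ of the system \eqref{NegativeCaseForSystemsOverDomains}. Fix $i$. Because $a_i,b_i,a_i+b_i,c_i \notin \frakp$ by (i), the computation from the proof of Theorem~\ref{NegativeCaseInNumberFields}, applied to the single equation $a_i x_i + b_i y_i = c_i w_i z_i^n$ with all of $x_i,y_i,w_i,z_i$ in $C_d$, yields — after splitting on whether $v_\frakp(x_i)$ is equal to, less than, or greater than $v_\frakp(y_i)$ — that one of $\tfrac{a_i}{c_i}, \tfrac{b_i}{c_i}, \tfrac{a_i+b_i}{c_i}$ (each well defined mod $\frakp$ since $c_i \notin \frakp$) is congruent to $d^n$ modulo $\frakp$. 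So the single-equation machinery already in the paper does all of the ``analytic'' work here.

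The one genuinely new step — and the part I expect to require the most care — is combining these $k$ congruences. Put $V = \bigcup_{i=1}^k \{\tfrac{a_i}{c_i}, \tfrac{b_i}{c_i}, \tfrac{a_i+b_i}{c_i}\}$. By (ii) the reduction map $V \to R/\frakp$ is injective, so there is at most one element of $V$ congruent to $d^n$ modulo $\frakp$; the previous paragraph shows such an element exists, say $v$. For each $i$ the member of $\{\tfrac{a_i}{c_i}, \tfrac{b_i}{c_i}, \tfrac{a_i+b_i}{c_i}\}$ that reduces to $d^n$ is forced by injectivity to be $v$ itself, hence $v \in \{\tfrac{a_i}{c_i}, \tfrac{b_i}{c_i}, \tfrac{a_i+b_i}{c_i}\}$ for every $i$, i.e. $v \in I$. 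But $v \equiv d^n \pmod{\frakp}$ exhibits $v$ as an $n$th power modulo $\frakp$, contradicting (iii). Therefore no cell contains a solution, and the system is not partition regular over $K_\frakp \setminus \{0\}$. The only subtlety to watch is that hypothesis (ii) is precisely what turns a collection of pointwise ``$n$th power mod $\frakp$'' statements into membership in the intersection $I$; without it one only learns that some possibly-$i$-dependent ratio is an $n$th power.
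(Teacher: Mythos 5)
Your proposal is correct and follows essentially the same route as the paper: set up the residue-class partition via the uniformizer $\pi$ exactly as in Theorem~\ref{NegativeCaseInNumberFields}, derive for each $i$ that one of $\tfrac{a_i}{c_i}, \tfrac{b_i}{c_i}, \tfrac{a_i+b_i}{c_i}$ is congruent to $d^n$ modulo $\frakp$, and then use hypothesis~(ii) to force these to be one common element $v \in I$, contradicting~(iii). Your explicit framing of (ii) as injectivity of the reduction map on the set $V$ of candidate ratios is a slightly more transparent phrasing of the same step in the paper's argument.
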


\begin{proof}
We begin the proof similarly to that of Theorem \ref{NegativeCaseInNumberFields}. Since $R$ is a Dedekind domain we see that $R_\frakp$ is a discrete valuation ring under the valuation $v_\frakp$, so let $\pi$ be a generator of the maximal ideal of $R_\frakp$. Let $F \subseteq R$ be a set of coset representatives of $(\pi)$ such that $\bigcup_{f \in F}\left(f+(\pi)\right) = R\setminus\frakp$ and $\left(f_1+(\pi)\right)\cap\left(f_2+(\pi)\right) = \emptyset$ whenever $f_1 \neq f_2$. We note that $|F| = [R:\frakp]-1 < \infty$. Let $\chi:K_{\mathfrak{p}}\setminus\{0\}\rightarrow F$ be given by
	
	\begin{equation}
		\frac{x}{\pi^{v_{\mathfrak{p}}(x)}} \equiv \chi(x)\pmod{\mathfrak{p}}.
	\end{equation}
	Observe that $\chi(r) \equiv r\pmod{\frakp}$ for all $r \notin \frakp$. Let $K_{\mathfrak{p}}\setminus\{0\} = \bigcup_{f \in F}C_f$ be the partition given by $C_f = \chi^{-1}(\{f\})$. Let us assume for the sake of contradiction that there exists $d \in F$ and $x_i,y_i,w_i,z_i \in C_d$ for $1 \le i \le k$ for which the system of equations in \eqref{NegativeCaseForSystemsOverDomains} is satisfied.
	We see that for $1 \le i \le k$ we have
	
	\begin{equation}
	    0 \not\equiv \chi(c_iw_iz_i^n) \equiv \chi(a_ix_i+b_iy_i) \equiv \begin{cases}
	                         \chi(a_i)d+\chi(b_i)d \hfill \pmod{\pi} & \text{ if }v_\frakp(x_i) = v_\frakp(y_i)\\
	                         \chi(a_i)d\hfill \pmod{\pi} & \text{ if }v_\frakp(y_i) > v_\frakp(x_i)\\
	                         \chi(b_i)d\hfill \pmod{\pi} & \text{ if } v_\frakp(x_i) > v_\frakp(x_i),
	                     \end{cases}
	\end{equation}
	
	\begin{equation}
	    \text{hence } d^n \equiv d^{-1}\chi(w_iz_i^n) = \begin{cases}
	                         (a_i+b_i)c^{-1} \hfill \pmod{\pi} & \text{ if }v_\frakp(x_i) = v_\frakp(y_i)\\
	                         a_ic^{-1}\hfill \pmod{\pi} & \text{ if }v_\frakp(y_i) > v_\frakp(x_i)\\
	                         b_ic^{-1}\hfill \pmod{\pi} & \text{ if } v_\frakp(x_i) > v_\frakp(x_i).
	                     \end{cases}
	\end{equation}
	For $1 \le i \le k$ let $v_i \in \{a_ic_i^{-1},b_ic_i^{-1},(a_i+b_i)c_i^{-1}\}$ be such that $d^n \equiv v_i\pmod{\frakp}$. Since we must have that $v_i \equiv d^n \equiv v_j \pmod{\frakp}$ for all $1 \le i < j \le k$, we see that there is some $v \in K$ for which $v = v_i$ for all $1 \le i \le k$, and hence $v \in I$. The desired contradictions follows after recalling that no element of $I$ is an $n$th power modulo $\frakp$.
\end{proof}

We observe that the conditions of Theorem \ref{NegativeResultForSystemsOverDedekindDomains} are vacuously fulfilled if $I = \emptyset$.

\begin{corollary}\label{NegativeResultForSystemsOverZ}
Let $a_1,\cdots,a_k,b_1,\cdots,b_k,c_1,\cdots,c_k \in \mathbb{Z}\setminus\{0\}$ and $n \in \mathbb{N}$ be such that the system of equations

\begin{equation}
    \begin{array}{ccccc}
         a_1x_1 & + & b_1y_1 & = & c_1w_1z_1^n\\
         a_2x_2 & + & b_2y_2 & = & c_2w_2z_2^n\\
         & & \vdots & & \\
         a_kx_k & + & b_ky_k & = & c_kw_kz_k^n
    \end{array}
\end{equation}
is partition regular over $\mathbb{Z}\setminus\{0\}$. Let

\begin{equation}
    I := \bigcap_{i = 1}^k \left\{\frac{a_i}{c_i},\frac{b_i}{c_i},\frac{a_i+b_i}{c_i}\right\}.
\end{equation}
\begin{enumerate}[(i)]
    \item If $4\nmid n$, then $I$ contains an $n$th power.
    
    \item If $4|n$, then $I$ contains an $\frac{n}{2}$th power.
\end{enumerate}
\end{corollary}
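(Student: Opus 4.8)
The plan is to argue by contraposition and reduce everything to producing a single obstructing prime. Put $n' = n$ if $4 \nmid n$ and $n' = n/2$ if $4 \mid n$, and assume that $I$ contains no $n'$th power in $\QQ$; the goal is to show the system is not partition regular over $\ZZ\setminus\{0\}$. Since $\ZZ\setminus\{0\}\subseteq\QQ_p\setminus\{0\}$ for every prime $p$ and partition regularity over a subset implies it over any superset, it suffices to find one prime $p$ for which the system is not partition regular over $\QQ_p\setminus\{0\}$, and this will come from Theorem \ref{NegativeResultForSystemsOverDedekindDomains} applied with $R=\ZZ$, $K=\QQ$, and $\frakp=(p)$. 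Conditions (i), (ii), (iv) of that theorem hold for all but finitely many primes — the exceptions divide some $a_i,b_i,c_i,a_i+b_i$ or the numerator of a difference of two distinct members of $\bigcup_{i}\{a_i/c_i,b_i/c_i,(a_i+b_i)/c_i\}$ — so the entire problem reduces to producing infinitely many primes $p$ modulo which no element of $I$ is an $n$th power.

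Since $I\subseteq\{a_1/c_1,b_1/c_1,(a_1+b_1)/c_1\}$ we have $|I|\le 3$, and I would split on $|I|$. If $|I|=0$ the condition on $p$ is vacuous and any large prime works. If $1\le|I|\le 2$, pick $\alpha,\beta\in\QQ$ with $I\subseteq\{\alpha,\beta\}$ (taking $\alpha=\beta$ when $|I|=1$); neither is an $n'$th power, so Lemma \ref{2VariableLemma} — part (i) when $4\nmid n$, part (ii) when $4\mid n$ — gives infinitely many primes modulo which neither $\alpha$ nor $\beta$, hence no element of $I$, is an $n$th power, and Theorem \ref{NegativeResultForSystemsOverDedekindDomains} concludes.

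The real work is the case $|I|=3$, say $I=\{\alpha,\beta,\gamma\}$. Then each equation has ratio set $\{\alpha,\beta,\gamma\}$, and comparing with $a_i/c_i+b_i/c_i=(a_i+b_i)/c_i$ forces, after relabeling, $\gamma=\alpha+\beta$; moreover partition regularity of the system implies that of the single equation $a_1x+b_1y=c_1wz^n$, whose ratio triple is $(\alpha,\beta,\alpha+\beta)$. When $n$ is odd, none of $\alpha,\beta,\gamma$ is an $n$th power and Corollary \ref{ExistenceOfPrimesQ}(i) supplies the primes. When $4\mid n$ with $n\ge 12$, none of them is an $(n/2)$th power by hypothesis, and $\gamma=\alpha+\beta$ together with Fermat's Last Theorem shows at least one is not an $(n/4)$th power, so Corollary \ref{ExistenceOfPrimesQ}(ii) applies. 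When $n\equiv 2\pmod 4$, write $n=2m$ with $m$ odd: if no element of $I$ is an $m$th power, Corollary \ref{ExistenceOfPrimesQ}(ii) applies directly; if some element is, say $\alpha=\alpha_0^m$ with $\alpha_0$ a non-square, one uses that for odd $m$ the element $\alpha$ is an $n$th power modulo $p$ exactly when $\alpha_0$ is a square modulo $p$, records this for every element of $I$ that is an $m$th power, and re-runs the inclusion--exclusion density estimate of Lemma \ref{ExistenceOfPrimes} — with the configuration where all three elements have radical degree $2$ excluded by Fermat's Last Theorem via $\gamma=\alpha+\beta$ — to keep below density $1$ the set of primes modulo which some element of $I$ is an $n$th power.

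The genuinely delicate cases are $|I|=3$ with $n\in\{2,4,8\}$, where Corollary \ref{ExistenceOfPrimesQ} is unavailable and the density argument stalls at the all--radical-degree-$2$ configuration. Here I would again reduce to the single equation $a_1x+b_1y=c_1wz^n$ with ratios $(\alpha,\beta,\alpha+\beta)$ (for $n\in\{4,8\}$, first passing to square roots via the equivalences above, so that the question becomes one about squares). If the product of the three relevant elements is not a square, Lemma \ref{Exponent2} (whose proof in fact yields a positive density of such primes) furnishes a prime modulo which all three are non-squares, and Theorem \ref{MainToolForNegativeResults} rules out partition regularity; if that product is a square — the very configuration that also escapes item (iii)(c) of Theorem \ref{MainResult} — one instead invokes the $p$-adic criterion of Theorem \ref{MinimalRadoConditionApplied}, choosing a prime $p$ with $v_p(\gamma)\notin n\ZZ_{\ge 0}$ and checking that none of $\alpha,\beta,\gamma$ is an $n$th power in $\QQ_p$. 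Proving that such a prime can always be found — controlling the interaction of the linear constraint $\gamma=\alpha+\beta$ with the $p$-adic power structure — is the main obstacle, and it is exactly this difficulty that forces the weaker conclusion "$(n/2)$th power" rather than "$n$th power" when $4\mid n$.
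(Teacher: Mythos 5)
Your high-level strategy matches the paper's: produce a prime $p$ satisfying the conditions of Theorem \ref{NegativeResultForSystemsOverDedekindDomains} (noting that (i), (ii), (iv) exclude only finitely many primes), and use Lemma \ref{2VariableLemma} when $|I| \le 2$. The paper's entire published proof is the one-line assertion ``Since $|I| \le 2$, we may invoke Lemma \ref{2VariableLemma}.'' You are right to be suspicious of this: the bound $|I|\le 2$ is \emph{not} automatic from the hypotheses. Taking $k=2$ with the second equation $b_1 x_2 + a_1 y_2 = c_1 w_2 z_2^n$ (the first with $x$ and $y$ swapped) gives $I = \{a_1/c_1, b_1/c_1, (a_1+b_1)/c_1\}$, which generically has cardinality $3$. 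So the $|I| = 3$ case, which you carefully flag and the paper ignores, is a real possibility.

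Your analysis of that case is also essentially sound in its conclusions. Since the sum constraint forces $(a_i+b_i)/c_i = \alpha+\beta$ for every $i$ when $|I|=3$, the whole system is (up to scaling and swapping $x\leftrightarrow y$) the single equation $a_1 x + b_1 y = c_1 w z^n$, and the corollary then asserts roughly Conjecture \ref{MainConjecture} for that equation. Corollary \ref{ExistenceOfPrimesQ} together with Fermat's Last Theorem settles $n$ odd and $4\mid n$ with $n\ge 12$, exactly as you say. But for $n \equiv 2 \pmod 4$ the corollary's item (i) claims an $n$th power, which is strictly stronger than the $\frac{n}{2}$th power given by Theorem \ref{MainResult}(iii)(b), and for $n\in\{2,4,8\}$ the best single-equation result is the disjunction of Theorem \ref{MainResult}(iii)(c); these are precisely the gaps enumerated as open problems in Section \ref{Conclusion} (e.g.\ $a^2b(a+b)x + ab^2(a+b)y = wz^2$, for which $|I|=3$ and $\alpha\beta\gamma$ is a square). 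You have therefore located a genuine gap in the published proof: as stated, the corollary does not follow from the paper's results when $|I|=3$ and $n$ is even, and the authors most likely intend $|I|\le 2$ as an unstated hypothesis (it holds in every example they give). Your proposal is more careful than the paper's proof; the unresolved part of your $|I|=3$ case is the same unresolved part of the paper.
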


\begin{proof}
Let us assume for the sake of contradiction that one of items (i) and (ii) were false. Since $|I| \le 2$, we may invoke Lemma \ref{2VariableLemma} to find a prime $p$ satisfying the conditions of Theorem \ref{NegativeResultForSystemsOverDedekindDomains} to attain the desired contradiction.
\end{proof}

\begin{remark}
The following systems of equations are not partition regular as seen by an application of Corollary \ref{NegativeResultForSystemsOverZ}:
\begin{enumerate}[(i)]
    \item \[\begin{array}{rcrcc}
        3^4\cdot4^2\cdot5^2x_1 & + & 3^2\cdot4^4\cdot5^2y_1 & = & w_1z_1^4  \\
        5^4\cdot12^2\cdot13^2x_2 & + & 5^2\cdot12^4\cdot13^2y_2 & = & w_2z_2^4;
    \end{array}\]
    
    \item \[\begin{array}{rcrcc}
        16x_1 & + & 17y_1 & = & w_1z_1^8  \\
        33x_2 & + & (2^{12}-33)y_2 & = & w_2z_2^8;
    \end{array}\]
    
    \item \[\begin{array}{rcrcc}
        2^nx_1 & + & 3^ny_1 & = & w_1z_1^n \\
        3^nx_2 & + & 7^ny_2 & = & w_2z_2^n \\
        7^nx_3 & + & 2^ny_3 & = & w_3z_3^n,\\
        \text{for} & \text{any}& n \in \mathbb{N};
    \end{array}\]
    
    \item \[\begin{array}{rcrcl}
        9x_1 & + & 16y_1 & = & w_1z_1^2 \\
        25x_2 & - & 9y_2 & = & w_2z_2^2 \\
        25x_3 & - & 16y_3 & = & w_3z_3^2\\
        9x_4 & + & 7y_4 & = & w_4z_4^2.
    \end{array}\]
\end{enumerate}
A few remarks are in order regarding these examples. In Example (i) neither of the constituent equations of the system are individually partition regular. This fact can be determined through the use of Lemma \ref{MinimalRadoConditionApplied}, but not through the use of Theorem \ref{MainToolForNegativeResults} alone, despite the similarity of the proofs of Theorem \ref{MainToolForNegativeResults} and \ref{NegativeResultForSystemsOverDedekindDomains}. In Example (ii) we do not currently know whether either of the constituent equations of the system are individually partition regular (cf. Section \ref{Conclusion}). In Example (iii) any proper subsystem of equations is partition regular as a consequence of Theorem \ref{PositiveResultForSystems}. Theorem \ref{PositiveResultForSystems} also shows us that in Example (iv) the system of equations becomes partition regular if any equation other than the first equation is removed from the system.
\end{remark}
\section{Conjectures and Concluding Remarks}
\label{Conclusion}

Theorem \ref{MainResult} and Corollary \ref{SomeMoreExamples} naturally lead us to the following conjecture.

\begin{conjecture}	\label{MainConjecture}
	Given $a,b,c \in \mathbb{Z}\setminus\{0\}$ and $n \in \mathbb{N}$, the equation
	\begin{equation}
		ax+by = cwz^n
	\end{equation}
	is partition regular over $\mathbb{Z}\setminus\{0\}$ if and only if one of $\frac{a}{c},\frac{b}{c},\frac{a+b}{c}$ is an $n$th power in $\mathbb{Q}$. 
\end{conjecture}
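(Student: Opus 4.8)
The forward implication is exactly Theorem~\ref{GeneralPositiveResult} with $s=1$, so the entire content of Conjecture~\ref{MainConjecture} is the converse: if none of $\frac{a}{c},\frac{b}{c},\frac{a+b}{c}$ is an $n$th power in $\mathbb{Q}$, then $ax+by=cwz^n$ is not partition regular over $\mathbb{Q}\setminus\{0\}$, and hence not over $\mathbb{Z}\setminus\{0\}$ or $\mathbb{N}$. The plan is to exhibit, for each such $(a,b,c,n)$, a finite coloring of $\mathbb{Q}\setminus\{0\}$ with no monochromatic solution, drawing on the three coloring schemes already available: (A) the residue colorings $\chi^{-1}(\{i\})$ modulo a single prime $p$ of Theorem~\ref{MainToolForNegativeResults}, valid once $\frac{a}{c},\frac{b}{c},\frac{a+b}{c}$ are simultaneously non-$n$th powers mod $p$; (B) the $p$-adic/Rado colorings underlying Theorem~\ref{MinimalRadoConditionApplied}, valid once some prime $p$ has $v_p(\frac{a+b}{c})\notin n\mathbb{N}\cup\{0\}$ and none of the three ratios is an $n$th power in $\mathbb{Q}_p$; and (C) the quadratic-reciprocity colorings of Lemma~\ref{Exponent2}, which bring in the product $(\frac{a}{c})(\frac{b}{c})(\frac{a+b}{c})$.

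First one would dispose of everything ``visible to a single prime.'' When $n$ is odd, Corollary~\ref{ExistenceOfPrimesQ}(i) supplies the prime needed for scheme (A), which is Corollary~\ref{MainNegativeResult}, and the odd case is finished. When $n$ is even and none of the three ratios is an $\frac{n}{2}$th power, Corollary~\ref{ExistenceOfPrimesQ}(ii) does the same (its ``$\frac{n}{4}$th power'' hypothesis being automatic for $n>8$ by Fermat's Last Theorem, and checkable directly via Lemma~\ref{Exponent2} and the remaining structure when $n\in\{4,8\}$). By the contrapositive of Theorem~\ref{MainResult}(iii) the problem then reduces to the case where $n$ is even, some ratio (say $\frac{a}{c}=\delta^{n/2}$) is an $\frac{n}{2}$th power but no ratio is an $n$th power in $\mathbb{Q}$. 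Spelling out that $\delta^{n/2}$ is an $n$th power mod $p$ precisely when $\delta$, respectively the analogous element of each other ratio, is a square mod $p$, scheme (C) then finishes every such equation \emph{except} when the product of the three square-classes is trivial, i.e.\ $(\frac{a}{c})(\frac{b}{c})(\frac{a+b}{c})$ is an $n$th power in $\mathbb{Q}$; and when additionally some $v_p(\frac{a+b}{c})$ lies outside $n\mathbb{N}\cup\{0\}$, scheme (B) takes over, exactly as in Corollary~\ref{SomeMoreExamples} (e.g.\ for $3x+13y=wz^8$ or $3^4x+3^6y=wz^{12}$). So the conjecture would follow once one handles the remaining equations, which are \emph{everywhere locally $n$th powers}: for every prime $p$ some ratio is an $n$th power both mod $p$ and in $\mathbb{Q}_p$, and $v_p(\frac{a+b}{c})\in n\mathbb{N}\cup\{0\}$ whenever a usable valuation would be required.

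The genuinely hard part is this last class, and I expect essentially all the difficulty to lie here. The prototype is $16x+17y=wz^8$: here $\frac{a}{c}=16=2^4$ is an $8$th power modulo every prime (Remark~\ref{FLTRemark}(i)) and in $\mathbb{Q}_p$ for every \emph{odd} $p$, failing only in $\mathbb{Q}_2$ because $v_2(16)=4\notin 8\mathbb{Z}$; but $\frac{a+b}{c}=33\equiv 1\pmod{32}$ \emph{is} an $8$th power in $\mathbb{Q}_2$, and $v_2(33)=0$, so Theorem~\ref{MinimalRadoConditionApplied} cannot even be invoked at $p=2$, while at $p=3,11$ (the only primes with $v_p(33)\neq 0$) one checks that $16$ is an $8$th power in $\mathbb{Q}_p$, so Theorem~\ref{MinimalRadoConditionApplied} yields nothing. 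Equations such as $9x+16y=wz^8$ and $33x+(2^{12}-33)y=wz^8$ behave identically. For these there is no single-prime or $\mathbb{Q}_p$ obstruction at all, so to prove the conjecture one must either (i) sharpen the Newton-polygon/Rado analysis of Section~\ref{ReductionToM==1} so that it can still read an $\ell$-adic $n$th-power defect off a monochromatic solution when $v_\ell(\frac{a+b}{c})$ is divisible by $n$ — for instance a coloring tracking the joint $\ell$-adic valuations \emph{and} high-precision residues of all four variables, so that cancellation in $ax+by$ is controlled — or (ii) introduce a genuinely new, inherently global obstruction, e.g.\ one from the algebra of $\beta\mathbb{Z}$ forbidding idempotent ultrafilters compatible with such solutions, or a multi-prime coloring defeating all the ``covering'' Grunwald--Wang configurations at once.

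A useful intermediate target, which would reduce the whole conjecture to $n\in\{4,8\}$ plus a finite search, is a number-theoretic lemma strengthening Corollary~\ref{ExistenceOfPrimesQ}: that the additive relation $\frac{a}{c}+\frac{b}{c}=\frac{a+b}{c}$, together with no ratio being an $n$th power in $\mathbb{Q}$, already forces a prime $p$ with $v_p(\frac{a+b}{c})\notin n\mathbb{Z}$ and no ratio an $n$th power in $\mathbb{Q}_p$, \emph{unless} $n\in\{4,8\}$. Establishing that conditional statement, and then settling $n=4$ and $n=8$ directly — or refuting the conjecture there — is the crux.
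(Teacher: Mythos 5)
The statement is labeled Conjecture~\ref{MainConjecture} in the paper precisely because it is \emph{not} proved there, and your submission does not prove it either. You correctly cite Theorem~\ref{GeneralPositiveResult} for the sufficiency direction and give a largely accurate inventory of the paper's converse tools---Theorem~\ref{MainToolForNegativeResults} with Corollary~\ref{ExistenceOfPrimesQ}, Lemma~\ref{Exponent2}, and Theorem~\ref{MinimalRadoConditionApplied}---and you correctly isolate the residual hard class of ``everywhere locally $n$th power'' triples, with $16x+17y=wz^8$ and $33x+(2^{12}-33)y=wz^8$ as the key examples. But at that point you openly switch from argument to prospectus: ``to prove the conjecture one must either (i) sharpen the Newton-polygon/Rado analysis \ldots or (ii) introduce a genuinely new, inherently global obstruction,'' closing with ``settling $n=4$ and $n=8$ directly---or refuting the conjecture there---is the crux.'' That is a research program, not a proof; the unresolved class you name coincides exactly with the one the paper itself flags as open in Section~\ref{Conclusion}, and none of the proposed repairs is actually carried out.

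Two of the intermediate reductions are also stated too loosely to hold as written. First, Corollary~\ref{ExistenceOfPrimesQ}(ii) is vacuous when $n=4$ (the hypothesis that $\alpha$ not be an $\frac{n}{4}$th power becomes ``not a $1$st power'') and for $n=8$ requires some ratio to fail to be a square; so your parenthetical that $n\in\{4,8\}$ is ``checkable directly via Lemma~\ref{Exponent2} and the remaining structure'' tacitly assumes $(\frac{a}{c})(\frac{b}{c})(\frac{a+b}{c})$ is not a square, a hypothesis that genuinely fails for infinitely many triples, as Corollary~\ref{SomeMoreExamples}(v) and the family in equation \eqref{n=4Difficulty} illustrate. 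Second, the step beginning ``Spelling out that $\delta^{n/2}$ is an $n$th power mod $p$ precisely when $\delta$, respectively the analogous element of each other ratio, is a square mod $p$, scheme (C) then finishes \ldots'' silently treats all three ratios as $\frac{n}{2}$th powers, whereas generically at most one is (and all three cannot be once $n/2\ge 3$, by Fermat's Last Theorem as the paper notes in Remark~\ref{FLTRemark}). Both points would need repair merely to state the genuine open gap cleanly, let alone to close it.
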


We see that the situation in which we have yet to resolve Conjecture \ref{MainConjecture} fully is when $n$ is even and one of $\frac{a}{c},\frac{b}{c},$ or $\frac{a+b}{c}$ is an $\frac{n}{2}$th power in $\mathbb{Q}$. Since some special instances of this situation have been resolved in Corollary \ref{SomeMoreExamples}, we list here some equations whose partition regularity remains unknown. Firstly, the equation
\begin{equation}
	33x+(2^{12}-33)y = wz^8
\end{equation}
is not expected to be partition regular since $33, 2^{12}-33,$ and $2^{12}$ are not $8$th powers, but $2^{12}$ is an $8$th power modulo $p$ for every prime $p$, and $33$ is an $8$th power in $\mathbb{Z}_2$, so we are unable to apply Theorem \ref{MainToolForNegativeResults} or Theorem \ref{MinimalRadoConditionApplied}. Similarly, the equation
\begin{equation}
    16x+17y = wz^8
\end{equation}
is not expected to be partition regular since $16, 17,$ and $33$ are not $8$th powers, but $16$ is an $8$th power modulo $p$ for every prime p, and $16$ is also an $8$th power in $\mathbb{Z}_3$ and $\mathbb{Z}_{11}$, so we are once again unable to apply Theorem \ref{MainToolForNegativeResults} or Theorem \ref{MinimalRadoConditionApplied}. Next, we see that for any coprime $a,b \in \mathbb{N}$ for which $a,b,$ and $a+b$ are not squares, the equation
\begin{equation}
	a^2b(a+b)x+ab^2(a+b)y = wz^2
\end{equation}
is not expected to be partition regular since none of $a^2b(a+b), ab^2(a+b),$ and $a^2b(a+b)+ab^2(a+b) = ab(a+b)^2$ are squares. However, at least one of $a^2b(a+b), ab^2(a+b),$ and $ab(a+b)^2$ is a square modulo $p$ for any prime $p$, so we cannot make use of Theorem \ref{MainToolForNegativeResults}. We have seen in item (iii) of Corollary \ref{SomeMoreExamples} that Theorem \ref{MinimalRadoConditionApplied} can be used in some cases, but it is unclear to the authors as to whether or not Theorem \ref{MinimalRadoConditionApplied} can be used in all cases. A similar difficulty arises in the case of $n = 4$. Recalling that for any $m,n,k \in \mathbb{N}$ we have $(2mnk)^2+\left(km^2-kn^2\right)^2 = \left(km^2+kn^2\right)^2$, we take $k = 2mn(m^2-n^2)(m^2+n^2)$ and consider the equation
\begin{equation}\label{n=4Difficulty}
    \left(2mnk\right)^2x+\left(km^2-kn^2\right)^2y = wz^4.
\end{equation}
We see that for any prime $p$ at least one of $\alpha := 2mnk, \beta := km^2-kn^2,$ or $\gamma := km^2+kn^2$ will be a perfect square modulo $p$, and hence one of $\alpha^2, \beta^2,$ or $\gamma^2 = \alpha^2+\beta^2$ will be a fourth power modulo $p$, so Equation \eqref{n=4Difficulty} is not susceptible to Theorem \ref{MainToolForNegativeResults}. We saw in item (v) of Corollary \ref{SomeMoreExamples} that Theorem \ref{MinimalRadoConditionApplied} can occasionally be of use in this situation, but it is once again unclear to the authors whether or not Theorem \ref{MinimalRadoConditionApplied} can always be used in this situation.

While the methods of this paper are not strong enough to fully resolve Conjecture \ref{MainConjecture}, they are strong enough to prove the following Theorem.

\begin{theorem}	\label{UnprovenGeneralTheorem}
	Fix $a_1,\cdots,a_r,b_1,\cdots,b_s,c\in \mathbb{Z}\setminus\{0\}$.
	\begin{itemize}
		\item[(i)] If min$(b_1,\cdots,b_s) \ge 2$ then the equation
		\begin{equation}\label{GeneralizedMainEquation}
			\sum_{i = 1}^ra_ix_i = c\prod_{j = 1}^sy_j^{b_j}
		\end{equation}
		is partition regular over $\mathbb{Z}\setminus\{0\}$ if and only if there exists $\emptyset \neq F \subseteq [1,r]$ for which $\sum_{i \in F}a_i = 0$. 
		
		\item[(ii)] If $s \ge 2$, then the equation
		\begin{equation}\label{GeneralizedMainEquation2}
			\sum_{i = 1}^ra_ix_i = cy_1\prod_{j = 2}^sy_j^{b_j}
		\end{equation}
		is partition regular over $\mathbb{Z}\setminus\{0\}$ if for some $\emptyset \neq F \subseteq [1,r], s_F := \sum_{i \in F}\frac{a_i}{c}$ is an $n$th power in $\mathbb{Q}$, where $n = \sum_{j = 2}^sb_j$. Furthermore, if there exists $F \subseteq [1,r]$ for which $s_F$ is a $n$th power in $\mathbb{Q}_{\ge 0}$, then Equation \eqref{GeneralizedMainEquation2} is partition regular over $\mathbb{N}$.
		
		\item[(iii)] The equation
		\begin{equation}
			\sum_{i = 1}^ra_ix_i = cy_1\prod_{j = 2}^sy_j^{b_j}
		\end{equation}
		is not partition regular over $\mathbb{Q}\setminus\{0\}$ if there exists infinitely many primes $p$ such that for any $\emptyset \neq F \subseteq [1,r], \sum_{i \in F}\frac{a_i}{c}$ is not an $n$th power modulo $p$, where $n = \sum_{j = 2}^sb_j$.
	\end{itemize}
\end{theorem}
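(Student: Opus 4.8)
Throughout, the strategy is to push the existing proofs through with minor modifications. Clause~(iii) is closest to Theorem~\ref{MainToolForNegativeResults}: fix a prime $p$ as in the hypothesis (which, as in Theorem~\ref{MainToolForNegativeResults}, may be taken with $p>|c|+\sum_i|a_i|$), take the map $\chi\colon\mathbb{Q}\setminus\{0\}\to[1,p-1]$ and the partition $C_f=\chi^{-1}(\{f\})$ used there, and suppose $x_1,\dots,x_r,y_1,\dots,y_s\in C_d$ is a solution. Let $F^{*}=\{i:v_p(x_i)\text{ is minimal}\}$, which is nonempty, and note $\sum_{i\in F^{*}}a_i\ne0$ (otherwise $0=\big(\sum_{i\in F^{*}}a_i\big)/c$ would be an $n$th power mod $p$, against the choice of $p$). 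Then the $x_i$ with $i\notin F^{*}$ contribute nothing modulo the relevant power of $p$, so $\chi\big(\sum_i a_ix_i\big)\equiv\big(\sum_{i\in F^{*}}a_i\big)d\pmod p$, while $\chi\big(cy_1\prod_{j\ge2}y_j^{b_j}\big)\equiv cd^{\,n+1}\pmod p$ with $n=\sum_{j\ge2}b_j$; comparing yields $\big(\sum_{i\in F^{*}}a_i\big)/c\equiv d^n\pmod p$, a contradiction, so no cell contains a solution.

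For the ``if'' part of clause~(ii) I would first prove the natural extension of Lemma~\ref{PositiveResultForPowers}: if $p\in\beta\mathbb{N}$ is as in Theorem~\ref{SpecialUltrafilter} and $d'=\sum_{i\in F}a_i\ne0$ for some nonempty $F$, then every $A\in p$ contains a solution of $\sum_i a_ix_i=d'wz^n$. To see this, replace $A$ by $A'_{d'}=\{v\in A\cap d'\mathbb{N}:v=wz^n\text{ for some }w,z\in A\}\in p$, apply property~(ii) of Theorem~\ref{SpecialUltrafilter} to $A'_{d'}$ with $h=d'$ and $\ell=\sum_{i\notin F}|a_i|$ to obtain $\alpha,d$ with $\{d'd\}\cup\{d'\alpha+id\}_{|i|\le\ell}\subseteq A'_{d'}$, and set $x_i=d'\alpha$ for $i\in F$, $x_i=d'd$ for $i\notin F$; then $\sum_i a_ix_i=d'\big(d'\alpha+d\sum_{i\notin F}a_i\big)$, and $d'\alpha+d\sum_{i\notin F}a_i\in A'_{d'}$ equals some $wz^n$ since $\big|\sum_{i\notin F}a_i\big|\le\ell$. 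Given this, if $s_F=d'/c$ is an $n$th power $(u/v)^n$ in $\mathbb{Q}$, the rescaling argument from the proof of Theorem~\ref{GeneralPositiveResult} (using $s\mathbb{N}\in p$) turns solvability in every $A\in p$ into partition regularity of $\sum_i a_ix_i=cwz^n$ over $s\mathbb{Z}\setminus\{0\}$, the $\mathbb{N}$-version under a nonnegative root being identical; specializing $y_1=w$, $y_2=\dots=y_s=z$ recovers the equation of clause~(ii). If instead $s_F=0$, i.e.\ $\sum_{i\in F}a_i=0$, one sets $y_1=\dots=y_s=z$ and argues as in clause~(i) below.

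The ``if'' part of clause~(i) is short: since each $a_i\ne0$, a nonempty $F$ with $\sum_{i\in F}a_i=0$ has $|F|\ge2$. Pick distinct $i_1,i_2\in F$, put $T=\sum_{i\notin F}a_i$ and $n=\sum_j b_j$, and specialize $y_j=z$ for all $j$, $x_i=z$ for $i\notin F$, and $x_i=x_{i_1}$ for $i\in F\setminus\{i_1,i_2\}$. Using $\sum_{i\in F}a_i=0$, the equation collapses (up to the sign of $a_{i_2}$) to $|a_{i_2}|x_{i_1}-|a_{i_2}|x_{i_2}=\pm(cz^n-Tz)$, and $cz^n-Tz\in\mathbb{Z}[z]$ vanishes at $0$, so Lemma~\ref{ScaledBergelsonLemma} gives a monochromatic $x_{i_1},x_{i_2},z$ in any partition of $s\mathbb{N}$; the collapsed variables share the color, so the equation is partition regular over $s\mathbb{N}$, hence over $\mathbb{Z}\setminus\{0\}$. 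For the ``only if'' part, put $P=\sum_i a_ix_i-c\prod_j y_j^{b_j}$, whose support consists of $r$ weight-$1$ monomials and one weight-$n$ monomial. Because each $b_j\ge2$ we have $\sum_{j\in G}b_j\ne1$ for every $G$, so the weight relations $\sum_j b_jt_{y_j}=t_{x_i}$ and $\sum_j b_jt_{y_j}-t_{x_i}=d>0$ fail to be partition regular (Corollary~\ref{1PREquation} and Theorem~\ref{InhomogeneousRadoTheorem}); repeating the analysis of Lemma~\ref{NotPROverN} then shows $P$ has no upper Rado functional of order $\ge1$ and that in every Rado partition the top block is either $\{\prod_j y_j^{b_j}\}$ alone or a set of weight-$1$ monomials alone, never a mixture. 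Consequently the only polynomials occurring in the maximal Rado condition are $\pm c\,w^n$ and $\big(\sum_{i\in S}a_i\big)w$ with $\emptyset\ne S\subseteq[1,r]$, and since no nonempty subset of $\{a_i\}$ sums to $0$, none of these has a root in $[1,q]$; thus $P$ fails the maximal Rado condition and the equation is not partition regular over $\mathbb{N}$. Running the same argument with $(-1)^{\,n-1}c$ in place of $c$ and splicing the two colorings as in the proof of Theorem~\ref{ReductionTom=1} removes the sign obstruction and yields non-partition-regularity over $\mathbb{Z}\setminus\{0\}$.

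The main technical points are (a) the generalization of Lemma~\ref{PositiveResultForPowers}, where the new device is to place \emph{all} of the variables $x_i$ with $i\notin F$ at the single companion value $d'd$ of the progression supplied by property~(ii) of Theorem~\ref{SpecialUltrafilter}---this works precisely because $\big|\sum_{i\notin F}a_i\big|$ does not exceed the radius $\ell$ of that progression---and (b) the ``only if'' part of clause~(i), where, with arbitrarily many weight-$1$ monomials present, one must carry out the case analysis of Lemma~\ref{NotPROverN} in full to confirm that no admissible block structure ever mixes the weight-$n$ monomial with a weight-$1$ one and that every surviving polynomial $g(w)$ is a monomial.
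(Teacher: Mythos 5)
Your proposal is correct and it fills in the details that the paper leaves implicit for Theorem \ref{UnprovenGeneralTheorem}, following exactly the methods the authors invoke (Theorem \ref{MainToolForNegativeResults}, Lemma \ref{PositiveResultForPowers}, Theorem \ref{GeneralPositiveResult}, Lemma \ref{NotPROverN}, Theorem \ref{ReductionTom=1}). A few small points are worth making explicit. In clause (iii), the argument you give works because $|\sum_{i\in F^*}a_i|\le\sum_i|a_i|<p$ forces $\sum_{i\in F^*}a_i\not\equiv 0\pmod p$, so once you know $\sum_{i\in F^*}a_i\ne0$ (which the hypothesis enforces, since $0$ is trivially an $n$th power mod $p$) the $\chi$-computation closes without further cancellation; you should spell this out, as it is the direct analogue of the observation that $(a+b)d\not\equiv0$ in Theorem \ref{MainToolForNegativeResults}. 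In clause (ii), your generalized Lemma \ref{PositiveResultForPowers} argument needs $d'=\sum_{i\in F}a_i>0$ in order for $A\cap d'\mathbb{N}$ to be nonempty; as in the paper's proof of Lemma \ref{PositiveResultForPowers} you must say ``replace all $a_i$ and $c$ by their negatives if necessary,'' which is harmless since this does not change the solution set or the property that $d'/c$ is an $n$th power. The device of placing every $x_i$, $i\notin F$, at the single companion value $d'd$ of the arithmetic progression and choosing $\ell=\sum_{i\notin F}|a_i|$ is exactly what makes the multi-variable version go through, and is the genuine new content here. In clause (i) the ``only if'' case analysis of Lemma \ref{NotPROverN} does indeed carry over: for every nonempty $G\subseteq[1,s]$ one has $\sum_{j\in G}b_j\ge2\ne1$ because $b_j\ge2$, so no nonempty subset of $\{1,-b_1,\ldots,-b_s\}$ sums to $0$, Corollary \ref{1PREquation} applies, and neither a weight-$1$ monomial nor any pair of distinct weight-$1$ monomials can be forced to share a block with the weight-$n$ monomial $\prod_j y_j^{b_j}$; consequently the polynomials $g(w)$ arising from the maximal Rado condition are all monomials, and have no root in $[1,q]$ exactly when no nonempty $S$ has $\sum_{i\in S}a_i=0$, which is the stated hypothesis (one should read $F\ne\emptyset$ in the theorem, as in Corollary \ref{1PREquation}). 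The reduction from $\mathbb{N}$ to $\mathbb{Z}\setminus\{0\}$ by splicing the colorings for $c$ and $(-1)^{n-1}c$ is verbatim the proof of Theorem \ref{ReductionTom=1}.
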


This naturally leads to the following generalization of Conjecture \ref{MainConjecture}.

\begin{conjecture}		\label{GeneralConjecture}
	Given $a_1,\cdots,a_r,c \in \mathbb{Z}\setminus\{0\}$ and $b_2,\cdots,b_s\in \mathbb{N}$, the equation
	\begin{equation}
		\sum_{i = 1}^ra_ix_i = cy_1\prod_{j = 2}^sy_j^{b_j}
	\end{equation}
	is partition regular over $\mathbb{Z}\setminus\{0\}$ if and only if there is some $\emptyset \neq F \subseteq [1,r]$ for which $\sum_{i \in F}\frac{a_i}{c}$ is an $n$th power in $\mathbb{Q}$, where $n = \sum_{j = 2}^sb_j$.

\end{conjecture}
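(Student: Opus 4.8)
The three parts generalize, respectively: part (iii) generalizes Theorem~\ref{MainToolForNegativeResults}, part (ii) generalizes Lemma~\ref{PositiveResultForPowers} together with Theorem~\ref{GeneralPositiveResult}, and the ``only if'' direction of part (i) generalizes Lemma~\ref{NotPROverN} and Theorem~\ref{ReductionTom=1}; the ``if'' direction of part (i) is the easy case. I would prove them in the order (iii), (ii), (i), since (iii) is nearly immediate and (i) contains the main obstacle.

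\emph{Part (iii).} I would repeat the character argument from the proof of Theorem~\ref{MainToolForNegativeResults} essentially verbatim, now with $r$ terms on the left instead of two. Fix a prime $p$ as in the hypothesis (coprime to $c$, and enlarged as in that proof so that $p>\max_i|a_i|$), let $\chi\colon\mathbb{Q}\setminus\{0\}\to[1,p-1]$ be given by $x/p^{v_p(x)}\equiv\chi(x)\pmod p$, and colour $\mathbb{Q}\setminus\{0\}$ by $\chi$. Since $0$ is an $n$th power mod $p$, the hypothesis forces $\sum_{i\in F}a_i\not\equiv0\pmod p$ for every nonempty $F\subseteq[1,r]$. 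If $x_1,\dots,x_r,y_1,\dots,y_s$ lie in a single colour class $C_d$ and solve the equation, put $F_{\min}=\{i:v_p(x_i)=\min_k v_p(x_k)\}$; then the terms indexed by $F_{\min}$ dominate, so $\chi\bigl(\sum_i a_ix_i\bigr)\equiv\bigl(\sum_{i\in F_{\min}}a_i\bigr)d\pmod p$, while $\chi\bigl(cy_1\prod_{j\ge2}y_j^{b_j}\bigr)\equiv cd^{n+1}\pmod p$. Dividing by $d$ gives $\sum_{i\in F_{\min}}\tfrac{a_i}{c}\equiv d^{\,n}\pmod p$, contradicting the choice of $p$.

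\emph{Part (ii).} Write $\sum_{i\in F}\tfrac{a_i}{c}=(\tfrac uv)^{n}$ with $u,v\in\mathbb{Z}$, set $y_2=\dots=y_s$ equal to a single variable $z$ and rename $y_1$ as $w$, so that the right-hand side becomes $c\,wz^{\,n}$. By the ``pull the partition back through multiplication by $\tfrac vu$'' device of Theorem~\ref{GeneralPositiveResult} it then suffices to produce, for every $A$ in a special ultrafilter $p$ of Theorem~\ref{SpecialUltrafilter}, a solution of $\sum_i a_ix_i=d\,wz^{\,n}$ inside $A$, where $d:=\sum_{i\in F}a_i$; scaling by $\tfrac vu$ converts this into $\sum_i a_iX_i=cWZ^{\,n}$ because $d(\tfrac vu)^{n}=c$, and when $\tfrac uv\ge0$ the pullback stays in $\mathbb{N}$, giving the $\mathbb{Q}_{\ge0}$ statement. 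As in Lemma~\ref{PositiveResultForPowers}, pass first to $A'=\{t\in A:t=wz^{\,n}\text{ for some }w,z\in A\}\in p$ via property~(i); then absorb the coefficients outside $F$, and all but one coefficient inside $F$, one at a time, iterating the Milliken--Taylor step of Lemma~\ref{PositiveResultForPowers} (property~(ii) with suitable parameters, the variables taken in an appropriate $d\mathbb{N}$ to keep the relevant fractions integral). The key structural fact that makes the linear part solvable against a value $d\,wz^{\,n}$ is precisely that $d$ is itself a subset sum of the coefficients. Carrying out this iteration cleanly is the one nontrivial computation in this part.

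\emph{Part (i).} For ``only if'' I would apply the maximal Rado condition (Theorem~\ref{MaximalRadoCondition}) to $P=c\prod_j y_j^{b_j}-\sum_i a_ix_i$, whose support is $\{\beta\}\cup\{e_1,\dots,e_r\}$ with $|\beta|=N=\sum_j b_j\ge2$. The hypothesis $b_j\ge2$ enters through Corollary~\ref{1PREquation}: the coefficient tuple $(b_1,\dots,b_s,-1)$ has no nonempty zero-sum subset, so no linear equation of the shape $\sum_j b_jt^y_j=t^x_i$ (after the standard affine shift used in the proof of Lemma~\ref{NotPROverN}) is partition regular. This rules out every upper Rado functional of order $\ge1$ (such a functional would force two of the top $\phi$-levels to lie within bounded distance, hence one of these non-PR equations to have infinitely many monochromatic solutions) and also every ``mixed'' Rado set $\{\beta\}\cup S$. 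The surviving possibilities for $J_0$ are therefore $\{\beta\}$, for which $g(w)=cw^{N}$ has no root in $[1,q]$, and the purely linear sets $\{e_i:i\in S\}$ with $\emptyset\ne S\subseteq[1,r]$, for which $g(w)=-\bigl(\sum_{i\in S}a_i\bigr)w$ has a root in $[1,q]$ only when $\sum_{i\in S}a_i=0$. Hence, if no nonempty subset of the $a_i$ sums to $0$, then $P$ fails the maximal Rado condition (take $q=2$), so the equation is not partition regular over $\mathbb{N}$; applying this to both $c$ and $(-1)^{N-1}c$ and gluing the two colourings exactly as in the proof of Theorem~\ref{ReductionTom=1} upgrades the conclusion to $\mathbb{Z}\setminus\{0\}$. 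For ``if'', when some nonempty $F$ has $\sum_{i\in F}a_i=0$ one sign-splits $F$, sets every $x_i$ with $i\notin F$ and every $y_j$ to common values, and reduces — after absorbing the one leftover linear term via Theorem~\ref{SpecialUltrafilter}(ii) — to Corollary~\ref{TrivialResultForCompleteness}; alternatively one invokes part (ii), since $0$ is an $n$th power. The main obstacle of the whole theorem is the Rado-set enumeration just described in the ``only if'' direction of part (i).
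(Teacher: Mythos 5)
The statement you were asked to prove is Conjecture~\ref{GeneralConjecture}, and the paper does \emph{not} prove it: it is explicitly an open conjecture. Immediately after stating it, the authors write that ``the methods of this paper cannot be extended to prove Conjecture~\ref{GeneralConjecture} even when $r=2$'' and exhibit the obstruction $2x_1+5x_2+10x_3+20x_4 = y_1y_2^3$ (at least one of $2,5,10,20$ is a cube modulo every prime, although none is a cube in $\mathbb{Q}$). Your three-part plan is not a proof of the conjecture; it is, rather, a reconstruction of Theorem~\ref{UnprovenGeneralTheorem}, the partial result stated just above it. Your part~(ii) is Theorem~\ref{UnprovenGeneralTheorem}(ii) (the ``if'' direction of the conjecture), your part~(iii) is Theorem~\ref{UnprovenGeneralTheorem}(iii), and your part~(i) is Theorem~\ref{UnprovenGeneralTheorem}(i), which concerns the different equation $\sum a_ix_i = c\prod_j y_j^{b_j}$ with every exponent $\ge 2$ and so is not even about the equation in the conjecture.

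The genuine gap is in the ``only if'' direction. What your part~(iii) establishes is: \emph{if} there exists a prime $p$ modulo which no subset sum $\sum_{i\in F}a_i/c$ is an $n$th power, \emph{then} the equation is not partition regular. To obtain the conjecture's ``only if'' you would additionally need the purely number-theoretic fact: if no $\sum_{i\in F}a_i/c$ is an $n$th power in $\mathbb{Q}$, then such a prime exists. That is exactly what Lemma~\ref{KeyLemma} and Corollary~\ref{ExistenceOfPrimesQ} supply for three candidate values (the case $r=2$), and even there only for $n$ odd or for even $n\notin\{4,8\}$; indeed already for $r=2$ the paper lists $16x+17y=wz^8$ as unresolved. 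For larger $r$ the prime-existence statement is simply false, as the $\{2,5,10,20\}$ example shows; and the paper further notes that the $p$-adic criterion (Lemma~\ref{PpartitionRegularLemma}/Theorem~\ref{MinimalRadoConditionApplied}) cannot rescue this example either, since $10$ is a cube in $\mathbb{Z}_{37}$. So your proposal, even if each of its parts were carried through in full detail, establishes only Theorem~\ref{UnprovenGeneralTheorem}, and leaves Conjecture~\ref{GeneralConjecture} precisely as open as the paper does.
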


We have already seen that the methods of this paper cannot be extended to prove Conjecture \ref{GeneralConjecture} even when $r = 2$. When $r > 2$, there are even more problematic cases to consider. For example, if $r = 4$ then at least one of $2, 5, 10,$ or $20$ is a perfect cube modulo $p$ for any prime $p$, so we are unable to use Theorem \ref{UnprovenGeneralTheorem}(iii) to show that the equation
\begin{equation}
	2x_1+5x_2+10x_3+20x_4 = y_1y_2^3
\end{equation}
is not partition regular over $\mathbb{Z}$. Furthermore, since $7^3 \equiv 10 \pmod{37}$, we may use Hensel's lemma to see that $10$ is a perfect cube in $\mathbb{Z}_{37}$, so analogues of Theorem \ref{MinimalRadoConditionApplied} will be of no use here.

In light of Theorem \ref{GeneralPositiveResultInDomains} and Corollary \ref{MainNegativeResultForNumberFields} it is natural to pose the following question.

\begin{question}\label{MostGeneralConjecture}
Given an integral domain $R$, $r_1,\cdots,r_k, c \in R\setminus\{0\}$, and $n_1,\cdots,n_s \in \mathbb{N}$, when is

\begin{equation}
    \sum_{i = 1}^kr_ix_i = c\prod_{j = 1}^sy_j^{b_j}
\end{equation}
partition regular over $R\setminus\{0\}$?
\end{question}

An analog of Theorem \ref{MaximalRadoCondition} would have to be proven for polynomial equations over $R$ instead of $\mathbb{N}$ in order to prove an analog of Theorem \ref{UnprovenGeneralTheorem}(i), which would help partially answer Question \ref{MostGeneralConjecture}. 
\begin{remark}
In light of Remark \ref{NvsZRemark} we are led to ask about sign obstructions to partition regularity of polynomial equations in rings of integers of totally real number fields. Let us consider for example the number field $K = \mathbb{Q}\left[\sqrt{2}\right]$, which is totally real since all of its embeddings into $\mathbb{C}$ turn out to be embeddings into $\mathbb{R}$. We recall that $\mathcal{O}_K = \mathbb{Z}\left[\sqrt{2}\right]$ is the ring of integers of $K$. It is clear that the equation

\begin{equation}\label{StillNotPR}
    x+y = -wz
\end{equation}
is not partition regular regular over $\mathbb{Z}\left[\sqrt{2}\right]_{> 0}$ due to sign obstructions despite being partition regular over $\mathbb{Z}\left[\sqrt{2}\right]\setminus\{0\}$ as a consequence of Theorem \ref{GeneralPositiveResultInDomains}. We are unable to determine whether or not the equation

\begin{equation}\label{PotentiallyPR}
    2x-2\sqrt{2}y = wz^3
\end{equation}
is partition regular over $\mathbb{Z}\left[\sqrt{2}\right]_{> 0}$ since there are no sign obstructions but we are unable to apply Theorem \ref{GeneralPositiveResultInDomains} and we are unable to apply the methods of Theorem \ref{GeneralPositiveResult} since $-2\sqrt{2} = (-\sqrt{2})^3$ and $-\sqrt{2} \notin \mathbb{Z}\left[\sqrt{2}\right]_{> 0}$. Unlike Remark \ref{NvsZRemark}, we may take this line of inquiry a step further by examining the semiring $P$ of totally positive elements of $\mathbb{Z}\left[\sqrt{2}\right]$, which are those elements that remain positive under every embedding of $\mathbb{Q}\left[\sqrt{2}\right]$ into $\mathbb{R}$. In this case we can directly determine $P$ to be given by $P = \{a+b\sqrt{2}\ |\ a > |b|\sqrt{2}\}$. It is clear that Equation \eqref{StillNotPR} is not partition regular over $P$ since $P \subseteq \mathbb{Z}\left[\sqrt{2}\right]_{> 0}$, but can we determine whether or not Equation \eqref{PotentiallyPR} is partition regular over $P$? Furthermore, it can be shown using the techniques of this paper that the equation

\begin{equation}\label{PartiallyPR}
    \sqrt{2}x+2\sqrt{2}y = wz^3
\end{equation}
is partition regular over $\mathbb{Z}\left[\sqrt{2}\right]_{> 0}$ since $2\sqrt{2} = (\sqrt{2})^3$ and $\sqrt{2} \in \mathbb{Z}\left[\sqrt{2}\right]_{> 0}$. However, $\sqrt{2}$ is a positive element of $\mathbb{Z}\left[\sqrt{2}\right]$ that is not totally positive, so Equation \eqref{PartiallyPR} is not partition regular over $P$ since $w,x,y,z \in P$ would result in the left hand side of the equation being positive but not totally positive, while the right hand side of the equation would be totally positive. 
\end{remark}
Now let us consider the equation

\begin{equation}\label{PartiallyPR2}
    2x+2\sqrt{2}y = wz^3.
\end{equation}
We can show that Equation \eqref{PartiallyPR2} is partition regular over $\mathbb{Z}\left[\sqrt{2}\right]_{> 0}$ using the considerations from above, but now that there are no ``generalized sign obstructions'' can we also determine whether or not Equation \eqref{PartiallyPR2} is partition regular over $P$?

In light of Theorem \ref{PositiveResultForSystems} and Corollary \ref{NegativeResultForSystemsOverZ} we are led to Conjecture \ref{FinalConjectureIntro} from the introduction, so we conclude with some examples of systems of equations whose partition regularity remains unknown. We are unable to apply Theorem \ref{NegativeResultForSystemsOverDedekindDomains} to the system of equations

\begin{equation}\label{UnresolvedSystem}
    \begin{array}{rcrcc}
         16x_1 & + & 17y_1 & = & w_1z_1^8  \\
         33x_2 & - & 17y_2 & = & w_2z_2^8 
    \end{array}
\end{equation}
since $I = \{16,33\}$ and $16$ is an $8$th power modulo every prime $p$. Since $33$ is an $8$th power in $\mathbb{Z}_2$ we also cannot expect a generalization of Theorem \ref{MinimalRadoConditionApplied} to systems of equations to help determine the partition regularity of the system in \eqref{UnresolvedSystem}. We are also unable to apply Theorem \ref{NegativeResultForSystemsOverDedekindDomains} to the system of equations

\begin{equation}
    \begin{array}{rcrcc}
         5^4x_1 & + & 3^6y_1 & = & w_1z_1^{12}  \\
         (5^4-3^6)x_2 & + & 3^6y_2 & = & w_2z_2^{12},
    \end{array}
\end{equation}
since $I = \left\{5^4,3^6\right\}$, $5$ is a cube modulo $p$ when $p \equiv 2 \pmod{3}$, and $-3$ is a square modulo $p$ when $p \equiv 1\pmod{3}$, so one of $5^4$ and $3^6$ will be a $12$th power modulo any prime $p$ (cf. Remark \ref{FLTRemark}(ii)(a)).

\section{The Existence of Special Ultrafilters}\label{Appendix}
In this section we will review some knowledge about $\beta S$, the space of ultrafilters over a set $S$, so that we can provide a thorough proof of Theorems \ref{SpecialUltrafilter} and \ref{SpecialUltrafilter6} as Theorems \ref{SpecialUltrafilter2} and \ref{SpecialUltrafilter5}, respectively. After proving Theorem \ref{SpecialUltrafilter5}, we give a brief discussion comparing the methods that we use to show that certain equations are partition regular with the methods used in \cite{BergelsonSurvey} and \cite{SumsEqualProducts}. As a result of this discussion (cf. Remark \ref{DiscussionOfDomains}), we decide to prove a generalization of Theorem \ref{SpecialUltrafilter2} as Corollary \ref{SpecialUltrafilter4}. For a more comprehensive study of ultrafilters and their usage in the study of semigroups the reader is referred to \cite{AlgebraInTheSCC}. 

Let us recall some notation before proceeding further. We let $\mathcal{P}(S)$ denote the collection of subsets of $S$ and $\mathcal{P}_f(S)$ denote the collection of non-empty finite subsets of $S$. If $(S,\diamond)$ is a semigroup, then for $s \in S$ and $A \subseteq S$ we define $sA = \{s\diamond a\ |\ a \in A\}$ and $s^{-1}A = \{x \in S\ |\ s\diamond x \in A\}$. 

\begin{manualdefinition}{3}\label{UltrafilterDefinition2}
    Let $S$ be a set. $p \subseteq \mathcal{P}(S)$ is an {\it ultrafilter} if it satisfies the following properties:
    \begin{enumerate}[(i)]
		\item The empty set is not a member of $p$, i.e., $\emptyset \notin p$,
		
		\item if $A \in p$ and $A \subseteq B$ then $B \in p$,
		
		\item if $A,B \in p$ then $A\cap B \in p$,
		
		\item for any $A \subseteq S$, either $A \in p$ or $A^c \in p$.
	\end{enumerate}
\end{manualdefinition}

We denote the space of all ultrafilters over $S$ by $\beta S$. It is often useful to think about $\beta S$ as the set of finitely additive $\{0,1\}$-valued measures on the set $S$. The topology of $\beta S$ is generated by the basis of open sets $\big\{\hat{A}\big\}_{A \subseteq S}$, where

\begin{equation}
    \hat{A} := \left\{p \in \beta S\ |\ A \in p\right\}.
\end{equation}
Since $(\hat{A})^c = \widehat{A^c}$ for all $A \subseteq S$, we see that each $\hat{A}$ is also a closed set, and it is a fact that $\{\hat{A}\}_{A \subseteq S}$ also generates the topology of $\beta S$ as a basis of closed sets. We note that for any $s \in S$, the collection of sets given by $e_s := \{A \subseteq S\ |\ s \in A\}$ is an ultrafilter over $S$. Let $e:S\rightarrow\beta S$ be given by $e(s) = e_s$ and observe that $e$ is an injective map that naturally embeds $S$ inside of $\beta S$ as a dense subset. Furthermore, when we endow $S$ with the discrete topology, which will always be the case for the rest of this section, $e$ is a homeomorphism onto its image. An ultrafilter $p$ is a principal ultrafilter if $p \in e(S)$, and a nonprincipal ultrafilter otherwise. Since we naturally identify each $s \in S$ with the principal ultrafilter $e_s$, it is common to write $s$ in place of $e_s$, and we will be using this convention for the rest of this section.

The following theorem is a universal property that can be used to characterize $\beta S$.

\begin{theorem}[{\cite[Theorem 3.27]{AlgebraInTheSCC}}]\label{UniversalProperty}
Let $S$ be an infinite set with the discrete topology and let $\beta S$ denote the space of ultrafilters over $S$. Given any compact space $Y$ and any function $f:S\rightarrow Y$, there exists a unique continuous function $\tilde{f}:\beta S\rightarrow Y$ such that $\tilde{f}|_S = f$.  
\end{theorem}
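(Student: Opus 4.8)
The plan is to give the standard construction of $\tilde f$ via limits of pushforward ultrafilters, following the classical proof that $\beta S$ is the Stone--\v Cech compactification of the discrete space $S$; here we adopt the convention, as in \cite{AlgebraInTheSCC}, that a compact space is Hausdorff. Given $f : S \to Y$, for each $p \in \beta S$ define the \emph{pushforward} $f_\ast(p) := \{ B \subseteq Y \mid f^{-1}(B) \in p \}$. One checks directly from Definition~\ref{UltrafilterDefinition2} that $f_\ast(p)$ is an ultrafilter on $Y$: $\emptyset \notin f_\ast(p)$ since $f^{-1}(\emptyset) = \emptyset \notin p$; it is upward closed and closed under finite intersection because $f^{-1}$ preserves inclusions and intersections; and for $B \subseteq Y$ exactly one of $f^{-1}(B)$ and $f^{-1}(B^c) = f^{-1}(B)^c$ lies in $p$, so exactly one of $B, B^c$ lies in $f_\ast(p)$.

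Next I would invoke the standard characterization of compactness: every ultrafilter on a compact space converges to at least one point, and when the space is Hausdorff that limit is unique. Applying this to $f_\ast(p)$, I define $\tilde f(p) := \lim f_\ast(p)$, equivalently the unique $y \in Y$ such that $f^{-1}(U) \in p$ for every open neighborhood $U$ of $y$. The identity $\tilde f|_S = f$ is then immediate: if $p = e_s$ is principal, then $f_\ast(e_s)$ is the principal ultrafilter at $f(s)$, which converges to $f(s)$, so $\tilde f(e_s) = f(s)$.

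The main work, and the only genuinely delicate point, is continuity of $\tilde f$; I would verify that $\tilde f^{-1}(U)$ is open for every open $U \subseteq Y$. Given $p$ with $y := \tilde f(p) \in U$, use that $Y$, being compact Hausdorff and hence regular, admits an open $V$ with $y \in V \subseteq \overline{V} \subseteq U$. Since $V$ is a neighborhood of $y = \tilde f(p)$, we get $f^{-1}(V) \in p$, i.e.\ $p \in \widehat{f^{-1}(V)}$. Conversely, for any $q \in \widehat{f^{-1}(V)}$ we have $V \in f_\ast(q)$, and an ultrafilter on $Y$ containing $V$ cannot converge to any point outside $\overline{V}$, for otherwise the disjoint open set $Y \setminus \overline{V}$ would also have to belong to it; hence $\tilde f(q) \in \overline{V} \subseteq U$, so $q \in \tilde f^{-1}(U)$. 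Thus $\widehat{f^{-1}(V)} \subseteq \tilde f^{-1}(U)$, exhibiting $\tilde f^{-1}(U)$ as a union of basic open sets $\widehat{\,\cdot\,}$. Finally, uniqueness follows from a general principle: $e(S)$ is dense in $\beta S$, $Y$ is Hausdorff, and two continuous maps into a Hausdorff space that agree on a dense subset coincide. I expect the regularity step controlling where ultrafilters on $Y$ can converge to be the crux; everything else is a routine unwinding of the ultrafilter axioms.
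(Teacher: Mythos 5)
The paper does not prove this theorem itself; it states it as a citation to Theorem~3.28 of \cite{AlgebraInTheSCC}. Your argument is correct and is the standard ultrafilter-limit construction used there and in essentially every treatment of the Stone--\v Cech compactification of a discrete space: push forward the ultrafilter along $f$, take its unique limit in the compact Hausdorff space $Y$, and use regularity of $Y$ to produce a basic open $\widehat{f^{-1}(V)}$ witnessing continuity. You correctly identify the regularity step as the only nontrivial point; the rest (pushforward is an ultrafilter, $\tilde f$ restricts to $f$ on principal ultrafilters, uniqueness from density of $e(S)$ plus $Y$ Hausdorff) is routine and your verifications are accurate.
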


A careful and repeated application of Theorem \ref{UniversalProperty} also allows one to extend binary operations from $S\times S$ to $\beta S\times \beta S$, which is of great use when $S$ is a semigroup.

\begin{theorem}[{\cite[Theorem 4.1]{AlgebraInTheSCC}}]\label{ContinuousExtension}
    Let $S$ be a set and let $\diamond$ be a binary operation defined on $S$. There is a unique binary operation $*:\beta S\times\beta S\rightarrow\beta S$ satisfying the following three conditions.
    \begin{enumerate}[(a)]
        \item For every $s,t \in S$, $s*t = s\diamond t$.
        \item For each $q \in \beta S$, the function $\rho_q:\beta S\rightarrow \beta S$ is continuous, where $\rho_q(p) = p*q$.
        \item For each $s \in S$, the function $\lambda_s:\beta S\rightarrow\beta S$ is continuous, where $\lambda_s(q) = s*q$.
    \end{enumerate}
\end{theorem}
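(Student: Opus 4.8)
The plan is to build $*$ by applying the universal property of $\beta S$ (Theorem~\ref{UniversalProperty}) twice, using that $\beta S$ is itself a compact Hausdorff space, and then to extract uniqueness from the density of $S$ in $\beta S$ together with conditions (b) and (c).

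First, for each fixed $s \in S$ I would apply Theorem~\ref{UniversalProperty} to the function $S \to \beta S$ given by $t \mapsto e_{s\diamond t}$, obtaining a unique continuous map $\lambda_s : \beta S \to \beta S$ with $\lambda_s(e_t) = e_{s\diamond t}$ for all $t\in S$. Next, for each fixed $q \in \beta S$ I would apply Theorem~\ref{UniversalProperty} again, now to the function $S \to \beta S$ given by $s \mapsto \lambda_s(q)$, obtaining a unique continuous map $\rho_q : \beta S \to \beta S$ extending it. I then define $p * q := \rho_q(p)$. Condition (b) is immediate since each $\rho_q$ is continuous by construction. For (c), observe that $\rho_q$ restricted to $S$ is $s \mapsto \lambda_s(q)$, so $s * q = \rho_q(e_s) = \lambda_s(q)$; since $\lambda_s$ is continuous, the map $q \mapsto s*q$ is continuous. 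For (a), with $s,t \in S$ we get $s*t = \rho_{e_t}(e_s) = \lambda_s(e_t) = e_{s\diamond t}$, i.e.\ $s * t = s\diamond t$ after identifying $S$ with its image in $\beta S$. Unwinding the two extensions on the basic clopen sets $\hat A$ gives the explicit description $A \in p*q \iff \{s \in S : s^{-1}A \in q\} \in p$, which is often the more convenient form in applications.

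For uniqueness, suppose $\star$ is another binary operation on $\beta S$ satisfying (a), (b), (c). I would argue in two steps that mirror the construction. Fixing $s \in S$, the maps $q \mapsto s*q$ and $q \mapsto s\star q$ are both continuous by (c) and agree on the dense subset $S$ by (a), hence agree on all of $\beta S$ because $\beta S$ is Hausdorff; thus $s*q = s\star q$ for all $s \in S$, $q \in \beta S$. Fixing now $q \in \beta S$, the maps $p \mapsto p*q$ and $p \mapsto p\star q$ are both continuous by (b) and agree on $S$ by the previous step, hence agree everywhere. Therefore $* = \star$.

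The argument is essentially bookkeeping with the universal property, so there is no serious obstacle; the one point that requires care is the \emph{order} in which the two extensions are performed. One must extend in $t$ first (for each fixed $s \in S$) and only then extend in $s$ (for each fixed $q \in \beta S$). This ordering is exactly what makes $\rho_q$ continuous for \emph{every} $q \in \beta S$ while only forcing $\lambda_s$ to be continuous for $s \in S$; performing the extensions in the opposite order would instead produce the opposite multiplication. This asymmetry is intrinsic — it reflects the fact that $(\beta S,*)$ is in general only a right-topological semigroup — and it is why (b) and (c) are the correct conditions to impose.
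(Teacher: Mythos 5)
Your proof is correct and is the standard construction; the paper does not give its own proof of this statement but simply defers to Theorem 4.1 of Hindman--Strauss \cite{AlgebraInTheSCC}, and your two-stage application of the universal property (first in $t$ for fixed $s\in S$, then in $s$ for fixed $q\in\beta S$) together with the density-plus-Hausdorff uniqueness argument is precisely the argument that source gives. Your closing remark about the order of the extensions being what makes $(\beta S,*)$ a right-topological rather than left-topological semigroup is an accurate and worthwhile observation.
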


It is worth mentioning that our decision to make $\rho_q$ continuous for all $q \in \beta S$ and $\lambda_s$ continuous only for all $s \in S \subseteq \beta S$ is because we are choosing to use the same conventions used in \cite{AlgebraInTheSCC}. Elsewhere in the literature it is common to work with the extension $*$ of $\diamond$ that makes $\lambda_q$ continuous for all $q \in \beta S$ and $\rho_s$ continuous only for all $s \in S$.

\begin{theorem}[{\cite[Theorem 4.12]{AlgebraInTheSCC}}]\label{Semigroup}
    If $(S,\diamond)$ is a semigroup and $*:\beta S\times\beta S\rightarrow\beta S$ is the operation given by Theorem \ref{ContinuousExtension}, then $*$ is an associative operation. Furthermore, for $p,q \in \beta S$ the ultrafilter $p*q$ is given by
    
    \begin{equation}
        A \in p*q \text{ if and only if }\{s \in S\ |\ s^{-1}A \in q\} \in p.
    \end{equation}
\end{theorem}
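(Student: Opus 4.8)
The plan is to first pin down the explicit formula for $p * q$ using only the three defining properties of $*$ from Theorem \ref{ContinuousExtension} together with the uniqueness of continuous extensions in Theorem \ref{UniversalProperty}, and then to read off associativity from that formula, the one genuine input being that $\diamond$ is associative on $S$. Throughout I write $st$ for $s \diamond t$ and recall that $s^{-1}A = \{x \in S : sx \in A\}$, so that $t \in s^{-1}A \iff st \in A$, and that for principal ultrafilters $s * t = st$.

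I would derive the formula in two stages. First fix $s \in S$ and $A \subseteq S$: since $\lambda_s$ is continuous and $\hat A$ is clopen, the map $\beta S \to \{0,1\}$ (two points, discrete) sending $q \mapsto \mathbf 1_{\hat A}(s * q)$ is continuous, and its restriction to $S$ sends $t$ to $1$ exactly when $st \in A$, i.e.\ exactly when $t \in s^{-1}A$; as $q \mapsto \mathbf 1_{\widehat{s^{-1}A}}(q)$ is another continuous map $\beta S \to \{0,1\}$ with the same restriction to $S$, the uniqueness clause of Theorem \ref{UniversalProperty} forces the two to coincide, giving $A \in s*q \iff s^{-1}A \in q$. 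Now fix $q \in \beta S$ and $A \subseteq S$: since $\rho_q$ is continuous, $p \mapsto \mathbf 1_{\hat A}(p*q)$ is a continuous map $\beta S \to \{0,1\}$ whose restriction to $S$, by the first stage, sends $s$ to $1$ precisely when $s$ lies in $D := \{s \in S : s^{-1}A \in q\}$; applying Theorem \ref{UniversalProperty} again, this map equals $\mathbf 1_{\widehat D}$, which is exactly the claimed equivalence $A \in p*q \iff \{s \in S : s^{-1}A \in q\} \in p$. That the right-hand side really defines an ultrafilter is automatic since it describes $p*q \in \beta S$, though it can also be checked directly from $s^{-1}(A\cap B) = s^{-1}A \cap s^{-1}B$ and $s^{-1}(A^c) = (s^{-1}A)^c$.

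For associativity, take $p,q,r \in \beta S$ and $A \subseteq S$ and apply the formula twice:
\[
A \in (p*q)*r \iff \{s : s^{-1}A \in r\} \in p*q \iff \big\{t : t^{-1}\{s : s^{-1}A \in r\} \in q\big\} \in p.
\]
For $t \in S$ one computes $t^{-1}\{s : s^{-1}A \in r\} = \{u : (tu)^{-1}A \in r\}$, and since $\diamond$ is associative on $S$ we have $(tu)^{-1}A = u^{-1}(t^{-1}A)$, so this set is $\{u : u^{-1}(t^{-1}A) \in r\}$, whose membership in $q$ is, by the formula once more, equivalent to $t^{-1}A \in q*r$. Hence
\[
A \in (p*q)*r \iff \{t : t^{-1}A \in q*r\} \in p \iff A \in p*(q*r),
\]
and since $A$ was arbitrary, $(p*q)*r = p*(q*r)$.

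The main obstacle is the asymmetry built into Theorem \ref{ContinuousExtension}: $\rho_q$ is continuous for \emph{every} $q \in \beta S$, whereas $\lambda_s$ is only guaranteed continuous for $s \in S$. This is precisely why the formula must be established in two stages — first with the left factor principal (using left-continuity), then bootstrapped to an arbitrary left factor (using right-continuity) — and why one cannot shortcut by invoking continuity of $\lambda_p$ for non-principal $p$. Once the formula is in hand, associativity is a short and essentially mechanical manipulation of the operator $A \mapsto s^{-1}A$, with the identity $(tu)^{-1}A = u^{-1}(t^{-1}A)$ as its only nontrivial ingredient.
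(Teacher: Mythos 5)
Your proposal is correct, and the two-stage derivation of the membership formula (first for principal left factor via continuity of $\lambda_s$, then bootstrapped to arbitrary $p$ via continuity of $\rho_q$, each time invoking uniqueness from Theorem \ref{UniversalProperty} applied to $\{0,1\}$-valued maps) followed by the mechanical check of associativity is exactly the standard argument in Theorems 4.4, 4.5, and 4.12 of \cite{AlgebraInTheSCC}, which the paper cites without reproducing a proof. No gaps.
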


It is customary to denote the operation $*$ that is produced by Theorem \ref{ContinuousExtension} by $\diamond$ so that $\diamond$ represents an operation on $S\times S$ as well as $\beta S\times\beta S$, and we shall adopt this practice. In light of Theorem \ref{Semigroup}, we see that if $(S,\diamond)$ is a semigroup, then $(\beta S,\diamond)$ is also a semigroup. Since $\beta S$ is a compact Hausdorff space (see \cite[Theorem 3.18]{AlgebraInTheSCC}), and for each $q \in \beta\mathbb{N}$ the map $\rho_q:\beta\mathbb{N}\rightarrow\beta\mathbb{N}$ given by $\rho_q(p) = p\diamond q$ is continuous, we see that $(\beta S,\diamond)$ is a compact right topological semigroup.\footnote{A {\bf compact right topological semigroup} is a semigroup $(S,\diamond)$ that is also a compact Hausdorff space for which each of the maps $\rho_s:S\rightarrow S$ given by $\rho_s(t) = t\diamond s$ are continuous. Note that other sources may use $\rho_s(t) = s\diamond t$ in their definition and that we base our definition off of \cite[Definition 2.1]{AlgebraInTheSCC}.} We apologize to the reader for our seemingly excessive emphasis on the operation $\diamond$ of our semigroup $(S,\diamond)$, but we choose to do this to avoid confusion later on when we work with rings $(R,+,\cdot)$ and are forced to consider the semigroups $(R,+)$ and $(R,\cdot)$ separately. We will now collect some results from the literature on semigroups.

\begin{definition}
Let $(S,\diamond)$ be a semigroup. 
\begin{enumerate}[(i)]
\item We say $e \in S$ is an {\it idempotent} if $e\diamond e = e$. We let $E(S,\diamond)$ denote the set of idempotents of $(S,\diamond)$. 

\item We say $\emptyset \neq L \subseteq S$ is a {\it left ideal} if for any $s \in S$ and $\ell \in L$ we have $s\diamond\ell \in L$. We say $\emptyset \neq R \subseteq S$ is a {\it right ideal} if for any $s \in S$ and $r \in R$ we have $r\diamond s \in R$. In general, $I \subseteq S$ is an {\it ideal} if it is a left ideal and a right ideal. 

\item We call $L \subseteq S$ a {\it minimal left ideal} if $L$ is a left ideal that does not properly contain any other left ideal. Similarly, {\it the smallest ideal of S}, if it exists, is an ideal $I$ that is contained in every other ideal of $S$.\footnote{Note that not every semigroup possesses a smallest ideal. In the semigroup $(\mathbb{N},+)$ each of the sets $I_n := \{m \in \mathbb{N}\ |\ m \ge n\}$ is an ideal. Since there is no set that is contained in every $I_n$, we see that $(\mathbb{N},+)$ does not have a smallest ideal.} If $(S,\diamond)$ does possess a smallest ideal, then we denote it by $K(S,\diamond)$ and observe that $K(S,\diamond)$ is also a semigroup.
\end{enumerate}
\end{definition}

\begin{theorem}[{\cite[Corollary 2.6]{AlgebraInTheSCC}}]
    Let $(S,\diamond)$ be a compact right topological semigroup. Then every left ideal of $S$ contains a minimal left ideal. Minimal left ideals are closed, and each minimal left ideal has an idempotent.
\end{theorem}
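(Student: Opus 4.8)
The plan is to prove the three assertions in turn, using Zorn's lemma twice against the backdrop of the two hypotheses: compactness of $S$ and continuity of the right translations $\rho_s \colon t \mapsto t \diamond s$. The single structural observation used throughout is that for every $s \in S$ the set $S \diamond s = \rho_s(S)$ is a \emph{closed} left ideal: it is a left ideal because $t \diamond (u \diamond s) = (t \diamond u)\diamond s$, and it is closed because it is the continuous image of the compact set $S$ inside the Hausdorff space $S$. Note also that if $s$ belongs to a left ideal $L$, then $S \diamond s \subseteq L$.

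First I would show that every left ideal $L$ contains a minimal left ideal. Order the collection of nonempty closed left ideals contained in $L$ by inclusion; it is nonempty since $S \diamond s \subseteq L$ for any $s \in L$. A chain in this collection consists of nonempty closed subsets of the compact space $S$ with the finite intersection property, so its intersection is nonempty, and it is clearly again a closed left ideal; thus Zorn's lemma provides a minimal element $L_0$. To upgrade ``minimal among closed left ideals'' to ``minimal left ideal'', let $M \subseteq L_0$ be any left ideal and pick $s \in M$: then $S \diamond s$ is a nonempty closed left ideal with $S \diamond s \subseteq M \subseteq L_0$, so minimality forces $S \diamond s = L_0$, and hence $M = L_0$. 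The same computation yields the second assertion at once: if $L$ is a minimal left ideal and $s \in L$, then the closed left ideal $S \diamond s \subseteq L$ must equal $L$, so $L$ is closed.

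Being closed, a minimal left ideal is itself a compact right topological semigroup (a left ideal is closed under $\diamond$), so the theorem reduces to the Ellis--Namakura lemma: every nonempty compact right topological semigroup $T$ contains an idempotent. Here I would run Zorn's lemma a second time on the nonempty closed subsemigroups of $T$ — intersections of chains are nonempty by compactness and are closed subsemigroups — to obtain a minimal such $T_0$. Fix $x \in T_0$. Then $T_0 \diamond x$ is a closed subsemigroup of $T_0$: closed because $\rho_x$ is continuous, a subsemigroup because $(t_1 \diamond x)\diamond(t_2 \diamond x) = \bigl(t_1 \diamond (x \diamond t_2)\bigr)\diamond x$ with $x \diamond t_2 \in T_0$, and contained in $T_0$ since $T_0$ is a subsemigroup. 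Minimality gives $T_0 \diamond x = T_0$, so some $y \in T_0$ satisfies $y \diamond x = x$; the set $\{t \in T_0 \colon t \diamond x = x\} = T_0 \cap \rho_x^{-1}(\{x\})$ is then a nonempty closed subsemigroup of $T_0$, hence equals $T_0$ by minimality, and since $x$ itself lies in it we conclude $x \diamond x = x$.

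The main obstacle is precisely this last step. Because left translations need not be continuous in a merely right topological semigroup, every auxiliary set in the idempotent argument must be exhibited as a continuous image or preimage under some $\rho_x$, never under a left translation; the two successive applications of Zorn's lemma, and the choice of which principal left ideal or subsemigroup to cut down by, have to be arranged so that this is always possible.
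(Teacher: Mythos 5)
Your argument is correct, and it is the standard Ellis--Namakura proof: the paper itself does not prove this statement but cites it directly from Hindman and Strauss (Corollary 2.6 of \cite{AlgebraInTheSCC}), and your proof is essentially the one given there. In particular you correctly exploit that only right translations $\rho_s$ are assumed continuous, using $\rho_s(S)=S\diamond s$ and $\rho_x^{-1}(\{x\})$ as the sole topological tools, which is exactly the care the right-topological hypothesis demands.
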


\begin{theorem}[{\cite[Theorem 2.8]{AlgebraInTheSCC}}]
    Let $(S,\diamond)$ be a semigroup. If $S$ has a minimal left ideal, then $K(S,\diamond)$ exists and $K(S,\diamond) = \bigcup\{L\ |\ L\text{ is a minimal left ideal of }S\}$.
\end{theorem}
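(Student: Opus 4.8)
The plan is to put $U := \bigcup\{L \mid L \text{ is a minimal left ideal of } S\}$, which is nonempty by hypothesis, and to prove that $U$ is the smallest two-sided ideal of $S$; this simultaneously shows that $K(S,\diamond)$ exists and that it equals $U$. The argument splits into two halves: first, that $U$ is itself a two-sided ideal, and second, that $U$ is contained in every two-sided ideal.

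Before either half I would isolate the one genuinely non-formal ingredient: if $L$ is a minimal left ideal and $a \in S$, then $La$ is again a minimal left ideal. That $La$ is a nonempty left ideal is immediate from $s(\ell a) = (s\ell)a$ together with $SL \subseteq L$. For minimality, given any nonempty left ideal $J \subseteq La$, I would pass to the pullback $T := \{\ell \in L \mid \ell a \in J\}$; one checks that $T$ is nonempty (since $J \subseteq La$) and is a left ideal contained in $L$, so $T = L$ by minimality of $L$, whence $La = Ta \subseteq J$ and therefore $J = La$. I expect this pullback step to be the main obstacle, in the sense that it is the only place where something beyond bookkeeping occurs.

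Granting this fact, the first half is short: $U$ is a union of left ideals, hence a left ideal; and if $u \in U$ lies in a minimal left ideal $L$ and $s \in S$, then $us \in Ls$, which is a minimal left ideal by the auxiliary fact, so $us \in U$. Thus $U$ is also a right ideal, i.e. a two-sided ideal.

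For the second half, let $I$ be an arbitrary (nonempty) two-sided ideal and let $L$ be an arbitrary minimal left ideal; the goal is $L \subseteq I$. I would pick $a \in I$ and $b \in L$ and look at $(La)b$. Since $La$ is a left ideal, so is $(La)b$, and it is nonempty; moreover $(La)b \subseteq SL \subseteq L$ precisely because $b \in L$, so minimality of $L$ forces $(La)b = L$. On the other hand $La \subseteq SI \subseteq I$ because $a \in I$ and $I$ is a left ideal, and then $(La)b \subseteq IS \subseteq I$ because $b \in S$ and $I$ is a right ideal; hence $L = (La)b \subseteq I$. As $L$ was arbitrary, $U \subseteq I$. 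Combining the two halves, $U$ is a two-sided ideal contained in every two-sided ideal, so it is the smallest one, and $K(S,\diamond) = U$. The subtlety to keep flagged throughout is the side of multiplication: one repeatedly uses that each $L$ is a \emph{left} ideal to keep products inside $L$, and the final containment genuinely needs $I$ to be both a left and a right ideal, so neither half of the hypothesis on $I$ can be dropped.
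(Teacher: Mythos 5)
Your proof is correct and follows the standard argument from Hindman--Strauss (Theorem 1.51 together with the fact, Lemma 1.43 there, that $La$ is again a minimal left ideal), which the paper cites without reproducing. Both halves are sound: $U$ is a two-sided ideal because right translates of minimal left ideals are minimal, and $U$ lies in every two-sided ideal $I$ via the computation $(La)b \subseteq L$ forcing $L = (La)b \subseteq I$; a marginally shorter route to the second half is to note $\emptyset \neq IL \subseteq L \cap I$, which is a left ideal inside $L$, so $L \cap I = L$, but your version is equally valid.
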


\begin{lemma}[{\cite[Theorems 1.38(d) and 1.60]{AlgebraInTheSCC}}]\label{MinimalIdempotentsLemma}
Let $(S,\diamond)$ be a semigroup and let $\le$ be the partial ordering on the set of idempotents of $S$ given by $f \le e$ if and only if $fe = ef = f$. Assume that $S$ has a minimal left ideal that has an idempotent.
\begin{enumerate}[(i)]
    \item If $e \in S$ is an idempotent that is minimal with respect to $\le$, then $e$ is a member of some minimal left ideal of $S$. Such an idempotent is a {\it minimal idempotent}. It follows that $E(K(S,\diamond))$ is the set of minimal idempotents of $(S,\diamond)$.
    
    \item If $f \in S$ is an idempotent then there exists a minimal idempotent $e$ such that $e \le f$.
\end{enumerate}
\end{lemma}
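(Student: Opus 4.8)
The plan is to derive both parts from the structure theory of the smallest ideal $K(S,\diamond)$; these statements are exactly (the pertinent parts of) Theorem 1.38 and Theorem 1.60 of \cite{AlgebraInTheSCC}, and I will only indicate the arguments. Beyond the two theorems displayed immediately above the lemma, the one structural input I would invoke at the start is that, under the standing hypothesis that some minimal left ideal has an idempotent, \emph{every} minimal left ideal of $S$ has an idempotent (equivalently, $K(S,\diamond)$ is completely simple); this is classical, see \cite{AlgebraInTheSCC}. I would also record the elementary facts that if $L$ is a minimal left ideal of $S$ and $t \in S$ then $Lt$ is again a minimal left ideal of $S$, and that $Lt \subseteq K(S,\diamond)$ since $K(S,\diamond)$ is a two-sided ideal.

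For part (i) the heart of the matter is the equivalence: an idempotent $e$ is $\le$-minimal if and only if $e \in K(S,\diamond)$. Suppose first $e \in K(S,\diamond)$, so $e$ lies in a minimal left ideal $L$; if $f$ is an idempotent with $f \le e$, then $f = fe \in L$, so $Sf$ is a nonempty left ideal contained in $L$, whence $Sf = L \ni e$; writing $e = sf$ gives $ef = sff = sf = e$, and since $f \le e$ also gives $ef = f$, we get $f = e$, so $e$ is $\le$-minimal. Conversely, let $e$ be a $\le$-minimal idempotent and fix any minimal left ideal $L$. Then $Le$ is a minimal left ideal inside $K(S,\diamond)$, so it has an idempotent $g$; writing $g = \ell e$ gives $ge = g$, and then a one-line check shows $g' := eg$ is idempotent with $eg' = g'e = g'$, i.e. $g' \le e$. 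Minimality of $e$ forces $g' = e$, and since $g' = eg \in Le$ we conclude $e \in Le \subseteq K(S,\diamond)$. Thus $E(K(S,\diamond))$ is precisely the set of $\le$-minimal idempotents, which we call the minimal idempotents.

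For part (ii), let $f$ be an arbitrary idempotent, fix a minimal left ideal $L$ of $S$ with an idempotent (which exists by hypothesis), and note $Lf$ is a minimal left ideal inside $K(S,\diamond)$, hence contains an idempotent $u$; since $u = \ell f$ we have $uf = u$. Put $v := fu$. Using $uf = u$ one checks $vv = f(uf)u = fuu = fu = v$, so $v$ is idempotent, while $fv = fu = v$ and $vf = fuf = fu = v$, so $v \le f$. Since $v = fu \in Lf \subseteq K(S,\diamond)$, part (i) identifies $v$ as a minimal idempotent with $v \le f$, as required.

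The main obstacle is not conceptual but organizational: one has to have the structural facts about $K(S,\diamond)$ — its existence, its description as the union of the minimal left ideals, and that each such ideal carries an idempotent — lined up before running the short idempotent manipulations, and in those manipulations the only genuine care needed is to keep straight on which side each factor acts. As all of this is standard semigroup theory, I would simply cite \cite{AlgebraInTheSCC} for the prerequisites and give the computations above.
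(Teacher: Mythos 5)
Your argument is correct, and it is essentially the standard argument found in Hindman--Strauss; the paper itself states this lemma with citations to Theorems 1.38 and 1.60 of \cite{AlgebraInTheSCC} and gives no proof of its own, so there is no alternative argument in the paper to compare against. A few remarks on the steps you use: the closure facts you record up front (that under the standing hypothesis every minimal left ideal carries an idempotent, that $Lt$ is again a minimal left ideal, and that $K(S,\diamond)$ is the union of the minimal left ideals) are exactly the structural inputs needed, and all of your idempotent manipulations check out --- in the forward direction of (i) the chain $e = sf \Rightarrow ef = sff = sf = e$ combined with $ef = f$ forces $f = e$; in the reverse direction $g' = eg$ is idempotent using $ge = g$, satisfies $eg' = g'e = g'$, and lies in $Le$ because $Le$ is a left ideal; and in (ii) the element $v = fu$ is idempotent via $uf = u$, satisfies $fv = vf = v$, and lies in $Lf \subseteq K(S,\diamond)$, so (i) applies. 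One could prove this slightly more quickly by quoting the completely simple structure of $K(S,\diamond)$ and the Rees decomposition, but your elementary idempotent computations are self-contained and need only the three structural facts you list, which is arguably preferable here.
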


\begin{definition}
    Let $(S,\diamond)$ be a semigroup and let $A \subseteq S$. Then $A$ is {\it central} if there exists an ultrafilter $p \in E(K(\beta S,\diamond))$ for which $A \in p$.
\end{definition}

\begin{theorem}[{\cite[Theorem 3.5]{PolynomialMPC}}]\label{GeneralDeuber}
    Let $(S,\diamond)$ be a commutative semigroup, let $\ell \in \mathbb{N}$, and let $A \subseteq \mathbb{N}$ be a central set. 
    \begin{enumerate}[(i)]
        \item There exists $b,g \in A$ such that 
    
    \begin{equation}
        b, b\diamond g, b\diamond g\diamond g, \cdots, b\diamond\underbrace{g\diamond g\diamond\cdots\diamond g}_{\ell} \in A.
    \end{equation}
    \item If $(S,\diamond)$ is a commutative group and $c:S\rightarrow S$ is a homomorphism for which $[S:c(S)] < \infty$, then there exists $b,g \in S$ such that
    
    \begin{equation}
        \{c(g)\}\cup\left\{c(b)\diamond g^j\right\}_{j = -\ell}^{\ell} \subseteq A.
    \end{equation}
    \end{enumerate}
\end{theorem}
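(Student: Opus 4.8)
The plan is to deduce both items from the Central Sets Theorem for commutative semigroups (in the polynomial form of \cite[Theorem 3.5]{PolynomialMPC}), whose proof is built precisely on the structure of the smallest ideal $K(\beta S,\diamond)$ and its minimal idempotents recalled above. Throughout I would fix a minimal idempotent $p\in E(K(\beta S,\diamond))$ with $A\in p$; such a $p$ exists exactly because $A$ is central.

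For item (i), observe that $b,\,b\diamond g,\,b\diamond g^{2},\,\dots,\,b\diamond g^{\ell}$ is the $\diamond$-analogue of a length-$(\ell+1)$ arithmetic progression together with its common difference, with both the base point $b$ and the difference $g$ required to lie in $A$. To produce it, I would apply the Central Sets Theorem to the finite family of monomial maps $x\mapsto x^{j}$, $1\le j\le\ell$, on $(S,\diamond)$ --- these are of the admissible type for the polynomial version --- obtaining $b,g\in A$ with $b\diamond g^{j}\in A$ for every $1\le j\le\ell$, while $b\in A$ takes care of the index $j=0$. Unwinding the proof, this amounts to repeatedly replacing $A$ by $A^{\star}=\{s\in A:s^{-1}A\in p\}\in p$ and locating the progression by a van der Waerden-type argument inside $(\beta S,\diamond)$; the point that the difference $g$ may also be taken in $A$ is exactly what distinguishes the central-set statement from a bare van der Waerden theorem, and it is this refinement that is needed here.

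For item (ii), $(S,\diamond)$ is now a group, so $g$ is invertible and the exponent can range over $-\ell\le j\le\ell$, and the new ingredient is the homomorphism $c$ with $[S:c(S)]<\infty$. I would first note that, since $c(S)$ is a finite-index subgroup and $p=p\diamond p$, the unique coset of $c(S)$ lying in $p$ must be $c(S)$ itself, so that $c(S)\in p$; together with the finite index this lets one check, via the continuous (hence homomorphic) extension $\tilde c$ of $c$ and a lifting of a minimal idempotent along it, that $c^{-1}(A)$ is again central in $S$. Applying the $c$-enhanced form of item (i) to the central set $A\cap c(S)$ then yields $b_{0},g\in A\cap c(S)$ with $\{b_{0}\diamond g^{j}:-\ell\le j\le\ell\}\subseteq A$ and, as an extra condition on the difference, $c(g)\in A$; writing $b_{0}=c(b)$, which is legitimate because $b_{0}\in c(S)$, gives precisely $\{c(g)\}\cup\{c(b)\diamond g^{j}:-\ell\le j\le\ell\}\subseteq A$. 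I expect the main obstacle to be exactly this last step: one must run the Central Sets Theorem so as to control simultaneously the inner progression $c(b)\diamond g^{j}$ and the lone external point $c(g)$ while staying inside a central set, and it is precisely the hypothesis $[S:c(S)]<\infty$ that keeps the $c$-preimages of the relevant $p$-large sets large enough for the argument to close --- for an image of infinite index the preimage need not be central and the scheme collapses.
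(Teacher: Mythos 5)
The paper does not prove Theorem \ref{GeneralDeuber}; it is imported as a black box from Theorem 3.5 of \cite{PolynomialMPC}, so there is no in-paper proof to compare against. What the paper \emph{does} prove is the closely analogous condition (iv) of Theorem \ref{SpecialUltrafilter5}, and it does so by a route quite different from yours: the target configuration $\{hd,ha\}\cup\{ha+j_id\}_{i=0}^\ell$ is encoded as the solution set of an explicit homogeneous linear system (equation \eqref{SystemForBrauer}), the columns condition for that system is checked by hand, and then Corollary \ref{RadoIdeal} (that $K(\beta R\setminus\{0\},\cdot)$ sits inside the ideal of ultrafilters witnessing every partition-regular homogeneous linear system) supplies a solution in $A$. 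This sidesteps entirely the step you single out as the obstacle.

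On item (i): your plan is essentially sound, but note that $b\diamond g^j$ is \emph{linear} in $b,g$ (in additive notation), so invoking the polynomial Central Sets Theorem with monomial maps $x\mapsto x^j$ is overkill and slightly misleading; the classical Central Sets Theorem, or more directly the IP-refined van der Waerden theorem in which the common difference is drawn from $A^\star=\{s\in A: s^{-1}A\in p\}$, already yields $b,g\in A$ with $\{b\diamond g^j\}_{j=0}^\ell\subseteq A$.

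On item (ii) there is a genuine gap. You appeal to a ``$c$-enhanced form of item (i)'' which, applied to $A\cap c(S)$, would supposedly produce $b_0,g\in A\cap c(S)$ with $\{b_0\diamond g^j\}_{j=-\ell}^\ell\subseteq A$ together with the extra condition $c(g)\in A$. But $g\in A\cap c(S)$ yields $g\in A$, which is a different requirement from $c(g)\in A$, and the proposal offers no mechanism to force the latter. You correctly flag this simultaneous control of the progression $c(b)\diamond g^j$ and the lone point $c(g)$ as the crux, but the argument as written does not close it; the remark that $[S:c(S)]<\infty$ ``keeps preimages large enough'' is intuition, not a proof. The clean resolution (and the one the paper uses in its domain version) is to observe that $\{c(g)\}\cup\{c(b)\diamond g^j\}_{j=-\ell}^\ell$ is precisely the solution set of a homogeneous linear system satisfying the columns condition --- a Deuber $(1,\ell,c)$-configuration --- and that central sets contain solutions of every partition-regular homogeneous linear system.
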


\begin{lemma}[{\cite[Theorem 17.3]{AlgebraInTheSCC}}]\label{SpecialUltrafilter3}
    There exists an ultrafilter $p \in \beta\mathbb{N}$ such that every $A \in p$ is a central subset of $(\mathbb{N},+)$ and a central subset of $(\mathbb{N},\cdot)$.
\end{lemma}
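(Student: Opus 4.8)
The plan is to obtain the desired $p$ as a minimal idempotent of the multiplicative semigroup $(\beta\mathbb{N},\cdot)$ that additionally lies in the closure $C := \overline{E(K(\beta\mathbb{N},+))}$ of the set of minimal idempotents of the additive semigroup $(\beta\mathbb{N},+)$; this is essentially the content of Lemma 17.2 and Theorem 17.3 of \cite{AlgebraInTheSCC}. The set $C$ is the right object for two reasons. First, if $p\in E(K(\beta\mathbb{N},\cdot))$, then by definition every $A\in p$ is central in $(\mathbb{N},\cdot)$. Second, if $p\in C$ and $A\in p$, then $\hat{A}$ is a clopen neighbourhood of $p$, so $\hat{A}\cap E(K(\beta\mathbb{N},+))\neq\emptyset$, i.e.\ $A$ belongs to some minimal additive idempotent and hence is central in $(\mathbb{N},+)$. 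Thus it suffices to produce a point of $C\cap E(K(\beta\mathbb{N},\cdot))$. For this I will show that $C$ is a closed left ideal of the compact right topological semigroup $(\beta\mathbb{N},\cdot)$: granting this, $C$ contains a minimal left ideal $L$ of $(\beta\mathbb{N},\cdot)$, $L$ contains an idempotent $p$, and since $p\in L\subseteq K(\beta\mathbb{N},\cdot)$ we get $p\in E(K(\beta\mathbb{N},\cdot))$, while $p\in L\subseteq C$; this $p$ is the ultrafilter we want.

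The heart of the argument is therefore showing $\beta\mathbb{N}\cdot C\subseteq C$, which I would do in three stages. (1) For $s\in\mathbb{N}$ let $M_s\colon\beta\mathbb{N}\to\beta\mathbb{N}$ be the continuous extension of $x\mapsto sx$; by uniqueness of continuous extensions it coincides with the map $q\mapsto s\cdot q$, and it is a homomorphism of $(\beta\mathbb{N},+)$, hence preserves additive idempotency. I claim $M_s(K(\beta\mathbb{N},+))\subseteq K(\beta\mathbb{N},+)$. If $I$ is any ideal of $(\beta\mathbb{N},+)$, then $M_s^{-1}(I)$ is again an ideal of $(\beta\mathbb{N},+)$: it is nonempty because $M_s(\beta\mathbb{N})=\overline{s\mathbb{N}}$ meets $K(\beta\mathbb{N},+)$ --- indeed $s\mathbb{N}$ lies in \emph{every} additive idempotent, as one checks by applying $q+q=q$ to the partition of $\mathbb{N}$ into residue classes modulo $s$ --- and then the ideal property of $I$ forces $M_s^{-1}(I)$ to absorb sums with arbitrary elements. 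Hence $M_s^{-1}(I)\supseteq K(\beta\mathbb{N},+)$ for every ideal $I$, and since $K(\beta\mathbb{N},+)$ is the intersection of all ideals of $(\beta\mathbb{N},+)$, the claim follows; combined with preservation of idempotency this gives $M_s(E(K(\beta\mathbb{N},+)))\subseteq E(K(\beta\mathbb{N},+))$. (2) Consequently $s\cdot p\in C$ for all $s\in\mathbb{N}$ and $p\in E(K(\beta\mathbb{N},+))$; since $r\mapsto r\cdot p$ is continuous and $\mathbb{N}$ is dense in $\beta\mathbb{N}$, we get $\beta\mathbb{N}\cdot p=\overline{\mathbb{N}\cdot p}\subseteq C$. (3) Finally, for $q\in C$ and $r\in\beta\mathbb{N}$, suppose $A\in r\cdot q$; then $\{s\in\mathbb{N}:s^{-1}A\in q\}\neq\emptyset$, and fixing such an $s$, the facts that $q\in C$ and $s^{-1}A\in q$ yield a minimal additive idempotent $p$ with $s^{-1}A\in p$, whence $A\in s\cdot p\in C$ by (2). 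Since $C$ is closed and $\{\hat{A}\}$ is a basis of clopen sets, no basic neighbourhood of $r\cdot q$ can be disjoint from $C$, so $r\cdot q\in C$.

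Once $C$ is known to be a closed left ideal, the conclusion is a routine invocation of the structure theory recalled above: a closed left ideal of a compact right topological semigroup contains a minimal left ideal, minimal left ideals are closed and contain idempotents, and $K(\beta\mathbb{N},\cdot)$ is the union of the minimal left ideals, so the idempotent $p$ produced lies in $E(K(\beta\mathbb{N},\cdot))\cap C$ as required.

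The main obstacle is stage (1): verifying that multiplication by $s$ carries minimal additive idempotents to minimal additive idempotents. This rests on the elementary but easy-to-overlook fact that $s\mathbb{N}$ belongs to every idempotent of $(\beta\mathbb{N},+)$, which is what lets $M_s^{-1}$ of an ideal be nonempty. A secondary subtlety is stage (3): because the multiplication on $\beta\mathbb{N}$ is only right-continuous, one cannot simply pass to a limit to upgrade the left-ideal property from $E(K(\beta\mathbb{N},+))$ to its closure $C$, so the indirect argument through a chosen member $A$ of $r\cdot q$ is genuinely needed rather than cosmetic.
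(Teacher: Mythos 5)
Your proof is correct, and its overall architecture --- produce a point of $E(K(\beta\mathbb{N},\cdot))\cap\overline{E(K(\beta\mathbb{N},+))}$ by first showing multiplication by $s\in\mathbb{N}$ sends $E(K(\beta\mathbb{N},+))$ into itself and then exploiting left-ideal structure in $(\beta\mathbb{N},\cdot)$ --- mirrors what the paper does for the more general Theorem \ref{ExistinceOfAdditiveAndMultiplicativeUltrafilters} and Corollary \ref{SpecialUltrafilter4}. You genuinely diverge in the key step (your stage (1), the analogue of Lemma \ref{MultiplicationPreservesMinimalIdempotents}): you show $M_s(K(\beta\mathbb{N},+))\subseteq K(\beta\mathbb{N},+)$ by proving that $M_s^{-1}(I)$ is a nonempty ideal of $(\beta\mathbb{N},+)$ for every ideal $I$ (nonempty because $M_s(\beta\mathbb{N})=\widehat{s\mathbb{N}}$ contains every additive idempotent), so $K(\beta\mathbb{N},+)\subseteq M_s^{-1}(I)$ and hence $M_s(K(\beta\mathbb{N},+))\subseteq\bigcap_I I=K(\beta\mathbb{N},+)$; combining this with the fact that $M_s$ is an additive endomorphism gives preservation of $E(K(\beta\mathbb{N},+))$. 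The paper instead embeds the ring into a division ring $D$ via the Ore condition (Theorem \ref{OreCondition}), takes a minimal idempotent $q\le r\cdot p$, applies $\tilde{\ell_{r^{-1}}}$ in $\beta D$, and uses Theorem \ref{FiniteIndexIsIP*} to pull $r^{-1}\cdot q$ back into $\beta R$, concluding $q=r\cdot p$. Your preimage-of-ideals argument is more elementary (it bypasses Ore's theorem and the idempotent ordering entirely) while still hinging on the same underlying IP${}^*$ fact about $s\mathbb{N}$, and it would port to right homomorphically finite rings essentially unchanged. Your stages (2) and (3), which establish that the closed set $C$ is itself a left ideal of $(\beta\mathbb{N},\cdot)$, are correct, though a bit more than is needed: as in the paper, once one knows $\beta\mathbb{N}\cdot E(K(\beta\mathbb{N},+))\subseteq C$ and that this set is a left ideal of $(\beta\mathbb{N},\cdot)$, it already contains a minimal left ideal, hence a minimal multiplicative idempotent lying in $C$, so the clopen-neighbourhood argument in stage (3) can be omitted. (Also, a small slip: in stage (3) the containment $s\cdot p\in C$ for $s\in\mathbb{N}$ is supplied by stage (1), not stage (2).)
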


We are now ready to prove Theorem \ref{SpecialUltrafilter2}, which implies Theorem \ref{SpecialUltrafilter}.

\begin{theorem}\label{SpecialUltrafilter2}
	Let $p \in \beta\mathbb{N}$ be such that every $A \in p$ is a central subset of $(\mathbb{N},+)$ and a central subset of $(\mathbb{N},\cdot)$. Then $p$ satisfies the following properties.
	\begin{enumerate}[(i)]
		\item For any $A \in p$ and $\ell \in \mathbb{N}$, there exist $b,g \in A$ with $\{bg^j\}_{j = 0}^{\ell} \subseteq A$.
		
		\item For any $A \in p$ and $h,\ell \in \mathbb{N}$, there exist $a,d \in \mathbb{N}$ for which $\{hd\}\cup\{ha+id\}_{i = -\ell}^{\ell} \subseteq A$.
		
		\item For any $s \in \mathbb{N}$, we have $s\mathbb{N} \in p$. 
	\end{enumerate}
\end{theorem}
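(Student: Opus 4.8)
The plan is to verify the three properties separately, reading off (i) and (ii) from Theorem~\ref{GeneralDeuber} and obtaining (iii) from a short ideal argument. \emph{Property (i).} Fix $A\in p$. By hypothesis $A$ is a central subset of the commutative semigroup $(\mathbb{N},\cdot)$, so Theorem~\ref{GeneralDeuber}(i) applied with $(S,\diamond)=(\mathbb{N},\cdot)$ and the given $\ell$ produces $b,g\in A$ with $b,bg,bg^{2},\dots,bg^{\ell}\in A$, i.e.\ $\{bg^{j}\}_{j=0}^{\ell}\subseteq A$, which is precisely the claim.

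\emph{Property (iii).} I would argue by contradiction. If $s\mathbb{N}\notin p$ for some $s\in\mathbb{N}$, then $\mathbb{N}\setminus s\mathbb{N}\in p$, so $\mathbb{N}\setminus s\mathbb{N}$ is central in $(\mathbb{N},\cdot)$ and hence lies in some $q\in E(K(\beta\mathbb{N},\cdot))$. But $s\mathbb{N}$ is a two-sided ideal of $(\mathbb{N},\cdot)$, so its closure in $\beta\mathbb{N}$ is an ideal of $(\beta\mathbb{N},\cdot)$ and therefore contains the smallest ideal $K(\beta\mathbb{N},\cdot)$; in particular $s\mathbb{N}\in q$, contradicting that $q$ is an ultrafilter. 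Hence $s\mathbb{N}\in p$.

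\emph{Property (ii).} Fix $A\in p$ and $h,\ell\in\mathbb{N}$. The configuration sought is a two-sided arithmetic progression together with the extra term $hd$, which matches the output of Theorem~\ref{GeneralDeuber}(ii) for the homomorphism ``multiplication by $h$'', but that part of the theorem requires a group. So I would first note that $A$, being central in $(\mathbb{N},+)$, is also central in $(\mathbb{Z},+)$: $(\beta\mathbb{N},+)$ sits inside $(\beta\mathbb{Z},+)$ as a subsemigroup, and a minimal idempotent $q$ of $(\beta\mathbb{N},+)$ remains minimal in $(\beta\mathbb{Z},+)$ --- one takes a minimal idempotent $q'\le q$ of $(\beta\mathbb{Z},+)$ by Lemma~\ref{MinimalIdempotentsLemma}(ii), checks $\mathbb{N}\in q'$ (since $-n+\mathbb{N}\in q$ for every $n\in\mathbb{Z}$, being $\supseteq\mathbb{N}$ when $n\ge 1$ and cofinite in $\mathbb{N}$ when $n\le 0$), and concludes $q'=q$ by minimality of $q$ in $\beta\mathbb{N}$. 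Now apply Theorem~\ref{GeneralDeuber}(ii) with $(S,\diamond)=(\mathbb{Z},+)$ and $c\colon\mathbb{Z}\to\mathbb{Z}$, $c(x)=hx$, which has $[\mathbb{Z}:c(\mathbb{Z})]=h<\infty$; this gives $b,g\in\mathbb{Z}$ with $\{hg\}\cup\{hb+jg\}_{j=-\ell}^{\ell}\subseteq A$. Since $A\subseteq\mathbb{N}$ and $h\ge 1$, the memberships $hg\in A$ and $hb\in A$ force $g\ge 1$ and $b\ge 1$; setting $a:=b$ and $d:=g$ yields $\{hd\}\cup\{ha+id\}_{i=-\ell}^{\ell}\subseteq A$ with $a,d\in\mathbb{N}$.

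I expect the main obstacle to be exactly this centrality transfer in (ii): $(\mathbb{N},+)$ is not a group, so the two-sided-progression half of Theorem~\ref{GeneralDeuber} is not directly applicable, and one must justify that $A$ stays central after enlarging to $\mathbb{Z}$ (equivalently, that $K(\beta\mathbb{N},+)=\beta\mathbb{N}\cap K(\beta\mathbb{Z},+)$, a standard fact). Properties (i) and (iii), by contrast, follow immediately once the cited results are in place.
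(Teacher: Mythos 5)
Your proof is correct. Part (i) matches the paper exactly. Part (ii) takes the same route as the paper (centrality in $(\mathbb{N},+)$ implies centrality in $(\mathbb{Z},+)$, then apply Theorem~\ref{GeneralDeuber}(ii) with $c(x)=hx$ and recover $a,d\in\mathbb{N}$ from $ha,hd\in A\subseteq\mathbb{N}$), but whereas the paper simply cites Theorem~2.9 of \cite{GeneralizedCentralSetsTheorem} for the centrality transfer, you supply the proof: take a minimal idempotent $q'\le q$ in $(\beta\mathbb{Z},+)$, show $\mathbb{N}\in q'$ via $q'=q'+q$ and $-n+\mathbb{N}\in q$ for every $n\in\mathbb{Z}$, and conclude $q'=q$ by minimality of $q$ in $(\beta\mathbb{N},+)$; this is a clean, self-contained argument and does work.

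The one place you genuinely diverge is part (iii). The paper deduces (iii) as a quick corollary of (ii): assuming $(s\mathbb{N})^c\in p$, apply (ii) with $h=s$, $\ell=1$ to produce $sd\in(s\mathbb{N})^c$, a contradiction. You instead argue that $s\mathbb{N}$ is an ideal of $(\mathbb{N},\cdot)$, so $\widehat{s\mathbb{N}}$ is a (closed) two-sided ideal of $(\beta\mathbb{N},\cdot)$ containing $K(\beta\mathbb{N},\cdot)$, hence $s\mathbb{N}$ belongs to every minimal idempotent of $(\beta\mathbb{N},\cdot)$; this is incompatible with $(s\mathbb{N})^c$ being multiplicatively central. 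Both arguments are correct and short. Yours is self-contained within the multiplicative structure and, notably, is precisely the argument the paper uses in its general-integral-domain version (Lemma~\ref{rRIdeal} proving that $H=\{p:rR\in p\ \forall r\}$ is an ideal of $(\beta R\setminus\{0\},\cdot)$), so it generalizes cleanly. The paper's route is slightly more economical in the $\mathbb{N}$ case by reusing property (ii), but it cannot be replicated over a general domain $R$ where (ii) takes a different form.
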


\begin{proof}
     By Lemma \ref{SpecialUltrafilter3} let $p \in \beta\mathbb{N}$ be an ultrafilter such that every $A \in p$ is a central subset of $(\mathbb{N},+)$ and a central subset of $(\mathbb{N},\cdot)$. To see that $p$ satisfies (i), it suffices to observe that each $A \in p$ is a central subset of $(\mathbb{N},\cdot)$, so Theorem \ref{GeneralDeuber}(i) applies. To see that $p$ satisfies (ii) we first note that any central subset of $(\mathbb{N},+)$ is also a central subset of $(\mathbb{Z},+)$ by \cite[Theorem 2.9]{GeneralizedCentralSetsTheorem}, so we may apply Theorem \ref{GeneralDeuber}(ii) with the homomorphism $c$ given by $c(x) = hx$ for all $x \in \mathbb{Z}$ to find some $a,d \in \mathbb{Z}$ for which $\{hd\}\cup\{ha+id\}_{i = -\ell}^{\ell} \subseteq A$. Since $ha,hd \in A \subseteq \mathbb{N}$, we see that $a,d \in \mathbb{N}$. To see that $p$ satisfies $(iii)$, let $s \in \mathbb{N}$ be arbitrary and let us assume for the sake of contradiction that $s\mathbb{N} \notin p$. It follows that $(s\mathbb{N})^c \in p$, so by (ii) let $a,d \in \mathbb{N}$ be such that $\{sd\}\cup\{sa+id\}_{i = -1}^1 \subseteq (s\mathbb{N})^c$ to obtain the desired contradiction.
\end{proof}

\begin{remark}\label{DiscussionOfDomains}
In \cite{BergelsonSurvey} and \cite{SumsEqualProducts} the authors also used an ultrafilter $p$ for which every $A \in p$ is a central subset of $(\mathbb{N},+)$ and a central subset of $(\mathbb{N},\cdot)$ in order to show that the equation $x+y = wz$ is partition regular over $\mathbb{N}$, so it is unsurprising that we have managed to use such an ultrafilter to obtain our positive results over $\mathbb{N}$ and $\mathbb{Z}\setminus\{0\}$. Unfortunately, if $R$ is a general integral domain, then there may not exist an ultrafilter $p \in \beta R$ for which every $A \in p$ is a central subset of $(R,+)$ and a central subset of $(R\setminus\{0\},\cdot)$. Thankfully, we will see as a consequence of Theorem \ref{SpecialUltrafilter5} that we only need to work with central subsets of $(R\setminus\{0\},\cdot)$ in order to get our desired results for a general integral domain. In particular, the ultrafilter from Theorems \ref{SpecialUltrafilter6} and \ref{SpecialUltrafilter5} is just a minimal idempotent in $(\beta R\setminus\{0\},\cdot)$, so it is a corollary of Lemma \ref{PositiveResultForPowersInDomains} that any central subset of $(\mathbb{Z}\setminus\{0\},\cdot)$ (and consequently of $(\mathbb{N},\cdot)$) contains a solution to the equation $x+y = wz$. For the sake of completeness, we will still examine rings $R$ for which there exists an ultrafilter $p \in \beta R$ such that every $A \in p$ is a central subset of $(R,+)$ and a central subset of $(R\setminus\{0\},\cdot)$ after we prove Theorem \ref{SpecialUltrafilter5}. We remark that we do not require our rings to have a multiplicative unit. 

We would also like to point out to the reader that we work with central subsets of $(R\setminus\{0\},\cdot)$ instead of central subsets of $(R,\cdot)$ because $K(\beta R,\cdot) = \{0\}$, so $\{0\}$ is the only central subset of $(R,\cdot)$. For central subsets of $(R\setminus\{0\},\cdot)$ to be defined, we need $(R\setminus\{0\},\cdot)$ to be a semigroup, which is why we will only work with division rings and integral domains for the rest of this section. We also observe that the natural inclusion map $\iota:\beta(R\setminus\{0\})\rightarrow (\beta R)\setminus\{0\}$ given by 

\begin{equation}
    \iota(p) = \{A \subseteq R\ |\ A\setminus\{0\} \in p\} = p\cup\{A\cup\{0\}\ |\ A \in p\}
\end{equation}
is a homeomorphism. Furthermore, we see that for any $p_1,p_2 \in \beta(R\setminus\{0\})$ we have that $\iota(p_1\cdot p_2) = \iota(p_1)\cdot\iota(p_2)$, so $\iota$ is a semigroup isomorphism as well. Since $(\beta R)\setminus\{0\}$ and $\beta(R\setminus\{0\})$ are naturally isomorphic as compact right topological semigroups, we will write $\beta R\setminus\{0\}$ for $(\beta R)\setminus\{0\}$ without any worry for the potential confusion with $\beta(R\setminus\{0\})$.
\end{remark}

\begin{definition}
Let $R$ be a division ring. A family $\mathcal{F} \subseteq \mathcal{P}(R)$ is \textit{partition regular} if for every finite partition $R = \bigcup_{i = 1}^rC_i$, there exists $1 \le i_0 \le r$ and $F \in \mathcal{F}$ for which $F \subseteq C_{i_0}$. $\mathcal{F}$ is \textit{multiplicatively shift invariant} if for every $F \in \mathcal{F}$ and $r \in R\setminus\{0\}$ we have $rF \in \mathcal{F}$.
\end{definition}

\begin{lemma}\label{RadoIdealSetUp}
Let $R$ be an infinite integral domain and let $\mathcal{F} \subseteq \mathcal{P}_f(R\setminus\{0\})$ be a partition regular family that is also multiplicatively shift invariant. Let $I_{\mathcal{F}} \subseteq \beta R\setminus\{0\}$ denote the collection of ultrafilters $p$ such that for every $A \in p$ there exists $F \in \mathcal{F}$ for which $F \subseteq A$. $I_{\mathcal{F}}$ is an ideal of $(\beta R\setminus\{0\},\cdot)$. 
\end{lemma}

\begin{proof}
First let us show that $I_{\mathcal{F}}$ is nonempty. To this end, let us assume for the sake of contradiction that for each $p \in \beta R\setminus\{0\}$ there exists $A_p \in p$ such that there is no $F \in \mathcal{F}$ with $F \subseteq A$. Since $\big\{\widehat{A_p}\big\}_{p \in \beta R\setminus\{0\}}$ is an open cover of the compact space $\beta R\setminus\{0\}$, let $\big\{\widehat{A_{p_i}}\big\}_{i = 1}^r$ be a finite subcover. The desired contradiction follows from the observation that $R\setminus\{0\} = \bigcup_{i = 1}^rA_{p_i}$ is a partition in which no cell contains a member of $\mathcal{F}$. Now let us show that $I_{\mathcal{F}}$ is a left ideal. To this end, let $p \in I_{\mathcal{F}}$ and $q \in \beta R\setminus\{0\}$ both be arbitrary. We see that for $A \in q\cdot p$ we have

\begin{equation}
    \left\{r \in R\setminus\{0\}\ |\ r^{-1}A \in p\right\} \in q,
\end{equation}
so let $r \in R\setminus\{0\}$ be such that $r^{-1}A \in p$. Since $r^{-1}A \in p$, there is some $F \in \mathcal{F}$ for which $F \subseteq r^{-1}A$, so $rF \subseteq A$. Since $rF \in \mathcal{F}$ and $A$ was arbitrary, we see that $q\cdot p \in I_{\mathcal{F}}$, so $I_{\mathcal{F}}$ is a left ideal. Now let us show that $I_{\mathcal{F}}$ is a right ideal. To this end, let $p \in I_{\mathcal{F}}$ and $q \in \beta R\setminus\{0\}$ both be arbitrary.  We see that for $A \in p\cdot q$ we have

\begin{equation}
    \left\{r \in R\setminus\{0\}\ |\ r^{-1}A \in q\right\} \in p,
\end{equation}
so let $F = \{x_1,\cdots,x_k\} \in \mathcal{F}$ be such that $F \subseteq \left\{r \in R\setminus\{0\}\ |\ r^{-1}A \in q\right\}$. Since $\bigcap_{i = 1}^kx_i^{-1}A \in q$, let $y \in \bigcap_{i = 1}^kx_i^{-1}A$ be arbitrary and note that $yF \subseteq A$. Since $yF \in \mathcal{F}$ and $A$ was arbitrary, we see that $p\cdot q \in I_{\mathcal{F}}$, so $I_{\mathcal{F}}$ is also a right ideal.
\end{proof}

\begin{remark}
An example of a partition regular $\mathcal{F} \subseteq \mathcal{P}_f(R)$ that is also multiplicatively shift invariant is given by $\mathcal{F}_1 = \{\{a,b,a+b\}\}_{a,b \in R}$. In general, if ${\bf M} \in M_{m,n}(R)$ is a matrix for which the equation ${\bf M}\vec{x} = \vec{0}$ is partition regular, then we obtain another such example by taking $\mathcal{F}_{\bf M}$, the family of finite sets each of which contains a solution to the equation ${\bf M}\vec{x} = \vec{0}$.
\end{remark}

\begin{theorem}[\cite{RadoForRingsAndModules}]
Let $R$ be an infinite integral domain and let ${\bf M}$ be a matrix with entries in $R$. Then the system ${\bf M}\vec{x} = \vec{0}$ is partition regular over $R\setminus\{0\}$ if and only if ${\bf M}$ satisfies the columns condition (cf. Definition \ref{ColumnsCondition}).\footnote{In \cite{RadoForRingsAndModules} the statements of the results discuss partition regularity over $R$, not $R\setminus\{0\}$, but the definition of partition regularity that is used in \cite{RadoForRingsAndModules} explicitly forbids trivial solutions, which is why we may modify their statement to mirror our previous statements.}
\end{theorem}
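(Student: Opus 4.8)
The plan is to establish the two implications separately. Throughout write $K$ for the fraction field of $R$, and observe first that the columns condition of Definition~\ref{ColumnsCondition} is insensitive to enlarging the ground ring: a $K$-linear dependence among vectors all of whose entries lie in a subfield $F$ is automatically an $F$-linear dependence, so $\mathbf{M}$ satisfies the columns condition over $R$ if and only if it does over $K$, if and only if it does over any subfield of $K$ containing the (finitely many) entries of $\mathbf{M}$. I would treat the forward implication (partition regularity $\Rightarrow$ columns condition) last, as it is the deeper one.

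For the direction ``columns condition $\Rightarrow$ partition regular over $R\setminus\{0\}$'' the plan is to use the algebra of $\beta R$ developed in this appendix; this is the ``Rado for modules'' mechanism. Given a finite coloring $R\setminus\{0\}=\bigcup_{i=1}^r C_i$, adjoin a fresh color for $0$ to obtain a finite partition of the infinite commutative group $(R,+)$. Since $(\beta R,+)$ is a compact right topological semigroup it has minimal idempotents (cf.\ the structure theory quoted above and Lemma~\ref{MinimalIdempotentsLemma}), so one block of the partition belongs to a minimal idempotent, i.e.\ is a central subset of $(R,+)$; this block is not $\{0\}$, because the identity $0$ of the infinite group $(R,+)$ is not a minimal idempotent (if $0$ belonged to the smallest ideal of $(\beta R,+)$ then, $0$ being a two-sided identity and that ideal being in particular a left ideal, it would equal all of $\beta R$, which is false). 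Thus some $C_{i_0}$ is central in $(R,+)$, and it remains to show that every central subset of $(R,+)$ contains a solution of $\mathbf{M}\vec{x}=\vec{0}$ with all coordinates in $R\setminus\{0\}$. For this one invokes Rado's linear-algebraic reduction: the columns condition exhibits the general $K$-solution of $\mathbf{M}\vec{x}=\vec{0}$ in a ``parametric'' Deuber $(m,p,c)$-shape in free parameters (the module structure of $R$ over its prime ring entering through the coefficients), and an iterated application of the Central Sets Theorem (Theorem~\ref{GeneralDeuber} and the related results of \cite{PolynomialMPC}) locates such a configuration inside the central set, with the resulting coordinates arranged to be nonzero. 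In characteristic $p$ one must be mildly careful, since additive progressions have bounded length and may pass through $0$; but the columns condition over $K$ in characteristic $p$ already encodes the $\FF_p$-linear relations among the columns, so the parametric solution involves only coefficients to which the Central Sets Theorem applies, and the argument goes through.

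For the direction ``partition regular $\Rightarrow$ columns condition'' I would argue by contraposition, building from a failure of the columns condition a finite coloring of $R\setminus\{0\}$ with no monochromatic solution, in imitation of Rado's base-$p$ coloring. Suppose first that $K$ carries a valuation $v$ whose value group embeds in some $\ZZ^s$ and whose residue field $k$ is finite, chosen so that $v$ is trivial on the finitely generated subgroup of $K^\times$ generated by all entries of $\mathbf{M}$ together with all of their nonzero partial column-sums $\sum_{j\in J}\mathbf{M}_{ij}$ (only finitely many elements). Fixing a section of $v$, color $x\in K^\times$ by the pair consisting of $v(x)$ modulo a suitable finite-index subgroup of the value group and the leading coefficient of $x$ with respect to $v$, an element of $k^\times$; restricting to $R\setminus\{0\}$ this is a coloring by finitely many colors. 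If $\mathbf{M}\vec{x}=\vec{0}$ had a monochromatic solution, then reading off the $v$-adic scales from the lowest upward --- the ``mod a finite-index subgroup'' component forcing the active scales to be well separated, and the leading-coefficient component forcing the leading terms to agree --- one recovers level by level a partition of the columns witnessing the columns condition, a contradiction.

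The hard part is that not every infinite integral domain admits such a valuation on its fraction field: this fails for $\overline{\QQ}$, for $\overline{\FF}_p$, and for $\RR$. To deal with this I would pass to a finitely generated subfield $F\subseteq K$ containing the entries of $\mathbf{M}$; such an $F$ is a number field, a global function field, or finitely generated of positive transcendence degree, so its affine model has Krull dimension at least $1$ over the prime ring and does admit a valuation as above, furnishing a finite coloring of $F^\times$ with no monochromatic solution. Promoting this to a coloring of all of $R\setminus\{0\}$ --- note that ``not partition regular over $F^\times$'' does not formally yield ``not partition regular over $R\setminus\{0\}$'' --- is where essentially all the difficulty lies: when $F$ is finite (which forces positive characteristic) the infinite ambient domain $R$ supplies either a non-unit, giving a nontrivial valuation, or an element of infinite multiplicative order; and in general one combines the coloring on $F^\times$ with an auxiliary coloring of $K^\times$ that is compatible with the scaling action relating the solutions of $\mathbf{M}\vec{x}=\vec{0}$. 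Carrying this out uniformly in all characteristics and for all field types is precisely the content of \cite{RadoForRingsAndModules}, which we invoke here.
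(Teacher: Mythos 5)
The paper does not prove this theorem --- it imports it directly as Theorem A of \cite{RadoForRingsAndModules}, the footnote serving only to reconcile that reference's definition of partition regularity (which excludes trivial solutions) with the present convention of working over $R\setminus\{0\}$. Your sketch is a reasonable reconstruction of the strategy behind the cited proof --- central sets and a Deuber-style reduction for the sufficiency of the columns condition, valuation-based colorings for its necessity --- and you correctly defer the genuinely hard points (carrying the Deuber argument through every characteristic while keeping all coordinates nonzero; manufacturing a coloring of $R\setminus\{0\}$ rather than merely of a small subfield when $K$ admits no convenient discrete valuation with finite residue field) to exactly that same reference, so your proposal is consistent with the paper's treatment and there is no internal argument in the paper against which to check the details. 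The one caution worth recording is that the claim that the positive-characteristic Deuber step ``goes through'' is asserted rather than argued, and the passage from a coloring of $F^\times$ to one of $R\setminus\{0\}$ is left entirely to the citation; since the paper supplies none of this either, this does not constitute a divergence from the paper.
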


\begin{lemma}\label{rRIdeal}
Let $R$ be an infinite integral domain. Let $H \subseteq \beta R\setminus\{0\}$ denote the collection of ultrafilters $p$ such that for every $r \in R\setminus\{0\}$ we have $rR \in p$. Then $H$ is an ideal of $(\beta R\setminus\{0\},\cdot)$.
\end{lemma}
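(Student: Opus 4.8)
The plan is to work inside the multiplicative semigroup $(R\setminus\{0\},\cdot)$, using the identification $(\beta R\setminus\{0\},\cdot)\cong(\beta(R\setminus\{0\}),\cdot)$ recorded in Remark~\ref{DiscussionOfDomains}. For $p\in\beta R\setminus\{0\}$ we have $R\setminus\{0\}\in p$, and since $R$ is an integral domain, $rR\cap(R\setminus\{0\})=r(R\setminus\{0\}):=\{rs:s\in R\setminus\{0\}\}$ for each $r\in R\setminus\{0\}$; hence $rR\in p$ if and only if $r(R\setminus\{0\})\in p$. Writing $\widehat{A}=\{p:A\in p\}$, we therefore have $H=\bigcap_{r\in R\setminus\{0\}}\widehat{r(R\setminus\{0\})}$, and since a nonempty intersection of ideals is again an ideal, it suffices to prove that $H$ is nonempty and that each $\widehat{r(R\setminus\{0\})}$ is an ideal of $(\beta R\setminus\{0\},\cdot)$.

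The first observation is that $r(R\setminus\{0\})$ is an ideal of the (commutative) semigroup $(R\setminus\{0\},\cdot)$: for $t\in R\setminus\{0\}$ and $s\in R\setminus\{0\}$ we have $t\cdot(rs)=r(ts)\in r(R\setminus\{0\})$ because $ts\neq 0$. Then, arguing exactly as in the proof of Lemma~\ref{RadoIdealSetUp} and using the formula for the extended operation from Theorem~\ref{Semigroup}, I would check that $\widehat{r(R\setminus\{0\})}$ is a two-sided ideal. For the left ideal property: given $p$ with $r(R\setminus\{0\})\in p$ and arbitrary $q$, one has $r(R\setminus\{0\})\subseteq s^{-1}\bigl(r(R\setminus\{0\})\bigr)$ for every $s\in R\setminus\{0\}$ (since $r(R\setminus\{0\})$ is an ideal), so $\{s:s^{-1}(r(R\setminus\{0\}))\in p\}=R\setminus\{0\}\in q$, giving $r(R\setminus\{0\})\in q\cdot p$. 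For the right ideal property: with the same $p$ and arbitrary $q$, it suffices to see $r(R\setminus\{0\})\subseteq\{s:s^{-1}(r(R\setminus\{0\}))\in q\}$, and indeed if $s\in r(R\setminus\{0\})$ then $s^{-1}(r(R\setminus\{0\}))=R\setminus\{0\}\in q$ because $r(R\setminus\{0\})$ absorbs products on the right.

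Finally, for nonemptiness I would invoke the finite intersection property of $\{r(R\setminus\{0\}):r\in R\setminus\{0\}\}$: each member is a nonempty subset of $R\setminus\{0\}$, and given $r_1,\dots,r_n\in R\setminus\{0\}$ the product $r_1\cdots r_n$ is nonzero (as $R$ is a domain) and lies in every $r_i(R\setminus\{0\})$. Hence this family is contained in some ultrafilter $p\in\beta(R\setminus\{0\})\cong\beta R\setminus\{0\}$, and $p\in H$. (Alternatively, every $\widehat{r(R\setminus\{0\})}$ contains the smallest ideal $K(\beta R\setminus\{0\},\cdot)$, which is nonempty since $(\beta R\setminus\{0\},\cdot)$ is a compact right topological semigroup, so $K(\beta R\setminus\{0\},\cdot)\subseteq H$.)

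None of this is difficult; the only genuine point of care is the bookkeeping between $R$ and $R\setminus\{0\}$ so that $0$ never enters the picture, together with keeping the left/right roles straight when applying Theorem~\ref{Semigroup}. The integral-domain hypothesis is used exactly where it must be — to guarantee $r(R\setminus\{0\})\subseteq R\setminus\{0\}$ and that a finite product of nonzero elements is nonzero — so these are the steps I would write out most explicitly.
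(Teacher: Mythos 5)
Your proof is correct and uses essentially the same set-inclusion argument as the paper's: for the left ideal property observe $s^{-1}(rR)\supseteq rR$ for all $s$, and for the right ideal property observe $s^{-1}(rR)\supseteq R\setminus\{0\}$ whenever $s\in rR$; your rewriting with $r(R\setminus\{0\})$ and the intersection $H=\bigcap_r\widehat{r(R\setminus\{0\})}$ is only a cosmetic repackaging. The one substantive addition is that you also verify $H\neq\emptyset$, which the paper's proof of this lemma omits but implicitly relies on later (when it concludes $K(\beta R\setminus\{0\},\cdot)\subseteq I\cap H$), so that extra step is a small but genuine improvement.
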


\begin{proof}
Let $p \in H, q \in \beta R\setminus\{0\},$ and $r \in R\setminus\{0\}$ all be arbitrary. Let us first show that $H$ is a left ideal. We note that for any $s \in R$, we have $s^{-1}(rR) \supseteq rR$. Since $rR \in p$, we see that
\begin{equation}
    R \subseteq \left\{s \in R\setminus\{0\}\ |\ s^{-1}(rR) \in p\right\},\text{ hence}\left\{s \in R\setminus\{0\}\ |\ s^{-1}(rR) \in p\right\} \in q.
\end{equation}
It follows that $rR \in q\cdot p$, so $q\cdot p \in H$ and $H$ is indeed a left ideal. Now let us show that $H$ is a right ideal. We note that for any $s \in rR$ we have $s^{-1}(rR) \supseteq R$. Since $R \in q$, we see that
\begin{equation}
    rR \subseteq \left\{s \in R\setminus\{0\}\ |\ s^{-1}(rR) \in q\right\},\text{ hence}\left\{s \in R\setminus\{0\}\ |\ s^{-1}(rR) \in q\right\} \in p.
\end{equation}
It follows that $rR \in p\cdot q$, so $p\cdot q \in H$.
\end{proof}

\begin{theorem}[cf. Theorem \ref{SpecialUltrafilter6}]\label{SpecialUltrafilter5}
Let $R$ be an infinite integral domain. If $p \in K(\beta R\setminus\{0\},\cdot)$, then $p$ satisfies the following properties.
\begin{enumerate}[(i)]
   \item For any $A \in p$ and any partition regular family $\mathcal{F} \subseteq \mathcal{P}_f(R)$ that is also multiplicatively shift invariant, there exists $F \in \mathcal{F}$ with $F \subseteq A$.
    
    \item For every $r \in R\setminus\{0\}$, we have $rR \in p$.

    \item For any $A \in p, \ell \in \mathbb{N}$, and $h,j_1,j_2,\cdots,j_{\ell} \in R\setminus\{0\}$, there exists $a,d \in R$ for which $\{hd,ha\}\cup\{ha+j_id\}_{i = 0}^{\ell} \subseteq A$.

    \item Suppose that $p\cdot p = p$, i.e., $p \in E(K(\beta R\setminus\{0\},\cdot))$. For any $A \in p$ and $\ell \in \mathbb{N}$, there exists $b,g \in A$ with $\left\{bg^j\right\}_{j = 0}^{\ell} \subseteq A$.
\end{enumerate}
\end{theorem}

\begin{proof}
  Let $\mathscr{F} = \{\mathcal{F} \subseteq \mathcal{P}_f(R)\ |\ \mathcal{F}\text{ is partition regular and multiplicatively shift }\allowbreak\text{invariant}\}$, let $I_{\mathcal{F}}$ be as in Lemma \ref{RadoIdealSetUp}, and let $I = \bigcap_{\mathcal{F} \in \mathscr{F}}I_{\mathcal{F}}$. Since $I$ and $H$ (from Lemma \ref{rRIdeal}) are ideals of $(\beta R\setminus\{0\},\cdot)$, $I\cap H$ is also an ideal. Hence $K(\beta R\setminus\{0\},\cdot) \subseteq I\cap H$. Since $p \in I$, we see that condition (i) is satisfied. Since $p \in H$, we see that condition (ii) is satisfied. The fact that $p$ satisfies (iii) is a corollary of the fact that $p$ satisfies (i). We give a proof of this implication for the sake of completeness, since property (iii) is used in the earlier sections of this paper.

  To see that $p$ satisfies condition (iii), we first consider the system of equations

\begin{equation}\label{SystemForBrauer}
    \begin{array}{lcrcrcl}
         hx_3 & - & hx_2 & - & j_1x_1 & = & 0\\
         hx_4 & - & hx_2 & - & j_2x_1 & = & 0 \\
         & & & \vdots & & & \\
         hx_{\ell+2} & - & hx_2 & - & j_{\ell}x_1 & = & 0\\
         hx_{\ell+3} & - & x_2 & - & hx_{\ell+5} & = & 0\\
         hx_{\ell+4} & - & hx_2 & - & x_1 & = & 0.
    \end{array}
\end{equation}
Let us assume that $\{x_i\}_{i = 1}^{\ell+5} \subseteq R\setminus\{0\}$ is a solution to the system of equations in \eqref{SystemForBrauer}. Since $x_1 = h(x_{\ell+4}-x_2)$ and $x_2 = h(x_{\ell+3}-x_{\ell+5})$, we may write $x_1 = hd$ and $x_2 = ha$ for some $a,d \in R\setminus\{0\}$. It follows that $x_i = ha+j_id$ for $3 \le i \le \ell+2$, so it suffices to show that each $A \in p$ contains a solution to the system of equations in \eqref{SystemForBrauer}. To this end, we observe that $\mathcal{F} := \left\{\{x_i\}_{i = 1}^{\ell+5}\ |\ x_1,\cdots,x_{\ell+5}\text{ satisfy equation }\eqref{SystemForBrauer}\right\}$ is multiplicatively shift invariant, so we only need to show that $\mathcal{F}$ is also partition regular in order to apply (i). Let ${\bf M} \in M_{\ell+2,\ell+5}(R)$ be the matrix such that the equation ${\bf M}\vec{x} = \vec{0}$ represents the system of equations in \eqref{SystemForBrauer}, and we will proceed to show that ${\bf M}$ satisfies the columns condition. Let $\left\{\vec{c}_i\right\}_{i = 1}^{\ell+5}$ denote the columns of ${\bf M}$, with $\vec{c}_i$ representing the column corresponding to $x_i$. We see that 

\begin{equation}
    \vec{c}_1 = \begin{pmatrix}-j_1 \\ -j_2 \\ \vdots \\ -j_{\ell} \\ 0 \\ -1\end{pmatrix}, \vec{c}_2 = \begin{pmatrix}-h \\ -h \\ {\small \vdots} \\ -h \\ -1 \\ -h\end{pmatrix}, \vec{c}_i = \begin{pmatrix}0 \\ \vdots \\ 0 \\ h \\ 0 \\ \vdots \\ 0\end{pmatrix}\text{ for }3 \le i \le \ell+4,\text{ and }\vec{c}_{\ell+5} = \begin{pmatrix} 0 \\ 0 \\ \vdots \\ 0 \\ -h \\ 0\end{pmatrix}
\end{equation}
with the $h$ in $\vec{c}_i$ occuring in row $i-2$ for 
$3 \le i \le \ell+4$. Consider the partition of column indices $\{C_1,C_2,C_3\}$ given by $C_1 = \{\ell+3,\ell+5\}, C_2 = \{2,3,\cdots,\ell+2,\ell+4\},$ and $C_3 = \{1\}$. We see that

\begin{equation}
    \begin{array}{ccl}
        \vec{s}_1 & = & \vec{c}_{\ell+3}+\vec{c}_{\ell+5} = \vec{0},\\
        \vec{s}_2 & = & \vec{c}_{\ell+4}+\sum_{i = 2}^{\ell+2}\vec{c}_i = \frac{1}{h}\vec{c}_{\ell+5},\text{ and }\\
        \vec{s}_3 & = & \vec{c}_1 = \left(\sum_{i = 3}^{\ell+2}-\frac{j_{i-2}}{h}\vec{c}_i\right)-\frac{1}{h}\vec{c}_{\ell+4},
    \end{array}
\end{equation}
so ${\bf M}$ does indeed satisfy the columns condition.

  To see that $p$ satisfies (iv), it suffices to observe that each $A \in p$ is a central subset of $(R\setminus\{0\},\cdot)$, so the desired result follows from Theorem \ref{GeneralDeuber}(i).
\end{proof}

To conclude this section we will show that an integral domain $R$ possesses an ultrafilter $p \in \beta R$ such that every $A \in p$ is a central subset of $(R,+)$ and a central subset of $(R,\cdot)$ if and only if $R$ is homomorphically finite. 

\begin{definition}
     Let $(S,\diamond)$ be a semigroup and let $A \subseteq S$. $A$ is {\it syndetic} if and only if there exists some $G \in \mathcal{P}_f(S)$ such that $S = \bigcup_{t \in G}t^{-1}A$.
\end{definition}

We observe that if $(S,\diamond)$ is a group and $H \subseteq S$ is a subgroup, then $H$ is a syndetic subset of $S$ if and only if $[S:H] < \infty$.

\begin{definition}\label{HomFinDef}
A ring $R$ is a {\it  right (left) homomorphically finite} if for every $r \in R\setminus\{0\}$ the right (left) ideal $rR$ ($Rr$) is a finite index subgroup of $(R,+)$. $R$ is a {\it homomorphically finite} if for every $r \in R\setminus\{0\}$ the two-sided ideal generated by $r$ is a finite index subgroup of $(R,+)$.
\end{definition}

\begin{theorem}[{\cite[Corollary 1.3.3]{SkewFields}}]\label{OreCondition}
Let $R$ be a ring with no zero divisors\footnote{The reader is warned that in \cite{SkewFields} an integral domain is a not necessarily commutative ring with no zero divisors. Similarly, in \cite{SkewFields} a field refers to a not necessarily commutative division ring.} such that

\begin{equation}
    aR\cap bR \neq \{0\}\ \text{for all }\ a,b \in R\setminus\{0\}.
\end{equation}
Then the localization of $R$ at $R\setminus\{0\}$ is a division ring $D$ and the natural homomorphism $\lambda:R\rightarrow D$ is an embedding.
\end{theorem}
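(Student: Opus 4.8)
The plan is to construct $D$ explicitly as the Ore ring of right fractions of $R$ with respect to the multiplicative set $S := R \setminus \{0\}$, and then to verify that this ring is a division ring into which $R$ embeds and which enjoys the universal property characterizing a localization. First I would observe that the hypothesis ``$aR \cap bR \neq \{0\}$ for all nonzero $a,b$'' is precisely the right Ore condition for the set $S$: since $R$ has no zero divisors, $S$ is multiplicatively closed and consists of non-zero-divisors, and for any $a \in R$ and $s \in S$ one obtains $b \in R$ and $t \in S$ with $at = sb$ (when $a \neq 0$ this is a nonzero element of $aR \cap sR$; when $a = 0$ it is trivial).

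Next I would carry out the standard Ore construction on $R \times S$, declaring $(a,s) \sim (b,t)$ iff there exist $u,v \in R$ with $au = bv$ and $su = tv \in S$; the class $\overline{(a,s)}$ is to be thought of as the formal right fraction $as^{-1}$. The key lemma, proved by induction from the Ore condition, is that any finite family of fractions can be brought over a common right denominator: given $s_1,s_2 \in S$ one finds $u_1,u_2$ with $s_1 u_1 = s_2 u_2 \in S$, whence $a_i s_i^{-1} = (a_i u_i)(s_i u_i)^{-1}$. Using this one checks that $\sim$ is an equivalence relation (transitivity being the step that genuinely invokes Ore), defines addition by bringing two fractions to a common denominator and adding numerators, and defines multiplication $as^{-1} \cdot bt^{-1}$ by first applying the Ore condition to $b$ and $s$ to write $bt_1 = sb_1 \in S$, hence $s^{-1}b = b_1 t_1^{-1}$, and declaring the product to be $(ab_1)(tt_1)^{-1}$, noting $tt_1 \in S$ since $R$ is a domain. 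Verifying that these operations are independent of the chosen representatives and satisfy the ring axioms is routine but lengthy.

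Then I would define $\lambda \colon R \to D$ by $\lambda(r) = \overline{(r,1)}$. This is a ring homomorphism by construction, and it is injective because $R$ is a domain: if $\overline{(r,1)} = \overline{(0,1)}$ then there are $u,v$ with $ru = 0$ and $u = v \in S$, so $u \neq 0$ forces $r = 0$. Every $s \in S$ becomes invertible in $D$, with $\overline{(s,1)}^{-1} = \overline{(1,s)}$, and more generally a nonzero element of $D$ has the form $\overline{(a,s)}$ with $a \neq 0$, whose inverse is $\overline{(s,a)}$; hence $D$ is a division ring. Finally, $D$ together with $\lambda$ satisfies the universal property of the localization $S^{-1}R$: any ring homomorphism $\varphi \colon R \to T$ carrying every element of $S$ to a unit factors uniquely as $\varphi = \tilde\varphi \circ \lambda$ via $\tilde\varphi(\overline{(a,s)}) = \varphi(a)\varphi(s)^{-1}$, so $D$ is indeed ``the localization of $R$ at $R \setminus \{0\}$.''

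The main obstacle is the bookkeeping in the second step: checking that $\sim$ is transitive and, above all, that addition and multiplication descend to well-defined operations on equivalence classes. Each verification requires repeatedly invoking the Ore condition to produce common right multiples and then comparing products in the possibly noncommutative ring $R$; there is no conceptual difficulty, but the diagram-chasing is delicate, which is exactly why the most economical route in the present paper is simply to cite Corollary 1.3.3 of \cite{SkewFields}, where the Ore localization theorem is established in full generality.
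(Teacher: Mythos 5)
The paper does not prove this theorem; it cites it as Corollary~1.3.3 of Cohn's \emph{Skew Fields}, so there is no in-paper proof to compare against. Your outline is the standard Ore localization argument and is essentially correct: you correctly translate the hypothesis $aR\cap bR\neq\{0\}$ into the right Ore condition for the multiplicative set $S=R\setminus\{0\}$, form right fractions modulo the usual equivalence, define addition by common right denominators and multiplication by commuting $s^{-1}$ past $b$ via Ore, and then verify that $\lambda(r)=\overline{(r,1)}$ is an injective ring homomorphism, that nonzero fractions $\overline{(a,s)}$ with $a\neq 0$ invert to $\overline{(s,a)}$, and that the universal property of $S^{-1}R$ holds. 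One small point of care: when defining $as^{-1}\cdot bt^{-1}$ you write ``$bt_1=sb_1\in S$,'' but if $b=0$ this common multiple is $0\notin S$; the case $b=0$ must be handled separately (the product is then $0$), after which the Ore condition applied to $b\neq 0$ does give a nonzero common multiple and $t_1\in S$ as you describe. You also correctly observe at the end that citing the result is the economical choice for the present paper, which matches what the authors actually do.
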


\begin{theorem}[{\cite[Theorem 5.8]{AlgebraInTheSCC}}]\label{IdempotentsHaveIPSets} Let $(S,\diamond)$ be a semigroup, let $p$ be an idempotent in $\beta S$, and let $A \in p$. There is a sequence $(x_n)_{n = 1}^{\infty}$ in $S$ such that $\left\{\prod_{n \in F}x_n\right\}_{F \in \mathcal{P}_f(\mathbb{N})} \allowbreak\subseteq A$.
\end{theorem}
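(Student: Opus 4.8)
\emph{Proof sketch (the Galvin--Glazer argument).} The plan is to run the classical argument that produces the sequence $(x_n)$ one term at a time while staying inside a shrinking family of members of $p$. Throughout, for a nonempty finite set $F$ of positive integers I write $\prod_{n\in F}x_n$ for the product taken in increasing order of the indices, and I recall from Theorem \ref{Semigroup} that for $q,r\in\beta S$ one has $B\in q\diamond r$ iff $\{s\in S\ |\ s^{-1}B\in r\}\in q$; applied to the idempotent $p=p\diamond p$ this says $B\in p$ iff $\{s\ |\ s^{-1}B\in p\}\in p$. I will also use the identity $(s\diamond t)^{-1}B=t^{-1}(s^{-1}B)$, which holds in any semigroup by associativity.

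First I replace $A$ by
\[
A^{\star}=\{s\in A\ |\ s^{-1}A\in p\}.
\]
Since $A\in p$ and $\{s\ |\ s^{-1}A\in p\}\in p$ by the displayed reformulation of idempotence, we get $A^{\star}\in p$; in particular $A^{\star}\neq\varnothing$. The crucial point is that $A^{\star}$ is ``almost idempotent'': for every $s\in A^{\star}$ one has $s^{-1}(A^{\star})\in p$. Indeed, unwinding the definition and using the identity above,
\[
s^{-1}(A^{\star})=\{t\ |\ s\diamond t\in A\ \text{and}\ (s\diamond t)^{-1}A\in p\}=(s^{-1}A)\cap\{t\ |\ t^{-1}(s^{-1}A)\in p\},
\]
and both sets on the right lie in $p$: the first because $s\in A^{\star}$, the second because $s^{-1}A\in p$ and $p$ is idempotent.

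Now I build $(x_n)$ by induction so that at stage $n$ every product $\prod_{i\in F}x_i$ with $\varnothing\neq F\subseteq\{1,\dots,n\}$ lies in $A^{\star}$. Pick $x_1\in A^{\star}$ arbitrarily. Given $x_1,\dots,x_n$, each partial product $y_F:=\prod_{i\in F}x_i$ ($\varnothing\neq F\subseteq\{1,\dots,n\}$) lies in $A^{\star}$, so $y_F^{-1}(A^{\star})\in p$ by the ``almost idempotent'' property; hence
\[
B_n:=A^{\star}\cap\bigcap_{\varnothing\neq F\subseteq\{1,\dots,n\}}y_F^{-1}(A^{\star})\in p,
\]
being a finite intersection of members of $p$, so $B_n\neq\varnothing$ and I may choose $x_{n+1}\in B_n$. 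Any ordered product with indices in $\{1,\dots,n+1\}$ either omits $n+1$, in which case it lies in $A^{\star}$ by the inductive hypothesis, or has the form $y_F\diamond x_{n+1}$ with $F\subseteq\{1,\dots,n\}$: if $F=\varnothing$ it equals $x_{n+1}\in A^{\star}$, and if $F\neq\varnothing$ then $x_{n+1}\in y_F^{-1}(A^{\star})$ forces $y_F\diamond x_{n+1}\in A^{\star}$. This closes the induction, and the resulting sequence satisfies $\{\prod_{n\in F}x_n\ |\ F\in\mathcal{P}_f(\mathbb{N})\}\subseteq A^{\star}\subseteq A$, as required.

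The only delicate step is the ``almost idempotent'' property of $A^{\star}$: it is exactly what makes the greedy construction possible, and it relies on correctly composing the description of the operation on $\beta S$ from Theorem \ref{Semigroup} with the associativity identity $(s\diamond t)^{-1}B=t^{-1}(s^{-1}B)$. Everything else is routine bookkeeping about finite products taken in index order.
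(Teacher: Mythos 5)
Your proof is correct and is exactly the Galvin--Glazer ``star set'' argument that underlies Theorem 5.8 of Hindman and Strauss (\cite{AlgebraInTheSCC}), which the paper cites without reproving; there is nothing to fault. The reduction to $A^{\star}$, the verification that $s^{-1}(A^{\star})\in p$ via the identity $(s\diamond t)^{-1}B=t^{-1}(s^{-1}B)$, and the greedy induction on finite intersections are precisely the standard steps.
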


\begin{theorem}\label{FiniteIndexIsIP*}
Let $(G,\diamond)$ be a group and let $H$ be a finite index subgroup of $G$. If $p \in \beta G$ is an idempotent, then $H \in p$.
\end{theorem}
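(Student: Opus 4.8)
The plan is to prove the statement by contradiction, extracting the required combinatorics from Theorem~\ref{IdempotentsHaveIPSets}. Suppose $H \notin p$. Since $p$ is an ultrafilter, this forces $G \setminus H \in p$. Applying Theorem~\ref{IdempotentsHaveIPSets} to the semigroup $(G,\diamond)$ with $A = G \setminus H$, I obtain a sequence $(x_n)_{n=1}^\infty$ in $G$ such that every finite product $\prod_{n \in F} x_n$ (taken in increasing order of indices, over nonempty finite $F \subseteq \mathbb{N}$) lies in $G \setminus H$.

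Next I would invoke the finite-index hypothesis via pigeonhole on partial products. Write $n = [G:H]$ and consider the $n+1$ elements $P_k := x_1 \diamond x_2 \diamond \cdots \diamond x_k$ for $1 \le k \le n+1$; each $P_k$ is the finite product over $F = \{1,\dots,k\}$, so $P_k \in G \setminus H$. Since $H$ has exactly $n$ left cosets in $G$, two of the $P_k$ lie in the same left coset, say $P_i H = P_j H$ with $1 \le i < j \le n+1$. Then $P_i^{-1} \diamond P_j \in H$; but $P_i^{-1} \diamond P_j = x_{i+1} \diamond x_{i+2} \diamond \cdots \diamond x_j$ is precisely the finite product over the nonempty set $\{i+1,\dots,j\}$, hence lies in $G \setminus H$. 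This contradiction shows $H \in p$.

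I do not expect a genuine obstacle: the content is just that a finite-index subgroup is an $\mathrm{IP}^*$ set, and such sets belong to every idempotent ultrafilter. The only points to get right are matching the increasing-order product convention supplied by Theorem~\ref{IdempotentsHaveIPSets} and using the correct pigeonhole bound $[G:H]+1$; note the argument is uniform, covering finite $G$ as well, where the only idempotent of $\beta G = G$ is the identity, which lies in $H$. An alternative worth mentioning but not pursuing is to replace $H$ by its normal core $H_G = \bigcap_{g\in G} gHg^{-1}$, a finite-index normal subgroup contained in $H$, and then push $p$ forward along the quotient $G \to G/H_G$ onto the finite group $G/H_G$, whose unique idempotent is its identity; this works but relies on the continuous extension of a semigroup homomorphism being a homomorphism, which is not recalled in the excerpt, whereas the argument above uses only Theorem~\ref{IdempotentsHaveIPSets}.
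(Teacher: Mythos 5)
Your proof is correct and follows essentially the same route as the paper's: assume $H^c \in p$, invoke Theorem~\ref{IdempotentsHaveIPSets} to get a sequence whose finite products lie in $H^c$, and pigeonhole the partial products $x_1 \diamond \cdots \diamond x_k$ into the cosets of $H$ to force some $x_{i+1}\diamond\cdots\diamond x_j \in H$. The only cosmetic difference is that the paper pigeonholes $M = [G:H]$ partial products into the $M-1$ cosets comprising $H^c$, while you use $M+1$ partial products and all $M$ cosets; both are valid.
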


\begin{proof}
Let $M = [G:H]$ and let us assume for the sake of contradiction that $H^c \in p$. By Theorem \ref{IdempotentsHaveIPSets} let $(x_n)_{n = 1}^{\infty}$ be a sequence in $G$ such that $\left\{\prod_{n \in F}x_n\right\}_{F \in \mathcal{P}_f(\mathbb{N})} \subseteq H^c$. Since $H^c$ is a disjoint union of $M-1$ cosets of $H$, let $1 \le j < k \le M$ be such that $\left(\prod_{i = 1}^jx_i\right)H = \left(\prod_{i = 1}^kx_i\right)H$. It follows that $\left(\prod_{i = j+1}^kx_i\right)H = H$, and hence $\prod_{i = j+1}^kx_i \in H$, which yields the desired contradiction.
\end{proof}

\begin{lemma}\label{MultiplicationPreservesMinimalIdempotents}
If $R$ is an infinite right (respectively left) homomorphically finite ring that has no zero divisors and $p \in E(K(\beta R,+))$, then for any $r \in R\setminus\{0\}$ we have $r\cdot p \in E(K(\beta R,+))$ (respectively $p\cdot r \in E(K(\beta R,+))$).
\end{lemma}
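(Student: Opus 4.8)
The plan is to realize the operation $p\mapsto r\cdot p$ as the continuous extension of a concrete self-map of $R$, and then to show that this extension preserves minimal idempotents of $(\beta R,+)$. Concretely, let $\lambda_r\colon R\to R$ be left multiplication, $\lambda_r(x)=rx$, and let $\widetilde{\lambda_r}\colon\beta R\to\beta R$ be its continuous extension (Theorem \ref{UniversalProperty}). By Theorem \ref{ContinuousExtension} the map $q\mapsto r\cdot q$ is continuous on $\beta R$ and agrees with $\lambda_r$ on $R$, so by uniqueness of continuous extensions $r\cdot q=\widetilde{\lambda_r}(q)$ for every $q\in\beta R$; in particular $r\cdot p=\widetilde{\lambda_r}(p)$. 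Thus it suffices to prove that $\widetilde{\lambda_r}$ maps $E(K(\beta R,+))$ into itself.

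First I would record the properties of $\lambda_r$ and $\widetilde{\lambda_r}$ that drive the argument. Left distributivity, $r(x+y)=rx+ry$, says precisely that $\lambda_r$ is an endomorphism of the semigroup $(R,+)$, hence $\widetilde{\lambda_r}$ is an endomorphism of $(\beta R,+)$ (cf.\ \cite{AlgebraInTheSCC}, Corollary 4.22). Since $R$ has no zero divisors and $r\neq 0$, $\lambda_r$ is injective; choosing any retraction $\sigma\colon R\to R$ with $\sigma\circ\lambda_r=\mathrm{id}_R$ (e.g.\ $\sigma(rx)=x$ and $\sigma\equiv 0$ off $rR$) and passing to $\beta$ gives $\widetilde{\sigma}\circ\widetilde{\lambda_r}=\mathrm{id}_{\beta R}$, so $\widetilde{\lambda_r}$ is injective. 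Finally, the image $rR=\lambda_r(R)$ is a finite-index subgroup of $(R,+)$ because $R$ is right homomorphically finite (Definition \ref{HomFinDef}); since $\widetilde{\lambda_r}(\beta R)$ is compact, hence closed, and contains $rR$, it contains $\overline{rR}=\widehat{rR}$.

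For the core of the proof, take $p\in E(K(\beta R,+))$ and set $q_0:=r\cdot p=\widetilde{\lambda_r}(p)$, which is an idempotent of $(\beta R,+)$ because $\widetilde{\lambda_r}$ is a homomorphism. By Lemma \ref{MinimalIdempotentsLemma}(ii) there is a minimal idempotent $q\in E(K(\beta R,+))$ with $q\le q_0$. Since $rR$ is a finite-index subgroup of $(R,+)$ and $q$ is an idempotent of $(\beta R,+)$, Theorem \ref{FiniteIndexIsIP*} gives $rR\in q$, i.e.\ $q\in\widehat{rR}\subseteq\widetilde{\lambda_r}(\beta R)$, so $q=\widetilde{\lambda_r}(p')$ for some $p'\in\beta R$. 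Using that $\widetilde{\lambda_r}$ is an injective homomorphism, the identities $q+q=q$ and $q+q_0=q_0+q=q$ pull back to $p'+p'=p'$ and $p'+p=p+p'=p'$; that is, $p'$ is an idempotent of $(\beta R,+)$ with $p'\le p$. Since $p$ is a minimal idempotent (Lemma \ref{MinimalIdempotentsLemma}(i)), we get $p'=p$, whence $r\cdot p=q_0=\widetilde{\lambda_r}(p)=\widetilde{\lambda_r}(p')=q\in E(K(\beta R,+))$. The left homomorphically finite case is symmetric, using right multiplication $x\mapsto xr$, right distributivity, and the ideal $Rr$ in place of $rR$.

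The step I expect to require the most care is verifying that the minimal idempotent $q$ below $q_0$ actually lies in the range of $\widetilde{\lambda_r}$: this is exactly where both the finite-index hypothesis (via Theorem \ref{FiniteIndexIsIP*}, forcing $rR\in q$) and the description of $\widetilde{\lambda_r}(\beta R)$ as a closed set containing $\widehat{rR}$ are essential, and it is what lets one transport the minimality of $p$ back to $q_0$. A secondary point worth stating explicitly is that $\widetilde{\lambda_r}$ is injective — this does not follow formally from its being a continuous extension and must be obtained from the retraction argument above.
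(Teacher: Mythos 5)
Your proof is correct, and it takes a genuinely different route from the paper's. The paper first invokes the Ore condition (Theorem \ref{OreCondition}) to embed $R$ into a division ring $D$ --- verifying the Ore hypothesis precisely via the finite-index assumption, since $[R:rR\cap sR]\le[R:rR][R:sR]<\infty$ forces $rR\cap sR$ to be infinite --- and then works in $\beta D$, using the additive endomorphism $\widetilde{\ell_{r^{-1}}}$ to carry the minimal idempotent $q\le r\cdot p$ back to $r^{-1}\cdot q\le p$, observe $r^{-1}\cdot q\in\beta R$ because $rR\in q$, and conclude $r^{-1}\cdot q=p$ by minimality. You bypass the localization entirely: you establish injectivity of $\widetilde{\lambda_r}$ on $\beta R$ by extending a set-theoretic retraction $\sigma$ of $\lambda_r$ (using that $\widetilde{f\circ g}=\widetilde{f}\circ\widetilde{g}$), and you identify the range of $\widetilde{\lambda_r}$ as a closed set containing $\widehat{rR}$, so that $rR\in q$ already places $q$ in the image; minimality of $p$ then transfers to $q_0=\widetilde{\lambda_r}(p)$ via injectivity, exactly as in the paper's final step. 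Both arguments hinge on the same two facts --- $rR\in q$ for every additive idempotent $q$ (Theorem \ref{FiniteIndexIsIP*}) and the homomorphism property of $\widetilde{\lambda_r}$ --- but your version uses the finite-index hypothesis only once (for Theorem \ref{FiniteIndexIsIP*}) whereas the paper uses it a second time to check the Ore condition, and your version never leaves $\beta R$. The paper's route has the advantage of making the ``inverse'' of $\widetilde{\lambda_r}$ an honest ring operation $\ell_{r^{-1}}$ in $D$, which is conceptually transparent, while yours is more self-contained and a little more economical with hypotheses.
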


\begin{proof}
We only prove the desired result for $r\cdot p$ since the proof of the result for $p\cdot r$ is similar. Firstly, we would like to show that $R$ is a subring of a division ring $D$, so it suffices to show that $R$ satisfies the conditions of Theorem \ref{OreCondition}. Let $r,s \in R\setminus\{0\}$ be arbitrary and note that within the group $(R,+)$ we have $[R:rR\cap sR] \le [R:rR][R:sR] < \infty$. Since $R$ is infinite and has no zero divisors, we see that $|rR\cap sR| = \infty$, and hence $R$ embeds in some division ring $D$. 

Since $R$ is a ring, we see that for any $r \in R$ the map $\ell_r:R\rightarrow R$ given by $\ell_r(s) = rs$ is an endomorphism of the group $(R,+)$, and hence its unique continuous extension $\tilde{\ell_r}:\beta R\rightarrow \beta R$ is also an additive endomorphism by \cite[Lemma 2.14]{AlgebraInTheSCC}, so $r\cdot p = \ell_r(p)$ is an additive idempotent. It only remains to show that $r\cdot p$ is minimal. To this end, by Lemma \ref{MinimalIdempotentsLemma}(ii) let $q \in \beta R$ be a minimal idempotent for which $q \le r\cdot p$. We note that $\tilde{\ell_{r^{-1}}}:\beta D\rightarrow \beta D$ is also an additive endomorphism, and hence $r^{-1}\cdot q \le r^{-1}\cdot r\cdot p = p$. By Lemma \ref{FiniteIndexIsIP*} we see that $rR \in q$, so $R \in r^{-1}\cdot q$, and hence $r^{-1}\cdot q \in \beta R \subseteq \beta D$. Since $p \in \beta R$ is minimal and $r^{-1}\cdot q \le p$, we see that $r^{-1}\cdot q = p$, and hence $q = r\cdot p$.
\end{proof}

\begin{theorem}\label{ExistinceOfAdditiveAndMultiplicativeUltrafilters}
If $R$ is an infinite right homomorphically finite ring that has no zero divisors, then there exists an ultrafilter $p \in E(K(\beta(R)\setminus\{0\},\cdot))\cap \overline{E(K(\beta R,+))}$.
\end{theorem}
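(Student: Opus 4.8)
The plan is to produce the desired ultrafilter as an idempotent lying in a minimal left ideal of $(\beta R\setminus\{0\},\cdot)$ that happens to be contained in the closure of the additive minimal idempotents. Write $E_+:=E(K(\beta R,+))$ for the set of minimal idempotents of $(\beta R,+)$ and set $T:=\overline{E_+}$, the closure being taken in $\beta R$. First I would record two easy structural facts. The point $0$ (the principal ultrafilter at $0\in R$) is isolated in $\beta R$, since $\{0\}$ is a finite, hence clopen, subset of the discrete set $R$; and it is standard that the additive identity $0$ is \emph{not} a minimal idempotent of $(\beta R,+)$ (being the $\le$-maximal idempotent while $(\beta R,+)$ carries other idempotents, $(R,+)$ being infinite), so by Lemma \ref{MinimalIdempotentsLemma}(i) $0\notin E_+$ and therefore $0\notin\overline{E_+}=T$. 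Consequently $T\subseteq\beta R\setminus\{0\}$, and $(\beta R\setminus\{0\},\cdot)$ is a compact right topological semigroup (it is closed in $\beta R$ by isolation of $0$, closed under $\cdot$ since $R$ has no zero divisors, and the right translations $\rho_q$ restrict to it; cf. also the isomorphism $\iota$ of Remark \ref{DiscussionOfDomains}), so the structure theory cited in Section \ref{Appendix} applies to it.

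The crux is to prove that $T$ is a \emph{left ideal} of $(\beta R\setminus\{0\},\cdot)$. Lemma \ref{MultiplicationPreservesMinimalIdempotents} is exactly the required input: since $R$ is infinite, right homomorphically finite and has no zero divisors, $r\cdot e\in E_+$ for every $r\in R\setminus\{0\}$ and every $e\in E_+$, i.e. $\lambda_r(E_+)\subseteq E_+$, where $\lambda_r$ denotes left multiplication by $r$ in $(\beta R,\cdot)$. For $r\in R$ the map $\lambda_r\colon\beta R\to\beta R$ is continuous by Theorem \ref{ContinuousExtension}(c), so $\lambda_r(T)=\lambda_r(\overline{E_+})\subseteq\overline{\lambda_r(E_+)}\subseteq\overline{E_+}=T$; thus $(R\setminus\{0\})\cdot T\subseteq T$. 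Now let $q\in\beta R\setminus\{0\}$ be arbitrary and $q'\in T$. The right translation $\rho_{q'}\colon\beta R\setminus\{0\}\to\beta R\setminus\{0\}$ is continuous by Theorem \ref{ContinuousExtension}(b), the set $R\setminus\{0\}$ is dense in $\beta R\setminus\{0\}$, and we have just shown $\rho_{q'}(R\setminus\{0\})=\{rq'\;:\;r\in R\setminus\{0\}\}\subseteq T$; since $T$ is closed this gives $q\cdot q'=\rho_{q'}(q)\in\overline{\rho_{q'}(R\setminus\{0\})}\subseteq T$. Hence $(\beta R\setminus\{0\})\cdot T\subseteq T$, so $T$ is a left ideal.

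To finish: as a left ideal of the compact right topological semigroup $(\beta R\setminus\{0\},\cdot)$, $T$ contains a minimal left ideal $L$ (Corollary 2.6 of \cite{AlgebraInTheSCC}), and $L$ is closed and contains an idempotent $p$. Being a member of a minimal left ideal, $p$ lies in $K(\beta R\setminus\{0\},\cdot)=\bigcup\{L'\;:\;L'\text{ a minimal left ideal}\}$ (Theorem 1.51 of \cite{AlgebraInTheSCC}), so $p\in E(K(\beta R\setminus\{0\},\cdot))$; and $p\in L\subseteq T=\overline{E(K(\beta R,+))}$. Thus $p\in E(K(\beta R\setminus\{0\},\cdot))\cap\overline{E(K(\beta R,+))}$, which is the asserted ultrafilter.

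I expect the main subtlety to be purely bookkeeping about continuity: only the left translations $\lambda_s$ by \emph{principal} ultrafilters $s\in R$ are continuous, whereas the right translations $\rho_q$ are continuous for every $q$, so the argument that $T$ is a left ideal must be routed first through $R\setminus\{0\}$ (using $\lambda_r$ and Lemma \ref{MultiplicationPreservesMinimalIdempotents}) and then extended by density and the continuity of $\rho_{q'}$, rather than attempting to multiply two arbitrary elements of $T$ directly. The genuinely nontrivial, ring-theoretic content — that multiplication by a nonzero ring element carries minimal additive idempotents to minimal additive idempotents — has already been isolated as Lemma \ref{MultiplicationPreservesMinimalIdempotents}, so nothing further of that flavor is needed here.
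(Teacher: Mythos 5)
Your proof is correct and takes essentially the same route as the paper: both rest on Lemma~\ref{MultiplicationPreservesMinimalIdempotents}, then use continuity of the semigroup operations on $\beta R$ together with density of $R\setminus\{0\}$ to produce a left ideal of $(\beta R\setminus\{0\},\cdot)$ contained in $\overline{E(K(\beta R,+))}$, and finish by extracting a minimal idempotent from that left ideal. The only organizational difference is that you show the full closure $\overline{E(K(\beta R,+))}$ is itself a left ideal (first via $\lambda_r$ for $r\in R\setminus\{0\}$ and then via $\rho_{q'}$ for $q'$ in the closure), whereas the paper observes more directly that $(\beta R\setminus\{0\})\cdot E(K(\beta R,+))$ is a left ideal sitting inside that closure, passing from $R\setminus\{0\}$ to $\beta R\setminus\{0\}$ through the continuity of $\rho_e$ alone.
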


\begin{proof}
Using Lemma \ref{MultiplicationPreservesMinimalIdempotents} and the continuity of right multiplication we see that

\begin{alignat}{2}
    &(R\setminus\{0\})\cdot E(K(\beta R,+)) \subseteq E(K(\beta R,+)),\text{ hence}\numberthis\\
    &\overline{(R\setminus\{0\})}\cdot E(K(\beta R,+)) \subseteq \overline{E(K(\beta R,+))},\text{ so}\\
    & (\beta(R)\setminus\{0\})\cdot E(K(\beta R,+)) \subseteq \overline{E(K(\beta R,+))}.
\end{alignat}

Since $(\beta(R)\setminus\{0\})\cdot E(K(\beta R,+))$ is a left ideal of $(\beta R\setminus\{0\},\cdot)$, it contains a minimal idempotent, which is the desired $p$.
\end{proof}

\begin{corollary}\label{SpecialUltrafilter4}
Let $R$ be an infinite right homomorphically finite ring that has no zero divisors. There exists an ultrafilter $p \in \beta R$ such that every $A \in p$ is a central subset of $(R,+)$ and a central subset of $(R\setminus\{0\},\cdot)$.
\end{corollary}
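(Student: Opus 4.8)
The plan is to extract this statement directly from Theorem \ref{ExistinceOfAdditiveAndMultiplicativeUltrafilters}, which already carries out the substantive construction: it produces an ultrafilter that is simultaneously a minimal idempotent for $(\beta R\setminus\{0\},\cdot)$ and a limit of minimal idempotents for $(\beta R,+)$. So the first step is simply to apply that theorem to the ring $R$ (which is legitimate, since by hypothesis $R$ is infinite, right homomorphically finite, and has no zero divisors) to obtain an ultrafilter $p \in E(K(\beta R\setminus\{0\},\cdot)) \cap \overline{E(K(\beta R,+))}$. Everything else is unwinding the two membership conditions.

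For the multiplicative assertion there is essentially nothing to prove. By the definition of \emph{central}, a subset $A \subseteq R\setminus\{0\}$ is central in $(R\setminus\{0\},\cdot)$ precisely when $A$ belongs to some member of $E(K(\beta(R\setminus\{0\}),\cdot))$; under the semigroup isomorphism $\beta R\setminus\{0\} \cong \beta(R\setminus\{0\})$ recorded in Remark \ref{DiscussionOfDomains}, the hypothesis $p \in E(K(\beta R\setminus\{0\},\cdot))$ says that $p$ itself is such a witnessing idempotent. Since $p$ is distinct from the principal ultrafilter $0 = e_0$, we have $\{0\} \notin p$, hence $R\setminus\{0\} \in p$; thus every $A \in p$ (equivalently, every $A \in p$ that is a subset of $R\setminus\{0\}$, and $R\setminus\{0\} \in p$ so restricting to such $A$ is harmless) is central in $(R\setminus\{0\},\cdot)$.

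For the additive assertion I would use the topology of $\beta R$. Recall that $A \subseteq R$ is central in $(R,+)$ if and only if $\widehat{A} \cap E(K(\beta R,+)) \neq \emptyset$, and that the sets $\widehat{A}$ form a basis of (cl)open sets for $\beta R$. Now fix $A \in p$. Then $\widehat{A}$ is an open neighborhood of $p$, and since $p$ lies in the closure $\overline{E(K(\beta R,+))}$, this neighborhood must meet $E(K(\beta R,+))$. Choosing any $q \in \widehat{A} \cap E(K(\beta R,+))$ gives a minimal additive idempotent with $A \in q$, so $A$ is central in $(R,+)$. This completes the argument.

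I do not expect any genuine obstacle here: the corollary is a repackaging of Theorem \ref{ExistinceOfAdditiveAndMultiplicativeUltrafilters}, whose proof in turn rests on Lemma \ref{MultiplicationPreservesMinimalIdempotents} (left translation by a nonzero ring element preserves minimal additive idempotents, using right homomorphic finiteness and the absence of zero divisors via the Ore condition) together with the standard fact that a left ideal of a compact right topological semigroup contains a minimal idempotent. The only point requiring a little care in writing this up is the bookkeeping among $\beta R$, $\beta R\setminus\{0\}$, and $\beta(R\setminus\{0\})$, and the remark that $p \neq 0$ forces $R\setminus\{0\} \in p$ so that the multiplicative statement is not vacuous.
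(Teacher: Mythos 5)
Your argument is correct and is essentially the paper's own proof: apply Theorem \ref{ExistinceOfAdditiveAndMultiplicativeUltrafilters} to get $p \in E(K(\beta R\setminus\{0\},\cdot))\cap\overline{E(K(\beta R,+))}$, read off multiplicative centrality of each $A \in p$ directly from the idempotent membership, and deduce additive centrality by intersecting the basic open neighborhood $\widehat{A}$ with $E(K(\beta R,+))$ using the closure condition. The extra bookkeeping you supply (that $p \neq 0$ so $R\setminus\{0\} \in p$) is harmless and just makes the $\beta R\setminus\{0\} \cong \beta(R\setminus\{0\})$ identification explicit.
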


\begin{proof}
Using Theorem \ref{ExistinceOfAdditiveAndMultiplicativeUltrafilters} let us pick some $p \in E(K(\beta R\setminus\{0\},\cdot))\cap\overline{E(K(\beta R,+))}$ and let $A \in p$ be arbitrary. Since $p \in E(K(\beta R\setminus\{0\},\cdot)) = E(K(\beta(R\setminus\{0\}),\cdot))$, we see that $A$ is a central subset of $(R\setminus\{0\},\cdot)$. Since $p \in \overline{E(K(\beta R,+))}$ and $\hat{A}$ is an open neighborhood of $p$, pick some $q \in E(K(\beta R,+))\cap\hat{A}$. Since $A \in q$ and $q \in E(K(\beta R,+))$, we see that $A$ is a central subset of $(R,+)$.
\end{proof}

\begin{remark}
To see why we need to assume that $R$ is a right homomorphically finite ring let us consider the integral domain $\mathbb{Q}[x]$. It is clear that $x\mathbb{Q}[x]$ is not a finite index subgroup of $(\mathbb{Q}[x],+)$, and we will see in Theorem \ref{NecessityOfHomFin} that $\widehat{x\mathbb{Q}[x]} \supseteq \overline{K(\beta\mathbb{Q}[x],\cdot)}$ while $\widehat{x\mathbb{Q}[x]}\cap\overline{K(\beta\mathbb{Q}[x],+)} = \emptyset$. It follows that $\widehat{x\mathbb{Q}[x]} \supseteq \overline{E(K(\beta\mathbb{Q}[x],\cdot))}$ while $\widehat{x\mathbb{Q}[x]}\cap\overline{E(K(\beta\mathbb{Q}[x],+))} = \emptyset$, so $x\mathbb{Q}[x]$ intersects every central subset of $(\mathbb{Q}[x]\setminus\{0\},\cdot)$ even though it is not a central subset of $(\mathbb{Q}[x],+)$. Furthermore, in the proof of Lemma \ref{ExistinceOfAdditiveAndMultiplicativeUltrafilters} we used the continuity of right multiplication in $\beta(R\setminus\{0\})$ which is why we had to assume that the ring $R$ was a right homomorphically finite ring. The same proof yields an analogous result for left homomorphically finite rings if you work with the extension of $\cdot$ from $R\setminus\{0\}$ to $\beta(R\setminus\{0\})$ that makes left multiplication continuous. Note that the minimal idempotents of $\beta(R\setminus\{0\},\cdot)$ may change depending on which extension of $\cdot$ from $R\setminus\{0\}$ to $\beta(R\setminus\{0\})$ you use as shown in \cite[Theorem 13.40.2]{AlgebraInTheSCC}.
\end{remark}

\begin{theorem}\label{NecessityOfHomFin}
Let $R$ be an infinite integral domain that is not homomorphically finite and let $r_0 \in R\setminus\{0\}$ be such that $[R:r_0R] = \infty$. We have

\begin{equation}
    \widehat{r_0R} \supseteq \overline{K(\beta R\setminus\{0\},\cdot)}\text{ and }\widehat{r_0R}\cap\overline{K(\beta R,+)} = \emptyset.
\end{equation}
In particular, we have

\begin{equation}
    \overline{K(\beta R\setminus\{0\},\cdot)}\cap\overline{K(\beta R,+)} = \emptyset.
\end{equation}
\end{theorem}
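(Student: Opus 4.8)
The two displayed assertions are essentially independent, and the final ``in particular'' follows formally from them: $\widehat{r_0R}$ is clopen (indeed $(\widehat{r_0R})^c=\widehat{(r_0R)^c}$), so if $\overline{K(\beta R\setminus\{0\},\cdot)}\subseteq\widehat{r_0R}$ and $\overline{K(\beta R,+)}\cap\widehat{r_0R}=\emptyset$, the two closures are separated by the clopen set $\widehat{r_0R}$ and hence disjoint. The plan is therefore to prove the two inclusions separately, and the first one is immediate: by Lemma \ref{rRIdeal} the set $H$ of ultrafilters $p\in\beta R\setminus\{0\}$ with $sR\in p$ for every $s\in R\setminus\{0\}$ is an ideal of $(\beta R\setminus\{0\},\cdot)$, so the smallest ideal satisfies $K(\beta R\setminus\{0\},\cdot)\subseteq H\subseteq\widehat{r_0R}$ (the last inclusion because $r_0\in R\setminus\{0\}$), and taking closures, using that $\widehat{r_0R}$ is closed, yields $\overline{K(\beta R\setminus\{0\},\cdot)}\subseteq\widehat{r_0R}$.

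For the second inclusion I would argue by contradiction. Since $\widehat{r_0R}$ is open and an open set that meets the closure of a set already meets the set itself, it suffices to show $\widehat{r_0R}\cap K(\beta R,+)=\emptyset$, so suppose $p\in K(\beta R,+)$ with $r_0R\in p$. Here I would pass to the quotient: let $Q=R/r_0R$, an abelian group, which is infinite by the hypothesis $[R:r_0R]=\infty$, let $\pi\colon R\to Q$ be the quotient homomorphism, and let $\tilde\pi\colon(\beta R,+)\to(\beta Q,+)$ be its continuous surjective extension (a semigroup homomorphism). Two facts then collide. First, a surjective homomorphism of semigroups carries each minimal left ideal onto a minimal left ideal (if $L$ is a minimal left ideal and $L'\subseteq\tilde\pi[L]$ is a nonempty left ideal of $\beta Q$, then $\tilde\pi^{-1}[L']\cap L$ is a nonempty left ideal of $\beta R$ contained in $L$, hence equal to $L$, forcing $L'=\tilde\pi[L]$), and therefore $\tilde\pi$ maps $K(\beta R,+)$ into $K(\beta Q,+)$; so $\tilde\pi(p)\in K(\beta Q,+)$. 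Second, $\pi^{-1}(\{0\})=r_0R\in p$ forces $\tilde\pi(p)$ to be the principal ultrafilter $e_0$ at $0\in Q$, i.e.\ the identity of $(\beta Q,+)$; but the identity cannot lie in $K(\beta Q,+)$ when $Q$ is infinite, since $e_0\in K(\beta Q,+)$ would give $\beta Q=\beta Q+e_0\subseteq K(\beta Q,+)$, while $\beta Q\setminus Q$ is a proper ideal of $\beta Q$ for every infinite cancellative $Q$ (\cite{AlgebraInTheSCC}). This contradiction establishes $\widehat{r_0R}\cap K(\beta R,+)=\emptyset$, hence $\widehat{r_0R}\cap\overline{K(\beta R,+)}=\emptyset$.

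If one prefers to avoid the quotient, the same conclusion follows from the standard characterization that a subset of $R$ whose $\beta R$-closure meets $K(\beta R,+)$ must be piecewise syndetic in $(R,+)$, combined with the elementary observation that a finite union of cosets of an infinite-index subgroup is never thick: a translate of a finite set meeting $m+1$ distinct cosets of $r_0R$ cannot be contained in $m$ cosets, so no finite union of translates of $r_0R$ is thick and $r_0R$ is not piecewise syndetic. Either route makes the theorem fall out, and there is no deep obstacle here; the content is the non-largeness of the subgroup $r_0R$ inside the group $(R,+)$, everything else being bookkeeping. The main thing to be careful about is keeping the additive and multiplicative operations, and the two distinct ambient semigroups $(\beta R,+)$ and $(\beta R\setminus\{0\},\cdot)$, cleanly separated, and double-checking the two cited facts used in the quotient argument (a surjective homomorphism sends the smallest ideal onto the smallest ideal, and $\beta Q\setminus Q$ is an ideal for $Q$ infinite cancellative).
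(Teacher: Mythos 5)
Your proof is correct, and your primary argument for the second disjointness is genuinely different from the paper's. For the inclusion $\overline{K(\beta R\setminus\{0\},\cdot)}\subseteq\widehat{r_0R}$ you proceed exactly as the paper does, via Lemma \ref{rRIdeal}. For $\widehat{r_0R}\cap\overline{K(\beta R,+)}=\emptyset$, the paper argues combinatorially: it picks $q\in\widehat{r_0R}\cap K(\beta R,+)$, invokes Theorem 4.39 of \cite{AlgebraInTheSCC} to conclude that the return set $\{r\in R\ |\ -r+r_0R\in q\}$ is syndetic in $(R,+)$, observes that this return set equals $r_0R$ (a coset either equals $r_0R$ or is disjoint from it), and then derives a contradiction from the elementary fact that an infinite-index subgroup is not syndetic. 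You instead pass to the quotient group $Q=R/r_0R$, extend the quotient map to a surjective semigroup homomorphism $\tilde\pi\colon(\beta R,+)\to(\beta Q,+)$, note that $\tilde\pi$ sends the smallest ideal into the smallest ideal (a standard structural fact, which you also sketch correctly), deduce $\tilde\pi(p)=e_0\in K(\beta Q,+)$, and derive a contradiction from $K(\beta Q,+)\subseteq\beta Q\setminus Q$. This quotient argument is more structural and avoids any appeal to combinatorial largeness notions, trading the syndeticity lemma for two facts about the algebra of $\beta S$ (homomorphisms preserve $K$, and $\beta Q\setminus Q$ is a proper ideal for infinite cancellative $Q$); the paper's route is shorter given that Theorem 4.39 of \cite{AlgebraInTheSCC} is already at hand. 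Your secondary route via piecewise syndeticity is a small variant of the paper's argument (piecewise syndetic rather than syndetic) and is also correct. One cosmetic omission: when showing $\tilde\pi$ sends minimal left ideals onto minimal left ideals, you should note explicitly that $\tilde\pi[L]$ is itself a left ideal of $\beta Q$ (which follows easily from surjectivity of $\tilde\pi$) before showing it is minimal.
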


\begin{proof}
Letting $H$ be as in Lemma \ref{rRIdeal} and observe that

\begin{equation}
    H = \bigcap_{r \in R\setminus\{0\}}\widehat{rR},
\end{equation}
so $H$ is a closed. Since $H$ is an ideal by Lemma \ref{rRIdeal}, we see that $K(\beta R\setminus\{0\},\cdot) \subseteq H$, and hence

\begin{equation}
    \overline{K(\beta R\setminus\{0\},\cdot)} \subseteq \overline{H} = H \subseteq \widehat{r_0R}.
\end{equation}
Now let us assume for the sake of contradiction that there exists some $p \in \widehat{r_0R}\cap\overline{K(\beta R,+)}$. Since $p \in \overline{K(\beta R,+)}$ and $\widehat{r_0R}$ is an open neighborhood of $p$, pick some $q \in \widehat{r_0R}\cap K(\beta R,+)$. Using \cite[Theorem 4.39]{AlgebraInTheSCC} we see that $\{r \in R\ |\ -r+r_0R \in q\}$ is a syndetic subset of $(R,+)$. Noting that for each $r \in R$ we have $-r+r_0R = r_0R$ if $r \in r_0R$ and $(-r+r_0R)\cap r_0R = \emptyset$ if $r \notin r_0R$, we see that $\{r \in R\ |\ -r+r_0R \in q\} = r_0R$. Since $[R:r_0R] = \infty$, we see that $r_0R$ is not a syndetic subset of $(R,+)$, which yields the desired contradiction.
\end{proof}

\begin{corollary}
Let $R$ be an infinite integral domain that is not homomorphically finite. There does not exist an ultrafilter $p \in \beta R$ such that every $A \in p$ is a central subset of $(R,+)$ and a central subset of $(R\setminus\{0\},\cdot)$.
\end{corollary}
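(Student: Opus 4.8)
The plan is to derive this corollary immediately from Theorem~\ref{NecessityOfHomFin}, by reading the hypothesis about central sets as a statement about closures of smallest ideals. First I would assume, for contradiction, that there is an ultrafilter $p \in \beta R$ such that every $A \in p$ is a central subset of $(R,+)$ and a central subset of $(R\setminus\{0\},\cdot)$. Since $\{0\} = \widehat{\{0\}}$ is clopen in $\beta R$, the set $\beta R\setminus\{0\} = \widehat{R\setminus\{0\}}$ is clopen, so the closure of a subset of $\beta R\setminus\{0\}$ is the same whether taken in $\beta R$ or in $\beta R\setminus\{0\}$; also, since $\{0\}$ is not central in $(R\setminus\{0\},\cdot)$, the hypothesis forces $R\setminus\{0\} \in p$, so $p \in \beta R\setminus\{0\}$.

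Next I would translate the two centrality conditions. By definition, a set $A$ is central in $(R\setminus\{0\},\cdot)$ exactly when $\widehat{A}$ meets $E(K(\beta R\setminus\{0\},\cdot))$, and $A$ is central in $(R,+)$ exactly when $\widehat{A}$ meets $E(K(\beta R,+))$. Because $\{\widehat{A} : A \in p\}$ is a neighbourhood basis of $p$, the hypothesis says precisely that
\begin{equation}
    p \in \overline{E(K(\beta R\setminus\{0\},\cdot))} \cap \overline{E(K(\beta R,+))}.
\end{equation}
Using $E(K(\beta R\setminus\{0\},\cdot)) \subseteq K(\beta R\setminus\{0\},\cdot)$ and $E(K(\beta R,+)) \subseteq K(\beta R,+)$, this yields
\begin{equation}
    p \in \overline{K(\beta R\setminus\{0\},\cdot)} \cap \overline{K(\beta R,+)}.
\end{equation}

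Finally, since $R$ is an infinite integral domain that is not homomorphically finite, Theorem~\ref{NecessityOfHomFin} asserts that $\overline{K(\beta R\setminus\{0\},\cdot)} \cap \overline{K(\beta R,+)} = \emptyset$, contradicting the existence of $p$; hence no such ultrafilter exists. I do not anticipate any real obstacle here: all of the substantive content lives in Theorem~\ref{NecessityOfHomFin}, and the only thing to verify is the routine equivalence between ``every member of $p$ is central'' and ``$p$ lies in the closure of the relevant set of minimal idempotents'', together with the harmless remark that $\beta R\setminus\{0\}$ is clopen so that the closures in the two statements agree.
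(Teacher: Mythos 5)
Your proposal is correct and follows essentially the same route as the paper's proof: both arguments show that the hypothesis places $p$ in $\overline{E(K(\beta R\setminus\{0\},\cdot))}\cap\overline{E(K(\beta R,+))}$ and then invoke Theorem \ref{NecessityOfHomFin} (via the inclusions $E(K(\cdot)) \subseteq K(\cdot)$, which you helpfully make explicit and the paper leaves implicit). Your preliminary remarks about $\beta R\setminus\{0\}$ being clopen and about $p$ necessarily lying in it are harmless clarifications, but the substance of the argument is the same.
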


\begin{proof}
Let us assume for the sake of contradiction that such an ultrafilter $p \in \beta R$ did exist. Let $A \in p$ be arbitrary. Since $A$ is a central subset of $(R\setminus\{0\},\cdot)$ let $q \in E(K(\beta R\setminus\{0\},\cdot))$ be such that $A \in q$. Since $A$ is a central subset of $(R,+)$, let $q' \in E(K(\beta R,+))$ be such that $A \in q'$. We see that $\hat{A}$ is an open neighborhood of $p$ that contains $q$ and $q'$, and hence

\begin{equation}
p \in \overline{E(K(\beta R\setminus\{0\},\cdot))}\cap\overline{E(K(\beta R,+))},
\end{equation}
which contradicts Theorem \ref{NecessityOfHomFin}.
\end{proof}

\smallskip

\noindent {\bf Acknowledgements.} We would like to thank Vitaly Bergelson for proposing this topic of research and for his helpful guidance. We would also like to thank the referee for his/her useful comments during his/her careful review of this paper.


\bibliographystyle{integers.bst}

\end{document}